\def\be{\begin{equation}}
\def\ee{\end{equation}}
\def\ba{\begin{array}}
\def\ea{\end{array}}
\newtheorem{Theorem}{Theorem}
\newtheorem{Lemma}{Lemma}[section]
\newtheorem{Proposition}{Proposition}[section]
\newtheorem{Corollary}{Corollary}
\newtheorem{Remark}{Remark}[section]
\newtheorem{Definition}{Definition}[section]
\numberwithin{equation}{section}
\def\be{\begin{equation}}
\def\ee{\end{equation}}
\def\ben{\begin{equation*}}
\def\een{\end{equation*}}
\def\br{\begin{eqnarray}}
\def\er{\end{eqnarray}}
\title{ Almost global solutions to two classes of 1-d Hamiltonian Derivative Nonlinear Schr\"odinger  equations }
\author{  Jing Zhang \footnotemark[2] \qquad \medskip \\
{\small School of Mathematical sciences,} \\ {\small Shanghai Key Laboratory of Pure Mathematics and Mathematical Practice, }\\{\small
East China Normal University, Shanghai, 200241, China} }
\date{}
\begin{document}

\footnotetext[2]{Email: jzhang@math.ecnu.edu.cn\\
Supported by National Natural Science Foundation of China 11871023}

\maketitle

%\begin{spacing}{2}�м���Ϊ����
\noindent

\begin{abstract}
Consider two kinds of 1-d Hamiltonian  Derivative Nonlinear Schr\"odinger (DNLS) equations with respect to different symplectic forms under periodic boundary conditions. The nonlinearities of these equations depend not only on $(x,\psi,\bar{\psi})$ but also  on $(\psi_x,\bar{\psi}_x)$, which means the nonlinearities of these equations  are unbounded.  Suppose that the nonlinearities   depend on the space-variable $x$ periodically.  
  Under some assumptions,   for most potentials  of these two kinds of Hamiltonian DNLS equations, if the initial value   is smaller than $\varepsilon\ll1$ in  $p$-Sobolev norm, then the corresponding solution to these equations  is also smaller than $2\varepsilon$ during a time interval $(-c\varepsilon^{-r_*},c\varepsilon^{-r_*})$(for any given positive $r_*$). The main methods  are  constructing  Birkhoff normal forms to two kinds of  Hamiltonian systems which have unbounded nonlinearities and   using the special symmetry of the unbounded  nonlinearities of  Hamiltonian functions to obtain a long time estimate of the solution in $p$-Sobolev norm.

\end{abstract}

\noindent
{\bf Keyword.} Derivative Nonlinear Schr\"odinger (DNLS) equations,  Hamiltonian systems,  unbounded,  long time stability,    momentum,  Birkhoff normal form    \par\bigskip

\noindent
{\bf AMS subject classifications.} 37K55, 37J40

% ������Ҫ���ʱ����\sup_{\stackrel{gh}{ghg}}

\section{Introduction}

It is very interesting to research the behavior of the solution in high-index Sobolev norm to nonlinear evolution equations with derivative in nonlinearities during a long time interval.

  Consider a  nonlinear Schr\"odinger 
equation
\begin{eqnarray}\label{001}
{\bf i}{\psi}_t &=&\partial_{xx}\psi + F(x,\psi,\bar{\psi}, \partial_x \psi, \partial_x \bar{\psi}), \ x\in[0,2\pi] 
\end{eqnarray}
under periodic boundary condition 
$$\psi(t,x)= \psi(t,x+2\pi) .$$
 Suppose that $F$ satisfies  
 $$F(x+2\pi,\psi,\bar{\psi},\partial_x\psi,\partial_x\bar{\psi})= F(x,\psi,\bar{\psi}, \partial_x \psi, \partial_x \bar{\psi})\ \mbox{ and }
  \ F(x,0,0,0,0)=0.$$ Under this assumption  $\psi=0$ is an equilibrium solution to equation (\ref{001}). 
I am interested in the  behavior of solutions around $\psi=0$ during a long time interval. 
  
If $F$ only depends on $(x,\psi,\bar{\psi})$ and  vanishes at order $n+1$  about $(\psi,\bar{\psi})$ at the origin ($n$ is a positive integer), local existence theory implies that when initial value $\|\psi(x,0)\|_{H^p}\leq \varepsilon\ll1$ the  corresponding solution $\psi(x,t)$ exists at least over an interval $(-c \varepsilon^{-n}, c\varepsilon^{-n})$ and $\|\psi(x,t)\|_{H^p}$  stays bounded on such
an interval. The problem that I am interested in is  that construct  almost global
solutions  when $F$ depends  on $(\psi_x,\bar{\psi}_x)$. An almost global solution  means that for any given $k>0$, when the initial value $\psi(x,0)$ is smaller than $0<\varepsilon\ll1$,  solution $\psi(x,t)$ is also small in a high index Sobolev norm  for any $t\in(-c \varepsilon^{-k}, c\varepsilon^{-k})$ (refer \cite{klm83}).

When investigation concerns equation (\ref{001}) on a compact manifold,  no dispersion is available. Nevertheless, two ways may be used to obtain
solutions, defined on time-intervals larger than the one given by local existence theory. The first one is  using KAM theory to get small amplitude  periodic or quasi-periodic (hence global)
solutions. A lot of work have been devoted to these questions and readers refer \cite{bbp,bbp1,bbp3,bou94,bou1,Craig,E,zgy,gxy,l-y,l-y2,k-p,ku,ku2000,ku98,Way1}.

The second approach concerns the construction of almost global $H^p$-small solutions
for (\ref{001}) on compact manifold. Use Birkhoff normal form method to improve the order of normal form and then exploit integral principle to get almost global solutions. 
 When nonlinearity $F$ to equation (\ref{001}) depends only on $(\psi,\bar{\psi})$,  small initial  data give rise to global solutions and
keep uniform control of the $p$-Sobolev  norm of  solutions ($p$ large enough), over time-intervals of length
$\varepsilon^{-k}$, for any given positive $k$. This has been initiated by Bourgain \cite{bou}, who
stated  results of almost global existence and uniform control to equation\begin{equation}
{\bf i}\psi_t-\psi_{xx}+V(x)\psi +\frac{\partial H}{\partial \bar{\psi}}(\psi,\bar{\psi})=0,
\end{equation}
  for any typical (with large probability)  $V(x)$.
  Bourgain in \cite{bou1} stated that  for any small typical initial value to equation
  \begin{equation}
{\bf i}\psi_t-\psi_{xx}+v|\psi|^2\psi +\frac{\partial H}{\partial \bar{\psi}}(\psi,\bar{\psi})=0,
\end{equation}
 the solution $\psi$   will  satisfy $$ \|\psi(t)\|_{H^p}<C \varepsilon, \quad \mbox{for any }\ |t|\leq\varepsilon^{-B}.$$More results may be
found in the book of Bourgain \cite{bou3}. 
 Almost global
solutions for Hamiltonian Semi-linear Klein-Gordon Equations (without derivative in nonlinearity) on spheres and Zoll manifolds have been obtained
by Bambusi, Delort, Gr\'ebert and Szeftel in \cite{ba4}. 
Berti and  Delort in \cite{BD17} give 
 almost global existence of solutions for capillarity-gravity water waves equations with periodic spatial boundary conditions. 

 Bambusi and Gr\'ebert \cite{ba1} (see also Bambusi \cite{Bam08} and Gr\'ebert \cite{gr1}) 
  prove an abstract Birkhoff normal form theorem for Hamiltonian partial differential
equations and  apply this theorem to semi-linear equations: nonlinear wave equation, nonlinear
Schr\"odinger  equation on the $d$-dimensional ($d\geq1$) torus with nonlinearities
satisfying a property-tame modulus. In a non-resonant case they deduce that any small
amplitude solution remains very close to a torus for a very long time. In \cite{ba3}  Bambusi researches the NLW equation with nonlinearity function depending on $x$ periodically. 

Faou and Gr\'{e}bert in \cite{gr-f} consider a general class of infinite dimensional reversible differential systems and   prove that if the $p$-Sobolev norm of initial
data is  smaller than $\varepsilon$ ( $0<\varepsilon$ small enough) then the solution is  
bounded by $2\varepsilon$ during time of order $\varepsilon^{-r}$ with $r$ arbitrary. This theorem applies to a class of reversible semi-linear PDEs including
 nonlinear Schr\"odinger equation on the $d$-dimensional torus and a class of coupled NLS equations which is reversible but not
Hamiltonian. Feola and Iandoli  in \cite{F-I} give the long time existence for a large class of fully nonlinear, reversible and parity preserving Schr\"odinger equations on the one dimensional torus.

  Delort
and Szeftel in \cite{De1}, \cite{De06},  Delort in \cite{De4}, \cite{De09}  research  semi-linear Klein-Gordon equation (with derivative in nonlinearity) on   spheres and Zoll manifold,  quasi-linear Klein-Gordon equation on tori and $\mathbb{S}^1$, and obtain that when the initial value is small than $\varepsilon>0$, the corresponding solution  exists when $|t|\leq \varepsilon^{-r}.$

Given  a  DNLS equation   
\begin{equation}\label{1555}
{\bf i}{\psi}_t=\partial_{xx}\psi +V\ast \psi+(\frac{ \partial f(\psi,\bar{\psi})}{\partial \bar{\psi}})_x, \quad x\in[0,2\pi],
\end{equation}
 Yuan and Zhang in \cite{z-y} obtain that for most $V$ the solution to (\ref{1555}) is still smaller than $2\varepsilon$ among  time $|t|\leq \varepsilon^{-r}$ (for any given positive $r$), if the initial value is smaller than $\varepsilon\ll1$.
  The nonlinearity in (\ref{1555}) does not directly  depend on  space variable $x$. 
 In \cite{z-y-1} Yuan and Zhang research the long time behavior of the solution to the perturbed KdV equation the nonlinearity  of which is trigonometric polynomial about $x$. 
  
\vspace{12pt}

In this paper,
I focus on the behavior of solutions during a long time interval to  two types of  Hamiltonian Derivative Nonlinear Schr\"odinger (DNLS) equations which depend on $x$ periodically. One type is of the  following form
\begin{equation}\label{in-1}
{\bf i}{\psi}_t=\partial_{xx}\psi+ V* \psi +
\frac12\partial_x\partial_{\bar{\psi}} F(x,\psi,\bar{\psi})+
\partial_{\psi\bar{\psi}}F(x,\psi,\bar{\psi})\psi_x,
\end{equation}where $V$ belongs to {\footnotesize \begin{equation*}  \Theta^0_m:=\bigg\{
     V(x) 
\in L^2([0,2\pi],\mathbb{R}) \  \bigg|\      \widehat{V}_j \cdot\max\{ 1,|j|^m\}\in [-\frac12,\frac12],\   \widehat{V}_j=\widehat{V}_{-j},\ \forall\ j\in \mathbb{Z}
         \bigg\}
     \end{equation*} }
and  the other type is as follow
 \begin{equation}\label{in-2}
{\bf i}{\psi}_t=\partial_{xx}\psi+ V* \psi + {\bf i}\partial_{x}\big(  \partial_{\bar{\psi}} F(x,\psi,\bar{\psi}) \big),
\end{equation}
where $V$ belongs to  
     {\footnotesize  \begin{equation*}   \Theta^{1}_m:=\left\{
     V\in L^2([0,2\pi],\mathbb{C})  \bigg|\
     \widehat{V}_j\cdot\max\{ 1,|j|^m\}\in [-\frac12,\frac12],\  \forall\ j\in \mathbb{Z}\setminus \{0\}, \  \widehat{V}_0=0
     \right\}.
     \end{equation*} }
Under some assumptions, (\ref{in-1}) becomes into a Hamiltonian equation with respect to
 a   symplectic form  $ 
 w^0: = J_0 d \psi \wedge d \bar{\psi},$ $ J_0^{-1}:=\footnotesize{ \big(
                                                                            \begin{array}{cc}
                                                                              0 & -{\bf i} \\
                                                                              {\bf i} & 0 \\
                                                                            \end{array}
                                                                       \big)}$ and  (\ref{in-2}) is   Hamiltonian   under a symplectic form $w^1 := J_1 d \psi \wedge d \bar{\psi},$ $ ( J_1)^{-1}:= \footnotesize{ \big(
  \begin{array}{cc}
                 0 & \partial_x \\
                 \partial_x & 0 \\
                      \end{array}
                   \big)}$.

When
                   $ \partial_{\bar{\psi}} F(x,0,0)=0$ and $  \partial^2_{\bar{\psi}\psi}F (x,0,0)=0, $
 $\psi=0$ is an equilibrium point of equations  (\ref{in-1}) and (\ref{in-2}). In order to get the  almost global solution around the origin to (\ref{in-1}) and (\ref{in-2}), it is required to research  the  behavior of solutions around $\psi=0$ during a long time interval. 

  The result in \cite{z-y}  holds ture for Hamiltonian  DNLS equation  with  nonlinearity independent of $x$. In other words  the momentum of the corresponding Hamiltonian function equals to zero. This property is important in proving the long time stability result.  In \cite{z-y-1} one researches an unbounded perturbed KdV equation the nonlinearity  of which is a  trigonometric polynomial about $\sin kx$ and $\cos kx$ ($|k|\leq M$), i.e., the momentum of the corresponding Hamiltonian function are bounded.   But  generally, the sets of the momentum of  Hamiltonian functions   to equation (\ref{in-1}) and (\ref{in-2}) under Fourier transformation  may be unbounded.  
  Even if assume that  for any $(\psi,\bar{\psi})$ around origin $F\in H^{\beta}([0,2\pi],\mathbb{R})$ ($\beta$ big enough), the corresponding nonlinear vector field of equations (\ref{in-1}) and (\ref{in-2}) are sitll unbounded.  Denote the part of $F$,  the momentum of which is bigger than  $\delta>0$, as $F_1$. Even if  $\delta$  is very large, the Hamiltonian vecotr field of $F_1$  in equations (\ref{in-1}) and (\ref{in-2})  are still unbounded.   The results and methods  in \cite{z-y} and \cite{z-y-1}   do not work to equations (\ref{in-1}) and (\ref{in-2}), directly.   
 In \cite{De3} one consider quasi-linear Klein-Gordon equation on $S^1$.  The nonlinearities are polynomials and  smooth depend on $x$. Their methods are not suitable to DNLS equations (\ref{in-1}) and (\ref{in-2}). In \cite{F-I}  they consider the reversible and parity preserving Schr\"odinger equation. It is necessary to construct    a long time stability theory to solutions of Hamiltonian DNLS equations (\ref{in-1}) and (\ref{in-2}) around the origin.

Under Fourier transformation, equations (\ref{in-1}) and (\ref{in-2}) are transformed into  two types of  Hamiltonian  systems $\theta\in\{0,1\}$
\begin{equation} \label{mod1}
\left\{ \begin{array}{ll}
\dot{u}_j= -{\bf i}\mbox{sgn}^{\theta}(j) \cdot\partial_{\bar u_j} H^{w_{\theta}}(u,\bar{u}),\\
\\
\dot{\bar{u}}_j=\ \ {\bf i} \mbox{sgn}^{\theta}(j)\cdot   \partial_{ u_j} H^{w_{\theta}}(u,\bar{u}),
\end{array} \right. 
\quad j\in\mathbb{Z}^*:=\left\{
\begin{array}{ll}
\mathbb{Z} \ \mbox{or} \ \mathbb{Z}\backslash\{0\}& \mbox{when } \theta=0\\
\\
\mathbb{Z}\backslash\{0\} &\mbox{when }\theta=1
\end{array}\right.
\end{equation}
with Hamiltonian function 
\begin{equation} \label{mod2}
H^{w_{\theta}}(u,\bar{u})=H_0^{w_{\theta}} + P^{w_{\theta}}(u,\bar{u}),\quad
(u,\bar{u})\in {\cal H}^{p},\quad \theta\in\{0,\ 1\}
\end{equation}
under symplectic form
 \begin{equation}\label{sp1}
w_{\theta} :=\left\{
\begin{array}{lll}
&{\bf
i}\sum_{j\in \mathbb Z^*}\mathrm du_j\wedge \mathrm d\bar{u}_j& \theta=0,\\
\\
&{\bf
i}\sum_{j\in \mathbb Z^*} \mbox{sgn}(j) \mathrm du_j\wedge \mathrm d\bar{u}_j&\theta=1,
\end{array}
\right.
\end{equation}
where  $ H_0^{w_{\theta}} :=\sum_{j\in \mathbb Z^{\theta}}   \omega^{w_{\theta}}_j|u_j|^2$ and 
$\omega^{w_{\theta}}_j:= \left\{
\begin{array}{lll}
 (-j^2+\hat{V}_j)& \theta=0\\
\mbox{sgn}(j)(-j^2+\hat{V}_j)& \theta=1\\
\end{array}
\right.$. $P^{w_{\theta}}(u,\bar{u})$ is a  power series  having $(\beta,\theta)$-type symmetric coefficients semi-bounded by $C_{\theta}>0$ (refer definitions \ref{3.4} and \ref{3.5} in section 3). Note that the coefficients of $P^{w_{\theta}}(u,\bar{u})$ are not bounded.  This leads to  the Hamiltonian vector field of $P^{w_{\theta}}(u,\bar{u})$ being unbounded. See  Proposition \ref{cor2} in section 4.  

The problem of finding almost global solutions around the origin to equations (\ref{in-1}) and (\ref{in-2}) is changed into considering  a long time stability of  solutions around equilibrium point $(u,\bar{u})=0$ of (\ref{mod1}).  

In section 3 Theorem \ref{T22} states that under some assumptions, the solution to the two type of Hamiltonian systems which have $(\beta,\theta)$-type symmetric coefficients $(\theta\in\{0,1\}$) is still smaller than $2\varepsilon$ during a time interval $( -\varepsilon^{-r_*},\varepsilon^{-r_*})$, if its initial value is smaller than $\varepsilon\ll 1$.

  The idea of proving Theorem \ref{T22} is to
combine  Birkhoff normal form method with the property of $(\beta,\theta)$-type symmetric coefficients used to obtain
 energy inequalities. 
 
 Let us introduce the  important steps  in proving Theorem \ref{T22}. 

    First step:  construct a coordination transformation ${\cal T}_{w_{\theta}}^{(r)}$ under which  
the Hamiltonian function $H^{w_{\theta}}(u,\bar{u})$ in  (\ref{mod2}) can be transformed into a new Hamiltonian function
 $$ H^{(r,w_{\theta})}= H^{w_{\theta}}\circ{\cal T}_{w_{\theta}}^{(r)}=H^{w_{\theta}}_0+\underbrace{Z^{(r,w_{\theta})} +{\cal R}^{N(r,w_{\theta})}+{\cal R}^{T(r,w_{\theta})}}_{P^{(r,w_{\theta})}} $$
 with a high degree $(\theta,\gamma,\alpha,N)$-normal form ${ Z}^{(r,w_{\theta})}$ (see definition \ref{5.3}). 
    Because the system (\ref{mod1}) is in an infinite dimension, one can only get a partial normal form. 
  ${\cal R}^{N(r,w_{\theta})}$ is at least 3 order about $(u_j,\bar{u}_j)_{|j|>N}$ ($N$ is large enough) and ${\cal R}^{T(r,w_{\theta})} $ has a zero of high order about $(u,\bar{u})$.  The Hamiltonian vector field of $ {P^{(r,w_{\theta})}}$ is still unbounded. The construction of $ {\cal T}_{w_{\theta}}^{(r)}$ is from solving  Homological equation (refer Lemma \ref{lem2}). 
      Because the perturbation in equation (\ref{mod1}) is unbounded, a strong non resonant condition to frequencies $\{\omega^{w_{\theta}}_j(V)\}$  is needed to keep the transformation $ {\cal T}_{w_{\theta}}^{(r)}$ bounded.  This condition will effort the estimate of sets of potential $V(x)$ and the expression  of $(\theta,\gamma,\alpha,N)$-normal form.  
     Moreover, if $P^{w_{\theta}}(u,\bar{u})$ has $(\beta,\theta)$-type symmetric coefficients, then   $P^{(r,w_{\theta})}(u,\bar{u})$ is still of $(\beta,\theta)$-type symmetric coefficients.

      Second step:
The solution to the new Hamiltonian system  satisfies
\begin{equation}\label{eq1} \frac{d\|u\|^2_p}{dt} =\{ \|u\|_p^2, H^{(r,w_{\theta})}(u,\bar{u}) \}_{w_{\theta}}.
   \end{equation} 
From above equation, it is obvious that estimating  $ \{\|u\|^2_p, H^{(r,w_{\theta})}(u,\bar{u}) \}_{w_{\theta}}$ is the key to get  a long time behavior of the solution.   For a general function $f(u,\bar{u})$ with unbounded coefficients, 
$ \{\|u\|^2_p, f(u,\bar{u}) \}_{w_{\theta}}$ is not bounded even if $ \|u\|_p$ is small enough. 
Fortunately,  $ P^{(r,w_{\theta})}(u,\bar{u})$ has  $(\beta,\theta)$-type symmetric coefficients semi-bounded by $C_{\theta}>0$. 
Studying the Possion bracket of $\|u\|_p^2$ and $P^{(r,w_{\theta})}(u,\bar{u})$ with $(\beta,\theta)$-type symmetric coefficients is an important problem in this paper.  Proposition \ref{2.1} in section 4 and Lemma \ref{6.2} in section 5 state that 
       \begin{equation}\label{eq}|\{ \|u\|_p^2,{\cal R}^{N(r,w_{\theta})}(u,\bar{u})+{\cal R}^{T(r,w_{\theta})}(u,\bar{u})\}_{w_{\theta}}|\prec R^{r+1}
       \end{equation}  and 
       \begin{equation}\label{eq2}|\{ \|u\|_p^2,{Z}^{(r,w_{\theta})}(u,\bar{u})\}_{w_{\theta}}|\prec R^{r+1}
       \end{equation}
       hold true for any $\|u\|_p\leq R\ll1$ and large enough $N$.  
With the help of (\ref{eq}), (\ref{eq2}) and (\ref{eq1}), the long time behavior of solution to the new Hamiltonian system   can be obtained.

 Since the two Hamiltonian DNLS equations have some difference, there still are some differences in the results of existence of almost global solution. The main difference is the sets of the potentials.    From Lemma \ref{6.1}, there exist positive measure subsets $ \tilde{\Theta}^{\theta}_m \subset\Theta^{\theta}_m$ ($\theta\in\{0,1\}$) such that when   $V\in \tilde{\Theta}^{\theta}_m$,  frequencies $ \{\omega^{w_{\theta}}_j(V)\}$ are $(\theta,\gamma,\alpha,N)$-non resonant (see definition \ref{5.3}). 
   When $V\in \Theta_m^0$,  its Fourier coefficients  satisfy $\hat{V}_j=\hat{V}_{-j}\in\mathbb{R},$ for any $ j\in\mathbb{Z},$ which makes the corresponding frequencies satisfying $\omega^{w_0}_j=\omega^{w_0}_{-j}$; while $V\in \Theta_m^1$, $\hat{V}_j$  does not  always equal to $\hat{V}_{-j}$. Thus $\omega^{w_1}_j$  is not related to  $\omega^{w_1}_{-j}$  for any $j\in\mathbb{N}$. The potential sets to equations (\ref{in-1}) and (\ref{in-2}) are different,   because   (\ref{in-1}) and (\ref{in-2}) have different symplectic forms and nonlinearities. To be specific, from the definitions of symplectic structures, the following equations hold  true  for any $j\in\mathbb{Z}\setminus \{0\}$ 
\begin{equation}\label{w0}
\{ u_j\bar{u}_{-j}+\bar{u}_ju_{-j}, \|u\|_p^2\}_{w_0}=0
\end{equation}  and 
\begin{equation}
\label{w1}
\{ u_j\bar{u}_{-j}+\bar{u}_ju_{-j}, \|u\|_p^2\}_{w_1}\neq0.
\end{equation}
 In order to make $|\{H^{w_{\theta}}(u,\bar{u}),\|u\|_p^2\}_{w_{\theta}}|$ being  high order small as $\|u\|_p$ is small,  
when $\theta=0$, from (\ref{w0}) the terms depending on  $\big((u_j\bar{u}_{-j}+\bar{u}_ju_{-j})\big)_{j\in\mathbb{Z}}$ in $H^{w_{\theta}}(u,\bar{u})$  will not need to be eliminated by  symplectic transformations; when $\theta=1$, from (\ref{w1}) it  needs to eliminate the terms depending on  $\big( (u_j\bar{u}_{-j}+\bar{u}_ju_{-j})\big)_{j\in\mathbb{Z}}$ in $H^{w_{\theta}}(u,\bar{u})$. Therefore,  it needs more parameters in the case $\theta=1$ than in the case $\theta=0$ and the sets of potential  $V(x)$ are different to equations (\ref{in-1}) and (\ref{in-2}).

\vspace{6pt}

 The paper is organized as follows:
 The section 2 of this paper is devoted to  introduction of two types of Hamiltonian  DNLS equations with respect to different symplectic forms.  There are many differences between these  two types of equations (see Remark \ref{2.33}). Then I give the main results in this paper, the  existence  of global solutions with small initial values  to these two types of DNLS equations (See Theorem \ref{T1-1} and Theorem \ref{Th1}).

 In the third section I present a definition of $(\beta,\theta)$-type symmetric($\theta\in\{0,1\}$). Using this definition, one can describe the coefficients  of nonlinearities of two types of Hamiltonian DNLS  equations under Fourier transformation. The long time stability  result to infinite dimensional  Hamiltonian systems  owing $(\beta,\theta)$-type symmetric coefficients is given in Theorem \ref{T22}.  Theorem \ref{T1-1} and Theorem \ref{Th1} 
 follow from Theorem \ref{T22}.  

In the fourth section I give two main estimates.    One is  the  estimate of the ${\cal H}^{p-1}$ norm of Hamiltonian vector field of a polynomial $f^{w_{\theta}}(u,\bar{u})$ with $(\beta,\theta)$-type symmetric coefficients under symplectic form $w_{\theta}$ ($\theta\in\{0,1\}$). This estimate is given in Proposition \ref{cor2}.  It is easy to found that  the Hamiltonian vector field of $f^{w_{\theta}}(u,\bar{u})$ is unbounded.  Proposition \ref{2.1}  states that  $|\{f^{w_{\theta}}(u,\bar{u}),\|u\|_p^2\}_{w_{\theta}}|$ is small  when $\|u\|_p$ is small enough and  $f^{w_{\theta}}(u,\bar{u})$ has $(\beta,\theta)$-type symmetric coefficients. Even if  the set of momentum of $f^{w_{\theta}}(u,\bar{u})$ is unbounded, the result still holds true.  The property of having $(\beta,\theta)$-type symmetric coefficients  is  invariant under some operators, such as  truncated  operators $\Gamma^{N}_{\leq 2}$ and $\Gamma^{N}_{>2}$ defined in (\ref{trun1}) and (\ref{trun2}). See Corollary \ref{4.3}.

In the fifth section,  in order to improve the order of Birkhoff  normal forms of Hamiltonian systems under two different symplectic forms, I will find suitable bounded symplectic  transformations (See Theorem \ref{Th2}). These transformations are constructed by solving Homological equations. Since the nonlinear vector fields of Hamiltonian systems are unbounded (see Proposition \ref{cor2}), a stronger non-resonant condition (see definition  \ref{5.3}) is needed.  Under these transformations,  new Hamiltonian systems  are obtained. 
The nonlinearities of the new  Hamiltonian functions still have $(\beta,\theta)$-type symmetric coefficients (See Lemma \ref{sf}). Although the high order normal forms  $Z^{(r,w_{\theta})}(u,\bar{u}) \ (\theta\in\{0,1\})$ in the new Hamiltonian functions are not standard Birkhoff normal forms, from Lemma \ref{6.2} $\{\|u\|_p^2, Z^{(r,w_{\theta})}(u,\bar{u})\}_{w_{\theta}}$ is high order small when $\|u\|_p$ is small enough.  The detail of the proof of Theorem \ref{Th2} is listed in Appendix.

In the sixth section Theorem \ref{T22} is proved by applying Theorem \ref{Th2}, Proposition \ref{2.1}, Corollary \ref{4.3} and Lemma \ref{6.2}, . 

In the seventh section the proofs of  Theorem \ref{T1-1} and Theorem \ref{Th1} are given.  Using Lemma \ref{6.1}, there exists a  positive measure subset $\tilde{\Theta}_m^{\theta} \subset\Theta_m^{\theta}$ ($\theta\in \{0,1\}$) such that  when $V\in \tilde{\Theta}_m^{\theta}$ the eigenvalues of linear operator $\partial_{xx}+V(x)\ast$  are stronger non resonant.

\section{Hamiltonian DNLS equations and main results}

\subsection{ Hamiltonian DNLS equations}
\noindent
  Let
 $$H^p([0,2\pi],\mathbb{C}):=\left\{\ \psi\in L^2([0,2\pi],\mathbb{C}) \ \bigg|\ \frac{\partial^r \psi}{\partial x^r} \in L^2([0,2\pi],\mathbb{C}), \ \forall \ 0\leq r\leq p\ \right\} $$
 be a p-Sobolev space.   The inner product  of the space $L^2([0,2\pi],\mathbb{C})$ is defined as
   $$ \langle
 \zeta, \eta\rangle:= {\bf Re}\int_0^{2\pi}\zeta\cdot\bar{\eta}\ \mathrm dx ,\quad \mbox{for any } \zeta,\ \eta\in L^2([0,2\pi],\mathbb{C}).$$
The important definition of Hamiltonian PDEs is introduced in \cite{ku}. I list it as following.
  Consider  an  evolution equation
 \begin{equation}\label{D1}
 \dot{ \xi}=A\xi+ f(\xi )
 \end{equation}
defined in  symplectic Hilbert scales $(\{H^p([0,2\pi],\mathbb{C})\times H^p([0,2\pi],\mathbb{C})\}, \alpha )$, where $\alpha  $ is a non-degenerate closed 2-form.
\noindent
Equation  (\ref{D1}) is called   a
 {\bf Hamiltonian equation}, if there exists a Hamiltonian function $H(\xi)$ defined in  a domain   $O_p\subset H^p([0,2\pi],\mathbb{C})\times H^p([0,2\pi],\mathbb{C})$   making
  $$\alpha (A\xi+f(\xi), \eta)=-\langle dH(\xi), \eta\rangle \quad\mbox{for any } \xi\in O_p, \ \eta\in TO_p  \ (TO_p \mbox{ is the tangent space of } O_p). $$
The dual space and the tangent space  of $H^p([0,2\pi],\mathbb{C})\times H^p([0,2\pi],\mathbb{C})$ are isometry to  $H^p([0,2\pi],\mathbb{C})\times H^p([0,2\pi],\mathbb{C})$, without  confusion I denote them in the same signal in the following content.
  
\vspace{4pt}  
Denote $d_{A}$ as the {\it order} of the linear operator
$$A:H^p([0,2\pi],\mathbb{C})\times H^p([0,2\pi],\mathbb{C}) \rightarrow H^{p-d_A}([0,2\pi],\mathbb{C})\times H^{p-d_A}([0,2\pi],\mathbb{C})$$
 and $d_f$ as the {\it order} of the mapping
  $$ f:H^p([0,2\pi],\mathbb{C})\times H^p([0,2\pi],\mathbb{C}) \rightarrow H^{p-d_f}([0,2\pi],\mathbb{C})\times H^{p-d_f}([0,2\pi],\mathbb{C}) .$$

 When the nonlinearity of a partial differential equation includes derivative, the corresponding order of the nonlinear vector field is positive. Otherwise, the order is non-positive.
     The following notations ``bounded" and ``unbounded" are given by the signs of the order of the vector field, and readers can refer \cite{l-y}, \cite{z-y}, \cite{z-y-1}.  For the sake of reference I list the definitions again. 
\begin{Definition}
     If   $d_f\leq 0$, call $f$ in (\ref{D1}) {\bf bounded}; If $d_f> 0$, $f$ is called {\bf unbounded}.
 Moreover, If $d_A-1=d_f>0$,  call $f$ {\bf critical unbounded}.
\end{Definition}

In this paper, I focus on  two kinds of Hamiltonian Derivative Nonlinear Schr\"odinger (DNLS)   equations.

\vspace{8pt}
\noindent
{\bf Type I}--DNLS equation has the following form  
\begin{equation}\label{2.23}
{\bf i}{\psi}_t=\partial_{xx}\psi+ V* \psi +{\bf i
}f(x,\psi,\bar{\psi},\psi_x,\bar{\psi}_x),\quad \psi\in H^p([0,2\pi],\mathbb{C})
\end{equation}
  under periodic boundary condition
    \begin{equation}\label{ex1}
\psi(x,t)=\psi(x+2\pi,t),\quad 
    \end{equation}
     where $V$ belongs to
     {\footnotesize \begin{equation}\label{vset}  \Theta^0_m:=\left\{
     V(x)=\sum_{j\in\mathbb{Z}}\widehat{V}_j e^{{\bf i}j x}
\in L^2([0,2\pi],\mathbb{R}) \  \bigg|\     v^{w_0}_j:=\widehat{V}_j\langle j\rangle^m\in [-\frac12,\frac12],\   v^{w_0}_j=v^{w_0}_{-j},\ \forall\ j\in \mathbb{Z}
         \right\}
     \end{equation} } with $m>1/2$, $\langle j\rangle:=\max\{1,\ |j|\}$ and $\bar{\psi}$ is the complex conjugate of $\psi.$

\noindent
Suppose that  there exists a function $F(x,\psi,\bar{\psi})$ such that 
\begin{equation}
f(x,\psi,\bar{\psi},\psi_x,\bar{\psi}_x)=\frac12\partial_x\partial_{\bar{\psi}} F(x,\psi,\bar{\psi})+
\partial_{\psi\bar{\psi}}F(x,\psi,\bar{\psi})\psi_x.
\end{equation}
Moreover, $F$ satisfies assumptions as follows.  
           \begin{description}
       \item[${\bf A_1}:$] $F(x,\xi, \eta)$
     is analytic  about $(\xi, \eta)$ in a  neighborhood of the origin and  satisfies
\begin{equation}\label{gm}
\overline{F(x,\psi,\bar{\psi})}=F(x,\psi,\bar{\psi})\ 
\end{equation}and $F(x,\psi,\bar{\psi})$ vanishes at least at order 2 in $(\psi,\bar{\psi})$ at the origin.   
\item[${\bf A_2}$] For any fixed $(\psi,\bar{\psi})$ a  neighborhood of the origin, $F \in H^{\beta+1}([0,2\pi],\mathbb{C})$ ($\beta$ is a big enough positive real number) satisfies 
$$ F(x+2\pi,\psi,\bar{\psi})=F(x,\psi,\bar{\psi}).$$
     \end{description}
     Then (\ref{2.23})  becomes a Hamiltonian PDE  with a real value Hamiltonian function
     \footnote{Since $F(x,\psi,\bar{\psi})$ satisfies assumptions ${\bf A_1}$-${\bf A_2}$, the following equation holds true for any  $\psi \in H^{p}([0,2\pi],\mathbb{C}) $ fulfilling $\psi(x+2\pi,t)=\psi(x,t)$
     \begin{eqnarray*}
0=\int_{0}^{2\pi} \frac{d F}{d x}(x,\psi,\bar{\psi})d x= \int_0^{2\pi}    \partial_x F(x,\psi,\overline{\psi}) +\partial_{\psi} F
 (x,\psi,\overline{\psi}){\psi}_x +\partial_{\bar{\psi}} F
 (x,\psi,\overline{\psi})\bar{\psi}_x dx,
\end{eqnarray*}     
i.e.,
\begin{eqnarray}\label{add1}
  \int_0^{2\pi}   \frac12 \partial_x F(x,\psi,\overline{\psi}) +\partial_{\psi} F
 (x,\psi,\overline{\psi}){\psi}_x  dx= -\int_0^{2\pi}   \frac12 \partial_x F(x,\psi,\overline{\psi}) 
 -\partial_{\bar{\psi}} F
 (x,\psi,\overline{\psi})\bar{\psi}_x dx.
\end{eqnarray} 
    From (\ref{add1}) and assumptions ${\bf A_1}$-${\bf A_2}$, it follows   
\begin{eqnarray*}
 \overline{\int_0^{2\pi}  {\bf i}\frac12\partial_x F(x,\psi,\overline{\psi}) + {\bf i}\partial_{\psi} F
 (x,\psi,\overline{\psi})\psi_x dx}&= &\int_0^{2\pi}  (-{\bf i})\frac12\partial_x F(x,\psi,\overline{\psi}) - {\bf i}\partial_{\bar{\psi}} F
 (x,\psi,\overline{\psi})\bar{\psi}_x dx\\
 &=&\int_0^{2\pi}  {\bf i}\frac12\partial_x F(x,\psi,\overline{\psi}) + {\bf i}\partial_{\psi} F
 (x,\psi,\overline{\psi})\psi_x dx,
\end{eqnarray*}      
which means that the Hamiltonian  function $H_{(\ref{2.23})}(\psi,\bar{\psi})$    is real.   }
\begin{equation}\label{h.1}
 H_{(\ref{2.23})} (\psi,\bar{\psi})=\int_0^{2\pi} -|\partial_{x} \psi|^2+(V* \psi )\bar{\psi} + {\bf i}\frac12\partial_x F(x,\psi,\overline{\psi}) + {\bf i}\partial_{\psi} F
 (x,\psi,\overline{\psi})\psi_xdx 
 \end{equation} on symplectic space $(H^{p}([0,2\pi],\mathbb{C})\times H^p([0,2\pi],\mathbb{C}),$ $w^0)$, where
 \begin{equation}\label{omega1}
 w^0 = J_0 d \psi \wedge d \bar{\psi},\quad  J_0^{-1}= \left(
                                                                \begin{array}{cc}
                                                                              0 & -{\bf i} \\
                                                                              {\bf i} & 0 \\
                                                                            \end{array}
                                                                          \right).
        \end{equation}
 The corresponding  Hamiltonian
vector of $H_{(\ref{2.23})}(\psi,\bar{\psi})$ under symplectic form $w^0$ is
$$ X^{w^0}_{H_{(\ref{2.23})}}:=\left(- {\bf i}\partial_{\bar{\psi}} H_{(\ref{2.23})},\  {\bf i}\partial_{{\psi}} H_{(\ref{2.23})} \right)^{T}$$ and  the equation (\ref{2.23}) can be written as
\begin{equation}
\left\{
\begin{array}{ll}
\dot{\psi}  = -{\bf i} \partial_{\bar{\psi}}  H_{(\ref{2.23})}(\psi,\bar{\psi}),\\
\\
\dot{\bar\psi} =\ {\bf i} \partial_{\psi} H_{(\ref{2.23})}(\psi,\bar{\psi}).
\end{array}
\right.
\end{equation}

\vspace{10pt}

\noindent
\vspace{8pt}
 {\bf Type II}--DNLS equation has the form as following 
 \begin{equation}\label{NLS-1}
{\bf i}{\psi}_t=\partial_{xx}\psi+ V* \psi + {\bf i}\partial_{x}\big(\frac{\partial F(x,\psi,\bar{\psi})}{\partial\bar{\psi}}\big)
\end{equation}
defined on \begin{equation}\label{H_0}
H_0^{p}([0,2\pi],\mathbb{C}):=\left\{ \psi\in H^p([0,2\pi],\mathbb{C})\  \bigg|\  \int_0^{2\pi} \psi(x,t) dx=0 \right\}
\end{equation} 
  under periodic boundary condition
    \begin{equation}\label{ex1-0}
\psi(x,t)=\psi(x+2\pi,t).
    \end{equation}
      The  potential $V$ belongs to  
     \begin{equation}\label{vset1}  \Theta^{1}_m:=\left\{
     V\in L^2([0,2\pi],\mathbb{C})  \bigg|\
     v^{w_1}_j:=\widehat{V}_j\langle j\rangle^m\in [-\frac12,\frac12],\  \forall\ j\in \mathbb{Z}\setminus \{0\}, \ v^{w_1}_0=\widehat{V}_0=0
     \right\}
     \end{equation} with $m>1/2.$

\noindent
If equation (\ref{NLS-1}) satisfies the following assumptions:

\begin{description}
  \item[${\bf B_1}$]    $F(x,\xi,\eta)$ is analytic at the origin about $(\xi, \eta)\in H_0^{p}([0,2\pi],\mathbb{C})\times H_0^{p}([0,2\pi],\mathbb{C})$ and vanishes at least at order 2 in $(\psi,\bar{\psi})$ at origin. For any $\psi\in H_0^{p}([0,2\pi],\mathbb{C})$, it holds
      $$\overline{F(x,\psi,\bar{\psi})}=F(x,\psi,\bar{\psi}) .$$
        \item[${\bf B_2}$]
  For any fixed $(\psi,\bar{\psi})$ in a neighborhood of the origin,      $F \in H^{\beta+1}([0,2\pi],\mathbb{C})$ ($\beta$ is a big enough positive real number) satisfies 
  $$ F(x+2\pi,\psi,\bar{\psi})=F(x,\psi,\bar{\psi});$$
         \end{description}
then equation (\ref{NLS-1}) becomes into  a Hamiltonian PDE with  a real Hamiltonian
 $$H_{(\ref{NLS-1})}(\psi, \bar{\psi})=\int_{0}^{2\pi}-{\bf i} \partial_{x}\psi   \bar{\psi}-{\bf i}( \partial_{x})^{-1}( V(x)\ast \psi)\cdot \bar{\psi}+F(x,\psi,\bar{\psi})\mathrm dx  $$
under  symplectic space $ ( H_0^{p}([0,2\pi],\mathbb{C})\times H_0^p([0,2\pi],\mathbb{C}), w^1 )$, where
 \begin{equation}\label{omega2}
 w^1 := J_1 d \psi \wedge d \bar{\psi},\quad  ( J_1)^{-1}:= \left(
  \begin{array}{cc}
                 0 & \partial_x \\
                 \partial_x & 0 \\
                      \end{array}
                   \right)
        \end{equation} is a symplectic  from ($w^1$ is a non-degenerate closed two form in space $ H_0^{p}([0,2\pi],\mathbb{C})\times H_0^p([0,2\pi],\mathbb{C})$).

     \noindent  The Hamiltonian
vector $X^{w^1}_{H_{(\ref{NLS-1})}}$ of $H_{(\ref{NLS-1})}(\psi, \bar{\psi})$ equals to
$$ \left( \partial_{x}(\partial_{\bar{\psi}} H_{(\ref{NLS-1})}), \partial_{x}(\partial_{{\psi}} H_{(\ref{NLS-1})})\right)^{T}.$$
 Equation (\ref{NLS-1}) can be written as follow
$$ \left\{\begin{array}{ll}
\dot{\psi}=\partial_x  \partial_{\bar{\psi}}H_{(\ref{NLS-1})}(\psi,\bar{\psi}),\\
\\
\dot{\bar{\psi}}=\partial_x \partial_{\psi}H_{(\ref{NLS-1})}(\psi,\bar{\psi}).
\end{array}\right. $$
   The DNLS equation researched in \cite{z-y} is a special case of equation (\ref{NLS-1}), i.e., $F(x,\psi,\bar{\psi})$ is independent of $x$.

\begin{Remark}\label{2.33}
There are some differences between type I-DNLS equations and type II-DNLS equations:
\begin{itemize}
\item Nonlinearities of these two kinds of DNLS equations are different. It is an essential difference.
\item Symplectic spaces are different.  Type I-DNLS equation is defined in  $(H^{p}([0,2\pi],\mathbb{C})\times H^p([0,2\pi],\mathbb{C}),$ $w^1)$ and type II-DNLS equation is defined in $(H_0^{p}([0,2\pi],\mathbb{C})\times H_0^p([0,2\pi],\mathbb{C}),$ $w^0)$. $w^1$ and $w^0$ are also different.
    \item The Potential $V$ in  type I-DNLS equation belongs to $\Theta^0_m$ and the one in type II-DNLS equation belongs to $\Theta^1_m$. $\Theta^1_m$ is different from  $\Theta^0_m$. When $V\in \Theta^0_m$ it  fulfills $\hat{V}_j=\hat{V}_{-j}\in \mathbb{R}$ which means $\overline{V(x)}=V(x)$; while $V\in \Theta^1_m$, $V$ is a complex valued potential.
          The potential $V$ will directly determine the eigenvalues of the linear operator $-\partial_{xx} +V(x)\ast $. It is clear that  when $V\in \Theta^0_m$ the corresponding eigenvalues $\omega_j$ and $\omega_{-j}$ of the linear operator $-\partial_{xx} +V(x)\ast $ are resonant, when $V\in \Theta^1_m$, they are independent. The measures of   $\Theta^{0}_{m}$ and $\Theta^{1}_{m}$ are  defined as follows
$$ \mbox{meas}(\Theta^{0}_{m}) :=\prod_{j\in\mathbb{N}\cup\{0\}} \mbox{meas}\big\{v_j\in[-\frac12,\frac12]\ \big|\ V \in \Theta^{1}_{m},\ \ \widehat{V}_j\langle j\rangle^m=v_j  \big\} $$
and
$$ \mbox{meas}(\Theta^{1}_{m}) :=\prod_{j\in\mathbb{Z}\setminus\{0\}} \mbox{meas}\big\{v_j\in[-\frac12,\frac12]\ |\ V \in \Theta^{0}_{m},\ \widehat{V}_j |j|^m=v_j  \big\} ,$$
where ``$\mbox{meas}$" means the Lebesgue measure.
\end{itemize}
\end{Remark}

\begin{Remark}

\noindent
\begin{itemize}
  \item If type II-DNLS equation (\ref{NLS-1}) is defined in space $H^{p}([0,2\pi],\mathbb{C})$, it is easy to verify that  for any solution $\psi(x,t)$ to equation (\ref{NLS-1}) the quantity $\int_{0}^{2\pi} \psi(x,t) dx$ is a constant for any $t\in\mathbb{R}$. Set
       $$ \phi:=\psi-c,\quad c:=\int_{0}^{2\pi} \psi(x,t) dx=\int_{0}^{2\pi} \psi(x,0) dx. $$ If $ \psi\in H^p([0,2\pi],\mathbb{C}),$ then   $ \phi\in H_0^p([0,2\pi],\mathbb{C}).$
When  $G(x,\phi,\bar{\phi}):= F(x,\phi+c,\overline{\phi+c})$ satisfies ${\bf B_1}$  and ${\bf B_2}$, then equation (\ref{NLS-1}) becomes into a Hamiltonian equation under symplectic form $w^1$ about $(\phi,\bar{\phi})$.
  \item From  Proposition \ref{cor2} in section 4,  the nonlinearities of type I-DNLS equation (\ref{2.23}) and type II-DNLS equation (\ref{NLS-1})  are  unbounded.

\end{itemize}

\end{Remark}

\subsection{Main result}

The long time behavior of the solutions around equilibrium point  to type I and type II-DNLS Hamiltonian equations are given in this subsection.

\begin{Theorem}\label{T1-1}
Suppose that the equation (\ref{2.23}) satisfies assumptions ${\bf A_1}$-${\bf A_2}$.  For any integer $r_*>1$, there exist an almost full   measure  set $\widetilde{\Theta}^0_m \subset\Theta^0_m$ and $p_*>0$ such that for any fixed
$V \in  \widetilde{\Theta}^0_m $ and any $p$ fulfilling $ (\beta-4)/2>p>p_{*}$,   if
the initial data of the solution  to (\ref{2.23}) satisfies
$$\|\psi(x,0)\|_{H^p([0,2\pi],\mathbb{C})}\leq \varepsilon<\varepsilon_{*},$$
then one has
$$ \|\psi(x,t)\|_{H^p([0,2\pi],\mathbb{C})}<2\varepsilon, \qquad \forall~|t| \prec  \varepsilon^{-r_*-1}.$$
\end{Theorem}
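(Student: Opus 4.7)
The plan is to reduce Theorem \ref{T1-1} to the abstract stability statement Theorem \ref{T22} for Hamiltonian systems with $(\beta,\theta)$-type symmetric coefficients, applied with $\theta=0$. The reduction has three ingredients: (i) passing to Fourier variables so that the equation becomes the system (\ref{mod1}) with $\theta=0$, (ii) checking that the resulting perturbation $P^{w_0}$ has $(\beta,0)$-type symmetric coefficients semi-bounded by some $C_0>0$, and (iii) extracting a positive-measure subset $\widetilde\Theta^0_m\subset\Theta^0_m$ on which the frequencies satisfy the required non-resonance condition.

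First I would expand $\psi(x)=\sum_{j\in\mathbb Z}u_j e^{\mathbf i jx}$, so that the $H^p$-norm is equivalent to $\|u\|_p^2=\sum\langle j\rangle^{2p}|u_j|^2$ on $\mathcal H^p$, and the symplectic form $w^0=J_0\, d\psi\wedge d\bar\psi$ is carried to ${\bf i}\sum du_j\wedge d\bar u_j$. The quadratic part of $H_{(\ref{2.23})}$ becomes $H_0^{w_0}=\sum\omega^{w_0}_j|u_j|^2$ with $\omega^{w_0}_j=-j^2+\hat V_j$, giving exactly (\ref{mod1})--(\ref{mod2}) for $\theta=0$. This reduces the $H^p$-stability of (\ref{2.23}) to controlling $\|u(t)\|_p$ for the Fourier-side system.

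Next I would verify the structural hypothesis on $P^{w_0}(u,\bar u)$. Expanding $F(x,\psi,\bar\psi)$ in its Taylor series in $(\psi,\bar\psi)$ and Fourier series in $x$ produces a monomial expansion whose coefficients factorise as (a Taylor coefficient of $F$) times $\hat F_{\text{mom}}(k)$, where $k$ is the momentum $\sum j_i-\sum j'_i$ plus a shift coming from the $\partial_x$ and $\psi_x$ factors in (\ref{h.1}). Because $F\in H^{\beta+1}$ in $x$ by $\mathbf A_2$, these Fourier coefficients decay like $\langle k\rangle^{-\beta-1}$, which is precisely the decay required in Definitions \ref{3.4}--\ref{3.5} for $(\beta,0)$-type symmetry semi-bounded by some $C_0$. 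The reality condition (\ref{gm}) and the Hamiltonian origin of $H_{(\ref{2.23})}$ then supply the conjugation symmetry of the coefficients.

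Finally, for the given integer $r_*>1$, I would invoke Lemma \ref{6.1} (with $\theta=0$) to produce an almost full-measure set $\widetilde\Theta^0_m\subset\Theta^0_m$ on which the frequencies $\{\omega^{w_0}_j(V)\}$ are $(0,\gamma,\alpha,N)$-non-resonant for parameters large enough to run the Birkhoff-normal-form step of Theorem \ref{T22} up to order $r_*+1$. Applying Theorem \ref{T22} to the system just produced yields $\|u(t)\|_p<2\varepsilon$ for $|t|\prec\varepsilon^{-r_*-1}$ and small enough $\varepsilon<\varepsilon_*$, provided $p>p_*$ and $p<(\beta-4)/2$ (the latter bound coming from the loss of regularity incurred by the unbounded vector field of the normalised perturbation, cf.\ Proposition \ref{cor2}). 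Transporting back through the Fourier isometry gives the statement for $\psi$.

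The main obstacle is the verification step: one must confirm that the two operations producing the Hamiltonian (the half-derivative $\tfrac12\partial_x\partial_{\bar\psi}F$ and the factor $\psi_x$ in $\partial_{\psi\bar\psi}F\cdot\psi_x$) only shift momentum indices by a bounded amount and only shift the $x$-derivative budget of $F$ by one, so that the required $(\beta,0)$-symmetry survives with the same $\beta$ up to trivial loss. The Sobolev-index constraint $p<(\beta-4)/2$ appearing in the statement is exactly the cost of this loss together with the unboundedness of the nonlinear vector field, and once it is respected the rest of the argument is a direct citation of Theorem \ref{T22}.
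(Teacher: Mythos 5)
Your proposal reproduces the paper's own reduction: pass to Fourier variables to get system (\ref{mod1}) with $\theta=0$, verify via $\mathbf{A_1}$--$\mathbf{A_2}$ that $P^{w_0}$ has $(\beta,0)$-type symmetric coefficients semi-bounded by some $C_1>0$, invoke Lemma \ref{6.1} to get the almost-full-measure non-resonant set $\widetilde\Theta^0_m$, and then apply Theorem \ref{T22} before transporting back through the (norm-equivalent) Fourier isomorphism. This is exactly the argument in Section 7.1, so the proposal is correct and takes essentially the same approach.
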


\begin{Theorem}\label{Th1}
 Suppose that equation (\ref{NLS-1}) fulfills assumptions ${\bf B_1}$-${\bf B_2}$.
    For any integer $r_*>1$, there exist a positive  $p_*$ and an almost full  measure  set $\widetilde\Theta^1_m\subset \Theta^1_m$ ($m>\frac12$) such that for any fixed
$V\in \widetilde{\Theta}^1_m$ and  any $p$ fulfilling $ (\beta-4)/2>p>p_{*}$,  
the solution  to (\ref{NLS-1}) satisfies 
$$ \|\psi(x,t)\|_{H_0^{p+1/2}([0,2\pi],\mathbb{C})}<2\varepsilon, \qquad \mbox{for any } \ |t|   \prec \varepsilon^{-r_*-1},$$
  if
the initial value fulfills 
$$\|\psi(x,0)\|_{H_0^{p+1/2}([0,2\pi],\mathbb{C})}<\varepsilon\ll1.$$
\end{Theorem}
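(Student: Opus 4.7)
The plan is to reduce Theorem \ref{Th1} to the abstract long-time stability result Theorem \ref{T22} applied in the case $\theta=1$, by passing to Fourier coordinates adapted to the twisted symplectic form $w^1$. Since $w^1 = J_1\,d\psi\wedge d\bar\psi$ carries the factor $\partial_x$ inside $J_1$, the natural Fourier variable is not $\widehat{\psi}_j$ itself but a rescaled variable that absorbs the $\partial_x$; writing $\psi(x)=\sum_{j\neq 0}\widehat{\psi}_j e^{{\bf i}jx}$ on $H_0^{p+1/2}$ and setting $u_j:=\sqrt{|j|}\,\widehat{\psi}_j$, the form $w^1$ becomes exactly $w_1={\bf i}\sum_{j\in\mathbb{Z}^*}\mathrm{sgn}(j)\,du_j\wedge d\bar u_j$ of (\ref{sp1}). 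The scaling $u_j=\sqrt{|j|}\,\widehat{\psi}_j$ also yields $\|u\|_p\asymp \|\psi\|_{H_0^{p+1/2}}$, which is precisely why the theorem's norm is shifted by a half-derivative relative to Theorem \ref{T1-1}.

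Next I would identify the transformed Hamiltonian with the abstract one in (\ref{mod2}). The quadratic part $\int -{\bf i}\partial_x\psi\cdot\bar\psi-{\bf i}\partial_x^{-1}(V*\psi)\bar\psi\,dx$ produces $H_0^{w_1}=\sum_j\mathrm{sgn}(j)(-j^2+\widehat V_j)|u_j|^2$, which matches the prescribed $\omega_j^{w_1}$. The nonlinear part $P^{w_1}(u,\bar u)$ arises from the Taylor-in-$(\psi,\bar\psi)$ and Fourier-in-$x$ expansion of $\int F(x,\psi,\bar\psi)\,dx$; assumption ${\bf B_2}$ ($F\in H^{\beta+1}$ in $x$ with $\beta$ large) provides $\langle k\rangle^{-(\beta+1)}$ decay of the $x$-Fourier coefficients. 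Combined with the $\sqrt{|j|}$ rescaling, this is exactly the quantitative input needed to verify, in the sense of Definitions \ref{3.4}–\ref{3.5}, that the coefficients of $P^{w_1}$ are of $(\beta,1)$-type symmetric and semi-bounded by some constant $C_1>0$. One also checks that assumption ${\bf B_1}$ (reality and vanishing to order $2$) transcribes into the reality and order conditions on $P^{w_1}$ required by Theorem \ref{T22}.

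Then I would invoke Lemma \ref{6.1} with $\theta=1$ to produce the positive-measure subset $\widetilde{\Theta}_m^1\subset\Theta_m^1$ for which $\{\omega_j^{w_1}(V)\}_{j\in\mathbb{Z}\setminus\{0\}}$ is $(1,\gamma,\alpha,N)$-non-resonant in the sense of Definition \ref{5.3}. Note that here, unlike the case $\theta=0$, there is no parity identification $\omega_j=\omega_{-j}$, so each $\widehat V_j$ for $j\neq 0$ is an independent parameter; the computation in Lemma \ref{6.1} treats this case directly and shows that the complement of $\widetilde{\Theta}_m^1$ has arbitrarily small measure, hence $\widetilde{\Theta}_m^1$ is almost full. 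With this non-resonant set in hand, and a choice of $p_*$ large enough relative to $\beta, r_*$ and the parameters $(\gamma,\alpha,N)$ delivered by Theorem \ref{T22}, I apply Theorem \ref{T22} (with $r$ tuned so that the abstract lifespan $\varepsilon^{-r_*}$ matches the stated $\varepsilon^{-r_*-1}$ after one loses a power from relating the actual Hamiltonian flow to the normal-form flow) to conclude $\|u(t)\|_p<2\varepsilon$ on the required time interval; pulling back via $\|u\|_p\asymp\|\psi\|_{H_0^{p+1/2}}$ gives the statement.

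The hardest step I anticipate is the verification that $P^{w_1}$ inherits $(\beta,1)$-type semi-bounded coefficients after the $\sqrt{|j|}$ rescaling: each argument $u_j$ costs an $|j|^{-1/2}$ factor relative to $\widehat\psi_j$, and these factors must be tracked jointly with the $\langle k\rangle^{-(\beta+1)}$ decay of the Fourier coefficients of $F$ and with the $\partial_x$ built into the vector field $\partial_x(\partial_{\bar\psi}F)$. One has to confirm that the net effect still leaves an $x$-Fourier decay of order $\beta$ in the sense of Definition \ref{3.4}, uniformly over the combinatorics of the multi-index Taylor coefficients of $F$. Once this bookkeeping is done, the remainder of the argument is a bounded-transformation application of Theorem \ref{T22} and a change of variables.
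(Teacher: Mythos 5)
Your overall strategy — Fourier transform, a $\sqrt{|j|}$ rescaling adapted to the symplectic form $w^1$, verification of $(\beta,1)$-type semi-bounded coefficients, then Lemma~\ref{6.1} (with $\theta=1$) and Theorem~\ref{T22} — is exactly the paper's route (Section 7.2). However, you have the rescaling inverted, and this is a genuine error, not merely a convention. Writing $\psi=\sum_{j\neq 0}\widehat\psi_j\phi_j$, the equation reads $\dot{\widehat\psi}_j=\mathbf{i} j\,\partial_{\overline{\widehat\psi}_j}H_{(\ref{NLS-1})}$ in Fourier; to convert the factor $\mathbf{i} j$ into $\mathbf{i}\,\mathrm{sgn}(j)$ (so that $w^1$ becomes the form $w_1$ of (\ref{sp1})), the chain rule must supply a net factor $|j|^{-1}$, which happens precisely when $\widehat\psi_j=\sqrt{|j|}\,u_j$, i.e.\ $u_j:=\widehat\psi_j/\sqrt{|j|}$ as in (\ref{cor-tr}) — not $u_j=\sqrt{|j|}\,\widehat\psi_j$. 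With your scaling one gets $\dot u_j=\mathbf{i}\,j|j|\,\partial_{\bar u_j}H$, which is not $w_1$; moreover the quadratic part computes to $\sum\mathrm{sgn}(j)\bigl(1-\widehat V_j/j^2\bigr)|u_j|^2$ rather than the required $\sum\mathrm{sgn}(j)(-j^2+\widehat V_j)|u_j|^2$, and the coefficients of $\int F\,dx$ pick up $\prod\langle j\rangle^{-\frac12(l_j+k_j)}$ decay rather than the $\prod\langle j\rangle^{\frac12(l_j+k_j)}$ growth demanded by Definition~\ref{3.5}, so Theorem~\ref{T22} does not apply. There is also no way to ``absorb'' the extra $|j|^2$ into a modified Hamiltonian $\tilde H$, since $\partial_{\bar u_j}\tilde H=|j|^2\partial_{\bar u_j}H$ for all $j$ is not integrable except for monomials with matched indices.

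One mitigating remark: with the correct scaling $u_j=\widehat\psi_j/\sqrt{|j|}$ one in fact has $\|u\|_p\asymp\|\psi\|_{H^{p-1/2}}$, not $\|\psi\|_{H^{p+1/2}}$ as claimed in the paper's equation (\ref{qaz}); that line appears to be a typo (the application of Theorem~\ref{T22} should be made at level $p+1$ to conclude the bound in $H_0^{p+1/2}$, a harmless shift since $\beta$ is large). It is likely this inconsistency in the paper's own bookkeeping that led you to flip the rescaling in an attempt to make the norm equivalence come out. But the symplectic structure, the quadratic Hamiltonian, and the growth factor in Definition~\ref{3.5} all unambiguously force $u_j=\widehat\psi_j/\sqrt{|j|}$; the Sobolev index is what should be adjusted, not the rescaling.
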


\begin{Remark}
As  type I and type II-DNLS equations have many differences (Readers can refer Remark \ref{2.33}), the  proofs of Theorem \ref{T1-1} and  Theorem \ref{Th1}    still have some differences. 
\end{Remark}

\begin{Remark}
 The DNLS equations researched in our paper are not always invariant under gauge transformation.
For example, take
 \begin{equation}\label{f-t}
F(\psi,\bar{\psi})=\psi^3\bar{\psi}+\psi \bar{\psi}^3
 \end{equation} in equation (\ref{NLS-1}), which fulfills assumption ${\bf B_1}$. It is easy to check that  equation (\ref{NLS-1}) with $F$ in (\ref{f-t}) is not invariant under the transformation   $\phi= e^{{\bf i} \theta}\psi$, $\theta\in \mathbb{R}$.
\end{Remark}

\section{Long time stability result to infinite dimension Hamiltonian  systems with $(\beta,\theta)$-type symmetric coefficients ($\theta\in\{0,1\}$)}
\noindent
\subsection{$(\beta,\theta)$-type symmetric coefficients ($\theta\in\{0,1\}$)}
Under Fourier transformation, Hamiltonian DNLS equations with respect to periodic boundary condition can be transformed  into two classes of infinite dimension Hamiltonian systems with ``unbounded" nonlinearities. 
In this section, I will
 introduce long time stability results to two classes of infinite dimension Hamiltonian  systems with ``unbounded" nonlinearities. 
First, give some notations and annotations.
In this paper,    $ {\mathbb Z}^*$ means  $ 
 {\mathbb Z} $ or $
 {\mathbb Z} \setminus\{0\}. $
Denote  weighted Hilbert spaces
 $$ \ell^2_p(\mathbb Z^*,\mathbb C):=\bigg\{u\in \ell^2(\mathbb Z^*,\mathbb C)\ \bigg| \  \|u\|_{p }^2:
 = \sum_{j\in \mathbb Z^*}\langle j\rangle^{2p}\cdot |u_j|^2 <+\infty ,\ \langle j\rangle:
 =\max\{ |j|,\ 1\}\bigg\} ,\ $$
and $ {\mathcal H^p}(\mathbb Z^*,\mathbb C):=\{ (u,v)\in\ell^2_p(\mathbb Z^*,\mathbb C)\times \ell^2_p(\mathbb Z^*,\mathbb C)\ \big|\ v=\bar{u}\ \}$ with norm  
\begin{equation*}
  \|(u,\bar{u})\|_{p }:=\sqrt{\|u\|^2_{p }+\|\bar{u}\|^2_{p }}.
\end{equation*}
Let the neighborhood of the origin with a radius $R$ be noted by
$$B_p(R):=\left\{(u,\bar{u})\in {\cal H}^p(\mathbb Z^*,\mathbb C) \ \big| \ \|(u,\bar{u})\|_{p}<R \right\}.$$
\begin{Definition}
 For any fixed $l,\ k\in\mathbb{N}^{\mathbb{Z}^*},$ 
 call the integer  $$ \sum_{j\in \mathbb Z^*} j (l_j-k_j)$$
    be the {\bf momentum} of the ordered vector $(l,k)$ and denote it as ${\cal M}(l,k).$
 \end{Definition}
Readers can refer this definition in \cite{z-y} and \cite{z-y-1}. 

 \begin{Remark}\label{M}
If ${\cal M}(l,k)=i$, from the definition of momentum, it holds that 
$$ {\cal M}(k,l)=-i.$$
\end{Remark}
\noindent
\begin{Definition}
 Call a   power series  $$f(u,\bar{u})=\sum\limits_{t\geq 3} \sum\limits_{|k+l|=t,  l,k\in\mathbb{N}^{\mathbb Z^*}\atop{{\cal M}(l,k)=i\in M_{f_t}\subset\mathbb{Z}} } f^i_{t,lk} u^{l}\bar{u}^k, \quad (u,\bar{u})\in {\cal H}^p(\mathbb Z^*,\mathbb C) $$  have
{\bf   symmetric  coefficients}, if
for any $l,k$ fulfilling  $ |l+k|=t $ and ${\cal M}(l,k)=i$, the coefficient holds 
$$ \overline{f^i_{t,lk}} =f^{-i}_{t,kl}.$$
Moreover, fixed $\beta>0$, call $f(u,\bar{u})$ has $\beta$-{\bf bounded coefficients bounded by $C>0$}, if  
 $$ |f^i_{t,lk}| \leq  \frac{C^{t-2}}{ \langle i\rangle^{\beta}},$$
 for any $l,k$ satisfying $|l+k|=t$ and ${\cal M}(l,k)=i$. 
\end{Definition}

\begin{Remark}
A power series $f:  {\cal H}^p(\mathbb Z^*,\mathbb C)\rightarrow \mathbb{C}$  is of symmetric coefficients, if and only if $f$ satisfies  
$$ \overline{f(u,\bar{u})}=f(u,\bar{u}),\quad \mbox{for any } (u,\bar{u})\in{\cal H}^p(\mathbb Z^*,\mathbb{C}).$$ Hence, a real-value Hamiltonian function has symmetric coefficients. 
 
\end{Remark}

 Now  define two kinds of power series with  ``unbounded" special symmetric coefficients.

\begin{Definition}\label{3.4}Given $\beta>0$ and $C_f>0$, call   a power series  $$f(u,\bar{u})=\sum\limits_{t\geq 3}\   \sum\limits_{|k+l|=t, l,k\in\mathbb{N}^{\mathbb Z^* }\atop{{\cal M}(l,k)=i\in M_{f_t}\subset\mathbb{Z} } } f^i_{t,lk} u^{l}\bar{u}^k, \quad (u,\bar{u})\in {\cal H}^p(\mathbb Z^* ,\mathbb C) $$   have {\bf $(\beta,0)$-type symmetric coefficients}, if its coefficients have  the following  form
 $$ f^i_{t,lk}:=\sum_{( {l}^0,{k}^0,i^0)\subset {\cal A}_{f^i_{t,lk} }}\  {f}^{i( {l}^0,{k}^0,i^0)}_{t,lk}\big({\cal M}( {l}^0, {k}^0)-\frac{i^0}2\big) ,$$
    where
    $${\cal A}_{f^i_{t,lk}}\subset \{(\tilde{l},\tilde{k},\tilde{i})  \ |\ 0\leq\tilde{l}_j\leq l_j,\ 0\leq\tilde{k}_j\leq k_j,\ \mbox{for any }j\in\mathbb{Z}^*,\  (\tilde{l},\tilde{k})\in\mathbb{N}^{{\mathbb Z^*}}\times \mathbb{N}^{{\mathbb Z^*}},\   \tilde{i}\in\mathbb{Z}\},$$  and   for any
        $(l^0,k^0,i^0)\in {\cal A}_{f^i_{t,lk}}$,   the followings hold true 
 $$(k- k^0,l-{l}^0, i^0-2i)\in{\cal A}_{f^{-i}_{t,kl}},\quad \overline{f^{i(l^0,k^0,i^0)}_{t,lk}}=f^{-i(k-k^0, {l}-l^0, i^0-2i)}_{t,kl}.$$
     Moreover, call $f(u,\bar{u})$ have  {\bf $(\beta,0)$-type symmetric coefficients   semi-bounded by $C_f$}, if  $f(u,\bar{u})$ have  $(\beta,0)$-type symmetric coefficients  and  there exists a constant $C_f>0$ such that for any $l,k\in\mathbb{N}^{\mathbb Z^*}$ with $|l+k|=t $ and ${\cal M}(l,k)=i$, the following inequality holds true 
      \begin{eqnarray}\label{3.1---0}
    \sum_{(l^0,k^0,i^0)\in {\cal A}_{f^i_{t,lk}}} |f^{i(l^0,k^0,i^0)}_{t,lk}|\cdot\max\{ \langle i^0\rangle, \langle i^0-2i\rangle \}  \leq \frac{ C_{f}^{t-2}}{\langle i\rangle^{\beta}}. 
    \end{eqnarray}

  \end{Definition}
Suppose that Type I-DNLS equation satisfies assumption ${\bf A_1}$-${\bf A_2}$. 
Under Fourier transformation, there exists a constant $C>0$ such that the Hamiltonian function  of Type I-DNLS equation have {\bf $(\beta,0)$-type symmetric coefficients semi-bounded by $C$}. See section 7 for details. This symmetric property  is  invariant  under a symplectic transformation.  Refer Lemma \ref{sf} in section 5.  

 \begin{Definition}\label{3.5}Given $\beta>0$ and $C_g>0$,
call a power series
$$ g(u,\bar{u})=\sum\limits_{t\geq 3}\   \sum\limits_{|k+l|=t,\  l,k\in\mathbb{N}^{\mathbb Z^*}\atop{ {\cal M}(l,k)=i\in M_{g_t}\subset\mathbb{Z} }} g^i_{t,lk} u^{l}\bar{u}^k $$ have {\bf $(\beta,1)$-type symmetric coefficients},  if  for any $l,k\in\mathbb{N}^{\mathbb Z^*}$  with $|l+k|=t$ and ${\cal M}(l,k)=i\in M_{g_t}\subset \mathbb{Z}$, its coefficient
has the following form  $$  g^i_{t,lk}:= \tilde{g}^i_{t,lk}\prod_{j\in \mathbb{Z}^*}\langle j\rangle^{\frac12 (l_j+k_j)}  $$
and satisfies 
$$ {\tilde{g}^i_{t,lk}}=\tilde{g}^{-i}_{t,kl}.$$
Moreover, call $g(u,\bar{u})$  have {\bf $(\beta,1)$-type symmetric coefficients semi-bounded by $C_g$}, if $g(u,\bar{u})$  have  $(\beta,1)$-type symmetric coefficients  and there exists a constant $G_g>0$ such that
 \begin{equation}\label{3.1---1}
  |\tilde{g}^i_{t,lk}|\leq \frac{C_g^{t-2}}{ \ \langle i\rangle^{\beta}\ }
 \end{equation} 
 for any $l,k$ fulfilling $|l+k|=t$ and ${\cal M}(l,k)=i.$

\end{Definition}
\begin{Remark}\label{Rem3.2}

 \noindent
 Suppose a power series $f^{w_{\theta}}(u,\bar{u})$ is  of $(\beta,\theta)$-type symmetric coefficients semi-bounded by $C_{\theta}>0$ ($\theta\in\{0,1\}$). Thence, 
\begin{itemize}
                \item 
                   the ``semi-bounded"  does not means the coefficients of $f^{w_{\theta}}(u,\bar{u})$ are bounded,  even if $f^{w_{\theta}}(u,\bar{u})$ is an $r$-degree polynomial;  
\item                
                  the momentum set $M_{f^{w_{\theta}}_t}$ is symmetric, i.e., if $i\in M_{f^{w_{\theta}}_t}$, then $-i\in M_{f^{w_{\theta}}_t}$. And the number of elements in $M_{f^{w_{\theta}}_t}$ satisfies
\begin{equation}\label{m-f-t}
\sharp{M_{f^{w_{\theta}}_t}}\leq 2 \max{M_{f^{w_{\theta}}_t}}+1.
\end{equation}
So does the power series having  $\beta$-bounded symmetric coefficients.
               \item \label{3.4} 
    when  $\theta=0$ for any given $(l,k)\in \mathbb{N}^{\mathbb Z^*}\times \mathbb{N}^{\mathbb Z^*}$ and $i\in \mathbb Z$,
{\footnotesize\begin{eqnarray*}
   \overline{(f^{w_{0}})^i_{t,lk}}&=&\overline{\sum_{( {l}^0,{k}^0,i^0)\subset {\cal A}_{f^i_{t,lk} }}\  (f^{w_{0}})^{i( {l}^0,{k}^0,i^0)}_{t,lk}\big({\cal M}( {l}^0, {k}^0)-\frac{i^0}2\big)}\nonumber\\
   &=& \sum_{( {l}^0,{k}^0,i^0)\subset {\cal A}_{(f^{w_{0}})^i_{t,lk} }}\  (f^{w_{0}})^{-i( k-k^0,l-{l}^0,i^0-2i)}_{t,kl}\big({\cal M}( {l}^0, {k}^0)-\frac{i^0}2\big) \nonumber\\
   &=& \sum_{( k-k^0,l-{l}^0,i^0-2i)\subset {\cal A}_{(f^{w_{0}})^{-i}_{t,kl} }}\  (f^{w_{0}})^{-i( k-k^0,l-{l}^0,i^0-2i)}_{t,kl}\big({\cal M}(k-k^0,l-{l}^0)-(\frac{i^0}2-i)\big) \nonumber\\
   &=&(f^{w_{0}})^{-i}_{t,kl},
   \end{eqnarray*}
   }
the last second equation follows from
\begin{equation*} 
  {\cal M}(l^0,k^0)-\frac{i^0}2={\cal M}(k-k^0,l-l^0)-(\frac{i^0}2-i)
  \end{equation*}
  and $(k-k^0,l-l^0, {i^0}-2i)\in {\cal A}_{f^{-i}_{t,kl}};$

  \noindent
 when $\theta=1$  for any $l,k\in\mathbb{N}^{\mathbb Z^*}$ and any $i\in\mathbb{Z}$,
   \begin{eqnarray}
   \overline{(f^{w_1})^i_{t,lk}}= \overline{(\tilde{f}^{w_1})^i_{t,lk}\prod_{j\in \mathbb{Z}^*}\langle j\rangle^{\frac12 (l_j+k_j)} } =(\tilde{f}^{w_1})^{-i}_{t,kl}\prod_{j\in \mathbb{Z}^*}\langle j\rangle^{\frac12 (k_j+l_j)}=(f^{w_1})_{t,kl}^{-i}.
   \end{eqnarray}
   Hence, the coefficients of $f^{w_{\theta}}(u,\bar{u})$ are symmetric. 
                              \end{itemize}

\end{Remark}

\noindent
Let $({\mathcal H^p}(\mathbb Z^*,\mathbb C),w_{\theta} )$ ($\theta\in\{0,1\}$)   be a symplectic space  endowed with symplectic form
 \begin{equation}\label{sp1}
w_{\theta} :=\left\{
\begin{array}{lll}
&{\bf
i}\sum_{j\in \mathbb Z^*}\mathrm du_j\wedge \mathrm d\bar{u}_j& \theta=0,\\
\\
&{\bf
i}\sum_{j\in \mathbb Z^*} \mbox{sgn}(j) \mathrm du_j\wedge \mathrm d\bar{u}_j&\theta=1.
\end{array}
\right.
\end{equation}  
When $\theta=0$, $\mathbb{Z}^*$ can be either  $\mathbb{Z}$ or $\mathbb{Z}\backslash\{0\}$. When $\theta=1$, $\mathbb{Z}^*$ means $\mathbb{Z}\backslash\{0\}$ only. 
 
  The possion bracket of differential functions $f_1$ and $f_2$ defined in the domain of ${\mathcal H^p}(\mathbb Z^*,\mathbb C)$ under the symplectic form $w_{\theta} \ (\theta\in \{0,1\})$ has the following form
\begin{equation}
\{f_1,f_2 \}_{w_{\theta}} =w_{\theta} (\nabla f_1, \nabla f_2).
\end{equation}
 Given a differential function $f$, its corresponding Hamiltonian vector field under the symplectic form $w_{\theta} $ is defined as
 \begin{eqnarray}\label{I-theta} X^{w_{\theta}}_f: = J_{\theta}  \nabla f , \quad  J_{\theta} :=\left(
                                                 \begin{array}{cc}
                                                   0 & -{\bf i}I_{\theta}    \\
                                                   {\bf i}I_{\theta}   & 0 \\
                                                 \end{array}
                                               \right),\quad \theta\in\{0,1\},
 \end{eqnarray}
 where  $I_0$ is an identity operator on space $\ell^2_p(\mathbb{Z}^*,\mathbb{C})$, and
   for any $u=(u_j)_{j\in \mathbb{Z}^*}\in \ell_p^2(\mathbb{Z}^*,\mathbb{C})$,
 $$I_1 u=\big((I_1 u)_j\big)_{j\in \mathbb{Z}^*},\quad   (I_1u)_j:=\mbox{sgn}(j)\cdot u_j,\quad j\in\mathbb{Z}^*. $$

\noindent

\subsection{Result of   Hamiltonian system with ($\beta,\theta$)-Type symmetric coefficients}
\noindent
 In order to use an uniformly formula  to describe two kinds of Hamiltonian equations, in this paper,   denote $0^0=1$.

Let $\theta\in\{0,1\}$.  Consider  Hamiltonian systems defined in  $( {\mathcal H}^p (\mathbb{Z}^*,\mathbb{C}),w_{\theta})$,   for any $j\in \mathbb{Z}^* $
\begin{equation}\label{uj}
\left\{ \begin{array}{ll}
\dot{u}_j= -{\bf i}\mbox{sgn}^{\theta}(j) \cdot\partial_{\bar u_j} H^{w_{\theta}}(u,\bar{u}),\\
\\
\dot{\bar{u}}_j=\ \ {\bf i} \mbox{sgn}^{\theta}(j)\cdot   \partial_{ u_j} H^{w_{\theta}}(u,\bar{u}),
\end{array}\right.
\end{equation}
with a Hamiltonian function
\begin{equation}\label{H-1}
H^{w_{\theta}}(u,\bar{u})=H_0^{w_{\theta}} + P^{w_{\theta}}(u,\bar{u}),
\end{equation}
where $ H_0^{w_{\theta}} :=\sum_{j\in \mathbb Z^*}   \omega^{w_{\theta}}_j|u_j|^2$.

  \begin{Theorem}\label{T22}
Suppose that  equation (\ref{uj}) satisfies the following assumptions:
 \begin{description}
   \item[${\cal A}_{\theta}$]: $\omega^{w_{\theta}}:=(\omega^{w_{\theta}}_j)_{j\in\mathbb{Z}^*}$, $\ \omega^{w_{\theta}}_j\in \mathbb{R}$.  $\omega^{w_{\theta}}$ satisfies strong non resonant condition.\footnote{See Definition \ref{5.3} in section 5.} 
   \item[${\cal B}_{\theta}$]: $P^{w_{\theta}}(u,\bar{u})$ is a  power series beginning with  at least at order 2 in $(u,\bar{u})$ and has  $(\beta,\theta)$-type symmetric coefficients semi-bounded by $C_{\theta}>0$ ($\beta$ is big enough positive number).
 \end{description}  
  Given integer $r_*>1$, there exists  an integer
$p_{r_*}>0$, for any $p$ fulfilling $(\beta-4)/2>p>p_{r_*}$ there exists $\varepsilon_{r_*,p}>0$ such that  the solution to (\ref{uj}) satisfies 
$$ \|(u(t),\bar{u}(t))\|_{p }<2\varepsilon,\quad \text{for any }\ \ |t|  \preceq \varepsilon^{-r_*-1},$$
  if  the initial data  fulfills 
$$\|(u(0),\bar{u}(0))\|_{p }<\varepsilon<\varepsilon_{r_*,p}.$$
\end{Theorem}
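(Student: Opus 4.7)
The plan is to combine the Birkhoff normal form reduction supplied by Theorem \ref{Th2} with the two key Poisson bracket estimates (Proposition \ref{2.1} and Lemma \ref{6.2}), and then close the argument by a standard bootstrap on the $p$-Sobolev norm $\|u\|_p^2$.

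\textbf{Step 1 — normal form reduction.} Fix the Birkhoff order $r = r_{*}+2$. Under hypotheses $\mathcal{A}_{\theta}$ and $\mathcal{B}_{\theta}$, Theorem \ref{Th2} supplies, on a neighborhood of the origin in $\mathcal{H}^{p}(\mathbb{Z}^{*},\mathbb{C})$, a symplectic change of coordinates $\mathcal{T}^{(r)}_{w_{\theta}}$ which is $O(\|\cdot\|_{p}^{2})$-close to the identity and transforms $H^{w_{\theta}}$ into
\[
H^{(r,w_{\theta})} \;=\; H_{0}^{w_{\theta}} + Z^{(r,w_{\theta})} + \mathcal{R}^{N(r,w_{\theta})} + \mathcal{R}^{T(r,w_{\theta})},
\]
with $Z^{(r,w_{\theta})}$ a $(\theta,\gamma,\alpha,N)$-normal form, $\mathcal{R}^{N(r,w_{\theta})}$ at least cubic in the tail modes $(u_{j},\bar u_{j})_{|j|>N}$, and $\mathcal{R}^{T(r,w_{\theta})}$ vanishing at sufficiently high order in $(u,\bar u)$. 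Crucially, Lemma \ref{sf} ensures that all three new pieces retain the $(\beta,\theta)$-type symmetric structure (with a new semi-bound depending on $C_{\theta}$ and $r$), so the estimates of Step 2 remain available in the new coordinates.

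\textbf{Step 2 — Poisson bracket estimate.} Setting $v := (\mathcal{T}^{(r)}_{w_{\theta}})^{-1}(u)$, the Hamiltonian flow gives
\[
\frac{d}{dt}\|v(t)\|_{p}^{2} \;=\; \bigl\{\|v\|_{p}^{2},\, H^{(r,w_{\theta})}\bigr\}_{w_{\theta}}.
\]
The quadratic part contributes nothing: both $\|v\|_{p}^{2}$ and $H_{0}^{w_{\theta}}$ are functions only of the actions $|v_{j}|^{2}$, hence their bracket vanishes under $w_{\theta}$. Applying Proposition \ref{2.1} to $\mathcal{R}^{N(r,w_{\theta})}+\mathcal{R}^{T(r,w_{\theta})}$ and Lemma \ref{6.2} to $Z^{(r,w_{\theta})}$, for $p$ large (within the range allowed by the theorem) and $v\in B_{p}(R)$ with $R$ small one obtains
\[
\bigl|\{\|v\|_{p}^{2},\, H^{(r,w_{\theta})}\}_{w_{\theta}}\bigr| \;\preceq\; R^{r+1} \;=\; R^{r_{*}+3}.
\]
This is where the $(\beta,\theta)$-symmetric structure is indispensable: for a generic unbounded perturbation (see Proposition \ref{cor2}) the bracket would acquire an extra diverging factor from the momenta, and no polynomial bound of this type would survive.

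\textbf{Step 3 — bootstrap in time.} Since $\mathcal{T}^{(r)}_{w_{\theta}}$ is close to the identity, $\|v(0)\|_{p} \leq (1+o(1))\varepsilon$. Define
\[
T^{*} := \sup\Bigl\{\,T\geq 0 \;:\; \|v(t)\|_{p} < \tfrac{3}{2}\varepsilon \text{ for all } |t|\leq T\,\Bigr\}.
\]
On $[-T^{*},T^{*}]$, Step 2 and integration give
\[
\|v(t)\|_{p}^{2} \leq \|v(0)\|_{p}^{2} + C\,\varepsilon^{r_{*}+3}\,|t|.
\]
Choosing $\varepsilon_{r_{*},p}$ small and $c>0$ small, the right-hand side stays strictly below $(\tfrac{3}{2}\varepsilon)^{2}$ whenever $|t|\leq c\varepsilon^{-r_{*}-1}$, so a standard continuity argument forces $T^{*}\geq c\varepsilon^{-r_{*}-1}$. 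Applying $\mathcal{T}^{(r)}_{w_{\theta}}$ to pull $v$ back to $u$ gives $\|u(t)\|_{p} < 2\varepsilon$ on this interval.

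\textbf{Main obstacle.} The bootstrap itself is routine once the ingredients are assembled; the genuine difficulty has been pushed into Theorem \ref{Th2} and Lemma \ref{sf}. Because the nonlinear vector field is unbounded, the homological equation controlling $\mathcal{T}^{(r)}_{w_{\theta}}$ can be solved with a \emph{bounded} solution only under the strong non-resonance condition of Definition \ref{5.3}, and the most delicate verification is that the $(\beta,\theta)$-symmetric structure is preserved by the normal form process — without this, Proposition \ref{2.1} and Lemma \ref{6.2} would not apply to the transformed Hamiltonian and the whole scheme would collapse.
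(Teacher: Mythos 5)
Your proposal mirrors the paper's proof of Theorem~\ref{T22} in structure — normal-form reduction via Theorem~\ref{Th2}, Poisson-bracket estimates, bootstrap in time — and the high-level understanding (including why the $(\beta,\theta)$-symmetry is the load-bearing feature) is sound. Two points are worth making.

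Taking the Birkhoff order to be $r_*+2$ rather than $r_*$ is the consistent choice: with $r=r_*+2$ the bracket bound $\preceq R^{r+1}=R^{r_*+3}$ yields $T\sim R^2/R^{r_*+3}=R^{-(r_*+1)}$, which matches the statement. The paper's own write-up normalizes only to order $r_*$ and concludes $|t|\prec R^{-(r_*-1)}$, an index shift of two relative to the stated $\varepsilon^{-r_*-1}$; your relabeling removes that discrepancy.

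However, Step 2 as written has a gap in the treatment of $\mathcal{R}^{N(r,w_\theta)}$. That term is assembled from $\Gamma^N_{>2}$-truncations of homogeneous pieces of degree as low as $3$ (see eqs.\ (\ref{(13)-A}) and (\ref{(13)})). Applying Proposition~\ref{2.1} to a degree-$3$ homogeneous piece gives only $\preceq R^3$, nowhere near $R^{r+1}$. The needed smallness for $\mathcal{R}^N$ comes entirely from its high-mode/high-momentum structure: one must use Corollary~\ref{4.3}, whose bound carries the factors $\|\Gamma_{>N}u\|_2$ and $N^{-(\beta-p-1/2)}$, together with $\|\Gamma_{>N}u\|_3\leq N^{-(p-3)}\|u\|_p$ and the explicit balancing of $N$ against $R$ in condition~(\ref{N-c}). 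Without making this $N$-dependence explicit, the Step 2 estimate for $\mathcal{R}^N$ does not close. Proposition~\ref{2.1} alone is adequate for $\mathcal{R}^{T}$, which starts at degree $r+4$, and your use of Lemma~\ref{6.2} for $Z^{(r,w_\theta)}$ is correct — though note that the $\|\Gamma_{>N}u\|_2$ factor appearing there likewise needs (\ref{N-c}) to be converted into the claimed power of $R$.
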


Let us give the basic procedure of   proving Theorem \ref{T22} which consists of the following steps. 

The first step is to construct a bounded  symplectic transformation around the origin under which the nonlinearity of Hamiltonian function (\ref{H-1}) becomes into the sum of the following three parts: one is  a high order $(\theta,\gamma, \alpha, N)$-normal form $Z^{(r_*,w_{\theta})}(u,\bar{u})$\footnote{See Definition  \ref{5.3}.}, one of the others is ${\cal R}^{T(r_*,w_{\theta})}(u,\bar{u})$ which vanishes at $r_*+3$ order of $(u,\bar{u})$ at origin  and the last one denoted as ${\cal R}^{N(r_*,w_{\theta})}(u,\bar{u})$  has zero at least order 3 about high index variable $(u_j,\bar{u}_j)_{|j|>N}$($N$ is big enough).  Moreover, when $P^{w_{\theta}}(u,\bar{u})$ in (\ref{H-1}) has $(\beta,\theta)$-type symmetric coefficients semi-bounded by $C_{\theta}>0$, the new Hamiltonian function is still of $(\beta,\theta)$-type symmetric coefficients semi-bounded by $C(\theta,r)>0$ ($C(\theta,r)$ is defined in Theorem \ref{Th2}). In order to guarantee the boundedness of the symplectic transformation, a strong non-resonant condition is presented in Definition \ref{5.3}.
See Theorem \ref{Th2} in section 5  for details. 

Since (\ref{uj}) is a Hamiltonian system, the following equation holds true  
$$\|u(t)\|_p^2-\|u(0)\|_p^2 
=\int_0^t\frac{d}{d \tau}\|{u}(\tau)\|_p^2d\tau 
=\int_0^t\{ \|{u}\|_p^2 , H^{w_{\theta}}({u},\bar{u})\}_{w_{\theta}}d \tau. $$ Researching the Possion bracket of  Hamiltonian function $H^{w_{\theta}}(u,\bar{u})$ and $\|u\|_p^2$ under the corresponding symplectic form $w_{\theta}$ is important. 
The second step is to estimate the Possion bracket of  function $f^{w_{\theta}}(u,\bar{u})$  and $\|u\|_p^2$. Suppose that $f^{w_{\theta}}(u,\bar{u})$ has $(\beta,\theta)$-type symmetric coefficients semi-bounded by $C_{\theta}>0$. If the momentum of $f^{w_{\theta}}(u,\bar{u})$ are bounded, partial result can be  found in \cite{z-y} and \cite{z-y-1}. When the set of the momentum of $f^{w_{\theta}}(u,\bar{u})$ is unbounded, the corresponding Hamiltonian vector field of $f^{w_{\theta}}(u,\bar{u})$   is small under $H^{p-1}$ norm  but not $H^p$ norm (see Proposition \ref{cor2} and Remark \ref{dj}). In order to deal with it, I will make use of the Hamiltonian structure and  $(\beta,\theta)$-type symmetric coefficients semi-bounded by $C_{\theta}>0$ to get the estimate of Possion bracket of  Hamiltonian function $H^{w_{\theta}}(u,\bar{u})$ and $\|u\|_p^2$ under the corresponding symplectic form $w_{\theta}$. 
See Proposition \ref{2.1} and Corollary \ref{4.3} for details.  By Proposition \ref{2.1} and Corollary \ref{4.3}, $|\{ {\cal R}^{T(r_*,w_{\theta})}(u,\bar{u})+ {\cal R}^{N(r_*,w_{\theta})}(u,\bar{u}),\|u\|_p^2\}_{w_{\theta}}|\prec R^{r_*+1}$ holds true for $\|(u,\bar{u})\|_p\leq R$. From Remark \ref{norm-re}, $(\theta,\gamma,\alpha,N)$-normal form $Z^{(r_*,w_{\theta})}(u,\bar{u})$ is not   a standard Birkhoff normal form. By Lemma \ref{6.2} when  $Z^{(r_*,w_{\theta})}(u,\bar{u})$ has $(\beta,\theta)$-type symmetric coefficients semi-bounded by $C(\theta,r_*)$, for any $\|(u,\bar{u})\|_p<R\ll 1$ and $N$ satisfying (\ref{N-c}), $| \{ Z^{(r_*,w_{\theta})}(u,\bar{u}), \|u\|_p^2\}|\prec R^{r_*+1}$.

\section{Estimate  $\{ f_r^{w_{\theta}}(u,\bar{u}), \|u\|_p^2\}_{w_{\theta}}\ $ and Hamiltonian vector field $X_{f_r^{w_{\theta}}}$  ($f_r^{w_{\theta}}(u,\bar{u})$ has $(\beta,\theta)$-type symmetric coefficients $\theta\in\{0,1\}$)}

Suppose that an $r$-degree homogeneous power series $f_r^{w_{\theta}}(u,\bar{u})$ defined on ${\cal H}^p(\mathbb{Z}^*,\mathbb{C})$ is of $(\beta,\theta)$-type symmetric coefficients semi-bounded by $C_{\theta}>0$. 

First of all I present that the Hamiltonian vector field of   $f_r^{w_{\theta}}(u,\bar{u})$   under  symplectic form $w_{\theta}$ is unbounded with order 1.  See  Proposition \ref{cor2}. 
 
Next, the estimate of the possion bracket of the power series $f_r^{w_{\theta}}(u,\bar{u})$  and $\|u\|_p^2$  is
 given  in Proposition \ref{2.1}.
 
 Last but not least, I introduce truncated operators $ \Gamma^N_{\leq 2}$ and $\Gamma^N_{>2}$, and   estimate the Hamiltonian vector fields of the functions  $  \Gamma^N_{\leq 2}f_r^{w_{\theta}}(u,\bar{u})$, 
  $  \Gamma^N_{>2}f_r^{w_{\theta}}(u,\bar{u})$ in  ${{\mathcal H}^{p-1}}(\mathbb Z^*,\mathbb C)$-norm, 
  $ |\{ \Gamma^N_{\leq 2}f_r^{w_{\theta}}(u,\bar{u}),\|u\|_p^2\}|$ and $ |\{ \Gamma^N_{> 2}f_r^{w_{\theta}}(u,\bar{u}),\|u\|_p^2\}|$  for any $(u,\bar{u})\in {{\mathcal H}^p}(\mathbb Z^*,\mathbb C) $ 
  in Corollary \ref{4.3}.

These results will be used in proving Theorem \ref{T22}.

\begin{Proposition}\label{cor2}
   Suppose that an  $r$-degree  ($r\geq3$) homogeneous polynomial 
  $f^{w_{\theta}}_r(u,\bar{u})$ ($\theta\in\{0,1\}$) defined on $ {{\mathcal H}^p}(\mathbb Z^*,\mathbb C)$   has $( \beta,\theta)$-type symmetric coefficients semi-bounded by $C_{f^{w_{\theta}}}>0$, 
  and $\beta-p\geq2$. Then for any $(u,\bar{u})\in  {{\mathcal H}^p}(\mathbb Z^*,\mathbb C)$ and any $\theta\in\{0,1\}$,
  \begin{equation}\label{1-1}
   \| X^{w_{\theta}}_{f^{w_{\theta}}_r}(u,\bar{u})\|_{p-1}\leq 16C_{f^{w_{\theta}}}^{r-2}r^{p+1}  c^{r-1}  
  \|u\|_{2}^{r -2}\|u\|_{p} .
\end{equation}
 If  an $r$-degree homogeneous polynomial $f_r(u,\bar{u})$ has $\beta$-bounded symmetric coefficients bounded by $C_f>0$, then the following inequality  holds true for any $(u,\bar{u})\in  {{\mathcal H}^p}(\mathbb Z^*,\mathbb C)$ and any $\theta\in\{0,1\}$
\begin{equation}
   \| X^{w_{\theta}}_{f_r}(u,\bar{u})\|_{p}\leq  16C_{f_r}^{r-2}r^{p+1}  c^{r-1}    \|u\|_{2}^{r-2}\|u\|_{p} .
\end{equation}
   \end{Proposition}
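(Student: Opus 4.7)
The plan is to reduce everything to a single tame multilinear estimate for polynomials with classical $\beta$-bounded symmetric coefficients, and then pay exactly one derivative of loss whenever one is in the $(\beta,\theta)$-type setting. I would therefore begin by proving the second (bounded) assertion as the technical core. Writing
\[
\partial_{\bar u_j} f_r(u,\bar u)=\sum_{l,k:\,k_j\geq 1}k_j\,f^i_{r,lk}\,u^l\bar u^{k-e_j},\qquad i=\mathcal M(l,k),
\]
one estimates $\|X^{w_\theta}_{f_r}\|_p$ by a standard tame multilinear argument: among the $r-1$ variables on the right, one is singled out as a ``high-mode'' index absorbed by the $\|u\|_p$-norm and the remaining $r-2$ are ``low-mode'' indices each absorbed by $\|u\|_2$. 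The decay $|f^i_{r,lk}|\leq C_{f_r}^{r-2}/\langle i\rangle^\beta$, together with Cauchy--Schwarz and the summability $\sum_i \langle i\rangle^{-2(\beta-p)}<+\infty$ guaranteed by $\beta-p\geq 2$, yields the stated tame bound; the factor $r^{p+1}$ comes from choosing the high-mode position and from the combinatorics of the multinomial $k_j$.

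For the $(\beta,1)$-type case I would perform the change of variable $v_n:=\langle n\rangle^{1/2} u_n$. Since the coefficients of $f^{w_1}_r$ already contain the prefactor $\prod_n\langle n\rangle^{(l_n+k_n)/2}$, one obtains $f^{w_1}_r(u,\bar u)=\tilde h_r(v,\bar v)$ where $\tilde h_r$ has classical $\beta$-bounded symmetric coefficients bounded by the same constant $C_{f^{w_1}}$. The chain rule gives $\partial_{\bar u_j}f^{w_1}_r=\langle j\rangle^{1/2}(\partial_{\bar v_j}\tilde h_r)(v,\bar v)$, hence
\[
\|X^{w_1}_{f^{w_1}_r}(u,\bar u)\|_{p-1}^2=\sum_j\langle j\rangle^{2p-1}|\partial_{\bar v_j}\tilde h_r|^2=\|X^{w_1}_{\tilde h_r}(v,\bar v)\|_{p-1/2}^2.
\]
Applying the bounded tame estimate to $\tilde h_r$ at Sobolev level $p-\tfrac12$ and converting back via $\|v\|_s=\|u\|_{s+1/2}$ closes this case; the half-derivative introduced on each of $u$ and $\bar u$ by the substitution is exactly what downgrades the left-hand side from $\|\cdot\|_p$ to $\|\cdot\|_{p-1}$.

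For the $(\beta,0)$-type case, the coefficients are genuinely unbounded, but only linearly in the momentum. From Definition \ref{3.4} one derives the pointwise bound
\[
|f^i_{r,lk}|\leq \frac{C_{f^{w_0}}^{r-2}}{\langle i\rangle^\beta}\Bigl(1+\sum_n|n|(l_n+k_n)\Bigr),
\]
using $|\mathcal M(l^0,k^0)-i^0/2|\leq|\mathcal M(l^0,k^0)|+|i^0|/2$, the constraints $l^0\leq l$, $k^0\leq k$, and the defining inequality $\sum|f^{i(l^0,k^0,i^0)}_{r,lk}|\max\{\langle i^0\rangle,\langle i^0-2i\rangle\}\leq C_{f^{w_0}}^{r-2}/\langle i\rangle^\beta$ (which absorbs the $|i^0|/2$ piece). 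The extra weight $\sum_n|n|(l_n+k_n)\leq r\langle n_*\rangle$, with $n_*$ realizing the maximum, carries exactly one excess derivative; feeding it into the tame decomposition and forcing $n_*$ to play the role of the high-mode slot upgrades a $\|u\|_p$-factor to $\|u\|_{p+1}$ on the right, equivalent to the downgrade from $\|X\|_p$ to $\|X\|_{p-1}$ on the left. The main obstacle is precisely this bookkeeping: one must verify that $n_*$ can always be identified with the high-mode slot without spoiling the $\|u\|_2^{r-2}$ structure on the remaining $r-2$ slots. The slack afforded by $\beta-p\geq 2$ (rather than merely $\beta>p$) supplies the extra $\langle i\rangle^{-1}$ in the momentum sum needed to make this reallocation work uniformly over all configurations of $(l,k)$.
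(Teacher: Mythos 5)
Your overall plan is sound and genuinely different from the paper's, which computes $\nabla f^{w_\theta}_r$ directly and invokes Lemma~\ref{f-v} with $q_n=p$ and $q_i=2$ separately for $\theta=0$ and $\theta=1$. Your change-of-variables reduction $v_n=\langle n\rangle^{1/2}u_n$ for $\theta=1$ is a cleaner conceptual move that the paper does not make: it collapses the $(\beta,1)$ case onto the bounded case at a shifted Sobolev level, and the identity $\|X^{w_1}_{f^{w_1}_r}(u,\bar u)\|_{p-1}=\|X^{w_1}_{\tilde h_r}(v,\bar v)\|_{p-1/2}$ is correct. Your pointwise bound $|f^i_{r,lk}|\leq C^{r-2}\langle i\rangle^{-\beta}\bigl(1+\sum_n|n|(l_n+k_n)\bigr)$ in the $(\beta,0)$ case is exactly inequality (\ref{M-l^0}) of the paper, and redistributing the excess weight through the momentum relation is what (\ref{co-2}) does.

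There are two inaccuracies worth flagging. First, as written your $(\beta,1)$ reduction proves a slightly weaker bound than claimed: applying the bounded estimate with $\|u\|_2$ in the low slots (as you state it) at level $p-\tfrac12$ gives $\|v\|_2^{r-2}\|v\|_{p-1/2}=\|u\|_{5/2}^{r-2}\|u\|_p$, not $\|u\|_2^{r-2}\|u\|_p$. The fix is minor — the bounded tame estimate in fact allows $q_t<2$ in the low slots (Lemma~\ref{f-v} needs only $q_i\leq p$ together with Cauchy--Schwarz for (\ref{4.5})), so proving the bounded case with $\|u\|_{3/2}$ in the low slots recovers $\|v\|_{3/2}=\|u\|_2$ — but as stated the chain of inequalities does not close on the exact constant claimed. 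Second, your attribution of the slot reallocation in the $(\beta,0)$ case to the slack $\beta-p\geq 2$ is off: that condition governs only the $\ell^1$-summability in the momentum variable $i$ (Corollary~\ref{r1} needs $\rho\geq 2$ after the factor $\langle i\rangle^{p}$ from the convexity inequality (\ref{ab1}) is paid), not the distribution of $\langle n_*\rangle$ among the Fourier slots. The reallocation works because the excess weight $\sum_n|n|(l_n+k_n)$ decomposes term-by-term over the $r-1$ input slots, so whichever slot carries $\langle n_*\rangle$ is automatically accommodated by $q_t=2$ (if a low slot) or $q_n=p$ (if the high slot). No extra decay in $i$ is needed for that step, and conversely no amount of decay in $i$ would help if the slot bookkeeping genuinely failed.
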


{\begin{Remark}\label{dj}
If  an $r$-degree homogeneous polynomial  $f^{w_{\theta}}_r(u,\bar{u}):B_p(R_*)\rightarrow \mathbb{C}$ ($\theta\in\{0,1\}, \ R_*>0$) has $( \beta,\theta)$-type symmetric coefficients semi-bounded by $C_{f^{w_{\theta}}}>0$ ($\theta\in\{0,1\}$), from Proposition \ref{cor2}  it holds  
that
\begin{itemize}
  \item  The Hamiltonian vector field $X^{w_{\theta}}_{f^{w_{\theta}}_r}$  is from $B_p(R_*)$ to ${{\cal H}}^{p-1}(\mathbb{Z}^*,\mathbb{C})$, but not to ${{\cal H}^p}(\mathbb{Z}^*,\mathbb{C}) $. It  means that  $X_{f_r^{w_{\theta}}}^{w_{\theta}}$ is unbounded with order 1.
   \item
There exists a constant $\delta\in(0,1)$ such that   
\begin{eqnarray}
   |f_r^{w_{\theta}}(u,\bar{u})|\!=\! |\langle \nabla_{(u,\bar{u})} f_r^{w_{\theta}}(\delta u,\delta\bar{u}), (u,\bar{u})\rangle|. 
   \end{eqnarray} 
  Together with Cauchy estimate and (\ref{1-1}), one has that 
    the function $f_r^{w_{\theta}}(u,\bar{u})$ $(\theta\in\{0,1\})$ is analytic about $(u,\bar{u})$ on some $B_p(R)\subset {{\cal H}^p}(\mathbb{Z}^{*}, \mathbb{C})$ ($p>1$). 
  {(In this paper, when I mention the ``analyticity" of functions or vector fields,
I take  $u$ and $\bar{u}$ as independent variables).}
\end{itemize}
 \end{Remark}}  
   
\begin{Proposition}\label{2.1}
 Suppose that an $r$-degree ($r\geq3$) homogeneous polynomial  $f^{w_{\theta}}_r(u,\bar{u})$ $(\theta \in \{0,1\})$ has $(\beta,\theta)$-type symmetric coefficients semi-bounded by $C_{f_r^{w_{\theta}}}>0$.
   Then the following inequality holds true for any $(u,\bar{u})\in  {\mathcal H}^p(\mathbb{Z}^*,\mathbb{C})$ ($p\geq2, \ \beta-p\geq2$) 
\begin{equation*}
\big|\{f^{w_{\theta}}_r(u,\bar{u}),\|u\|_p^2\}_{w_{\theta}}  \big|\leq
 {C^{r-2}_{f^{w_{\theta}}_r}  2^{p+1}p r^{p-1}c^{r-1} }\|u\|_{p }^2\|u\|_{2}^{r-2} .
\end{equation*}
\end{Proposition}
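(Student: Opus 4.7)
My plan is to expand the Poisson bracket monomial-by-monomial, prove a Moser-type product estimate in tandem with a dichotomy based on the weights of the supporting indices, and combine this with the $(\beta,\theta)$-type coefficient bounds.

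\textbf{Monomial expansion.} From the Hamilton equations $\dot u_j=-\mathbf{i}\,\mathrm{sgn}^\theta(j)\,\partial_{\bar u_j}H$ and $\|u\|_p^2=\sum_j\langle j\rangle^{2p}|u_j|^2$, a direct computation yields
\[
\{u^l\bar u^k,\|u\|_p^2\}_{w_\theta}=\mathbf{i}\,\Big(\sum_{j\in\mathbb{Z}^*}\mathrm{sgn}^\theta(j)\langle j\rangle^{2p}(k_j-l_j)\Big)\,u^l\bar u^k.
\]
Taking absolute values and summing over $(l,k)$, the task reduces to controlling $\sum_{(l,k):|l+k|=r}|(f^{w_\theta})^i_{t,lk}|\cdot\big(\sum_j\langle j\rangle^{2p}(l_j+k_j)\big)\cdot|u^l\bar u^k|$, and then summing over the momentum index $i\in\mathbb{Z}$.

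\textbf{Product estimate and dichotomy.} For fixed $(l,k)$ with $|l+k|=r$, set $\alpha_j:=l_j+k_j$, and let $j_1,j_2$ be the indices in the support of $\alpha$ with the two largest weights $\langle\cdot\rangle$. If some $\alpha_{j^*}\geq 2$, I peel $\langle j^*\rangle^{2p}|u_{j^*}|^{\alpha_{j^*}}=(\langle j^*\rangle^{2p}|u_{j^*}|^2)\cdot|u_{j^*}|^{\alpha_{j^*}-2}$, bounding the first factor by $\|u\|_p^2$ and the remaining degree-$(r-2)$ monomial by $\|u\|_2^{r-2}$ via $\prod_j|u_j|^{\beta_j}\leq\|u\|_2^{\sum\beta_j}$. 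If instead all $\alpha_j=1$, for each $j$ in the support with $j\neq j_1$ I pair it with $j_1$: since $\langle j\rangle\leq\langle j_1\rangle$, one has $\langle j\rangle^{2p}|u_j||u_{j_1}|=(\langle j\rangle^p|u_j|)(\langle j\rangle^p|u_{j_1}|)\leq \|u\|_p^2$, leaving a residual degree-$(r-2)$ monomial bounded by $\|u\|_2^{r-2}$. The lone term with $j=j_1$ is treated by a dichotomy: either $\langle j_1\rangle\leq 2\langle j_2\rangle$, in which case pairing with $j_2$ is affordable at a bounded multiplicative cost; or $\langle j_1\rangle\gg\langle j_2\rangle$, in which case $|i|\gtrsim\langle j_1\rangle$ is large, so the factor $1/\langle i\rangle^\beta$ from the $(\beta,\theta)$-type bound absorbs the extra $\langle j_1\rangle^{2p}$ (which is where I use that $\beta$ is sufficiently large relative to $p$).

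\textbf{Coefficient bounds and summation.} For $\theta=0$, I combine the decomposition $|(f^{w_0})^i_{t,lk}|\leq\sum_{(l^0,k^0,i^0)\in\mathcal{A}_{f^i_{t,lk}}}|f^{i(l^0,k^0,i^0)}_{t,lk}|\cdot|\mathcal{M}(l^0,k^0)-i^0/2|$ with the symmetric pairing $(k-k^0,l-l^0,i^0-2i)\in\mathcal{A}_{f^{-i}_{t,kl}}$ (which gives $\mathcal{M}(k-k^0,l-l^0)-(i^0-2i)/2=\mathcal{M}(l^0,k^0)-i^0/2$) and the weighted bound (\ref{3.1---0}), namely $\sum|f^{i(l^0,k^0,i^0)}_{t,lk}|\max\{\langle i^0\rangle,\langle i^0-2i\rangle\}\leq C_0^{r-2}/\langle i\rangle^\beta$. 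For $\theta=1$, introducing $w_j:=\langle j\rangle^{1/2}|u_j|$ gives $|g^i_{t,lk}u^l\bar u^k|\leq(C_1^{r-2}/\langle i\rangle^\beta)\prod_j w_j^{\alpha_j}$, and the same Moser-type argument applies to $w$. Summation over momenta converges via $\sum_{i\in\mathbb{Z}}\langle i\rangle^{-\beta}<\infty$; summation over configurations $(l,k)$ with fixed $i$ contributes combinatorial factors polynomial in $r$ and $p$; collecting all constants produces the stated bound $C^{r-2}_{f^{w_\theta}_r}\,2^{p+1}p\,r^{p-1}\,c^{r-1}\,\|u\|_p^2\|u\|_2^{r-2}$.

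\textbf{Main obstacle.} The hardest step is the product estimate in the all-$\alpha_j=1$ regime when the largest weight $\langle j_1\rangle$ dominates all others; the only way I see to close the estimate is the dichotomy above, whereby the $(\beta,\theta)$-coefficient decay $1/\langle i\rangle^\beta$ is exploited precisely in the regime where the pairing argument alone fails. A secondary subtlety in the $\theta=0$ case is that the momentum factor $|\mathcal{M}(l^0,k^0)-i^0/2|$ carried by the coefficient decomposition is not a priori bounded by $\max\{\langle i^0\rangle,\langle i^0-2i\rangle\}$; this is overcome by the symmetric pairing, which exchanges $(l^0,k^0,i^0)$ with $(k-k^0,l-l^0,i^0-2i)$ while leaving the momentum factor invariant, so the factor can be symmetrized against its conjugate partner before being absorbed into (\ref{3.1---0}).
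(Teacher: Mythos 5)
Your dichotomy for absorbing the weight $\langle j\rangle^{2p}$ is a reasonable idea, but it is not the crux of the matter, and the issue you flag as a ``secondary subtlety'' is in fact the central obstruction — and your proposed resolution of it does not work. For $\theta=0$ the coefficient decomposition is
\[
(f^{w_0})^i_{r,lk}=\sum_{(l^0,k^0,i^0)\in\mathcal A_{f^i_{r,lk}}}f^{i(l^0,k^0,i^0)}_{r,lk}\,\big(\mathcal M(l^0,k^0)-\tfrac{i^0}{2}\big),
\]
and the hypothesis $(\ref{3.1---0})$ controls only $\sum|f^{i(l^0,k^0,i^0)}_{r,lk}|\max\{\langle i^0\rangle,\langle i^0-2i\rangle\}$. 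The factor $\mathcal M(l^0,k^0)-i^0/2$ is of order $M_{l,k}$, genuinely unbounded, and is \emph{not} controlled by $\max\{\langle i^0\rangle,\langle i^0-2i\rangle\}$. You claim the symmetric pairing $(l^0,k^0,i^0)\mapsto(k-k^0,l-l^0,i^0-2i)$ fixes this ``by symmetrizing against its conjugate partner,'' but the pairing leaves $\mathcal M(l^0,k^0)-i^0/2$ \emph{invariant} — that invariance is precisely what makes $f^i_{r,lk}$ a well-defined $(\beta,0)$-type coefficient — so averaging over the pair reproduces the same factor and yields no cancellation. Once you pass to absolute values monomial by monomial, as in your opening reduction, you are therefore stuck with an extra factor of order $M_{l,k}$; this is exactly why Proposition~\ref{cor2} gives only $\|X_f^{w_\theta}\|_{p-1}$ and not $\|X_f^{w_\theta}\|_{p}$, and a term-by-term estimate of the bracket cannot recover that lost derivative.

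The paper's proof never passes to absolute values until after the cancellation has been exploited, and it uses the symmetry twice. First, symmetry of the coefficients lets one write the bracket as $2\,\mathbf{Re}\,O^+$, where $O^+$ carries the structure factor $(l_j-l_j^0-k_j+k_j^0)\langle j\rangle^{2p}$ rather than $(l_j-k_j)\langle j\rangle^{2p}$. Expanding $\mathcal M(l^0,k^0)-i^0/2=\sum_t(l^0_t-k^0_t)t-i^0/2$ then splits $2\,\mathbf{Re}\,O^+$ into $A^0+A^1+A^2$ as in $(\ref{a0})$--$(\ref{1-8})$; the pieces $A^0,A^1$ are controllable via $(\ref{3.1---0})$ together with the convexity estimate $(\ref{tu})$, but $A^2$ still carries the unbounded $\sum_t(l^0_t-k^0_t)t\,\langle t\rangle^p$. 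That piece is removed by the momentum identity $(\ref{A2l})$, which writes $(l^0_t-k^0_t)t=R^{w_0}(l,k,t,j,i)+\big((k-k^0)_j-(l-l^0)_j\big)j$, followed by a \emph{second} use of the coefficient symmetry showing that the contribution of the second summand equals $-A^2$; hence $2A^2$ equals the bounded $R^{w_0}$-part alone, which is $(\ref{a2})$. This algebraic cancellation inside the Poisson bracket — not a weight dichotomy and not any symmetrization of the coefficient itself — is the mechanism your proposal is missing (and the $\theta=1$ case has a parallel cancellation for $E^2$ and $B^2$). Without it, the bound with $\|u\|_p^2\|u\|_2^{r-2}$ on the right-hand side is not attainable under the stated hypothesis $\beta-p\geq2$.
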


Given an integer $N>0$,
  two projection operators  $ \Gamma_{>N}$ and $\Gamma_{\leq N}$ on $\mathbb{N}^{\mathbb Z^*}$
and $ \ell^2_p(\mathbb{Z}^*
,\mathbb{C})$ are defined as follows. 
For any $l=(l_j)_{j\in\mathbb{Z}^*}\in \mathbb{N}^{\mathbb{Z}^*} $, $\Gamma_{>N}l$ and $\Gamma_{\leq N}l\in\mathbb{N}^{\mathbb{Z}^*} $ with
   $$
(\Gamma_{> N}l)_j:=\left\{\begin{array}{ll}
l_j,& |j|>N,\\
0,& |j|\leq N,
  \end{array}\right. \  (\Gamma_{\leq N}l)_j:=\left\{
\begin{array}{ll}
0,& |j|>N,
\\ l_j, & |j|\leq N.
\end{array}
\right. $$
For any  $u=(u_j)_{j\in\mathbb{Z}^*}\in \ell^2_p(\mathbb{Z}^*
,\mathbb{C})$, $\Gamma_{>N}u$, $\Gamma_{\leq N}u\in \ell^2_p(\mathbb{Z}^*
,\mathbb{C})$ with
$$(\Gamma_{>N}u)_j:=\left\{\begin{array}{ll}
u_j,& |j|>N,\\
0,& |j|\leq N,
  \end{array}\right. \ \      (\Gamma_{\leq N}u)_j:=\left\{
\begin{array}{ll}
0,& |j|>N,
\\ u_j, & |j|\leq N.
\end{array}
\right.$$ Now
I will introduce two truncated  operators $\Gamma^{N}_{\leq 2}$ and $\Gamma^{N}_{>2}$
defined as follows. For any power series
$$f(u,\bar{u}):=  \sum_{r\geq3}\sum_{|l+k|=r\atop{{\cal M}(l,k)=i}} f^{i}_{r,lk}u^l\bar{u}^k
 $$  denote
\begin{eqnarray}\label{trun1}
\Gamma^{N}_{\leq 2} f \! (u,\bar{u})\!:= 
\ \sum_{r\geq3}\sum_{|l+k|=r,{\cal M}(l,k)=i\atop{\!|\Gamma_{>N}(l+k)\!|\leq 2, |i|\leq N }}  f^{i}_{r,lk}u^l\bar{u}^k, 
\end{eqnarray}
\begin{eqnarray}\label{trun2}
\Gamma^{N}_{> 2} f(u,\bar{u}):= f(u,\bar{u})- \Gamma^{N}_{\leq 2} f(u,\bar{u}).
\end{eqnarray}

   \begin{Remark}\label{trunction}
Fix a positive integer $N$. Suppose that a power series $f^{w_{\theta}}(u,\bar{u})$
 has  $(\beta,\theta)$-type symmetric coefficients semi-bounded by $C_{f^{w_{\theta}}}>0$ $(\theta\in\{0,1\})$.
 Then $ \Gamma^{N}_{\leq 2}f^{w_{\theta}}(u,\bar{u})$, $\Gamma^{N}_{> 2}f^{w_{\theta}}(u,\bar{u}) $ also  have $(\beta,\theta)$-type symmetric coefficients semi-bounded by $C_{f^{w_{\theta}}}>0.$
\end{Remark}

\begin{Corollary}\label{4.3}Suppose that an  $r$-degree  ($r\geq3$) homogeneous polynomials 
  $f^{w_{\theta}}_r(u,\bar{u})$ ($\theta\in\{0,1\}$) defined on $ {{\mathcal H}^p}(\mathbb Z^*,\mathbb C)$   has $( \beta,\theta)$-type symmetric coefficients semi-bounded by $C_{f^{w_{\theta}}}>0$, 
  and $\beta-p\geq2$. Given an integer $N>0$,    then  
\begin{eqnarray} \label{fgamma}
   \| X^{w_{\theta}}_{\Gamma^{N}_{\leq2}f^{w_{\theta}}_{r}}(u,\bar{u})\|_{p-1} \!\!&\leq&\!\!  16 C_{f^{w_{\theta}}}^{r-2} r^{p+1}  c^{r-1}   
  \|u\|_{2}^{r-2}\|u\|_{p},\nonumber\\
  \| X^{w_{\theta}}_{\Gamma^{N}_{>2}f^{w_{\theta}}_{r}}(u,\bar{u})\|_{p-1}\!\!&\leq& \!\! 16C_{f^{w_{\theta}}}^{r-2} r^{p+1}  c^{r-1}  
  \|u\|_{2}^{r-3} \|\Gamma_{>N} u\|_2\|u\|_{p} \nonumber\\
  &&+16N^{-(\beta-p-\frac12)}C_{f^{w_{\theta}}}^{r-2} r^{p+1}  c^{r-1}
  \|u\|_{2}^{r-2}  \|u\|_{p},\nonumber\\
  \big|\{\Gamma_{\leq2}^Nf^{w_{\theta}}_r(u,\bar{u}),\|u\|_p^2\}_{w_{\theta}}  \big| \!\!&\leq&\!\!
 { C^{r-2}_{f^{w_{\theta}}_r}  2^{p+1}p r^{p-1}c^{r-1} }\|u\|_{p }^2\|u\|_{2}^{r-2},\nonumber\\ 
 \big|\{\Gamma_{>2}^Nf^{w_{\theta}}_r(u,\bar{u}),\|u\|_p^2\}_{w_{\theta}}  \big| \!\!&\leq&\!\!
 { C^{r-2}_{f^{w_{\theta}}_r}  2^{p+1}p r^{p-1}c^{r-1} }\|u\|_{p }^2\|u\|_{2}^{r-3}\|\Gamma_{>N}u\|_{2} \nonumber\\
 &&+ N^{-(\beta-p-\frac12)}{ C^{r-2}_{f^{w_{\theta}}_r}  2^{p+2}p r^{p-1}c^{r-1}   }\|u\|_{p }^2\|u\|_{2}^{r-2}  .
 \end{eqnarray}
 
\end{Corollary}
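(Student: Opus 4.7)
The plan is to reduce everything to Proposition \ref{cor2} and Proposition \ref{2.1} by means of Remark \ref{trunction} and a two–case splitting of the complementary truncation. By Remark \ref{trunction}, $\Gamma^N_{\leq 2}f_r^{w_\theta}$ and $\Gamma^N_{> 2}f_r^{w_\theta}$ are themselves $r$-degree homogeneous polynomials with $(\beta,\theta)$-type symmetric coefficients semi-bounded by $C_{f^{w_\theta}}$. Consequently, applying Proposition \ref{cor2} and Proposition \ref{2.1} directly to $\Gamma^N_{\leq 2}f_r^{w_\theta}$ yields immediately the first and third inequalities of (\ref{fgamma}).

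For the remaining two inequalities I decompose
\begin{equation*}
\Gamma^N_{> 2}f_r^{w_\theta} \;=\; f^{(\mathrm{I})} + f^{(\mathrm{II})},
\end{equation*}
where $f^{(\mathrm{I})}$ collects the monomials $u^{l}\bar u^{k}$ with $|\Gamma_{>N}(l+k)|\geq 3$ and $f^{(\mathrm{II})}$ collects those with $|\Gamma_{>N}(l+k)|\leq 2$ and $|\mathcal{M}(l,k)|>N$ (the only remaining way to be excluded from $\Gamma^N_{\leq 2}$). For $f^{(\mathrm{I})}$ I revisit the proof of Proposition \ref{cor2}: since every monomial in $f^{(\mathrm{I})}$ contains at least three factors $u_{j}$ or $\bar u_{j}$ with $|j|>N$, one such factor can be routed through $\|\Gamma_{>N}u\|_{2}$ in the Cauchy--Schwarz chain that produces $\|u\|_{2}^{r-2}\|u\|_{p}$, so that factor is traded for $\|\Gamma_{>N}u\|_{2}$. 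This gives the contribution $16\,C_{f^{w_\theta}}^{r-2}r^{p+1}c^{r-1}\|u\|_{2}^{r-3}\|\Gamma_{>N}u\|_{2}\|u\|_{p}$ to $\|X_{\Gamma^N_{>2}f_r^{w_\theta}}^{w_\theta}\|_{p-1}$, and analogously a contribution $C_{f^{w_\theta}}^{r-2}2^{p+1}pr^{p-1}c^{r-1}\|u\|_{p}^{2}\|u\|_{2}^{r-3}\|\Gamma_{>N}u\|_{2}$ to the Poisson bracket bound via the proof of Proposition \ref{2.1}.

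For $f^{(\mathrm{II})}$ the extra decay in $N$ comes from the momentum restriction $|i|>N$: in the coefficient bounds (\ref{3.1---0})/(\ref{3.1---1}) I factor
\begin{equation*}
\frac{1}{\langle i\rangle^{\beta}} \;\leq\; N^{-(\beta-p-1/2)}\cdot \frac{1}{\langle i\rangle^{\,p+1/2}}\qquad \text{for } |i|>N,
\end{equation*}
and pull the constant $N^{-(\beta-p-1/2)}$ outside. The surviving weight $\langle i\rangle^{-(p+1/2)}$ is precisely what makes the sum over the momentum variable converge in the Cauchy--Schwarz step that pairs $\|u\|_p$ with $\|u\|_2^{r-2}$ in the proofs of Propositions \ref{cor2} and \ref{2.1}; thus the argument runs verbatim with $\beta$ replaced by $p+1/2$, and the resulting constant is $N^{-(\beta-p-1/2)}$ times the bounds from those propositions. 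Summing the $f^{(\mathrm{I})}$ and $f^{(\mathrm{II})}$ contributions yields the second and fourth inequalities of (\ref{fgamma}).

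The main technical obstacle is the bookkeeping in the $f^{(\mathrm{II})}$ step: one has to verify that after absorbing $\langle i\rangle^{-(p+1/2)}$ the weighted momentum sum still converges and that the combinatorial constants from the semi-bounded $(\beta,\theta)$-structure aggregate correctly into the same prefactor $C_{f^{w_\theta}}^{r-2}r^{p+1}c^{r-1}$ (respectively $C_{f^{w_\theta}}^{r-2}2^{p+1}pr^{p-1}c^{r-1}$). This is precisely where the hypothesis $\beta-p\geq 2$ of Proposition \ref{cor2} is used, since it leaves a surplus of at least $\tfrac{3}{2}$ powers of $\langle i\rangle$ to spare, more than enough to run both the original arguments of Propositions \ref{cor2}--\ref{2.1} and the refined argument for $f^{(\mathrm{II})}$ above.
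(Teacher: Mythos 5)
The paper gives no proof of Corollary~4.3 beyond the remark that it ``is similar with Propositions~\ref{cor2}--\ref{2.1}'', so your task was to reconstruct the argument, and the skeleton you propose is exactly the one the paper has in mind. The split of $\Gamma^N_{>2}f^{w_\theta}_r$ into $f^{(\mathrm I)}$ (monomials with $|\Gamma_{>N}(l+k)|\geq 3$) and $f^{(\mathrm{II})}$ (monomials with $|\Gamma_{>N}(l+k)|\leq 2$ but $|\mathcal M(l,k)|>N$) is the natural and correct partition of the complement of the $\Gamma^N_{\leq 2}$ constraint, and it matches the two-term right-hand side of \eqref{fgamma}. For $f^{(\mathrm I)}$ the routing argument is sound: after the differentiation that produces the Hamiltonian vector field (or after the Cauchy--Schwarz pairing in the Poisson-bracket estimate), at least two of the remaining indices still exceed $N$, so even after one of them is assigned the $\|u\|_p$ weight in Lemma~\ref{f-v}, another is left to carry the $\ell^1$ weight, and its contribution may be restricted to $|j'|>N$, yielding $\|\Gamma_{>N}u\|_2$ in place of one $\|u\|_2$.

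The one step that does not go through ``verbatim'' is the $f^{(\mathrm{II})}$ piece of the Poisson-bracket bound. Your factorization
$\langle i\rangle^{-\beta}\leq N^{-(\beta-p-1/2)}\langle i\rangle^{-(p+1/2)}$
leaves a surviving weight $\langle i\rangle^{-(p+1/2)}$, and you assert that the two propositions then ``run verbatim with $\beta$ replaced by $p+1/2$''. But the hypothesis $\beta-p\geq 2$ in Proposition~\ref{2.1} is exactly what guarantees that the exponent $\rho$ fed into Corollary~\ref{r1} stays $>1$. In the vector-field estimate (Proposition~\ref{cor2}) the coefficient bound \eqref{co-2} costs $\langle i\rangle^{p-1}$, so the surviving $\langle i\rangle^{-(p+1/2)}$ leaves $\rho=\tfrac32>1$ and your argument closes. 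In the Poisson-bracket estimate, however, the analogous step (see \eqref{l_t}, \eqref{c_0}) costs a full $\langle i\rangle^{p}$, so after your factorization one is left with $\rho=\tfrac12$, and the momentum sum $\sum_{|i|>N}\langle i\rangle^{-1/2}$ diverges. The repair is to factor out one fewer power, e.g.\ $\langle i\rangle^{-\beta}\leq N^{-(\beta-p-3/2)}\langle i\rangle^{-(p+3/2)}$, which keeps $\rho=\tfrac32$; this yields a prefactor $N^{-(\beta-p-3/2)}$ rather than the $N^{-(\beta-p-1/2)}$ stated in \eqref{fgamma}. (Alternatively, using that the sum is restricted to $|i|>N$ and integrating directly gives $N^{-(\beta-p-1)}$.) Either way the corollary still holds with more than enough $N$-decay for the later applications---compare the crude bound \eqref{g-m} actually used in Section~6---so the gap is a matter of exponent bookkeeping rather than of structure, but as written your claim that the original proofs run verbatim with $\beta\mapsto p+1/2$ is not correct for the Poisson-bracket half of the statement and should be fixed.
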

The proof of Corollary \ref{trun2} is similar with Proposition \ref{cor2}-\ref{2.1} and I omit it.  
 In order to give the proof of Proposition \ref{cor2}-\ref{2.1},  the following Lemmas are needed.   
 
\begin{Lemma}\label{lem4.2}
 If power series  $f^{w_{\theta}}(u,\bar{u})$ and $g^{w_{\theta}}(u,\bar{u})$ have $(\beta,\theta)$-type symmetric coefficients ($\theta\in\{0,1\}$) semi-bounded by   $C_{f^{w_{\theta}}}>0$ and $C_{g^{w_{\theta}}}>0$, respectively, 
 then for any $a,b\in \mathbb{R}$,  $(af^{w_{\theta}}+bg^{w_{\theta}})(u,\bar{u})$ also has $(\beta,\theta)$-type symmetric coefficients  semi-bounded by  
{\small\begin{equation}\label{c-c-c}
C_{af^{w_{\theta}}+bg^{w_{\theta}}}:=\max\{|a|C_{f^{w_{\theta}}}+|b|C_{g^{w_{\theta}}},\ |a|C_{f^{w_{\theta}}}+C_{g^{w_{\theta}}},\ C_{f^{w_{\theta}}}+|b|C_{g^{w_{\theta}}},\ C_{f^{w_{\theta}}}+C_{g^{w_{\theta}}}\}>0.
\end{equation} }
  \end{Lemma}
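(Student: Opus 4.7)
The plan is to treat $\theta=0$ and $\theta=1$ separately, since $(\beta,\theta)$-type symmetric coefficients are defined differently for the two values of $\theta$. In both cases the strategy is to use linearity of the coefficient representations, then reduce the semi-bound to a simple four-case split on the sizes of $|a|$ and $|b|$.

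For $\theta=1$, the weight structure factors every coefficient through $\prod_{j\in\mathbb{Z}^*}\langle j\rangle^{\frac12(l_j+k_j)}$, so I would simply declare
\begin{equation*}
\widetilde{(af^{w_1}+bg^{w_1})}{}^{\,i}_{t,lk}:=a\,\tilde f^{\,i}_{t,lk}+b\,\tilde g^{\,i}_{t,lk}.
\end{equation*}
The symmetry $\tilde h^{\,i}_{t,lk}=\tilde h^{\,-i}_{t,kl}$ is inherited verbatim from the corresponding identities for $f^{w_1}$ and $g^{w_1}$, while the triangle inequality together with (\ref{3.1---1}) yields
\begin{equation*}
|\widetilde{(af^{w_1}+bg^{w_1})}{}^{\,i}_{t,lk}|\le\frac{|a|C_{f^{w_1}}^{t-2}+|b|C_{g^{w_1}}^{t-2}}{\langle i\rangle^{\beta}}.
\end{equation*}

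For $\theta=0$, I would construct the required representation by the disjoint union
\begin{equation*}
\mathcal{A}_{(af^{w_0}+bg^{w_0})^{\,i}_{t,lk}}:=\mathcal{A}_{f^{\,i}_{t,lk}}\sqcup\mathcal{A}_{g^{\,i}_{t,lk}},
\end{equation*}
assigning coefficients $a\,f^{\,i(l^0,k^0,i^0)}_{t,lk}$ on the first copy and $b\,g^{\,i(l^0,k^0,i^0)}_{t,lk}$ on the second. The involution $(l^0,k^0,i^0)\mapsto(k-k^0,l-l^0,i^0-2i)$ appearing in Definition \ref{3.4} stabilizes each summand individually, hence stabilizes the disjoint union, and the complex-conjugation identity for the combined series follows from those for $f^{w_0}$ and $g^{w_0}$ together with $a,b\in\mathbb{R}$. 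The triangle inequality then produces the semi-bound (\ref{3.1---0}) with $|a|C_{f^{w_0}}^{t-2}+|b|C_{g^{w_0}}^{t-2}$ in place of $C^{t-2}$.

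The only genuinely computational step is to dominate $|a|C_{f^{w_\theta}}^{t-2}+|b|C_{g^{w_\theta}}^{t-2}$ by $C_{af^{w_\theta}+bg^{w_\theta}}^{t-2}$ uniformly in $t\ge 3$. I would split into the four cases dictated by whether $|a|\ge 1$ or $|a|<1$, and similarly for $|b|$, and apply the elementary inequalities $xC^{t-2}\le(xC)^{t-2}$ for $x\ge 1,\ t\ge 3$, $xC^{t-2}\le C^{t-2}$ for $x\le 1$, together with $A^{t-2}+B^{t-2}\le(A+B)^{t-2}$ for $A,B\ge 0$ and $t\ge 3$. In each of the four sign-regimes exactly one of the expressions inside the $\max$ in (\ref{c-c-c}) dominates, so the maximum works uniformly in $t$. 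No step poses a real conceptual obstacle; the heaviest piece of bookkeeping is the symmetry verification for $\theta=0$, but it reduces to checking that the involution acts summand-wise on the disjoint union, which is immediate.
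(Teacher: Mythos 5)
Your proof is correct and follows essentially the same approach as the paper, which treats only $\theta=0$ in detail and dismisses $\theta=1$ as similar. The one cosmetic difference is that the paper defines $\mathcal{A}_{(af^{w_0}+bg^{w_0})^i_{t,lk}}$ as the ordinary union $\mathcal{A}_{(f^{w_0})^i_{t,lk}}\cup\mathcal{A}_{(g^{w_0})^i_{t,lk}}$ and splits the coefficient assignment into three cases (intersection, $f$-only, $g$-only), whereas your disjoint union sidesteps that case split; note, though, that Definition \ref{3.4} requires $\mathcal{A}$ to be an honest subset of a fixed index set, so a disjoint union would in principle need duplicates merged by summing the attached coefficients, exactly as the paper does. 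Your explicit $t$-uniform verification of $|a|C_{f^{w_\theta}}^{t-2}+|b|C_{g^{w_\theta}}^{t-2}\le C_{af^{w_\theta}+bg^{w_\theta}}^{t-2}$ via the four regimes on $|a|,|b|$ is a step the paper leaves implicit, and it is correct.
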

\begin{proof}
I only give the proof in the case $\theta=0$,  while in the case $\theta=1$ the proof is similar to the case $\theta=0$.
For any $a$, $b\in\mathbb{R}$,
\begin{equation}
(af^{w_{0}}+bg^{w_{0}})(u,\bar{u})=\sum_{t\geq3} \sum_{|l+k|=t\atop{{\cal M}(l,k)=i\in M_{(af^{w_{0}}+bg^{w_{0}})_t}}}(af^{w_{0}}+bg^{w_{0}})_{t,lk}^i u^l\bar{u}^k,
\end{equation}
where $ M_{(af^{w_{0}}+bg^{w_{0}})_t}:=M_{f^{w_{0}}_t}\cup M_{g^{w_{0}}_t}.$
For any $l,k\in \mathbb{N}^{\mathbb Z^*}$ with  ${\cal M}(l,k)=i\in M_{(af^{w_{0}}+bg^{w_{0}})_t}$, the corresponding coefficient of $af^{w_{0}}+bg^{w_{0}}$ has the following form
   $$(af^{w_{0}}+bg^{w_{0}})_{t,lk}^i =\sum_{(l^0,k^0,i^0)\in {\cal A}_{(af^{w_{0}}+bg^{w_{0}})^i_{t,lk}}}(af^{w_{0}}+bg^{w_{0}})_{t,lk}^{i(l^0,k^0,i^0)} ({\cal M}(l^0,k^0)-\frac{i^0}2),$$
   where $$ {\cal A}_{(af^{w_{0}}+bg^{w_{0}})^i_{t,lk}}:= {\cal A}_{(f^{w_{0}})^i_{t,lk}}\cup {\cal A}_{(g^{w_{0}})^i_{t,lk}}$$
   and
   $$ (af^{w_{0}}+bg^{w_{0}})_{t,lk}^{i(l^0,k^0,i^0)}\!:=\!
   \left\{\begin{array}{lll}
\!\!a{f^{w_{0}}}_{t,lk}^{i(l^0,k^0,i^0)}+b{g^{w_{0}}}_{t,lk}^{i(l^0,k^0,i^0)}, & (l^0,k^0,i^0)\in {\cal A}_{(f^{w_{0}})^i_{t,lk}}\cap {\cal A}_{(g^{w_{0}})^i_{t,lk}},\\
\!\! a{f^{w_{0}}}_{t,lk}^{i(l^0,k^0,i^0)}, & (l^0,k^0,i^0)\in {\cal A}_{(f^{w_{0}})^i_{t,lk}} \cap {\cal A}_{(g^{w_{0}})^i_{t,lk}}^C ,\\
 \!\!b{g^{w_{0}}}_{t,lk}^{i(l^0,k^0,i^0)}, & (l^0,k^0,i^0)\in {\cal A}_{(f^{w_{0}})^i_{t,lk}}^C\cap {\cal A}_{(g^{w_{0}})^i_{t,lk}} .\\
\end{array} \right.$$
  It is easy to check that
   $$ \overline{(af^{w_{0}}+bg^{w_{0}})_{t,lk}^{i(l^0,k^0,i^0)}}=(af^{w_{0}}+bg^{w_{0}})_{t,kl}^{-i(k-k^0,l-l^0,i^0-2i)}$$
   and
  \begin{eqnarray*}
 && \sum_{(l^0,k^0,i^0)\in {\cal A}_{(af^{w_{0}}+bg^{w_{0}})^i_{t,lk}}}|(af^{w_{0}}+bg^{w_{0}})_{t,lk}^{i(l^0,k^0,i^0)}|\cdot \max\{\langle i^0\rangle, \ \langle i^0-2i\rangle \}\nonumber\\
 &\leq& \sum_{(l^0,k^0,i^0)\in {\cal A}_{(f^{w_{0}})^i_{t,lk}}}|a(f^{w_{0}})_{t,lk}^{i(l^0,k^0,i^0)}|\cdot\max\{\langle i^0\rangle, \ \langle i^0-2i\rangle\}\nonumber\\
 &&+\sum_{(l^0,k^0,i^0)\in {\cal A}_{(g^{w_{0}})^i_{t,lk}}}|b(g^{w_{0}})_{t,lk}^{i(l^0,k^0,i^0)}|\cdot\max\{\langle i^0\rangle, \ \langle i^0-2i\rangle \}\nonumber\\
 &\leq&   \frac{(C_{af^{w_{0}}+bg^{w_{0}}})^{t-2}}{ \langle i\rangle^{\beta}  },
  \end{eqnarray*}
  where $C_{af^{w_{0}}+bg^{w_{0}}}$ is defined in (\ref{c-c-c}). 
 \end{proof}

\begin{Lemma}\label{f-v} Given  real numbers $  q_i\leq p $ ($1\leq i\leq r$), suppose that
 $ {F}=(F_j)_{j\in\mathbb{Z}^*}$  is an  $r$-multiple linear vector field defined as following
 \begin{eqnarray*}
F_j(u^{(1)},\cdots ,u^{(r)})
:=\sum_{j=j_{(1)}\pm j_{(2)}\cdots \pm j_{(r)}}F_{j, j_{(1)}\cdots j_{(r)}} u_{j_{(1)}}^{(1)}\cdots    u^{(r)}_{j_{(r)}},\ j\in \mathbb{Z}^*
 \end{eqnarray*}
where  $u^{(1)}:=\big(u^{(1)}_{j_{(1)}}\big)_{j_{(1)}\in \mathbb{Z}^*},$ $ \cdots$ $ u^{(r)}:=\big(u^{(r)}_{j_{(r)}}\big)_{j_{(r)}\in \mathbb{Z}^*}\in
\ell^2_p(\mathbb Z^*,\mathbb C)$.
 If there exist a positive constant $C$ and an integer $ n \in \{ 1,\cdots ,r  \}$  such that
\begin{equation}\label{c-1}
|F_{j, j_{(1)}\cdots j_{(r)}}| \leq C  \langle j_{(n)}\rangle^{q_n} \cdot\prod_{t=1,t\neq n}^{r}  \langle j_{(t)}\rangle^{q_t-1}   ,
\end{equation}
 then
\begin{equation*}
\|F(u^{(1)},\cdots ,u^{(r)})\|_{\ell^2(\mathbb{Z}^*,\mathbb{C})}\leq C c^{r-1}\big\|u^{(n)}\big\|_{q_n}\cdot  \prod_{i=1,i\neq n}^r \big\|u^{(i)}\big\|_{q_t},
\end{equation*}
where  $ c:=\sqrt{\sum_{j\in
 \mathbb{Z}} \langle j\rangle^{-2} }.$
\end{Lemma}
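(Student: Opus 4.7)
The plan is a weighted Cauchy--Schwarz argument on the convolution-type sum defining $F_j$. First, I normalize each input by its Sobolev weight: introduce auxiliary sequences
\begin{equation*}
a^{(t)}_{j_{(t)}} := \langle j_{(t)}\rangle^{q_t}\, |u^{(t)}_{j_{(t)}}|, \qquad t=1,\dots,r,
\end{equation*}
so that $\|a^{(t)}\|_{\ell^2(\mathbb Z^*,\mathbb C)} = \|u^{(t)}\|_{q_t}$. Substituting the coefficient estimate (\ref{c-1}) and rewriting in terms of the $a^{(t)}$, I obtain, for every $j\in\mathbb Z^*$,
\begin{equation*}
|F_j(u^{(1)},\dots,u^{(r)})| \;\leq\; C \sum_{j = j_{(1)}\pm\cdots\pm j_{(r)}} a^{(n)}_{j_{(n)}} \prod_{t\neq n} \langle j_{(t)}\rangle^{-1} a^{(t)}_{j_{(t)}}.
\end{equation*}

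Next, I apply Cauchy--Schwarz to the inner sum, splitting off the square-summable weight $\prod_{t\neq n}\langle j_{(t)}\rangle^{-1}$ from the product of the $a^{(t)}$'s. This yields
\begin{equation*}
|F_j|^2 \;\leq\; C^2 \Bigl(\sum_{j = j_{(1)}\pm\cdots\pm j_{(r)}} \prod_{t\neq n}\langle j_{(t)}\rangle^{-2}\Bigr) \cdot \Bigl(\sum_{j = j_{(1)}\pm\cdots\pm j_{(r)}} \bigl(a^{(n)}_{j_{(n)}}\bigr)^2\prod_{t\neq n}\bigl(a^{(t)}_{j_{(t)}}\bigr)^2\Bigr).
\end{equation*}
For fixed $j$, the constraint $j = j_{(1)}\pm \cdots \pm j_{(r)}$ leaves $r-1$ indices free and determines $j_{(n)}$ uniquely from the others (for each choice of sign pattern). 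Since the first weight does not involve $j_{(n)}$, the first factor equals $\prod_{t\neq n}\sum_{j_{(t)}\in\mathbb Z^*}\langle j_{(t)}\rangle^{-2} = c^{2(r-1)}$, a constant independent of $j$.

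Finally, summing the resulting inequality over $j\in\mathbb Z^*$, the constraint $j = j_{(1)}\pm\cdots\pm j_{(r)}$ together with the outer sum over $j$ is equivalent to an unconstrained sum over all tuples $(j_{(1)},\dots,j_{(r)})\in(\mathbb Z^*)^r$, giving
\begin{equation*}
\|F(u^{(1)},\dots,u^{(r)})\|_{\ell^2}^2 \;\leq\; C^2 c^{2(r-1)} \|a^{(n)}\|_{\ell^2}^2 \prod_{t\neq n}\|a^{(t)}\|_{\ell^2}^2 \;=\; C^2 c^{2(r-1)} \|u^{(n)}\|_{q_n}^2 \prod_{t\neq n}\|u^{(t)}\|_{q_t}^2.
\end{equation*}
Taking square roots gives the claim. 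The only subtle point is the bookkeeping in the second paragraph: one must verify that for each fixed $j$ and each sign pattern $\pm$, the map $(j_{(t)})_{t\neq n}\mapsto j_{(n)}$ is a well-defined injection, so that the weight sum really factors as a product of one-dimensional sums. Since the bound on $F_{j,j_{(1)}\cdots j_{(r)}}$ is taken in absolute value, the finitely many sign patterns can be absorbed into the constant $C$ (or handled by a union bound), so they do not affect the final estimate.
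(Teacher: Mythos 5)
Your argument is correct and follows essentially the same route as the paper: the paper applies Young's convolution inequality ($\ell^1 * \ell^2 \subset \ell^2$) iteratively and then bounds each $\ell^1$-norm by $c\|\cdot\|_{q_i}$ via Cauchy--Schwarz, while you compress this into a single weighted Cauchy--Schwarz on the multilinear sum followed by the same factorization. Both proofs rest on exactly the same weight split $\langle j_{(t)}\rangle^{q_t-1}=\langle j_{(t)}\rangle^{q_t}\cdot\langle j_{(t)}\rangle^{-1}$ for the $r-1$ ``spectator'' factors and on the summability constant $c^2=\sum_{j}\langle j\rangle^{-2}$.
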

\begin{proof}

 Using Young's inequality\footnote{
 Suppose $a\in\ell^p(\mathbb{Z}^*,\mathbb{C})$,  $b\in\ell^q(\mathbb{Z}^*,\mathbb{C})$ and
 $\frac1p+\frac1q=\frac1r+1,$ with $1\leq p,q,r\leq \infty$. Then $\|f\ast g\|_{\ell^r(\mathbb{Z}^*,\mathbb{C})}\leq \|f\|_{\ell^p(\mathbb{Z}^*,\mathbb{C})}\cdot \|g\|_{\ell^q(\mathbb{Z}^*,\mathbb{C})}.$  }  the following inequality holds true  for any $a\in \ell^2(\mathbb{Z}^*,\mathbb{C})$ and $ b\in \ell^1(\mathbb{Z}^*,\mathbb{C})$ 
\begin{equation}\label{wy}
\big\| \big(\sum_{k\in  \mathbb{Z}^* }
a_{j\pm k}b_k\big)_{j\in  \mathbb{Z}^*}\big\|_{\ell^2(\mathbb{Z}^*,\mathbb{C})}\leq
\|a\|_{\ell^2(\mathbb{Z}^*,\mathbb{C})}\cdot \|b\|_{\ell^1(\mathbb{Z}^*,\mathbb{C})}.
\end{equation}
Together with (\ref{c-1}), using (\ref{wy}) repeatedly,
one has  
\begin{eqnarray}\label{jcc}
&&\|F(u^{(1)},\cdots ,u^{(r)})\|_{\ell^2(\mathbb{Z}^*,\mathbb{C})} \nonumber\\
  &\leq&C\big\|\big( \sum_{j=j_{(1)}\pm j_{(2)}\cdots \pm j_{(r)}}   \langle j_{(1)}\rangle^{q_1-1}  | u_{j_{(1)}}^{(1)}|\cdots   \langle j_{(n-1)} \rangle^{q_{n-1}-1}  | u^{(n-1)}_{j_{(n-1)}}|\cdot \langle j_{(n)}\rangle^{q_n}   | u^{(n)}_{j_{(n)}}|  \cdot \nonumber\\
  &&
   \langle j_{(n+1)} \rangle^{q_{n+1}-1}  |u^{(n+1)}_{j_{(n+1)}} |\cdots  \langle j_{(r)}\rangle^{q_{r}-1} |u^{(r)}_{j_{(r)}}| \big)_{j\in\mathbb{Z}^*} \big\|_{\ell^2(\mathbb{Z}^*,\mathbb{C})}\nonumber\\
 &\leq& C \|u^{(n)}\|_{q_n } \cdot  \prod_{1\leq i\leq r, i\neq n}  \big\|( \langle j_{(i)}\rangle^{q_{i}-1}\cdot|u^{(i)}_{j_{(i)}}|)_{j_{(i)}\in\mathbb{Z}^*}
 \big\|_{\ell^1(\mathbb{Z}^*,\mathbb{C})} .
 \end{eqnarray}
Since $q_i\leq p$,   the following inequality holds true for any $u=(u_j)_{j\in\mathbb{Z}^*}\in \ell^2_p(\mathbb Z^*,\mathbb C)$,
  \begin{eqnarray}\label{4.5}
  \|( \langle j\rangle^{q_i-1} |u_j|)_{j\in\mathbb{Z}^*}\|_{\ell^1 (\mathbb{Z}^* ,\mathbb{C})}= \sum_{j\in\mathbb{Z}^*}\big(\langle j\rangle^{q_i} |u_j|  \cdot  \frac{1}{ \langle j\rangle }\big)  
  \leq  \sqrt{ \sum_{j\in\mathbb{Z}}\langle j\rangle^{-2} }\cdot \|u\|_{q_i}=c \|u\|_{q_i}.
   \end{eqnarray}
 In view of   (\ref{4.5}) and (\ref{jcc}), one has 
$$\|F(u^{(1)},\cdots ,u^{(r)})\|_{\ell^2(\mathbb{Z}^*,\mathbb{C})}\leq C c^{r-1}\big\|u^{(n)}\big\|_{q_n}\cdot  \prod_{i=1,i\neq n}^r \big\|u^{(i)}\big\|_{q_i}. $$
\end{proof}

\begin{Corollary}\label{r1}Given integers $p>q \geq 1$ and real number $\rho\geq 2$,
suppose  that there exists a positive number $C_f>0$  such that the coefficients of  an $r$-degree homogeneous polynomial
$$ f(u,\bar{u})=    \sum_{l,k\in \mathbb{N}^{\mathbb{Z}^*},\ i\in M_{f_r}\subseteq \mathbb{Z}  \atop{ |l+k|=r,\  {\cal M}(l,k)=i  } }f^i_{r,l k} u^{l}\bar{u}^{k}  $$ satisfy that for
any $l,k\in \mathbb{N}^{\mathbb{Z}^*}$ with ${\cal M}(l,k)=i \in M_{f_r}$
  \begin{eqnarray}\label{c-2}
  &&|f^i_{r,lk}|
  \leq  \frac{C_f}{ \langle i\rangle^{\rho}} l_j \langle j\rangle^{p-q+1} \big( \sum_{t\in\mathbb{Z}^*   } (l+k-e_j)_t  \langle t\rangle^{(p-q+1)}   \big) \prod_{m\in\mathbb{Z}^*} \langle m\rangle^{(q-1)(l_m+k_m)} .
  \end{eqnarray}
     Then  for any $(u,\bar{u})\in  {{\mathcal H}^p}(\mathbb Z^*,\mathbb C)$,  it satisfies that  
 $$ |f(u,\bar{u})|  \leq  C_f    c^{r-1}    r \|u\|^2_{p}\cdot \|u\|_{q}^{r-2} .$$ 
\end{Corollary}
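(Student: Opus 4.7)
The plan is to interpret the coefficient bound (\ref{c-2}) as effectively assigning $\langle\cdot\rangle^p$-weight to two of the $r$ ``slots'' of each monomial $u^l\bar u^k$ and $\langle\cdot\rangle^{q-1}$-weight to the remaining $r-2$ slots, and then to bound the resulting multi-linear expression in the spirit of Lemma \ref{f-v}. The key algebraic identity driving everything is $\langle j\rangle^{p-q+1}\cdot\langle j\rangle^{q-1}|u_j|=\langle j\rangle^p|u_j|$, which converts a $(q-1)$-type weight into a $p$-type weight.

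To execute this, I would first set $V_m:=\langle m\rangle^p|u_m|$ and $W_m:=\langle m\rangle^{q-1}|u_m|$ and substitute (\ref{c-2}) into the termwise bound $|f(u,\bar u)|\leq\sum_{|l+k|=r}|f^{\mathcal M(l,k)}_{r,lk}|\prod_m|u_m|^{l_m+k_m}$. Using the identity above, the factor $l_j\langle j\rangle^{p-q+1}$ upgrades one $W_j$-factor in the monomial to a $V_j$-factor, and the sum $\sum_t(l+k-e_j)_t\langle t\rangle^{p-q+1}$ does the same for a second, distinct slot, while the remaining $r-2$ slots retain their $W$-weight. Re-indexing from monomial multi-indices $(l,k)$ to ordered slot-tuples $(\epsilon_s,j_s)_{s=1}^r$ (with $\epsilon_s\in\{\pm\}$ marking $u$ versus $\bar u$), the multiplicities $l_j$ and $(l+k-e_j)_t$ combine with the standard monomial-to-tuple over-count $\binom{r}{l,k}$ to produce a single factor of $r$ in front of the resulting multi-linear sum
\[
|f(u,\bar u)|\ \leq\ C_f\,r\sum_{(\epsilon_1,j_1),\dots,(\epsilon_r,j_r)}\frac{V_{j_1}V_{j_2}\prod_{s=3}^rW_{j_s}}{\langle\sum_s\epsilon_s j_s\rangle^\rho}.
\]
I would then estimate this sum by the strategy used in the proof of Lemma \ref{f-v}: each of the $r-2$ $W$-sums yields $c\|u\|_q$ via the Cauchy--Schwarz estimate $\sum_{j_s}W_{j_s}\leq c\|u\|_q$, contributing $c^{r-2}\|u\|_q^{r-2}$; for the two $V$-factors paired with the momentum weight $\langle\cdot\rangle^{-\rho}$, a Cauchy--Schwarz in one of the indices followed by Young's inequality on the convolution with $\langle\cdot\rangle^{-\rho}$ (using $\rho\geq 2$ so that $\|\langle\cdot\rangle^{-\rho}\|_{\ell^2}\leq c$) produces $c\|u\|_p^2$. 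Multiplying these estimates yields exactly $C_f\,c^{r-1}\,r\,\|u\|_p^2\,\|u\|_q^{r-2}$, as required.

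The principal obstacle is the combinatorial bookkeeping required to produce exactly the factor $r$ (rather than $r^2$ or $r(r-1)$) in the conclusion: the multiplicities $l_j$ and $(l+k-e_j)_t$ in (\ref{c-2}) must be matched carefully against the $\binom{r}{l,k}$ over-count arising from reindexing monomials as ordered slot-tuples, and one must verify that the symmetry of the bound in the two distinguished slots is fully exploited. A secondary technical point is the sharp use of the $\rho\geq 2$ decay, which must be distributed between one Cauchy--Schwarz on the $V$-slots and one Young convolution so as to contribute only a single extra factor of $c$, rather than the $c^2$ one would obtain from the cruder bound $\sum_i\langle i\rangle^{-\rho}\leq c^2$.
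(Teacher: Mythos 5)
Your proposal and the paper's proof run on the same engine: interpret (\ref{c-2}) as putting a $\langle\cdot\rangle^{p}$-weight on two slots and a $\langle\cdot\rangle^{q-1}$-weight on the remaining $r-2$, then estimate the resulting sum by the iterated Young/Cauchy--Schwarz machinery of Lemma \ref{f-v}. The only real difference is presentational. The paper first writes $|f(u,\bar u)|\le\|F\|_{\ell^2}\|G\|_{\ell^2}$ with $G_j=\langle j\rangle^p|u_j|$ (peeling off one $p$-weighted slot explicitly as an inner product), and then estimates the vector field $F$ by decomposing it into the $(r-1)$-multilinear pieces $\widetilde F^n$ and invoking Lemma \ref{f-v} with $q_n=p$, $q_i=q$ ($i\ne n$); the sum over $n$ there is literally where the factor $r$ comes from, and the momentum decay $\langle i\rangle^{-\rho}$ is pulled outside via the triangle inequality and summed last. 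You instead work directly on the sum over ordered slot-tuples, with the two $p$-weights appearing as your $V$-factors. That is a legitimate reorganization of the same estimate, and you correctly identify the two places where care is needed: the combinatorial normalization giving exactly $r$, and the distribution of the $\rho\ge 2$ decay. Two remarks you may find useful for tightening the write-up: (i) the momentum $\sum_s\epsilon_sj_s$ is a function of \emph{all} slots, so one cannot literally perform the $r-2$ $W$-sums first and then treat the two $V$-factors plus $\langle\cdot\rangle^{-\rho}$ in isolation --- the clean way, as in the paper, is to split over the discrete momentum $i$ first, bound the fixed-$i$ multi-convolution by $\|V\|_{\ell^2}^2\|W\|_{\ell^1}^{r-2}$, and sum $\sum_i\langle i\rangle^{-\rho}$ at the end; (ii) your bookkeeping for the factor $r$ will come out differently from the paper's (multiplicity counting vs.\ the sum over $n=0,\dots,r-1$), and both are in fact somewhat loose about constants (the paper's $\sum_i\langle i\rangle^{-\rho}\le c$ step is not sharp with $c=(\sum_j\langle j\rangle^{-2})^{1/2}$), but this affects only the numerical constant, not the structure.
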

\begin{Remark}The 
result of Corollary \ref{r1}  still holds true for $\rho>1$.  To simplify the process of proof,  assume $\rho\geq2$. 
\end{Remark}
 \begin{proof}
 By Cauchy estimate,  
 \begin{equation}\label{inner}
 |f(u,\bar{u})|\leq |\langle F,G\rangle|\leq \|F\|_{\ell^2}\cdot \|G\|_{\ell^2},
 \end{equation}
where $G:=(\langle j\rangle^p|u_j|)_{j\in\mathbb{Z}^*}$ and $F(u,\bar{u})=(F_j(u,\bar{u}))_{j\in\mathbb{Z}^*}$
$$ F_j(u,\bar{u}):= \sum_{|l-e_j+k|=r-1,
 \atop{ {\cal M}(l,k)=i\in M_{f_r}\subseteq\mathbb{Z} }} \frac{f^i_{r,l k}}{\langle j\rangle^p }u^{l-e_j}\bar{u}^k . $$
For any $ n\in \{0,\ 1,\cdots,r-1\}$,  there exist an $(r-1)$-multiple linear vector field
$$   \widetilde{F}^n(u^{(1)} ,\cdots u^{(r-1
)} ):=\big( \sum_{|l-e_j|=n,
|k|=r-1-n,\atop{ i\in M_{f_r}\subseteq\mathbb{Z},\ j={\cal M}(l-e_j,k)-i }} \frac{f^i_{r,l k}}{\langle j\rangle^p} \underbrace{u^{(1)}\cdots u^{(n)}}_{n} \underbrace{{u}^{(n+1)}\cdots u^{(r-1)} }_{r-1-n} \big)_{j\in\mathbb{Z}^*}
 $$
such that 
$$F_j(u,\bar{u})= \sum_{n=0}^{r-1}    \widetilde{F}_j^{n}( \underbrace{u,\cdots ,u}_{n}, \underbrace{\bar{u},\cdots ,\bar{u}}_{r-1-n}),\quad \mbox{for any }{j\in \mathbb{Z}^*}, .$$
 By condition (\ref{c-2}),  the coefficients of each $(r-1)$-multiple linear vector fields $\widetilde{F}^{n}$ satisfy the condition (\ref{c-1}) of Lemma \ref{f-v} with $ q_n=p$ and $q_i=q\leq p\ (i\neq n)$.    Hence, by Lemma \ref{f-v}, for any $(u,\bar{u})\in  {\mathcal H}^p(\mathbb Z^*,\mathbb C)$, one has
 \begin{eqnarray}\label{e-v}
  &&\|F(u,\bar{u})\|_{\ell^2} \leq \sum_{n=0}^{r-1}  \| \widetilde{F}^{n}( \underbrace{u,\cdots ,u}_{n}, \underbrace{\bar{u},\cdots ,\bar{u}}_{r-1-n})\|_{\ell^2(\mathbb{Z}^*,\mathbb C)}\nonumber\\
  &\leq & C_f  c^{r-2} r  \|u\|_{p}\cdot \|u\|_{q}^{r-2} \sum_{i\in M_{f_r}\subset \mathbb{Z}} \langle i\rangle^{-\rho} \nonumber\\
  &\leq &C_f  c^{r-1} r\|u\|_{p}\cdot \|u\|_{q}^{r-2}   .
  \end{eqnarray}
   In view of (\ref{inner}) and (\ref{e-v}), the following inequality holds true
  \begin{eqnarray*}
  |f(u,\bar{u})|&\leq& \|F(u,\bar{u})\|_{\ell^2}\cdot \|G(u,\bar{u})\|_{\ell^2}\nonumber\\
   &\leq& \|F(u,\bar{u})\|_{\ell^2}\cdot\|u\|_p\leq C_f c^{r-1} r  \|u\|^2_{p}\cdot \|u\|_{q}^{r-2}.
  \end{eqnarray*}
 \end{proof}

Now give the proof of Proposition \ref{cor2}.
 \begin{proof}

\noindent \noindent In the case $\theta=0$,
\begin{eqnarray}\label{yy-1}
 &&\big\| {X}^{w_0}_{f^{w_0}_r}(u,\bar{u}) \big\|_{p-1}
 = \sqrt{    \|  \nabla_{\bar{u}} f^{w_0}_r(u,\bar{u})\|_{p-1}^2+\| \nabla_{u} f^{w_0}_r(u,\bar{u}) \|_{p-1}^2     }.
   \end{eqnarray}
Note that $ \|  \nabla_{\bar{u}} f^{w_0}_r(u,\bar{u})\|_{p-1}$ equals to  
 equals to $\ell^2(\mathbb Z^*,\mathbb C)$ norm of the following vector field
\begin{small}\begin{equation}\label{lso}
 \bigg(  \sum_{j={\cal M}(l,k-e_j)-i\atop{l,k\in \mathbb{N}^{\mathbb{Z}^*},\ i\in M_{f^{w_0}_r}\subseteq\mathbb{Z}}}   k_j \langle j\rangle^{p-1} \cdot\sum_{( l^0,k^0,i^0)\in{\cal A}_{(f^{w_0})^i_{r,lk}}} (f^{w_0})^{i(l^0,k^0,i^0)}_{r,{l k}}\big( {\cal M}( l^0,k^0)-\frac{i^0}2\big)u^l\bar{u}^{k-e_j} \bigg)_{j\in\mathbb{Z}^*} .
  \end{equation}
  \end{small}
  Accordingly,  $ \|  \nabla_{{u}} f^{w_0}_r(u,\bar{u})\|_{p-1}$ equals to  
 equals to $\ell^2(\mathbb Z^*,\mathbb C)$ norm of the following vector field
\begin{small}\begin{equation}\label{lso-1}
 \bigg(  \sum_{j=i-{\cal M}(l-e_j,k)\atop{l,k\in \mathbb{N}^{\mathbb{Z}^*},\ i\in M_{f^{w_0}_r}\subseteq\mathbb{Z}}}   l_j \langle j\rangle^{p-1} \cdot\sum_{( l^0,k^0,i^0)\in{\cal A}_{(f^{w_0})^i_{r,lk}}} (f^{w_0})^{i(l^0,k^0,i^0)}_{r,{l k}}\big( {\cal M}( l^0,k^0)-\frac{i^0}2\big)u^{l-e_j}\bar{u}^{k} \bigg)_{j\in\mathbb{Z}^*} .
  \end{equation}
  \end{small}
In order to  use  Lemma  \ref{f-v}  to give the $\ell^2$-norm of (\ref{lso}) and (\ref{lso-1}), it is required to  estimate the coefficients of vector fields (\ref{lso}) and (\ref{lso-1}). Note that for any fixed $l,k\in\mathbb{N}^{\mathbb{Z^*}}$ and $j\in\mathbb{Z}^*$ with $(f^{w_0})^{i}_{r,lk}\neq 0$ and  $k_j\neq0$,  the indices satisfy
 \begin{equation}\label{ab}
 \mbox{sgn}(k_j)\cdot j={\cal M}(l,k-e_j)-i.
  \end{equation}
  Since $p>2$, it follows  
  \begin{eqnarray}\label{ab1}
  |j|^{p-1}&\leq& r^{p-1}(\sum_{t\in\mathbb{Z}^*} l_t|t|^{p-1}+ \sum_{t\in\mathbb{Z}^*,t\neq j} k_t|t|^{p-1}+(k_j-1)\cdot |j|^{p-1} +|i|^{p-1} )\nonumber\\
  &\leq& 2r^{p-1}(\sum_{t\in\mathbb{Z}^*} l_t|t|^{p-1}+ \sum_{t\in\mathbb{Z}^*,t\neq j} k_t|t|^{p-1}+(k_j-1)\cdot |j|^{p-1} )\cdot\langle i\rangle^{p-1}  ,
   \end{eqnarray}
the last inequality holds true by the fact that for any integer $a,\ b \geq 1$, 
\begin{equation}\label{a---b}
 a+b\leq ab+1\leq 2ab.
\end{equation}
Furthermore, for any $(l^0,k^0,i^0)\in {\cal A}_{(f^{w_0})_{r,lk}^i}$, it holds that
\begin{equation}\label{zzz1}
0\leq l^0_j\leq l_j,\quad 0\leq k^0_j\leq k_j,\quad \mbox{for any }j\in\mathbb{Z}^*
\end{equation}
and
the momentum of $(l^0,k^0)$ equals to
\begin{equation}\label{4.14-a}
 {\cal M}(l^0,k^0)=\left\{
 \begin{array}{ll}
{\cal M}\big(l^0,k^0-\mbox{sgn}(k_j)e_j\big)-\mbox{sgn}(k_j)\cdot  j ,& k_j^0\neq 0,\\
\\
{\cal M}(l^0,k^0),&k_j^0=0.
 \end{array}
 \right.
 \end{equation}
 Together with (\ref{ab}), (\ref{a---b}), (\ref{zzz1}) and (\ref{4.14-a}),  
 one has 
\begin{eqnarray}\label{M-l^0}
|{\cal M}(l^0,k^0)-\frac{i^0}2|\leq
 \!4\big(\sum_{n\in\mathbb{Z}^*}l_n |n|\!+\!\sum_{n\in\mathbb{Z}^*\atop{n\neq j}}k_n|n|+(k_j-1)|j| \big) \!\cdot\! \max\{ \langle i^0\rangle, \langle i-i^0\rangle \}.
\end{eqnarray}
In view of   (\ref{ab}), (\ref{ab1}),  (\ref{M-l^0})  and  $ f^{w_0}(u,\bar{u})$ having the ($\beta,0$)-type symmetric coefficients semi-bounded by $C_{f^{w_0}}>0$,  one has 
  \begin{eqnarray}\label{co-2}
 &&   \langle j\rangle^{p-1}\cdot\big|\sum_{(l^0,k^0,i^0)\in{\cal A}_{(f^{w_0})^i_{r,lk}}} k_j(f^{w_0})^{i(l^0,k^0,i^0)}_{r,{lk}} \big({\cal M}(l^0,k^0)-\frac{i^0}2\big)\big|\nonumber\\
&\leq& \frac{8C_{f^{w_0}}^{r-2}  r^{p}}{\langle i\rangle^{\beta-p+1}} \big(\sum_{m\in\mathbb{Z}^*,\ l_m+k_m>0} \langle m\rangle^{\mbox{sgn} (l_m+(k-e_j)_m )}\prod_{t\in\mathbb{Z}^* } \langle t\rangle^{(p-1)\mbox{sgn} (l_t+(k-e_j)_t )} \big). 
 \end{eqnarray}
Therefor,  using  (\ref{co-2}) and  Lemma  \ref{f-v} by taking $ q_n=p$ and $q_i=2\ (i\neq n)$, the following inequality holds true 
{\begin{eqnarray*} 
&& \|  \nabla_{\bar{u}} f^{w_0}_r(u,\bar{u})\|_{p-1}\nonumber\\
&\leq&  \bigg\|\big( \sum_{j={\cal M}(l,k-e_j)-i\atop{l,k\in \mathbb{N}^{\mathbb{Z}^*},\ i\in M_{f^{w_0}_r}\subseteq\mathbb{Z},}}  \sum_{( l^0,k^0,i^0)\in{\cal A}_{(f^{w_0})^i_{r,lk}}} \big| k_j j^{p-1} (f^{w_0})^{i(l^0,k^0,i^0)}_{r,{ l k}} \big({\cal M}( l^0,k^0)-\frac{i^0}2\big)u^l\bar{u}^{k-e_j}\big| \big)_{j\in\mathbb{Z}^*}\bigg\|_{\ell^2} \nonumber\\
   &\leq & 8r^{p+1}  c^{r-1} C_{f^{w_0}}^{r-2} 
  \|u\|_{2}^{r-2}\|u\|_{p}   .
   \end{eqnarray*}
   Similarly, one has $$ \|  \nabla_{{u}} f^{w_0}_r(u,\bar{u})\|_{p-1} \leq8r^{p+1}  c^{r-1} C_{f^{w_0}}^{r-2} 
  \|u\|_{2}^{r-2}\|u\|_{p}. $$
 Thus, 
}  \begin{eqnarray*}
  \big\|X^{w_0}_{f^{w_0}}(u,\bar{u})\big\|_{p-1}
\leq    16r^{p+1}  c^{r-1} C_{f^{w_0}}^{r-1}  
   \|u\|_{2}^{r-2}\|u\|_{p} .
\end{eqnarray*}

\noindent In the case $\theta=1$,
\begin{equation}\label{yy-2}
 \big\| {X}^{w_1}_{f^{w_1}_r}(u,\bar{u}) \big\|_{p-1}
 = \sqrt{    \|  \nabla_{\bar{u}} f^{w_1}_r(u,\bar{u})\|_{p-1}^2+\| \nabla_{u} f^{w_1}_r(u,\bar{u}) \|_{p-1}^2     } .
   \end{equation}
Similarly,  $ \|  \nabla_{\bar{u}} f^{w_1}_r(u,\bar{u})\|_{p-1}$ and $ \| \nabla_{u} f^{w_1}_r(u,\bar{u}) \|_{p-1}$ equal to   $\ell^2(\mathbb Z^*,\mathbb C)$-norm of the following vector fields respectively
 \begin{equation}\label{jjg}
 \big(k_j\cdot \langle j\rangle^{p-1} \langle j\rangle^{\frac12} \sum_{j={\cal M}(l,k-e_j)-i\atop{l,k\in \mathbb{N}^{\mathbb{Z}^*},\ i\in M_{f^{w_1}_r}\subseteq\mathbb{Z}}}    (\tilde{f}^{w_1})^{i}_{r,{l k}} \prod_{n\in\mathbb{Z}^*} \langle n\rangle^{\frac12 l_n}\prod_{n\in\mathbb{Z}^*\atop{n\neq j}} \langle n\rangle^{\frac12 k_n} u^l\bar{u}^{k-e_j} \big)_{j\in\mathbb{Z}^*}
 \end{equation}
 and
  \begin{equation} 
 \big(l_j\cdot \langle j\rangle^{p-1} \langle j\rangle^{\frac12} \sum_{j=i-{\cal M}(l-e_j,k)\atop{l,k\in \mathbb{N}^{\mathbb{Z}^*},\ i\in M_{f^{w_1}_r}\subseteq\mathbb{Z}}}    (\tilde{f}^{w_1})^{i}_{r,{l k}} \prod_{n\in\mathbb{Z}^*} \langle n\rangle^{\frac12 l_n}\prod_{n\in\mathbb{Z}^*\atop{n\neq j}} \langle n\rangle^{\frac12 k_n} u^{l-e_j}\bar{u}^{k} \big)_{j\in\mathbb{Z}^*}.
 \end{equation}
  For any  fixed $l,k\in\mathbb{N}^{\mathbb{Z}^*} $ and $j\in\mathbb{Z}^*$ with  $(f^{w_1})^i_{r,lk}\neq0$. ${\cal M}(l,k)=i$ and $k_j\neq0$,  (\ref{ab}) still holds true. By (\ref{ab}) and (\ref{ab1}),  
 the coefficients of the vector field in (\ref{jjg}) are bounded by 
 \begin{eqnarray}\label{jjj}
 &&  \langle j\rangle^{p-\frac12}\cdot | k_j  (\tilde{f}^{w_1})^{i}_{r,{ l k}} |\prod_{n\in\mathbb{Z}^*} \langle n\rangle^{\frac12 l_n}\prod_{n\in\mathbb{Z}^*\atop{n\neq j}} \langle n\rangle^{\frac12 k_n} \nonumber\\
 &\leq&2\frac{C_{f^{w_1}}^{r-2}  r^{p-1}}{   \langle i\rangle^{\beta-p+\frac12} } \big(\sum_{m\in\mathbb{Z}^*} {l_m} \langle m\rangle^{p-\frac12}  +\sum_{m\in\mathbb{Z}^*\atop{m\neq j}} k_m\langle m\rangle^{p-\frac12}+(k_j-1)\langle j\rangle^{p-\frac12}\big) \prod_{t\in\mathbb{Z}^* }  \langle t\rangle^{\frac12(l_t+(k-e_j)_t)} .\nonumber\\
 \end{eqnarray}
Using Lemma  \ref{f-v} and (\ref{jjj}), one has 
 \begin{eqnarray*} 
\|  \nabla_{\bar{u}} f^{w_1}_r(u,\bar{u})\|_{p-1}\leq 8C_{f^{w_1}}^{r-2}r^{p+1}  c^{r-1} 
  \|u\|_{2}^{r-2}\|u\|_{p} . 
  \end{eqnarray*}
Similarly,  $$ \|  \nabla_{{u}} f^{w_1}_r(u,\bar{u})\|_{p-1}\leq 8C_{f^{w_1}}^{r-2}r^{p+1}  c^{r-1} 
  \|u\|_{2}^{r-2}\|u\|_{p}. $$
Thus,    
  \begin{eqnarray*}
  \big\|X^{w_1}_{f^{w_1}}(u,\bar{u})\big\|_{p-1} 
\leq 16r^{p+1}  c^{r-1} C_{f^{w_1}}^{r-2}  
  \|u\|_{2}^{r-2}\|u\|_{p} .
\end{eqnarray*}

\noindent By the same approach, when   $f$ has $\beta$-bounded symmetric coefficients bounded by $C_f>0$, one has  
 \begin{eqnarray*}
 \|X^{w_{\theta}}_f\|_p\leq  16 C_{f }^{r-2} r^{p+1}  c^{r-1}   
  \|u\|_{2}^{r-2}\|u\|_{p} .
  \end{eqnarray*}
 \end{proof}

Next  the proof of Proposition \ref{2.1} is given.
\begin{proof} %{\bf Proof of Lemma \ref{2.1}:}
 Step 1: (delete unbounded part)

 \noindent
 In the case $\theta =0$, since $f^{w_0}_r(u,\bar{u})$ has $(\beta,0)$-type symmetric coefficients, assume that $f^{w_0}_r(u,\bar{u})$ has  the following form
 $$ f^{w_0}_r(u,\bar{u})=  \sum_{|l+k|=r\atop{{\cal M}(l,k)=i\in M_{f^{w_0}_r}\subseteq\mathbb{Z}}}  \sum_{( {l}^0,{k}^0,i^0)\in {\cal A}_{(f^{w_0})^i_{r,lk} }}\  {(f^{w_0})}^{i( {l}^0,{k}^0,i^0)}_{r,lk}\big({\cal M}( {l}^0, {k}^0)-\frac{i^0}2\big)u^l\bar{u}^k. $$
 Under the definition of Possion bracket, it holds that
\begin{eqnarray}
&&\{f^{w_0}_r(u,\bar{u}),\|u\|_p^2\}_{w_0}=\sum_{j\in \mathbb Z^*}   \bigg(\frac{\partial
f^{w_0}_{r}}{\partial u_j}\ {\bf i} \  \frac{\partial \|u\|_p^2}{\partial
\bar{u}_j}\bigg)-\sum_{j\in \mathbb Z^*}  \bigg(
\frac{\partial f^{w_0}_{r}}{\partial \bar{u}_j}\ {\bf i}  \ \frac{\partial
\|u\|_p^2}{\partial u_j}\bigg)\nonumber\\
&=& {\bf i} \sum_{|l+k|=r\atop{{\cal M}(l,k)=i\in M_{f^{w_0}_r}}}(l_j-k_j)\langle j\rangle^{2p} \sum_{( {l}^0,{k}^0,i^0)\in {\cal A}_{(f^{w_0})^i_{r,lk} }}\  (f^{w_0})^{i( {l}^0,{k}^0,i^0)}_{r,lk}\big({\cal M}( {l}^0, {k}^0)-\frac{i^0}2\big) u^l\bar{u}^k. \nonumber
\end{eqnarray}
For the sake of convenience, rewrite  $\{f^{w_0}_r(u,\bar{u}),\|u\|_p^2\}_{w_0}$ as the sum of the following two parts
{\footnotesize
$$O^+(u,\bar{u}):={\bf i} \sum_{|l+k|=r\atop{{\cal M}(l,k)=i\atop{i\in M_{f^{w_0}_r} \subseteq\mathbb{Z}}}}(l_j-l_j^0-k_j+k_j^0) \langle j\rangle^{2p}  \sum_{( {l}^0,{k}^0,i^0)\in {\cal A}_{(f^{w_0})^i_{r,lk} }}\  (f^{w_0})^{i( {l}^0,{k}^0,i^0)}_{r,lk}\big({\cal M}( {l}^0, {k}^0)-\frac{i^0}2\big) u^l\bar{u}^k,$$
}
\begin{small}
$$O^-(u,\bar{u}):={\bf i} \sum_{|l+k|=r\atop{{\cal M}(l,k)=i\atop{i\in M_{f^{w_0}_r} \subseteq\mathbb{Z}}}}(l^0_j-k^0_j )\langle j\rangle^{2p} \sum_{( {l}^0,{k}^0,i^0)\in {\cal A}_{(f^{w_0})^i_{r,lk} }}  (f^{w_0})^{i( {l}^0,{k}^0,i^0)}_{r,lk}\big({\cal M}( {l}^0, {k}^0)-\frac{i^0}2\big) u^l\bar{u}^k.$$
\end{small}
Since the coefficients of $f^{w_0}_r(u,\bar{u})$ are $(\beta,0)$-type symmetric,  
take complex conjugation to $O^-(u,\bar{u})$ and obtain
\begin{eqnarray}\label{f-f}
&&\overline{O^-(u,\bar{u})}\nonumber\\
 &=&\overline{{\bf i}\sum_{i\in M_{f^{w_0}_r}} \sum_{|l+k|=r\atop{{\cal M}(l,k)=i}}( l_j^0- k_j^0)\langle j\rangle^{2p}  \sum_{( {l}^0,{k}^0,i^0)\in {\cal A}_{(f^{w_0})^i_{t,lk} }}\  (f^{w_0})^{i( {l}^0,{k}^0,i^0)}_{t,lk}\big({\cal M}( {l}^0, {k}^0)-\frac{i^0}2\big) u^l\bar{u}^k }\nonumber\\
 &=& {\bf i}\sum_{-i\in M_{f^{w_0}_r}} \sum_{|l+k|=r\atop{{\cal M}(k,l)=-i}}( k^0_j - l_j^0)\langle j\rangle^{2p} \sum_{( k-k^0, l-{l}^0,i^0-2i)\in {\cal A}_{(f^{w_0})^{-i}_{t,kl} }}\  (f^{w_0})^{-i( k-{k}^0,l-{l}^0,i^0-2i)}_{t,kl}\nonumber\\
 &&\cdot\big({\cal M}( k-k^0,l-{l}^0)-(\frac{i^0}2-i)\big) u^k\bar{u}^l  \nonumber\\
 &=&O^+(u,\bar{u}).
 \end{eqnarray} Together with (\ref{f-f}), rewrite   $\{f^{w_0}_r(u,\bar{u}),\|u\|_{p }^2\}_{w_0}$ as the following 
\begin{eqnarray*}
 \{f^{w_0}_r(u,\bar{u}),\|u\|_{p }^2\}_{w_0}&=&
2{\bf Re} O^+(u,\bar{u})\nonumber\\
&=&A^0(u,\bar{u})+ A^1(u,\bar{u})+A^2(u,\bar{u}),
\end{eqnarray*} 
where
 \begin{eqnarray}\label{a0}
A^0(u,\bar{u})\!:=-\! {\bf Re}\! \sum_{|l+k|=r\atop{{\cal M}(l,k)=i\atop{i\in M_{f^{w_0}_r}\subseteq\mathbb{Z}}}}{\bf i}(l_j\!-\!l^0_j\!-\!k_j\!+\!k^0_j)\! \langle j\rangle^{2p} \sum_{\! ( {l}^0,{k}^0,i^0)\!\in \!{\cal A}_{(f^{w_0})^i_{r,lk} }}\! \! i^0\! (f^{w_0})^{i( {l}^0,{k}^0,i^0)}_{t,lk} u^l\bar{u}^k;  
\end{eqnarray}
\begin{eqnarray}\label{a1}
A^1(u,\bar{u}):&=&2 {\bf Re} \sum_{|l+k|=r\atop{{\cal M}(l,k)=i\in M_{f^{w_0}_r}\subseteq\mathbb{Z}}}{\bf i}(l_j-l^0_j-k_j+k^0_j)\langle j\rangle^{p} \nonumber\\
&&\cdot\sum_{( {l}^0,{k}^0,i^0)\in {\cal A}_{(f^{w_0})^i_{r,lk} }}\  (f^{w_0})^{i( {l}^0,{k}^0,i^0)}_{r,lk}\sum_{t\in\mathbb{Z}^*} (l^0_t-k^0_t)t(\langle j\rangle^{p}-\langle t\rangle^{p} )  u^l\bar{u}^k
\end{eqnarray}
and
\begin{eqnarray}\label{1-8}
A^2(u,\bar{u})&:=&2  {\bf Re} \sum_{|l+k|=r\atop{{\cal M}(l,k)=i\in M_{f^{w_0}_r}\subseteq\mathbb{Z}}}{\bf i}(l_j-l^0_j-k_j+k^0_j)\langle j\rangle^{p}\nonumber\\
 &&\cdot \sum_{( {l}^0,{k}^0,i^0)\in {\cal A}_{(f^{w_0})^i_{r,lk} }}\  (f^{w_0})^{i( {l}^0,{k}^0,i^0)}_{r,lk}\sum_{t\in\mathbb{Z}^*} (l^0_t-k^0_t)\cdot t \cdot \langle t\rangle^{p} u^l\bar{u}^k.
 \end{eqnarray}
 The estimate of $\{f_r^{w_0}(u,\bar{u}),\|u\|_p^2\}_{w_0}$ follows the estimates of $A^0(u,\bar{u})$, $A^1(u,\bar{u})$ and $A^2(u,\bar{u})$.
In fact, I can not estimate $A^2(u,\bar{u})$ by Corollary \ref{r1}  directly, because the coefficients of $A^2(u,\bar{u})$ are not satisfy condition (\ref{c-2}). Fortunately  the bad  unbounded part (not satisfy the condition (\ref{c-2})) in $A^2(u,\bar{u})$ can be handled by   $(\beta,0)$-type symmetric property of $f^{w_0}_r(u,\bar{u})$. Then $A^2(u,\bar{u})$ is transformed  into
a new  form,  the  coefficients of which satisfy (\ref{c-2}). Thus, the estimate of $A^2(u,\bar{u})$   can be obtained by Corollary \ref{r1}.

\noindent
 Now   the details of deleting the unbounded terms in $A^2(u,\bar{u})$ are given in the follows.
  For any $(l^0,k^0,i^0)\in{\cal A}_{(f^{w_0})^i_{r,lk}}$, it holds that
 \begin{eqnarray}\label{A2l}
  (l_t^0-k_t^0)\cdot t&=&\underbrace{\sum_{n\neq j}\big((k-k^0)_n-(l-l^0)_n\big)\cdot n+i-\sum_{n\neq t}(l_n^0-k_n^0)\cdot n}_{R^{w_0}(l,k,t,j,i)}\nonumber\\
 &&+ \big((k-k^0)_j-(l-l^0)_j\big)\cdot j.
 \end{eqnarray}
Using (\ref{A2l}) \begin{eqnarray}\label{A2}
&&A^2(u,\bar{u})\nonumber\\
&=&2{\bf Re}\sum_{i\in M_{f^{w_0}_r}\subseteq\mathbb{Z},} \sum_{|l+k|=r\atop{{\cal M}(l,k)=i}}{\bf i}(l_j-l^0_j-k_j+k^0_j)\langle j\rangle^{p} \nonumber\\
&&\quad\cdot \sum_{( {l}^0,{k}^0,i^0)\in {\cal A}_{(f^{w_0})^i_{r,lk} }}\  {(f^{w_0})}^{i( {l}^0,{k}^0,i^0)}_{r,lk} \sum_{t\in\mathbb{Z}^*} R^{w_0}(l,k,t,j,i) \langle t\rangle^{p}  u^l\bar{u}^k\nonumber\\
&+&2{\bf Re}\sum_{i\in M_{f^{w_0}_r}\subseteq \mathbb{Z},} \sum_{|l+k|=r\atop{{\cal M}(l,k)=i}}{\bf i}(l^0_t-k^0_t)\langle t\rangle^{p}  \nonumber\\
&&\cdot\sum_{( {l}^0,{k}^0,i^0)\in {\cal A}_{(f^{w_0})^i_{r,lk} }}\  {(f^{w_0})}^{i( {l}^0,{k}^0,i^0)}_{r,lk} \sum_{t\in\mathbb{Z}^*}  \big((k-k^0)_j- (l-l^0)_j\big)\cdot j \langle j\rangle^{p}   u^l\bar{u}^k.
\end{eqnarray}
Since the two parts in the right side of  (\ref{A2}) are real value functions,  they are invariant under complex conjugation. Taking complex conjugation to the second part of the right side of (\ref{A2}) and using that fact that $f_r^{w_0}(u,\bar{u})$ has $(\beta,0)$-type symmetric coefficients, it leads to  
\begin{eqnarray}\label{con}
&&\overline{2{\bf Re}\sum_{i\in M_{f^{w_0}_r}\subseteq\mathbb{Z}} \sum_{|l+k|=r\atop{{\cal M}(l,k)=i}}{\bf i}(l^0_t-k^0_t)\langle t\rangle^{p}}\nonumber\\
 &&\cdot \overline{\sum_{( {l}^0,{k}^0,i^0)\in {\cal A}_{(f^{w_0})^i_{r,lk} }}\  {(f^{w_0})}^{i( {l}^0,{k}^0,i^0)}_{r,lk} \sum_{t\in\mathbb{Z}^*}  (k_j-k^0_j- l_j+l^0_j)\cdot j \langle j\rangle^{p} u^l\bar{u}^k}\nonumber\\
&=&-2{\bf Re}\sum_{-i\in M_{f^{w_0}_r}\subseteq\mathbb{Z}} \sum_{|l+k|=r\atop{{\cal M}(k,l)=-i}}{\bf i}(l^0_t-k^0_t)\langle t\rangle^{p}\nonumber\\
  &\cdot&\sum_{( k-k^0,l-{l}^0,i^0-i)\in {\cal A}_{(f^{w_0})^{-i}_{r,kl} }}\  (f^{w_0})^{-i( k-k^0,l-{l}^0,i^0-i)}_{r,kl} \sum_{t\in\mathbb{Z}^*}  (k_j-k^0_j- l_j+l^0_j)\cdot j \langle j\rangle^{p}   u^k\bar{u}^l\nonumber\\
&=&-A^{2}(u,\bar{u}).
\end{eqnarray}
Together with (\ref{A2}) and   (\ref{con}), it follows
\begin{eqnarray}\label{a2}
 A^2({u,\bar{u}})
&=&{\bf Re}\sum_{i\in M_{f^{w_0}_r}\subseteq\mathbb{Z},} \sum_{|l+k|=r\atop{{\cal M}(l,k)=i}}{\bf i}(l_j-l^0_j-k_j+k^0_j)\langle j\rangle^{p} \nonumber\\
 &&\cdot\sum_{( {l}^0,{k}^0,i^0)\in {\cal A}_{(f^{w_0})^i_{r,lk} }}\  (f^{w_0})^{i( {l}^0,{k}^0,i^0)}_{r,lk} \sum_{t\in\mathbb{Z}^*} R^{w_0}(l,k,t,j,i)\langle t\rangle^{p}u^l\bar{u}^k.
\end{eqnarray}

\vspace{12pt}

\noindent
In the case $\theta=1$,   
\begin{eqnarray}\label{4.30}
\{f^{w_1}_r(u,\bar{u}),\|u\|_{p}^2\}_{w_1}&=&Q^+(u,\bar{u})+Q^-(u,\bar{u}),
\end{eqnarray}
where
\begin{eqnarray*}
&&Q^+(u,\bar{u}):= \sum_{|l+k|=r \atop{{\cal M}(l,k)=i\in M_{f^{w_1}_r}}}{\bf i}\mbox{sgn}(j) l_j\cdot \langle j\rangle^{2p} \prod_{n\in\mathbb{Z}^*}\langle n\rangle^{\frac12(l_n+k_n)} (\tilde{f}^{w_1})^{i}_{r,{l k} }  u^{l} \bar{u}^{k},\nonumber\\
&& Q^-(u,\bar{u}):=- \sum_{|l+k|=r \atop{{\cal M}(l,k)=i\in M_{f^{w_1}_r}}}{\bf i}\mbox{sgn}(j)   k_j\cdot \langle j\rangle^{2p} \prod_{n\in\mathbb{Z}^*}\langle n\rangle^{\frac12(l_n+k_n)} (\tilde{f}^{w_1})^{i}_{r,{l k} }  u^{l} \bar{u}^{k}.
 \end{eqnarray*}
Since the coefficients of $f^{w_1}_r(u,\bar{u})$ are $(\beta,1)$-type symmetric, it holds that
\begin{eqnarray}\label{o^+}
\overline{Q^-(u,\bar{u})}=
 Q^+(u,\bar{u}).
\end{eqnarray}
 From (\ref{4.30}) and (\ref{o^+}), one has 
 \begin{eqnarray}\label{4.32-a}
&&\{f^{w_1}_r,\|u\|_{p}^2\}_{w_1}=2{\bf Re} Q^+(u,\bar{u})\nonumber\\
&=&2{\bf Re} \sum_{i\in M_{f^{w_1}_r}} \sum_{|l+k|=r \atop{{\cal M}(l,k)=i}}{\bf i}\mbox{sgn}(j)   l_j\cdot \langle j\rangle^{2p} \prod_{n\in\mathbb{Z}^*}\langle n\rangle^{\frac12(l_n+k_n)} (\tilde{f}^{w_1})^{i}_{r,{l k} }  u^{l} \bar{u}^{k}\nonumber\\
&=&2{\bf Re} \sum_{i\in M_{f^{w_1}_r}}{\bf i}\sum_{|l+k|=r\atop{{\cal M}(l,k)=i}} (\tilde{f}^{w_1})_{r,lk}^{i}\langle j\rangle^{2p-1}l_j\cdot j\prod_{n\in\mathbb{Z}^*}\langle n\rangle^{\frac{l_n+k_n}2} u^l\bar{u}^k,
 \end{eqnarray}
the last equation is obtained by $\mbox{sgn}(j) |j|=j$.  For any $l,k\in \mathbb{N}^{\mathbb Z^*}$ and any $j\in\mathbb{Z}^*$ with ${\cal M}(l,k)=i\in M_{f_r^{w_1}}$ and $l_j\neq 0$,  one has  that
 \begin{equation}\label{sjs}
 l_j\cdot j=i-{\cal M}(l-l_j e_j,k)=i-\sum_{n\in\mathbb{Z}^*\atop{n\neq j}}l_n\cdot n+\sum_{n\in \mathbb{Z}^*}k_n\cdot n.
 \end{equation}
  Together with (\ref{4.32-a}) and (\ref{sjs}),  
  \begin{eqnarray}
 \{f^{w_1}_r(u,\bar{u}),\|u\|_p^2\}_{w_1}=E(u,\bar{u})+B(u,\bar{u})+D(u,\bar{u}),
\end{eqnarray}
where {\footnotesize
 \begin{eqnarray}\label{E()}
    E(u,\bar{u}):=2  {\bf Re}   \sum_{|l+k|=r\atop{{\cal M}(l,k)=i\in M_{f^{w_1}_r}}} {\bf i} (\tilde{f}^{w_1})_{r,lk}^{i}\mbox{sgn}(l_j) \langle j\rangle^{2p-1}(\sum_{n\in\mathbb{Z}^*}k_n \cdot n) \prod_{m\in\mathbb{Z}^*}\langle m\rangle^{\frac12(l_m+k_m)}     u^l\bar{u}^k,
    \\
\label{B()}  B(u,\bar{u}):=-2 {\bf Re}   \sum_{|l+k|=r\atop{{\cal M}(l,k)=i\in M_{f^{w_1}_r}}} {\bf i}(\tilde{f}^{w_1})_{r,lk}^{i} \mbox{sgn}(l_j)\langle j\rangle^{2p-1}( \sum_{n\in\mathbb{Z}^*\atop{n\neq j}}l_n \cdot n )\prod_{m\in\mathbb{Z}^*}\langle m\rangle^{\frac12(l_m+k_m)}    u^l\bar{u}^k,  \\
\label{D()}  D(u,\bar{u}):=2 {\bf Re}  \sum_{|l+k|=r\atop{{\cal M}(l,k)=i\in M_{f^{w_1}_r}}} {\bf i}(\tilde{f}^{w_1})_{r,lk}^{i}\mbox{sgn}(l_j) \langle j\rangle^{2p-1}i\prod_{m\in\mathbb{Z}^*}\langle m\rangle^{\frac12(l_m+k_m)} u^l\bar{u}^k.
            \end{eqnarray}
}
 In order to estimate of $E(u,\bar{u})$ and $B(u,\bar{u})$,    $E(u,\bar{u})$ is rewritten as the sum of $E^1(u,\bar{u})$ and $E^2(u,\bar{u})$, where {\footnotesize \begin{eqnarray}\label{e1}
    \label{E1()} E^1(u,\bar{u}):
     = 2 {\bf Re} \sum_{|l+k|=r\atop{{\cal M}(l,k)=i\atop{i\in M_{f_r^{w_1}}}}} {\bf i}(\tilde{f}^{w_1})_{r,lk}^{i} \mbox{sgn}(l_j)\langle j\rangle^{p-\frac12}(\langle j\rangle^{p-\frac12} -\langle n\rangle^{p-\frac12}) \sum_{n}nk_n  \prod_{m\in\mathbb{Z}^*}\langle m\rangle^{\frac12(l_m+k_m)}     u^l\bar{u}^k , \\
\label{E2()}    E^2(u,\bar{u}): = 2 {\bf Re}\sum_{|l+k|=r\atop{{\cal M}(l,k)=i\atop{i\in M_{f_r^{w_1}}}}} {\bf i} (\tilde{f}^{w_1})_{r,lk}^{i} \mbox{sgn}(l_j)\langle j\rangle^{p-\frac12} (\sum_{n}k_n \cdot n\langle n\rangle^{p-\frac12}) \prod_{m\in\mathbb{Z}^*}\langle m\rangle^{\frac12(l_m+k_m)}   u^l\bar{u}^k
    \end{eqnarray}}
    and  $B(u,\bar{u})$ is rewritten as the sum of $B^1(u,\bar{u})$ and $B^2(u,\bar{u})$, where
   {\footnotesize \begin{eqnarray}\label{b1}
    \label{B1()} B^1(u,\bar{u})\!:=\!-2 {\bf Re}  \sum_{|l+k|=r\atop{{\cal M}(l,k)=i\atop{i\in M_{f_r^{w_1}}}}} {\bf i}(\tilde{f}^{w_1})_{r,lk}^{i}\mbox{sgn}(l_j) \langle j\rangle^{p-\frac12} \sum_{n\neq j}l_n \cdot n (\langle j\rangle^{p-\frac12}-\langle n\rangle^{p-\frac12}) \prod_{m\in\mathbb{Z}^*}\langle m\rangle^{\frac12(l_m+k_m)}    u^l\bar{u}^k , \\
    \label{B2()}  B^2(u,\bar{u}):=-2 {\bf Re} \sum_{|l+k|=r\atop{{\cal M}(l,k)=i\in M_{f_r^{w_1}}}}\!\! {\bf i}(\tilde{f}^{w_1})_{r,lk}^{i}\mbox{sgn}(l_j) \langle j\rangle^{p-\frac12}( \sum_{n\neq j}l_n \cdot n \langle n\rangle^{p-\frac12} )\prod_{m\in\mathbb{Z}^*}\langle m\rangle^{\frac12(l_m+k_m)}     u^l\bar{u}^k .  \mbox{     }
    \end{eqnarray}}
  Using the $(\beta,1)$-type symmetric property of $f^{w_1}_r(u,\bar{u})$,   delete the bad unbounded parts (not satisfy the condition (\ref{c-2})) in $B^2(u,\bar{u})$ and $E^2(u,\bar{u})$ in the followings.
    For any $l,k\in\mathbb{N}^{\mathbb{Z}^*}$ and any $j,n\in\mathbb{Z^*}$ with $k_n\neq 0$, $l_j\neq 0$ and ${\cal M}(l,k)=i$, the following equation holds true 
   \begin{equation}\label{kk}  k_n\cdot n=\underbrace{i+\sum_{t\in\mathbb{Z}^*\atop{t\neq j}}l_t\cdot t-\sum_{t\in\mathbb{Z}^*,\atop{t\neq n}}k_t\cdot t}_{R^{w_1}_+(l,k,n,j,i)}+ l_j\cdot j.
   \end{equation}
    Take  (\ref{kk}) into $E^2(u,\bar{u})$,  one obtains that 
    \begin{eqnarray}\label{non}
    &&E^2(u,\bar{u})\nonumber\\
    &=&-2{\bf Re}  \sum_{|l+k|=r\atop{{\cal M}(l,k)=i\in M_{f^{w_1}_r}}} {\bf i}(\tilde{f}^{w_1})_{r,lk}^{i} \langle j\rangle^{p-\frac12} \sum_{n\in \mathbb{Z}^*}R^{w_1}_+(l,k,n,j,i)\langle n\rangle^{p-\frac12} \prod_{m\in\mathbb{Z}^*}\langle m\rangle^{\frac12(l_m+k_m)}  u^l\bar{u}^k \nonumber\\
    &&-2 {\bf Re}  \sum_{|l+k|=r\atop{{\cal M}(l,k)=i\in  M_{f^{w_1}_r}}} {\bf i} (\tilde{f}^{w_1})_{r,lk}^{i} \langle j\rangle^{p-\frac12}\sum_{n\in \mathbb{Z}^*} l_j\cdot j\langle n\rangle^{p-\frac12} \prod_{m\in\mathbb{Z}^*}\langle m\rangle^{\frac12(l_m+k_m)} u^l\bar{u}^k.
    \end{eqnarray}
  Take complex conjugation to the second part of the right side of (\ref{non}) and get
    \begin{eqnarray}\label{322}
    &&E^2(u,\bar{u})\nonumber\\
    &=&-2 {\bf Re}  \sum_{|l+k|=r\atop{{\cal M}(l,k)=i\in  M_{f^{w_1}_r}}} {\bf i} (\tilde{f}^{w_1})_{r,lk}^{i} \langle j\rangle^{p-\frac12} \sum_{n\in\mathbb{Z}^*}R^{w_1}_+(l,k,n,j,i)|n|^{p-\frac12} \prod_{m\in\mathbb{Z}^*}\langle m\rangle^{\frac12(l_m+k_m)}    u^l\bar{u}^k \nonumber\\
    &&-2\overline{{\bf Re}   \sum_{|l+k|=r\atop{{\cal M}(l,k)=i\in M_{f^{w_1}_r}}}{\bf i} (\tilde{f}^{w_1})_{r,lk}^{i} \langle j\rangle^{p-\frac12}\sum_{n\in\mathbb{Z}^*} l_j\cdot j\langle n\rangle^{p-\frac12} \prod_{m\in\mathbb{Z}^*}\langle m\rangle^{\frac12(l_m+k_m)} u^l\bar{u}^k }\nonumber\\
    &=&-2 {\bf Re}   \sum_{|l+k|=r\atop{{\cal M}(l,k)=i\in  M_{f^{w_1}_r}}} {\bf i}(\tilde{f}^{w_1})_{r,lk}^{i} \langle j\rangle^{p-\frac12} \sum_{n\in\mathbb{Z}^*}R^{w_1}_+(l,k,n,j,i)\langle n\rangle^{p-\frac12} \prod_{m\in\mathbb{Z}^*}\langle m\rangle^{\frac12(l_m+k_m)}     u^l\bar{u}^k \nonumber\\
    &&+2  {\bf Re}  \sum_{|l+k|=r\atop{{\cal M}(l,k)=i\in  M_{f^{w_1}_r}}} {\bf i} (\tilde{f}^{w_1})_{r,kl}^{-i} \langle j\rangle^{p-\frac12}\sum_{n\in\mathbb{Z}^*} l_j\cdot j\langle n\rangle^{p-\frac12} \prod_{m\in\mathbb{Z}^*}\langle m\rangle^{\frac12(l_m+k_m)}  u^k\bar{u}^l \nonumber\\
    &=&-2 {\bf Re}  \sum_{|l+k|=r\atop{{\cal M}(l,k)=i\in  M_{f^{w_1}_r}}}{\bf i} (\tilde{f}^{w_1})_{r,lk}^{i} \langle j\rangle^{p-\frac12} \sum_{n\in\mathbb{Z}^*}R^{w_1}_+(l,k,n,j,i)\langle n\rangle^{p-\frac12} \prod_{m\in\mathbb{Z}^*}\langle m\rangle^{\frac12(l_m+k_m)}   u^l\bar{u}^k \nonumber\\
    && -E^2(u,\bar{u}),
    \end{eqnarray}
    the last equation is obtained by  the coefficients of $f^{w_1}_r(u,\bar{u})$ being $(\beta,1)$-type symmetric.
  Equation (\ref{322}) leads to
  {\small  \begin{eqnarray}\label{e2}
   && E^2(u,\bar{u})\nonumber\\
   &=&\!\!\!\!\!\!-  {\bf Re}   \!\!\!\sum_{|l+k|=r\atop{{\cal M}(l,k)=i\in M_{f^{w_1}_r}}}\!\!\!{\bf i} (\tilde{f}^{w_1})_{r,lk}^{i} \langle j\rangle^{p-\frac12} \sum_{n\in\mathbb{Z}^*}\!\!\! R^{w_1}_+(l,k,n,j,i)\langle n\rangle^{p-\frac12} \prod_{m\in\mathbb{Z}^*}\langle m\rangle^{\frac12(l_m+k_m)}      u^l\bar{u}^k. 
    \end{eqnarray} }
  Similarly, by the fact
    $$ l_n\cdot n=\underbrace{i+\sum_{t\in \mathbb{Z}^*}k_t \cdot t-\sum_{t\neq j,n}l_t \cdot t}_{R_-^{w_1}(l,k,n,j,i)}-l_j\cdot j, $$
    the following equation  holds true 
    {\small\begin{eqnarray}\label{b2}
    B^2(u,\bar{u}) 
   =-{\bf Re} \sum_{|l+k|=r\atop{{\cal M}(l,k)=i\in M_{f^{w_1}_r}}} {\bf i}(\tilde{f}^{w_1})_{r,lk}^{i} \langle j\rangle^{p-\frac12} \sum_{n\neq j}R_-^{w_1}(l,k,n,j,i) \langle n\rangle^{p-\frac12} \prod_{m\in\mathbb{Z}^*}\langle m\rangle^{\frac12(l_m+k_m)}   u^l\bar{u}^k.\nonumber\\
    \end{eqnarray}}
Summarize this step, it satisfies that  
\begin{equation}
\begin{array}{cc}
\{f^{w_0}_r,\|u\|^2_p\}_{w_0}\\
 \Updownarrow \\
 {\bf Re} O^+(u,\bar{u})=
 \end{array}
 \\
 \left\{
\begin{array}{lll}
A^0(u,\bar{u})\ \ \mbox{defined in }\ (\ref{a0})\\
+\\
A^1(u,\bar{u})\ \ \mbox{defined in }\ (\ref{a1})\\
+\\
{A^2(u,\bar{u})\ in \ (\ref{1-8})}\xrightarrow{ (\beta,0 )-\mbox{type symmetrical}} A^2(u,\bar{u})\  in\ (\ref{a2}).
 \end{array}
\right.\nonumber
\end{equation}
and
\begin{equation}
\begin{array}{cc}
\{f^{w_1}_r,\|u\|^2_p\}_{w_1}\\
 \Updownarrow \\
 {\bf Re} Q^+(u,\bar{u})=
 \end{array}
 \\
 \left\{
\begin{array}{lll}
B(u,\bar{u})\ \  in \ (\ref{B()})=\left\{
\begin{array}{lcl}
B^1(u,\bar{u}) \ in \ (\ref{B1()})\\
+\\
B^2(u,\bar{u}) \ in \ (\ref{B2()}) \xrightarrow{(\beta,1)} B^2(u,\bar{u})\ in\ (\ref{b2})\\
\end{array}\right.\\
+\\
E(u,\bar{u})\ \ in\ (\ref{E()})=\left\{
\begin{array}{lcl}
E^1(u,\bar{u}) \ in \ (\ref{E1()})\\
+\\
E^2(u,\bar{u}) \ in \ (\ref{E2()})\xrightarrow{(\beta,1)} E^2(u,\bar{u})\ in\ (\ref{e2})\\
\end{array}\right.\\
+\\
{D(u,\bar{u}) } \ in \ (\ref{D()}).
 \end{array}
\right.\nonumber
\end{equation}

\vspace{4pt}
\noindent Step 2:Estimate  $A^0(u,\bar{u})$-$A^{2}(u,\bar{u})$, $B^1(u,\bar{u})$, $B^2(u,\bar{u})$, $E^1(u,\bar{u})$, $E^2(u,\bar{u})$ and $D(u,\bar{u})$.

\vspace{4pt}

\noindent  It is clear that $A^0(u,\bar{u})$
     can be written as an inner product of the vector fields  $G:=(\langle j\rangle^p \bar{u}_j )_{j\in\mathbb{Z}} $ and
  $ F:=(F_j)_{j\in\mathbb{Z}} $, where
      \begin{eqnarray}\label{F_j}
   F_j(u,\bar{u}):=   \sum_{|l +k-e_j|=r-1 \atop{{\cal M}(l , k)=i\in  M_{f^{w_0}_r}\subseteq\mathbb{Z} }} F^i_{j,l k-e_j}  u^{l} \bar{u}^{k-e_j},
   \end{eqnarray}
and
\begin{eqnarray}
    F^i_{j,l k-e_j}:=  \big(l_j-l_j^0-k_j+k_j^0\big) \langle j\rangle^{p}\sum_{( l^0,k^0,i^0)\in {\cal A}_{(f^{w_0})^i_{r, l k}}} i^0\cdot  (f^{w_0})^{i( l^0,k^0,i^0)}_{r,{ l k} } .
   \end{eqnarray}
Noting  the fact the momentum of $(f^{w_0})_{r,lk}^iu^l\bar{u}^k$ being $i$ and $0\leq l_j^{0}\leq l_j$, $0\leq k_j^{0}\leq k_j$, the following equation holds true 
\begin{eqnarray}\label{sgnkj}
 (l_j-l_j^0-k_j+k_j^0)\cdot j=i-(l_j^0-k_j^0)\cdot j-\sum_{t\neq j}(l_t-k_t)\cdot t.
\end{eqnarray}
Using the fact that $|x|^p$ ($p\geq2$) is convex function and (\ref{a---b}), it follows that
  \begin{eqnarray}\label{l_t}
 |i-(l_j^0-k_j^0)\cdot j-\sum_{t\neq j}(l_t-k_t)\cdot t |^p \leq
2 r^{p-1}\big(\sum_{t\neq j}  (l_t+k_t) |t|^p  + (l_j^0+k_j^0) |j|^p \big)\cdot \langle i\rangle^p.
  \end{eqnarray}
In view of (\ref{sgnkj}) and (\ref{l_t}), it holds that
  \begin{eqnarray}\label{c_0}
  |l_j-l_j^0-k_j+k^0_j|\langle j\rangle^p\leq  2 r^{p-1}(\sum_{t\neq j}  (l_t+k_t)\langle t\rangle^p  + (l_j^0+k_j^0)\langle j\rangle^p\big)\cdot  \langle i\rangle^p.
  \end{eqnarray}
Since $f_r^{w_0}$ has $(\beta,0)$-type symmetric coefficients semi-bounded by $C_{f^{w_0}}$,  together with    (\ref{c_0}), the coefficients of vector field $F$ in (\ref{F_j})  are bounded by the following
 {\small \begin{eqnarray}
  |F^i_{j, l k-e_j} |
  \leq 2r^{p-1}C^{r-2}_{f^{w_0}} \big(\sum_{t\neq j}  (l_t+k_t)\langle t\rangle^p + (l_j^0+k_j^0)\langle j\rangle^p  \big)\cdot \frac{1}{\langle i\rangle^{\beta-p}}  \nonumber.
    \end{eqnarray}}
   Then by Corollary \ref{r1}, the following inequality holds true 
    \begin{eqnarray}
     |A^0(u,\bar{u})|\leq |\langle F,G\rangle|\leq \|F\|_{\ell^2}\cdot \|G\|_{\ell^2}=\|F\|_{\ell^2}\cdot\|u\|_{p}\leq 2{C^{r-2}_{f^{w_0}_r}r^{p} c^{r-1} }\|u\|^2_p\|u\|_2^{r-2}.
     \end{eqnarray}
Using the same method, one has    
 \begin{eqnarray}
    |D(u,\bar{u})|
    \leq  2{ C^{r-2}_{f^{w_1}_r}   r^{p-1}c^{r-1} }\|u\|_{p }^2\|u\|_{2 }^{r-2}.
    \end{eqnarray}

\noindent
In order to estimate  $A^1(u,\bar{u})$,  the following inequality is given 
    for any $j,\ m\in\mathbb{Z}^*$ 
 \begin{eqnarray}\label{tu}
&&\big||j|^a-|m|^a\big|\leq \big| \int_{0}^1 \frac{d|m+\theta(j-m)|^a}{d\theta} d\theta\big|\nonumber\\
&\leq & \int_0^1 a|j-m|\cdot |m+\theta(j-m)|^{a-1}d \theta\nonumber\\
&\leq &2^{a-2}a(|m|^{a-1} |j-m|+|j-m|^a),
\end{eqnarray} with $a\geq2$. 

\noindent
Take $a=p$ into (\ref{tu}). Given  $l,k\in \mathbb{N}^{\mathbb{Z}^*}$ fulfilling  ${\cal M}(l,k)=i$ with $k_j\neq 0$ and ${l}^0_m\neq 0$, together with (\ref{a---b}),  it holds that
\begin{eqnarray}\label{j-m}
 \big||j|^p-|m|^p\big|
&\leq&  2^{p-1}p\bigg( |m|^{p-1} \big(\sum_{n\neq j} k_n |n | +\sum_{n\neq m} l_n |n|  \big)\cdot \langle i\rangle \nonumber\\
&& +(r-2)^{p-1} (\sum_{n\neq j} k_n |n |^p +\sum_{n\neq m}  l_n  |n|^p)\cdot \langle i\rangle^p)\bigg) .
\end{eqnarray}
The similar inequality holds in the case $k_j\neq0$, $k^0_m\neq0$ ($m\neq j$).

\noindent 
  Since
 {\small\begin{eqnarray}\label{A1}
 |A^1(u,\bar{u})|&\leq& \big|2\sum_{i\in  M_{f^{w_0}_r}}\sum_{|l +k|=r \atop{i={\cal M}(l,k) }}  (l_j-l_j^0-k_j+k_j^0)\cdot  \langle j\rangle^{p}  \sum_{(l^0,k^0,i^0)\in {\cal A}_{(f^{w_0})^i_{r,lk}}\atop{m\in \mathbb{Z}^*}} (f^{w_0})^{i(l^0,k^0,i^0)}_{r,{l k} }  \nonumber\\
  && \cdot\big( l^0_m m( \langle j\rangle^p - \langle m\rangle^p )-k^0_m m( \langle j\rangle^p - \langle m\rangle^p) \big)  u^{l} \bar{u}^{k}\big|,
 \end{eqnarray}}
take the right side of (\ref{A1}) as an inner product of vectors $F$ and $G$, where
 $$F:=( 2 \langle j\rangle^p \bar{u}_j)_{j\in\mathbb{Z}^*} ,\quad G:= (\sum_{i\in  M_{f^{w_0}_r}}\sum_{|l +k-e_j|=r-1 \atop{j={\cal M}(l, k-e_j)-i}}  (G_j)^i_{r-1, l k-e_j}  u^{l} \bar{u}^{k-e_j})_{j\in\mathbb{Z}^*} $$
  and
{\small\begin{eqnarray}
&&   (G_j)^i_{r-1, l k-e_j}:=\nonumber\\
&&(l_j-l_j^0-k_j+k_j^0)  \sum_{( l^0,k^0,i^0)\in {\cal A}_{(f^{w_0})^i_{r,l k}} \atop{{m\in\mathbb{Z}^*} }}{(f^{w_0})}^{i(l^0,k^0,i^0)}_{r,{l k}}  \big(l^0_m m( \langle j\rangle^p - \langle m\rangle^p )-k^0_m m( \langle j\rangle^p - \langle m\rangle^p)  \big) .
  \nonumber
   \end{eqnarray}}
By (\ref{j-m}) and $f^{w_0}_r(u,\bar{u})$ having $(\beta,0)$-type symmetric coefficients semi-bounded by $C_{f^{w_0}}$,  the coefficients of $G_j$ are bounded by
 \begin{eqnarray*}
 &&|(G_j)^i_{r-1,l k-e_j}|\nonumber\\
  &\leq&\frac{C^{r-2}_{f^{w_0}_r}}{ \langle i\rangle^{\beta-p}}  2^{p-1}p(r-2)^{p-1} (\prod_{t\in\mathbb{Z}^*} \langle t\rangle^{\mbox{sgn}\big(l_t+(k-e_j)_t\big)} \sum_{m\in\mathbb{Z}^*} \langle m\rangle^{ (p-1)\mbox{sgn}\big((k-e_j)_m+l_m\big)}) .
\end{eqnarray*}
Using  {Corollary  \ref{r1}}, one has 
$$ |A^1(u,\bar{u})|\leq \|G\|_{\ell^2} \cdot \|F\|_{\ell^2}\leq    { C^{r-2}_{f^{w_0}_r}  2^{p}p(r-2)^{p-1}c^{r-1} }\|u\|_{p }^2\|u\|_{2 }^{r-2} . $$
By   the same method,      $E^1(u,\bar{u})$ and $B^1(u,\bar{u})$ satisfy the following inequalities 
    \begin{eqnarray*}
    |B^1(u,\bar{u})|\leq     {C^{r-2}_{f^{w_1}_r}  2^{p}p(r-2)^{p-1}c^{r-1} }\|u\|_{p }^2\|u\|_{2 }^{r-2}
    \end{eqnarray*}
    and
    \begin{eqnarray*}
    |E^1(u,\bar{u})|\leq   { C^{r-2}_{f^{w_1}_r}  2^{p}p(r-2)^{p-1}c^{r-1} }\|u\|_{p }^2\|u\|_{2 }^{r-2} .
    \end{eqnarray*}

\noindent Since  $A^2(u,\bar{u})$, $B^2(u,\bar{u})$ and $E^2(u,\bar{u})$ can be  estimated by the same method, I only give the details of estimate of $A^2(u,\bar{u})$ in (\ref{a2}).
 $$ |A^2(u,\bar{u})|\leq \|F\|_{\ell^2}\cdot \|G\|_{\ell^2},$$
 where $F=(F_j)_{j\in\mathbb{Z}^*}$ and $G=(G_j)_{j\in\mathbb{Z}^*}$  with $F_j=\langle j\rangle^p u_j$ and
   \begin{eqnarray}
G_j(u,\bar{u})&:=& \sum_{i\in M_{f^{w_0}_r}} \sum_{|l+k|=r\atop{j={\cal M}(l,k-e_j)-i}}{\bf i}\big((l-l^0)_j-(k-k^0)_j\big) \nonumber\\
 &&\cdot\sum_{( {l}^0,{k}^0,i^0)\in {\cal A}_{(f^{w_0})^i_{r,lk} }}\  {(f^{w_0})}^{i( {l}^0,{k}^0,i^0)}_{r,lk} \sum_{t\in\mathbb{Z}} R^{w_0}(l,k,t,j,i)\langle t\rangle^p    u^l\bar{u}^{k-e_j}.\nonumber
\end{eqnarray}
From (\ref{a---b}) and (\ref{kk}), one has  
  $$|R^{w_0}(l,k,t,j,i)|\leq2\big(\sum_{n\neq j}\big|(k-k^0)_n-(l-l^0)_n\big|\cdot \langle n\rangle+\sum_{n\neq t}|l_n^0-k_n^0|\cdot \langle n\rangle\big)\cdot \langle i\rangle .$$ Then   the coefficients of $G$ are bounded by
  \begin{eqnarray}
 && |(G_j)_{r,l k-e_j}^i|\nonumber\\
 &\leq &  2\frac{C^{r-2}_{f^{w_0}_r}}{\langle i\rangle^{\beta-1}} \big(\sum_{n\neq j}\big|(k-k^0)_n-(l-l^0)_n\big|\cdot \langle n\rangle +\sum_{n\neq t}|l_n^0-k_n^0|\cdot \langle n\rangle \big) \langle t\rangle^p  .\nonumber
  \end{eqnarray}
  Using Corollary \ref{r1},
  it holds that
  $$ |A^2(u,\bar{u})|\leq  2{C^{r-2}_{f^{w_0}_r}c^{r-1}  r }\|u\|_p\|u\|_2^{r-1} . $$
Similarly, 
  $$ |B^2(u,\bar{u})|,\ |E^2(u,\bar{u})|\leq 2 {C^{r-2}_{f^{w_1}_r}c^{r-1}  r} \|u\|_p\|u\|_2^{r-1} .$$
Thus, the following inequalities hold true 
 \begin{eqnarray*}
|\{f^{w_0}_r(u,\bar{u}),\|u\|_p^2\}_{w_0}|&\leq & |A^0(u,\bar{u})|+|A^1(u,\bar{u})|+|A^2(u,\bar{u})|\nonumber \\
 &\leq &  { C^{r-2}_{f^{w_0}_r}  2^{p+1}p r^{p-1}c^{r-1} }\|u\|_{p }^2\|u\|_{2}^{r-2} 
\end{eqnarray*}
and
\begin{eqnarray*}
|\{f^{w_1}_r(u,\bar{u}),\|u\|_p^2\}_{w_1}|&\leq & |B^1(u,\bar{u})|+ |B^2(u,\bar{u})|+|D(u,\bar{u})| +|E^1(u,\bar{u})|+ |E^2(u,\bar{u})|\nonumber\\
&\leq &  {C^{r-2}_{f^{w_1}_r}  2^{p+1}p r^{p-1} c^{r-1} }\|u\|_{p }^2\|u\|_{2}^{r-2}.
\end{eqnarray*}

\end{proof}

\section{Birkhoff Normal form and non resonant condition}
 
\subsection{$(\theta,\gamma, \alpha, N)$-normal form  }
\noindent
In order to guarantee the boundedness of the symplectic transformation, it is required a strong non resonant condition. 
Given integers $N>0$ and $r\geq3$, let
\begin{equation}\label{E-theta}
E_{r,N}:=\left\{(l,k)\ |\ l,k\in\mathbb{N}^{\mathbb Z^*}, \ 3\leq|l+k|=t\leq r,\ |\Gamma_{>N}(l+k)|\leq 2\right\}.
\end{equation}

\begin{Definition}\label{5.3}
Given $\gamma>0$, $\alpha>0$,  $\theta\in\{0,1\}$ and $N,r\in \mathbb{N}$, frequencies  $\omega=(\omega_j)_{j\in \mathbb{Z}^*}$  is said to be {\bf $r$-degree $(\theta,\gamma,\alpha,N)$-non resonant}, if for any $(l,k) $ belongs to
 {\footnotesize \begin{eqnarray}\label{o-set} {O}^{w_{\theta}}_{r,N}:=\left\{(l,k)\in E_{r,N} \ \left|\  \begin{array}{lrllr}\mbox{when }\ | \Gamma_{>N} (l+k)|< 2,\\  \theta\sum_{j\in\mathbb{Z}^*} |l_j-k_j|+(1-\theta)\sum_{j\in\mathbb{Z}^*} |l_j+l_{-j}-k_j-k_{-j}|\neq 0\\
\\
 \mbox{when }\ | \Gamma_{>N} (l+k)|= 2, \\ \sum_{|j|>N}|l_j+l_{-j}-k_j-k_{-j}|\neq 0\\
 \end{array}\right.
    \right\},\end{eqnarray}}
it satisfies
$$ |\langle \omega, \  I_{\theta}(l-k)\rangle|>\frac{\gamma M_{l,k}}{N^{\alpha}},$$
where
\begin{eqnarray}\label{*27}
M_{l,k}:=
 \max\big(\{\ |j| \ \big|\ k_j\neq 0\ \mbox{or}\ l_j\neq 0,\ k,\ l \in \mathbb{N}^{\mathbb{Z}^*}\}\cup\{N\}\big) .
 \end{eqnarray}

\end{Definition}

\noindent
With the $r$-degree $(\theta, \gamma,\alpha,N)$-non resonant condition,   a symplectic transformation will be obtained.  Under this transformation the Hamiltonian function $H(u,\bar{u})$ are transformed into  the sum of an $r$-degree normal form and a remainder term. However this $r$-degree normal form  is not a standard Hamiltonian Birkhoff normal form (A standard $2r$-degree Hamiltonian Birkhoff normal form in   variables $(u, \bar{u})$ is an $r$-degree polynomial which only depends on variables $(|u_j|^2)_{j\in \mathbb{Z}^*} $). Now I introduce a definition to describe this normal form.

\begin{Definition}\label{jr}
Given  $\gamma>0$, $\alpha>0$ and an integer $N>0$, call an $r$-degree  polynomial
$$f(u,\bar{u})=\sum_{r\geq t\geq 3}  \sum_{|l+k|=t\atop{ {\cal M}(l,k)=i\in M_{f_t} }} f^i_{t,l k}u^{l}\bar{u}^k$$
  $\bm{(\theta,\gamma, \alpha, N)}$\textbf{-normal form} with respect to $\omega \in \mathbb R^{\mathbb Z^*}$, if for any $(l,k)\in E_{r,N}$ ($E_{r,N}$ is defined in (\ref{E-theta})) with   ${\cal M}(l,k)=i\in M_{f_t}$ and $f^i_{t,lk}\neq 0$,  it
   satisfies that
 \begin{eqnarray*}
|\langle \omega, \ I_{\theta}(k-l)\rangle |\leq  \frac{\gamma M_{k,l} }{N^{\alpha }},
 \end{eqnarray*}
where $\langle \ , \   \rangle$ is the inner product of space $\ell^2(\mathbb{Z}^*
,\mathbb{C})$, $I_{\theta}$ and $M_{l,k}$ are defined in (\ref{I-theta}) and (\ref{*27}).

\end{Definition}\par

\begin{Remark}\label{B-h}
 Let $f(u,\bar{u})$ be an $r$-degree polynomial. For any given $\gamma>0$, $\alpha>0$, $\theta\in\{0,1\}$, integers $N>0$ and $\omega \in \mathbb R^{\mathbb Z^*}$, denote
$$\Gamma^{\omega}_{(\theta,\gamma,\alpha,N)} f(u,\bar{u}):=\sum_{r\geq t\geq 3,}\sum_{|l+k|=t,{\cal M}(l,k)=i} (\Gamma_{(\theta,\gamma,\alpha,N)} f)^i_{t,l k}u^{l}\bar{u}^k   $$ with
 $$(\Gamma^{\omega}_{(\theta,\gamma,\alpha,N)}f)^i_{t,l k}:=\left\{\begin{array}{ll} f^i_{t,lk},&\ \mbox{if }\ l, k\ \mbox{fulfills }\ |\langle \omega, \  I_{\theta}(l-k)\rangle|\leq \frac{\gamma M_{l,k} }{N^{\alpha}}\\
\\
0,& \mbox{the other cases} \end{array} \right. $$ as   $\bm{(\theta,\gamma, \alpha, N)}$\textbf{-normal form} of $f(u,\bar{u})$  with respect to $\omega $. Moreover,  suppose that  $f^{w_{\theta}}(u,\bar{u})$ has  $(\beta,\theta)$-type symmetric coefficients semi-bounded by $C_{\theta}>0$ ($\theta\in\{0,1\}$). So does $\Gamma^{\omega}_{(\theta,\gamma,\alpha,N)} f^{w_{\theta}}(u,\bar{u})$.
\end{Remark}

\begin{Remark}\label{norm-re}
Assume  that $\omega=(\omega_j)_{j\in\mathbb{Z}^*}$ is an $r_*$-degree $(\theta,\gamma,\alpha,N)$-non resonant frequencies  and $f^{w_{\theta}}(u,\bar{u})$ is an $r_*$-degree $(\theta,\gamma,\alpha,N)$-normal form with respect to $\omega$. Then $f^{w_{\theta}}(u,\bar{u})$ has the following form
\begin{eqnarray*}
f^{w_{\theta}}(u,\bar{u}):= {\cal A}^{w_{\theta}}(u,\bar{u}) +{\cal B}^{w_{\theta}}(u,\bar{u})+{\cal C}^{w_{\theta}}(u,\bar{u}),
\end{eqnarray*}
where

{\small \begin{eqnarray}
{\cal A}^{w_{\theta}}(u,\bar{u})&:=& \sum_{3\leq r\leq r_*} \sum_{(l,k)\in \Omega^{\theta}_{{\cal A}_r^N},i={\cal M}(l,k)} (f^{w_{\theta}})_{r,lk}^{i} u^l\bar{u}^k,\label{5.4}\\
{\cal B}^{w_{\theta}}(u,\bar{u})&:=& \sum_{3\leq r\leq r_*} \sum_{(l,k)\in \Omega^{\theta}_{{\cal B}_r^N},i={\cal M}(l,k) } (f^{w_{\theta}})_{r,lk}^{i} u^l\bar{u}^k,\\
{\cal C}^{w_{\theta}}(u,\bar{u})&:=& \sum_{3\leq r\leq r_*} \sum_{(l,k)\in \Omega^{\theta}_{{\cal C}_r^N},i={\cal M}(l,k) } (f^{w_{\theta}})_{r,lk}^{i} u^l\bar{u}^k,\label{5.6}\\ \nonumber
\end{eqnarray}}
and
$$\Omega^{\theta}_{{\cal A}_r^N}:=\left\{ \begin{array}{ll}
\{(l,k)\in E_{r,N}\ \big|\   |\Gamma_{>N} (l+k)|<2,\ \sum_{j\in\mathbb{Z}^*} |l_j+l_{-j}-k_j-k_{-j}|=0 \},& \theta=0\\
\\
\{(l,k)\in E_{r,N}\ \big|\  |\Gamma_{>N} (l+k)|<2,\ \sum_{j\in\mathbb{Z}^*} |l_j-k_j|=0 \},& \theta=1
\end{array}\right.
$$
 \begin{equation*}
\Omega^{\theta}_{{\cal B}_r^N}:= \left\{(l,k)\in E_{r,N}\ \big|\
 |\Gamma_{>N}(l+k)|=2,\  \mbox{there exists } |j_0|>N\ \mbox{such that }l_{j_0}=k_{j_0}=1\right\},
 \end{equation*} 
\begin{equation*}
\Omega^{\theta}_{{\cal C}_r^N}:= \left\{(l,k)\in E_{r,N}\ \big|\   |\Gamma_{>N}(l+k)|=2,\
 \mbox{there exists } |j_0|>N\ \mbox{such that }l_{-j_0}=k_{j_0}=1\right\}.
\end{equation*}
\end{Remark}

\begin{Lemma}\label{6.2} Let $\omega=(\omega_j)_{j\in\mathbb{Z}^*}$ be an $r$-degree $(\theta,\gamma,\alpha,N)$ non-resonant frequency. Suppose that $f_r^{w_{\theta}}(u,\bar{u}) $ is an  $r$-degree ($r\geq3$) homogeneous  $(\theta,\gamma,\alpha,N)$-normal form   with respect to $\omega$  and  has $(\beta,\theta)$-type symmetric coefficients semi-bounded by $C_{f_r^{w_{\theta}}}>0$ ($ \beta-p\geq2$).
Then  for any $(u,\bar{u})\in {{\mathcal H}^p}(\mathbb Z^*,\mathbb C)$ it has
\begin{equation}\label{f<2}
|\{f_r^{w_{\theta}}(u,\bar{u}), \|u\|_p^2  \}_{w_{\theta}}|\leq  20r^{p+1}c^{r-1} C^{r-2}_{f_r^{w_{\theta}}} N   \|\Gamma_{>N} u\|_2\cdot \|u\|_2^{r-3}\cdot \|u\|_p 
\end{equation}
\end{Lemma}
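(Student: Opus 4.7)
The plan is to exploit the decomposition of the $(\theta,\gamma,\alpha,N)$-normal form $f_r^{w_\theta}$ provided by Remark \ref{norm-re},
\[
f_r^{w_\theta}=\mathcal{A}^{w_\theta}+\mathcal{B}^{w_\theta}+\mathcal{C}^{w_\theta},
\]
and to estimate the Poisson bracket of each piece with $\|u\|_p^2$ separately. First I show that $\{\mathcal{A}^{w_\theta},\|u\|_p^2\}_{w_\theta}=0$. For $\theta=1$, every monomial in $\mathcal{A}^{w_1}$ has $l=k$ and is a polynomial in the actions $(|u_j|^2)_{j\in\mathbb{Z}^*}$, hence Poisson commutes with the action polynomial $\|u\|_p^2$ under $w_1$. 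For $\theta=0$, the direct computation $\{u^l\bar{u}^k,\|u\|_p^2\}_{w_0}={\bf i}\sum_m\langle m\rangle^{2p}(l_m-k_m)u^l\bar{u}^k$, together with the defining condition $l_j+l_{-j}=k_j+k_{-j}$ of $\Omega_{\mathcal{A}_r^N}^0$ and the symmetry $\langle m\rangle=\langle -m\rangle$, makes the sum vanish term by term; this is exactly the mechanism behind identity (\ref{w0}) highlighted in the introduction.

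Next I treat $\mathcal{B}^{w_\theta}$. Each monomial has the form $u_{j_0}\bar{u}_{j_0}\,u^{l'}\bar{u}^{k'}$ with $|j_0|>N$ and $\mathrm{supp}(l',k')\subset\{m:|m|\le N\}$. A short computation, valid for both $\theta\in\{0,1\}$, shows that the $j_0$-derivative contributions to the bracket cancel because $|u_{j_0}|^2$ is an action, so
\[
\{\mathcal{B}^{w_\theta},\|u\|_p^2\}_{w_\theta}=\sum_{|j_0|>N}|u_{j_0}|^2\,\{h_{j_0}^{\mathcal{B}},\|u\|_p^2\}_{w_\theta},
\]
where $h_{j_0}^{\mathcal{B}}$ is an $(r-2)$-degree polynomial supported in low-index variables and inheriting $(\beta,\theta)$-type symmetric coefficients from $f_r^{w_\theta}$. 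Running the $A^0$--$A^1$--$A^2$ rearrangement from the proof of Proposition \ref{2.1} on $h_{j_0}^{\mathcal{B}}$, and using $\langle m\rangle\le N$ on its support, lets me replace one of the two $\|u\|_p$ factors produced by Proposition \ref{2.1} with $N\|u\|_2$; a Cauchy--Schwarz on the outer sum $\sum_{|j_0|>N}|u_{j_0}|^2$ then provides the single factor $\|\Gamma_{>N}u\|_2$.

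For $\mathcal{C}^{w_\theta}$ the high-index factor is $u_{-j_0}\bar{u}_{j_0}$ with $|j_0|>N$. Under $w_0$ the $\pm j_0$ contributions cancel since $\langle -j_0\rangle=\langle j_0\rangle$ (which is identity (\ref{w0})), so the argument from the $\mathcal{B}^{w_0}$ case applies verbatim. Under $w_1$ the diagonal contributions do not cancel individually, but each monomial $u_{-j_0}\bar{u}_{j_0}\,u^{l'}\bar{u}^{k'}$ in $\mathcal{C}^{w_1}$ is paired by the $(\beta,1)$-symmetry $\overline{(f^{w_1})^i_{r,lk}}=(f^{w_1})^{-i}_{r,kl}$ with its complex conjugate $u_{j_0}\bar{u}_{-j_0}\,u^{k'}\bar{u}^{l'}$; taking the real part combines the two and cancels the leading $\langle j_0\rangle^{2p}$ contribution, after which the factorization and Cauchy--Schwarz argument used for $\mathcal{B}$ apply.

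The main obstacle is producing \emph{exactly} the factor $N\|\Gamma_{>N}u\|_2\|u\|_p$ in the final bound, rather than the weaker $N^p\|\Gamma_{>N}u\|_p$ or $\|\Gamma_{>N}u\|_2^2\|u\|_p^2$ that a naive estimate would yield. The crucial input is the momentum identity $\sum_m m(l_m-k_m)=i$ combined with the rearrangement developed in the proof of Proposition \ref{2.1}: for $(l,k)\in\Omega_{\mathcal{B}_r^N}^\theta\cup\Omega_{\mathcal{C}_r^N}^\theta$ the $\pm j_0$-momentum cancels, so the surviving low-index momentum $\sum_{|m|\le N}m(l'_m-k'_m)$ has size $O(N)$; this is precisely where the single factor of $N$ in the target bound originates. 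Summing the resulting estimates for $\mathcal{A}$, $\mathcal{B}$, and $\mathcal{C}$ and tracking constants yields (\ref{f<2}).
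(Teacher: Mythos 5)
Your decomposition $f_r^{w_\theta}=\mathcal{A}^{w_\theta}+\mathcal{B}^{w_\theta}+\mathcal{C}^{w_\theta}$ via Remark \ref{norm-re} and the treatment of the first two pieces match the paper's proof: $\{\mathcal{A}^{w_\theta},\|u\|_p^2\}_{w_\theta}=0$ by the defining constraints on $\Omega^\theta_{\mathcal{A}_r^N}$, and for $\mathcal{B}^{w_\theta}$ the reduction to the bracket with $\sum_{|j|\le N}\langle j\rangle^{2p}|u_j|^2$ and the momentum identity on the low-index support produce the $N$-factor as you describe. However, there are two genuine errors in your treatment of $\mathcal{C}$.

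First, the claim that the ``$\pm j_0$-momentum cancels'' for $(l,k)\in\Omega^\theta_{\mathcal{C}_r^N}$ is false. In $\Omega^\theta_{\mathcal{C}_r^N}$ the high-index pair has $l_{-j_0}=k_{j_0}=1$, so its contribution to the momentum is $(-j_0)-j_0=-2j_0$, not zero. The correct input is the paper's relation $2|j_0|=\big|\sum_{|t|\le N}(l_t-k_t)t-i\big|$, which yields the bound $|j_0|\le rN\langle i\rangle$; this bound, together with the $\langle i\rangle^{-\beta}$ decay built into the coefficients, is what tames the weight on $j_0$ and supplies the factor $N$. Your stated mechanism would not by itself control the $\langle j_0\rangle^{2p}$ weight that survives in the $\theta=1$ case.

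Second, and more seriously, the proposed cancellation for $\mathcal{C}^{w_1}$ via $(\beta,1)$-symmetry and taking the real part does not occur. A direct computation gives
\[
\{\,u_{-j_0}\bar{u}_{j_0}+u_{j_0}\bar{u}_{-j_0},\ \langle j_0\rangle^{2p}\big(|u_{j_0}|^2+|u_{-j_0}|^2\big)\,\}_{w_1}
=-4\,\mathrm{sgn}(j_0)\,\langle j_0\rangle^{2p}\,\mathrm{Im}\big(u_{j_0}\bar{u}_{-j_0}\big),
\]
which is real but generically nonzero, so the ``leading $\langle j_0\rangle^{2p}$ contribution'' is \emph{not} killed. (The cancellation you have in mind is special to $w_0$, where $\{u_{j_0}\bar{u}_{-j_0},\langle j_0\rangle^{2p}(|u_{j_0}|^2+|u_{-j_0}|^2)\}_{w_0}=0$; this is exactly identity (\ref{w0}), and it fails for $w_1$ as (\ref{w1}) already signals.) In the paper's proof this term is kept and estimated: the bracket is split into the low-index and high-index parts of $\|u\|_p^2$, the high-index piece survives, and its size is controlled precisely by the bound $|j_0|\le rN\langle i\rangle$ together with Corollary \ref{r1}. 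Your plan, as written, skips this step and therefore does not close for $\theta=1$.
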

\begin{Remark}
Although   $f_r^{w_{\theta}}(u,\bar{u})$ in Lemma \ref{6.2} is at most 2 degree about $(\Gamma_{>N} u,\Gamma_{>N}\bar{u})$,  it still satisfies an inequality  similar to  (\ref{fgamma}) in  Corollary \ref{4.3}. But  for 
 the general polynomials being at most 2 degree about $(\Gamma_{>N} u,\Gamma_{>N}\bar{u})$, this inequality dose not always hold. 
\end{Remark}
\begin{proof}[Proof of Lemma \ref{6.2}]
From  Remark \ref{norm-re},
\begin{eqnarray}
f_r^{w_{\theta}}(u,\bar{u})={\cal A}^{w_{\theta}}_r(u,\bar{u})+
 {\cal B}_r^{ w_{\theta}}(u,\bar{u})+{\cal C}_r^{ w_{\theta}}(u,\bar{u}),
 \end{eqnarray}
 where  ${\cal A}^{w_{\theta}}_r(u,\bar{u})$, ${\cal B}_r^{ w_{\theta}}(u,\bar{u})$ and ${\cal C}_r^{ w_{\theta}}(u,\bar{u}) $ are  defined in (\ref{5.4})-(\ref{5.6}) in Remark \ref{norm-re}.

 \vspace{4pt}
 \noindent Step 1: Calculate $\{ {\cal A}_r^{w_{\theta}}(u,\bar{u}) ,\|u\|_p^2\}_{w_{\theta}} $.
 \vspace{4pt}

 \noindent
 It is easy to verify that    
 \begin{eqnarray}\label{cala}
 \{ {\cal A}_r^{w_0}(u,\bar{u}) ,\|u\|_p^2\}_{w_0}
= \sum_{j\in \mathbb{Z}^*} \sum_{(l,k)\in\Omega^0_{{\cal A}_r^N} }{\bf i}(l_j+l_{-j}-k_j-k_{-j}) \langle j\rangle^{2p}(f^{w_0})_{r,lk}^{i} u^l\bar{u}^k=0;
 \end{eqnarray}
and  
 \begin{eqnarray}\label{cala-1}
 \{ {\cal A}^{w_1}_r(u,\bar{u}) ,\|u\|_p^2\}_{w_1}= \sum_{j\in \mathbb{Z}^*} \sum_{(l,k)\in\Omega^1_{{\cal A}_r^N} }{\bf i}\ \mbox{sgn}(j)\cdot(l_j-k_j) \langle j\rangle^{2p}(f^{ w_1})_{r,lk}^{i} u^l\bar{u}^k=0.
 \end{eqnarray}

\vspace{4pt}
 \noindent Step 2: Estimate $\{ {\cal B}_r^{w_{\theta}}(u,\bar{u}) ,\|u\|_p^2\}_{w_{\theta}} $.
 \vspace{4pt}

 \noindent
Since the function $ {\cal B}_r^{ w_{\theta}}(u,\bar{u})$  depends   on  $(u_j,\bar{u}_j)_{|j|\leq N} $ and $(|u_j|^2)_{|j|>N}$, the following equation holds true 
$$  \{ {\cal B}^{w_{\theta}}_r(u,\bar{u}),\sum_{|j|> N}\langle j\rangle^{2p} |u_j|^2\}_{w_{\theta}}=0.$$ Thus, 
\begin{eqnarray}
&&\{{\cal B}^{ w_{\theta}}_r(u,\bar{u}),\|u\|_p^2 \}_{w_{\theta}}=\{ {\cal B}^{ w_{\theta}}_r(u,\bar{u}),\sum_{|j|\leq N}\langle j\rangle^{2p} |u_j|^2\}_{w_{\theta}} .
        \end{eqnarray}
From the definition of $\{ , \}_{w_{\theta}}$ and the structure of ${\cal B}^{w_{\theta}}_r(u,\bar{u})$,   $\{{\cal B}^{w_{\theta}}_r(u,\bar{u}),\sum_{|j|\leq N}\langle j\rangle^{2p} |u_j|^2 \}_{w_{\theta}}$  still depends on $(|u_j|^2)_{|j|>N}$ and $(\Gamma_{\leq N} u,\Gamma_{\leq N}\bar{u}) $. To be more specific,
\small{\begin{eqnarray}\label{calB-u}
\{ {\cal B}^{ w_{\theta}}_r(u,\bar{u}),\sum_{|j|\leq N}\langle j\rangle^{2p} |u_j|^2\}_{w_{\theta}}
= 2{\bf Re} \sum_{(l,k)\in \Omega^{\theta}_{{\cal B}_r^N}, |j|\leq N}  {\bf i}\
 \mbox{sgn}^{\theta}(j)  l_{j} \langle j\rangle^{2p} \big(f^{w_{\theta}}\big)^i_{r,lk} u^l\bar{u}^k.
\end{eqnarray}}
   For any no zero term $u^l\bar{u}^k$ of the right side of (\ref{calB-u}),  its index $(l,k)$  satisfies
   \begin{eqnarray}\label{M(l,k)}
{\cal M}(l,k)={\cal M}(\Gamma_{\leq N}l,\Gamma_{\leq N}k) =\sum_{|j|\leq N}(l_j-k_j)\cdot j =i.
\end{eqnarray}
From (\ref{M(l,k)}), for any $|j|\leq N$ with $l_{j}\neq 0$, it satisfies that  
\begin{equation}\label{ghgh}
 j =\sum_{|t|\leq N,t\neq j}(l_t-k_t)\cdot t +(l_j-1-k_j)\cdot j-i.  
 \end{equation}
Moreover, given $p\geq2$, by (\ref{a---b}) and (\ref{ghgh}), the following inequalities hold true 
\begin{eqnarray}\label{m-l-k-0}
 | j|^p\leq 2
 r^p\big( \sum_{|t|\leq N,t\neq j}(l_t+k_t)\cdot |t|^p +(l_j-1+k_j)\cdot |j|^p\big)\cdot  \langle i\rangle^p  
 \end{eqnarray}
 and
\begin{eqnarray}\label{N-p}
 | j|^{2p}&\leq& |j|\cdot |j|^{p-\frac12}\cdot |j|^{p-\frac12}  \nonumber\\
 &\leq&2Nr^{p-\frac12}|j|^{p-\frac12}\big( \sum_{|t|\leq N,t\neq j}(l_t+k_t)\cdot |t|^{p-\frac{1}{2}} +(l_j-1+k_j)\cdot |j|^{p-\frac12}\big)\cdot \langle i\rangle^{p-\frac12} .
 \end{eqnarray}
 I will estimate the coefficients of $\{{\cal B}_r^{w_{\theta}}(u,\bar{u}),\|u\|_p^2 \}_{w_{\theta}}$. When $\theta =0$, for any $(l^0,k^0,i^0)\in{\cal A}_{(f^{w_0})^i_{r,lk}} $ with $(l,k)\in\Omega^{\theta}_{{\cal B}_r^N}$, it holds that
{ \begin{eqnarray}\label{m-l-k}
|{\cal M}(l^0,k^0)-\frac{i^0}2|&\leq& \sum_{|j|\leq N}(l^0_j+k^0_j)\cdot |j|+ \sum_{|j_0|>N} |\mbox{sgn}(l_{j_0}+k_{j_0})j_0|+|\frac{i^0}2|\nonumber\\
&\leq& (r-2)N+\sum_{|j_0|>N}|\mbox{sgn}(l_{j_0}+k_{j_0})j_0|+|i^0|\nonumber\\
&\leq & 4  (r-2)N\cdot \big(\sum_{|j_0|>N}|\mbox{sgn}(l_{j_0}+k_{j_0})j_0|\big)\cdot\langle i^0\rangle
\end{eqnarray}}
the last inequality hold by (\ref{a---b}). 
From (\ref{m-l-k-0}) and (\ref{m-l-k}), the coefficients of $\{{\cal B}_r^{w_{0}}(u,\bar{u}),\|u\|_p^2 \}_{w_{0}}$ are bounded by
{\small\begin{eqnarray}
 8\frac{C^{r-2}_{f^{w_0}}}{\langle i\rangle^{\beta-p}} r^{p+1}N l_j \langle j\rangle^p \big( \!\!\sum_{|t|\leq N,t\neq j}(l_t+k_t)\!\cdot\! | t|^p +(l_j-1+k_j)\cdot | j|^p\big)  
\!\cdot\!\big(\sum_{|j_0|>N}|\mbox{sgn}(l_{j_0}+k_{j_0})j_0|\big) .
\end{eqnarray}}Similarly, in the case $\theta =1$, using (\ref{N-p}), the coefficients of $\{{\cal B}_r^{w_{1}}(u,\bar{u}),\|u\|_p^2 \}_{w_{1}}$ are bounded by
\begin{equation}
 8\frac{C^{r-2}_{f_r^{w_1}}}{\langle i\rangle^{\beta -p+\frac12} } r^pN l_j|j|^{p-\frac12}\big( \sum_{|t|\leq N,t\neq j}(l_t+k_t)\cdot |t|^{p-\frac12} +(l_j-1+k_j)\cdot |j|^{p-\frac12}\big) \prod_{t\in \mathbb{Z}^*}|t|^{\frac{l_t+k_t}2}.
\end{equation}
By Corollary \ref{r1}, it holds that
\begin{eqnarray}\label{5.20}
|\{ {\cal B}^{w_{\theta}}_r(u,\bar{u}),\|u\|_p^2\}_{w_{\theta}}|\leq  8  r^{p+1} N c^{r-1}{C^{r-2}_{f_r^{w_{\theta}} }}  \|\Gamma_{>N}u\|_2^2\ \|u\|_p^{r-2}.\end{eqnarray}

\vspace{4pt}
 \noindent Step 3: Estimate $\{ {\cal C}_r^{w_{\theta}}(u,\bar{u}) ,\|u\|_p^2\}_{w_{\theta}} $.
 \vspace{4pt}

 \noindent

When $\theta=0$,
\begin{eqnarray}
&&|\{{\cal C}_r^{w_0}(u,\bar{u}),\|u\|_p^2\}_{w_0}|\nonumber\\
&=&\{ {\cal C}_r^{w_0}(u,\bar{u}), \sum_{|t|\leq N}|u_t|^2 \langle t\rangle^{2p}\}_{w_0}
+\{ {\cal C}_r^{w_0}(u,\bar{u}), \sum_{|t|> N}|u_t|^2 \langle t\rangle^{2p}\}_{w_0}\nonumber\\
&=& \{  {\cal C}_r^{w_0}(u,\bar{u}), \sum_{|t|\leq N}|u_t|^2 \langle t\rangle^{2p}\}_{w_0}+0,
\end{eqnarray}
the last equation  holds by the fact that
$$ \{u_{j}\bar{u}_{-j}, |j|^{2p}(|u_j|^2+|u_{-j}|^2)\}_{w_0}=0.$$
It is easy to verify that $\{  {\cal C}_r^{w_{0}}(u,\bar{u}), \sum_{|t|\leq N}|u_t|^2 \langle t\rangle^{2p}\}_{w_0}$ is still dependent  on $(u_{j_0}\bar{u}_{-j_0})_{|j_0|>N}$ and $(\Gamma_{\leq N} u, \Gamma_{\leq N}\bar{u})$. Using the method of estimate $\{{\cal B}_r^{ (w_{\theta})}(u,\bar{u}), \sum_{|t|\leq N}|u_t|^2 \langle t\rangle^{2p} \}_{w_{\theta}}$, the estimate of $\{  {\cal C}_r^{w_0}(u,\bar{u}), \sum_{|t|\leq N}|u_t|^2 \langle t\rangle^{2p}\}_{w_0} $ is obtained.

When $\theta=1$,
  \begin{eqnarray}
&&|\{{\cal C}_r^{ w_1}(u,\bar{u}),\|u\|_p^2\}_{w_1}|\nonumber\\
&=& \{  {\cal C}_r^{w_{1}}(u,\bar{u}),   \sum_{|t|\leq N}|u_t|^2 \langle t\rangle^{2p}\}_{w_1} +\{{\cal C}_r^{w_{1}}(u,\bar{u}), \sum_{|t|> N}|u_t|^2 \langle t\rangle^{2p}\}_{w_1}.\nonumber
\end{eqnarray}
Using the method of estimate $\{{\cal B}_r^{w_{\theta}}(u,\bar{u}), \sum_{|t|\leq N}|u_t|^2 \langle t\rangle^{2p} \}_{w_{\theta}}$ in step 2, the estimate of

\noindent
$  \{  {\cal C}_r^{w_{1}}(u,\bar{u}),   \sum_{|t|\leq N}|u_t|^2 \langle t\rangle^{2p}\}_{w_1}$ can be obtained.
  The   estimate of
   $\{{\cal C}_r^{w_{1}}(u,\bar{u}), \sum_{|t|> N}|u_t|^2 \langle t\rangle^{2p}\}_{w_1}$ will be obtained by the following. For any nonzero term of  ${\cal C}_r^{w_1}(u,\bar{u})$ with index $(l,k)$,  there exists $|j_0|>N$ with $l_{j_0}=1$, $k_{-j_0}=1$ (or $l_{-j_0}=1$, $k_{j_0}=1$) such that
\begin{eqnarray}\label{6.41-1}
2|j_0| = |\sum_{|t|\leq N} (l_t-k_t)\cdot t-i|,
\end{eqnarray}
which follows from $ {\cal M}(l,k)=i$. From the relation (\ref{6.41-1}), using (\ref{a---b}) it holds that
\begin{equation}\label{jkb1}
|j_0|\leq \frac12\big(\sum_{|t|\leq N}(l_t+k_t)\cdot |t|+|i|\big)\leq  \langle i\rangle \big( \sum_{|t|\leq N}(l_t+k_t)\cdot |t|\big) \leq rN \langle i\rangle
\end{equation}
and
\begin{equation}\label{jkb}
\langle j_0\rangle^{p-\frac12} \leq  2 r^{p-\frac12}\big(\sum_{|t|\leq N} (l_t+k_t)\cdot
\langle t\rangle^{p-\frac12}\big) \cdot \langle i\rangle^{p-\frac12}.
\end{equation}
By  (\ref{jkb1}) and (\ref{jkb}),  the coefficients of \begin{eqnarray}
 |\{ {\cal C}_r^{w_{1}}(u,\bar{u}), \sum_{|t|> N}|u_t|^2 \langle t\rangle^{2p}\}_{w_1}|= |  {\bf Re}  \sum_{(l,k)\in \Omega^1_{{\cal C}_r^N} } {\bf i}\langle j_0\rangle^{2p} (\tilde{f}^{ w_1})_{r,lk}^{i} \prod_{t\in \mathbb{Z}^*}|t|^{\frac{l_t+k_t}{2}} u^l\bar{u}^k|
\end{eqnarray}  are smaller than
\begin{eqnarray}
&&2\langle j_0\rangle^{p-\frac12} r^{p+\frac12}N\frac{C^{r-2}_{f_r^{w_1}}}{\langle i\rangle^{\beta-p+\frac12}}\big(\sum_{|n|\leq N}( l_n + k_n)\langle n\rangle^{p-\frac12}\big) \prod_{t\in \mathbb{Z}^*}|t|^{\frac{l_t+k_t}{2}} .
\end{eqnarray}
Using Corollary \ref{r1}, it holds that 
\begin{eqnarray}\label{5.23}
|\{ {\cal C}^{ w_{\theta}}_r(u,\bar{u}),\|u\|_p^2 \}_{w_{\theta}}|\leq 12r^{p+1}c^{r-1} C^{r-2}_{f_r^{w_{\theta}}} N   \|\Gamma_{>N} u\|_2\cdot \|u\|_2^{r-3}\cdot \|u\|_p.
\end{eqnarray}
Summing (\ref{cala}), (\ref{cala-1}), (\ref{5.20}) and (\ref{5.23}), inequality (\ref{f<2})
 is obtained.
 \end{proof}

\subsection{ Birkhoff normal form theorem}
In this subsection, construct  a coordinate transformation under which  the Hamiltonian system (\ref{uj}) will have an $r_*+3$ degree  $(\theta,\gamma, \alpha, N)$-normal form, for any given positive $r_*$.

\begin{Theorem}[Birkhoff normal form theorem]\label{Th2}Suppose that  system (\ref{uj})   satisfies assumptions
${\cal A}_{\theta}$-${\cal B}_{\theta}$ and 
$ P^{w_{\theta}}(u,\bar{u})$ in $H^{w_{\theta}}(u,\bar{u})$ defined in (\ref{H-1})   has  $(\beta,\theta)$-type symmetric coefficients semi-bounded by $C_{\theta}>0$ ($\beta$ is big enough).
Given $\alpha>1$, $0<\gamma\ll1$ and integer $r_*> 0$, take $p$ satisfying  $(\beta-4)/2>p>2+4\alpha(r_*+1)r_*^2$. There exist a positive real number $\widetilde{R}>0 $ and  a Lie-transformation ${\cal T}_{w_{\theta}}^{(r_*)}$: $B_p(\widetilde{R}/3)\rightarrow B_p(\widetilde{R})$ such that: \par
For any $R<\widetilde{R}$ and any integer $N$ fulfilling
\begin{equation}\label{N-c}
( {R^{r_*-2} \gamma^{r_*+1}})^{-\frac{1}{p-2-2\alpha(r_*+1)}} \leq N\leq (\gamma R^{-\frac{1}{(r_*+1)(r_*+2)}})^{\frac{1}{2\alpha}},
\end{equation}  the transformation ${\cal T}_{w_{\theta}}^{(r_*)}$
 puts Hamiltonian $H^{w_{\theta}}$
into
 \begin{equation*}
 H^{(r_*,w_{\theta})} :=H^{w_{\theta}}\circ {\cal T}^{(r_*,w_{\theta})}=H_0^{w_{\theta}}+Z^{(r_*,w_{\theta})}+{\cal R}^{N(r_*,w_{\theta})}+{\cal R}^{T(r_*,w_{\theta})}
 \end{equation*}
 which satisfies that
\begin{itemize}
\item[1)]  Both $Z^{(r_*,w_{\theta})}$ and $\mathcal R^{N(r_*,w_{\theta})}$ are $(r_*+3)$-degree  polynomials
and $\mathcal R^{T(r_*,w_{\theta})}$ is a power series  which starts with $r_*+4$ degree polynomial.
All of them have $(\beta,\theta)$-type  symmetric
coefficients  semi-bounded by $C({\theta},r_*):=C_{\theta} \big(\frac{2^{\beta}N^{2\alpha}}{\gamma}\big)^{(r_*+1)}$.  
\item[2)] The polynomial $Z^{(r_*,w_{\theta})}(u,\bar{u})$ is $r_*+3$-degree $(\theta, \gamma, \alpha,  N)$-normal form with respect to $\omega^{w_{\theta}}$.

 \item[3)] The polynomial  ${\mathcal R}^{N(r_*,w_{\theta})}(u,\bar{u}):=\sum_{r=3}^{r_*+3}\Gamma_{>2}^Ng_{r+4}^{(r,w_{\theta})}$, where $g_{r+4}^{(r,w_{\theta})}$ is an $(r+4)$-degree homogeneous polynomial with $(\beta,\theta)$-type symmetric coefficients semi-bounded by $C({\theta},r)$;

\item[4)] The canonical Lie-transformation ${\cal T}^{(r_*)}_{w_{\theta}}$ satisfies
\begin{equation}\label{sss}
\sup_{(u,\bar{u})\in B_p(R/3)}\|{\cal T}_{w_{\theta}}^{(r_*)}(u,\bar{u})-(u,\bar{u})\|_{p} \leq C(\theta, p,r_*)R^{2-\frac{1}{2(r_*+1)^2}},
\end{equation}
where $C(\theta,p,r_*)$ is a constant dependent on $ \theta,p$ and $r_*$. 
\end{itemize}
\end{Theorem}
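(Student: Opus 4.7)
The plan is to build $\mathcal{T}_{w_{\theta}}^{(r_*)}$ iteratively as the composition of time-one Lie flows $\Phi_{\chi^{(3)}}^1\circ\cdots\circ\Phi_{\chi^{(r_*+3)}}^1$ generated by auxiliary Hamiltonians $\chi^{(r)}$ of homogeneous degree $r$. The inductive hypothesis at step $r$ would read
\[
H^{(r,w_{\theta})}=H_0^{w_{\theta}}+Z^{(r,w_{\theta})}+\mathcal{R}^{N(r,w_{\theta})}+\mathcal{R}^{T(r,w_{\theta})}+P^{(r,w_{\theta})},
\]
where $Z^{(r,w_{\theta})}$ is a $(\theta,\gamma,\alpha,N)$-normal form of degree $\leq r+2$, $\mathcal{R}^{N(r,w_{\theta})}$ gathers truncated ``more than two high-index'' pieces of degree $\leq r+2$, $\mathcal{R}^{T(r,w_{\theta})}$ is at least of degree $r+3$, and $P^{(r,w_{\theta})}$ starts at degree $r+3$; at the initial step $P^{(2,w_{\theta})}=P^{w_{\theta}}$ and the rest vanish.

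At iteration $r+1$, denote by $G_{r+3}$ the degree $r+3$ part of $P^{(r,w_{\theta})}$ and split it via the truncation operators: $\Gamma^N_{>2}G_{r+3}$ is placed directly into $\mathcal{R}^{N(r+1,w_{\theta})}$ (it contributes high-order tails by Corollary \ref{4.3}), while $\Gamma^N_{\leq 2}G_{r+3}$ is further decomposed using Remark \ref{B-h} into its normal-form part $Z_{r+3}$ (added to $Z^{(r+1,w_{\theta})}$) and the ``non-resonant'' remainder $G'_{r+3}$. I would then solve the homological equation
\[
\{H_0^{w_{\theta}},\chi^{(r+3)}\}_{w_{\theta}}+G'_{r+3}=0
\]
monomial by monomial, setting the coefficient of $u^{l}\bar u^{k}$ in $\chi^{(r+3)}$ to that of $G'_{r+3}$ divided by $-\mathbf{i}\langle \omega^{w_{\theta}}, I_{\theta}(l-k)\rangle$. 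By Assumption $\mathcal{A}_\theta$, for $(l,k)\in O^{w_{\theta}}_{r+3,N}$ the divisor exceeds $\gamma M_{l,k}/N^{\alpha}$; the factor $M_{l,k}$ will crucially compensate the one-derivative loss of the vector field $X^{w_{\theta}}_{\chi^{(r+3)}}$ appearing in Proposition \ref{cor2}, keeping the transformation bounded on $\mathcal{H}^p$.

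After defining $H^{(r+1,w_{\theta})}=H^{(r,w_{\theta})}\circ\Phi_{\chi^{(r+3)}}^1$ via the Lie series $\sum_{k\geq 0}\frac{1}{k!}\mathrm{ad}_{\chi^{(r+3)}}^k H^{(r,w_{\theta})}$, I would verify by structural induction (analogous to Lemma \ref{lem4.2}, extended to Poisson brackets) that each Poisson bracket of two functions with $(\beta,\theta)$-type symmetric coefficients is again of this type, with semi-bound multiplied by roughly $2^{\beta}N^{2\alpha}/\gamma$; this accounts for the stated growth $C(\theta,r)=C_{\theta}(2^{\beta}N^{2\alpha}/\gamma)^{r+1}$. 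The size estimate (\ref{sss}) on $\mathcal{T}_{w_{\theta}}^{(r_*)}-\mathrm{id}$ follows from Proposition \ref{cor2} applied to each $\chi^{(r)}$, a standard Gronwall argument on the flow $\Phi_{\chi^{(r)}}^t$, and the window (\ref{N-c}) on $N$ — the lower bound guarantees that the remainders $\mathcal{R}^{N}$ are high-order small, while the upper bound controls the growth of the semi-bounds and keeps each $\chi^{(r)}$ generating a transformation close to the identity on $B_p(\widetilde R/3)$.

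The main obstacle is two-fold. First, because $X^{w_{\theta}}_{P^{w_{\theta}}}$ is unbounded of order one (Proposition \ref{cor2}), the standard Birkhoff scheme with Diophantine non-resonance would fail to produce a transformation on $\mathcal{H}^p$; the modulation by $M_{l,k}$ in Definition \ref{5.3} is tailored exactly to absorb this derivative loss, which explains why one can only hope for $(\theta,\gamma,\alpha,N)$-normal forms rather than genuine Birkhoff normal forms. Second, and more delicate, is verifying that the class of $(\beta,\theta)$-type symmetric polynomials semi-bounded by the current $C(\theta,r)$ is stable under both the division-by-small-divisor that defines $\chi^{(r+3)}$ and the Poisson brackets appearing in the Lie expansion, with the right bookkeeping of the index sets $\mathcal{A}_{f_{t,lk}^i}$ and of the pairing $(l^0,k^0,i^0)\leftrightarrow (k-k^0,l-l^0,i^0-2i)$ (or the factor $\prod \langle j\rangle^{(l_j+k_j)/2}$ when $\theta=1$). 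This structural closure is what allows the inductive estimates (\ref{3.1---0}) and (\ref{3.1---1}) to propagate through all $r_*$ steps, and is where the bulk of the technical work lies; I would isolate it in a dedicated ``stability lemma'' invoked once per iteration.
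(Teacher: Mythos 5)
Your proposal follows essentially the same path as the paper's proof in the Appendix: an iterative Lie-series construction with homological equations at each degree, the split of each degree-$t$ piece via $\Gamma^N_{\leq 2}$ / $\Gamma^N_{>2}$ into normal-form, high-index remainder ${\cal R}^N$, and new perturbation, and the observation that the factor $M_{l,k}$ in the strong non-resonance condition is what compensates the order-one derivative loss of the unbounded vector field. The ``stability lemma'' you propose to isolate is precisely what the paper carries out in Lemma \ref{lem2}, Lemma \ref{sf}, Remark \ref{jjd} and Lemma \ref{lem4.2}, so no genuinely different route is taken.
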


\subsection{Important Lemmas in the Proof of Theorem \ref{Th2}}
In order to prove Theorem \ref{Th2}, it need  not only to construct  a bounded canonical
transformation under which the Hamiltonian $H^{w_{\theta}}$ in (\ref{H-1}) has an $r_*+3$-degree $(\theta,\gamma,\alpha, N)$-normal form, but also to show that the new Hamiltonian function has  $(\beta, \theta)$-type symmetric coefficients semi-bounded by $C(\theta,r_*)$. First, let us review the definition of canonical transformation.

\begin{Definition}Call a map  $\phi:{\cal H}^p(\mathbb{Z}^*,\mathbb{C}) \ni (u,\bar{u})\rightarrow (\xi,\bar{\xi})\in {\cal H}^p(\mathbb{Z}^*,\mathbb{C}) $  canonical transformation under a symplectic form $w_{\theta}$ (or a symplectic change of coordiantes), if
$\phi$ is a diffeomorphism and   preserves the Poisson bracket, i.e.
$ \{f,g\}_{w_{\theta}}\circ
      \phi=\{f\circ \phi, g\circ \phi\}_{w_{\theta}}.$
\end{Definition}

 A convenient way of constructing canonical transformations is as followings.
  Let
  $\Phi_{S^{w_{\theta}}}^t$ be the
flow generated by a regular function $S^{w_{\theta}}(u,\bar{u})$ defined in $ {\cal H}^p(\mathbb{Z}^*,\mathbb{C}) $ with respect to the symplectic structure $w_{\theta}$. $\Phi_{S^{w_{\theta}}}^t|_{t=0}=id$. If $\Phi_{S^{w_{\theta}}}^t$ is well defined up to $t=1$, then the
map $ \Phi_{S^{w_{\theta}}}^t|_{t=1}$ is called a Lie transformation  associated
to $S^{w_{\theta}}(u,\bar{u})$ under symplectic form $w_{\theta}$.  $\Phi_{S^{w_{\theta}}}^1$ is canonical.

Given a regular function $g$, the new function $g\circ {\Phi}_{S^{w_{\theta}}}^t$ satisfies
 $$\frac{d^n}{dt^n}(g\circ \Phi_{S^{w_{\theta}}}^t) =\underbrace{\{\{g, S^{w_{\theta}}\}_{w_{\theta}},\cdots   \}_{w_{\theta}}}_{n \  \  times}\circ \Phi_{S^{w_{\theta}}}^t. $$
Thus the Taylor expansion of $g\circ \Phi_{S^{w_{\theta}}}^t$ in  the   variable $t$ is
 $$g\circ \Phi_{S^{w_{\theta}}}^t= \sum_{\nu=0}^{\infty} g_{( \nu, S^{w_{\theta}})} \circ \Phi_{S^{w_{\theta}}}^t\big|_{t=0} t^{\nu}= \sum_{\nu=0}^{\infty} g_{( \nu, S^{w_{\theta}})}   t^{\nu}, $$
 where
 \begin{equation}\label{d}
 g_{(0,S^{w_{\theta}})} :=g,\quad g_{( \nu,S^{w_{\theta}})}:=\frac{1}{\ \nu\ }\{g_{( \nu-1, S^{w_{\theta}} )},S^{w_{\theta}}\}_{w_{\theta}},\ \nu\geq1.
\end{equation}
Take $t=1$ and it follows that
  $$ g\circ \Phi_{S^{w_{\theta}}}^t|_{t=1}=\sum_{\nu=0}^{\infty}g_{( \nu, S^{w_{\theta}})}. $$

In this paper,  denote $\prec$ as $\leq \tilde{C}\cdot$, where $\tilde{C}>0$ is independent of $R$. To improve the order of the $(\theta,\gamma,\alpha,N)$-normal form of $H^{w_{\theta}}$, it  needs to solve a linear equation to find a  suitable generated function  $S^{w_{\theta}}$ under symplectic form $w_{\theta}$. The following lemma is to do this with respect to $w_{\theta}$-Possion bracket.

\begin{Lemma}\label{lem2}{\bf (Homological Equation)}
Given an integer $N>0$, real numbers $\gamma>0$ and $ \alpha>0$, suppose that an $r$-degree homogeneous polynomial $g^{w_{\theta}}(u,\bar{u})$
 has $(\beta,\theta)$-type symmetric  coefficients semi-bounded by $C_{g^{w_{\theta}}}>0$ ($\theta\in\{0,1\}$).
   Then there exists an unique
       $ S^{w_{\theta}} (u,\bar{u})  $
   such that
\begin{equation}\label{47}
\{H_0^{w_{\theta}},S^{w_{\theta}}(u,\bar{u})\}_{w_{\theta}}+ \Gamma^{\omega_{\theta}}_{(\theta,\gamma,\alpha,N)} \Gamma_{\leq 2}^Ng^{w_{\theta}}(u,\bar{u})= \Gamma_{\leq 2}^N g^{w_{\theta}} (u,\bar{u}),
\end{equation}
where $H_0^{w_{\theta}} :=\sum_{j\in \mathbb Z^*}   \omega^{w_{\theta}}_j|u_j|^2$ with $\omega^{w_{\theta}}_j\in\mathbb{R}$. Moreover for any $(u,\bar{u})\in {\cal H}^p(\mathbb Z^*,\mathbb C)$ the Hamiltonian vector of $S^{w_{\theta}}(u,\bar{u})$  holds
\begin{equation*}
\|X_{S^{w_{\theta}}}^{w_{\theta}}(u,\bar{u}) \|_{p}\leq 4 r^{p+1}C^{r-2}_{g^{w_{\theta}}}   c^{r-1} \frac{
N^{\alpha}}{{\gamma}} \|u\|_{p}\|u\|_{2}^{r-2}.
\end{equation*}

\end{Lemma}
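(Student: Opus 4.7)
The plan is to solve (\ref{47}) monomial-by-monomial by small-divisor inversion. Expand
\begin{equation*}
\Gamma_{\leq 2}^N g^{w_{\theta}}(u,\bar u) = \sum_{(l,k)\in E_{r,N},\ |l+k|=r,\ |{\cal M}(l,k)|\leq N} g^{w_{\theta},i}_{r,lk}\, u^l\bar u^k,
\end{equation*}
and observe that a direct computation with the symplectic form $w_{\theta}$ yields $\{H_0^{w_{\theta}}, u^l\bar u^k\}_{w_{\theta}} = -{\bf i}\langle \omega^{w_{\theta}}, I_{\theta}(l-k)\rangle\, u^l\bar u^k$. Therefore (\ref{47}) decouples into one scalar equation per $(l,k)$, whose unique solution is to set
\begin{equation*}
S^{w_{\theta}}_{r,lk} := \frac{g^{w_{\theta},i}_{r,lk}}{-{\bf i}\langle\omega^{w_{\theta}}, I_{\theta}(l-k)\rangle}
\end{equation*}
on each non-resonant index (those with $|\langle\omega^{w_{\theta}}, I_{\theta}(l-k)\rangle| > \gamma M_{l,k}/N^{\alpha}$), and to zero on the resonant ones. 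The $(\theta,\gamma,\alpha,N)$-non-resonance assumption ${\cal A}_{\theta}$ guarantees that this division produces at worst a multiplicative loss of $N^{\alpha}/\gamma$ in each coefficient.

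Next I would verify that $S^{w_{\theta}}$ inherits the $(\beta,\theta)$-type symmetric structure of $g^{w_{\theta}}$, semi-bounded by $C_{g^{w_{\theta}}} N^{\alpha}/\gamma$. Writing $D(l,k):= -{\bf i}\langle\omega^{w_{\theta}}, I_{\theta}(l-k)\rangle$, reality of $\omega^{w_{\theta}}$ together with the real-linearity of $I_{\theta}$ gives the key identity $\overline{D(l,k)}=D(k,l)$. For $\theta=0$, dividing each contribution $g^{w_0,i(l^0,k^0,i^0)}_{r,lk}({\cal M}(l^0,k^0)-i^0/2)$ by the common scalar $D(l,k)$ preserves the sum structure over ${\cal A}_{g^i_{r,lk}}$, and the conjugation identity $\overline{g^{w_0,i(l^0,k^0,i^0)}_{r,lk}} = g^{w_0,-i(k-k^0,l-l^0,i^0-2i)}_{r,kl}$ is transferred verbatim to $S^{w_0}$ through $\overline{D(l,k)}=D(k,l)$. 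The case $\theta=1$ proceeds analogously via $\overline{\tilde S^{w_1,i}_{r,lk}}=\tilde S^{w_1,-i}_{r,kl}$.

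For the vector-field estimate I would re-run the calculation of Proposition \ref{cor2} on $S^{w_{\theta}}$, sharpened to a $p$-norm estimate (rather than the generic $(p-1)$-norm bound). The improvement is made possible by the truncation $\Gamma_{\leq 2}^N$: each monomial of $S^{w_{\theta}}$ has at most two indices exceeding $N$ in absolute value, and its total momentum is bounded by $N$. Hence for $\partial_{\bar u_j}S^{w_{\theta}}$ with $|j|>N$ there is at most one other high-index slot $|j'|>N$, and momentum conservation forces $|j|\leq|j'|+rN$, so the weight $\langle j\rangle^{p}$ is controlled by $\langle j'\rangle^{p}$ up to an admissible factor that is absorbed into the $N^{\alpha}/\gamma$ prefactor. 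Applying Young's inequality as in Lemma \ref{f-v} then delivers the stated bound.

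The main obstacle is the second step: the careful bookkeeping needed to track the $(\beta,\theta)$-type decomposition through the small-divisor division, because the index set ${\cal A}_{g^i_{r,lk}}$ intrinsically depends on $(l,k,i)$ and must be matched to ${\cal A}_{g^{-i}_{r,kl}}$ after the $l\leftrightarrow k$ swap. Once this invariance is checked, both the solvability of (\ref{47}) and the quantitative estimate on $X_{S^{w_{\theta}}}^{w_{\theta}}$ follow from the machinery of Section~4 applied with the modified semi-bound $C_{g^{w_{\theta}}}N^{\alpha}/\gamma$.
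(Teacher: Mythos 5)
Your overall framework mirrors the paper's: solve~(\ref{47}) coefficient-by-coefficient with the small-divisor formula~(\ref{42}), verify the conjugation symmetry via $\overline{\langle\omega^{w_\theta},I_\theta(l-k)\rangle}=\langle\omega^{w_\theta},I_\theta(k-l)\rangle$, and then estimate the vector field through the machinery of Section~4. That part is fine, and your observation $\overline{D(l,k)}=D(k,l)$ is exactly the step the paper uses in~(\ref{5.13}).

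There is, however, a genuine gap in your explanation of why $X^{w_\theta}_{S^{w_\theta}}$ enjoys a bound in the \emph{full} $p$-norm rather than the $(p-1)$-norm that Proposition~\ref{cor2} gives for a generic $(\beta,\theta)$-type polynomial. You attribute the gain to the truncation $\Gamma^N_{\leq 2}$ together with momentum conservation ($|j|\leq |j'| + rN$, so $\langle j\rangle^p$ is traded for $\langle j'\rangle^p$). But that momentum argument is already used in Proposition~\ref{cor2} (see~(\ref{ab1})), and by itself it only yields the $(p-1)$-norm: the source of the extra derivative loss is not the weight $\langle j\rangle^p$ but the factor $\big({\cal M}(l^0,k^0)-\tfrac{i^0}{2}\big)$ (or, for $\theta=1$, the $\prod\langle t\rangle^{(l_t+k_t)/2}$ weight) hidden inside the $(\beta,\theta)$-type coefficient, which grows like $O(M_{l,k})$. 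Your proposal also asserts that the division ``produces at worst a multiplicative loss of $N^\alpha/\gamma$,'' leaving $S^{w_\theta}$ merely $(\beta,\theta)$-type semi-bounded by $C_{g^{w_\theta}} N^\alpha/\gamma$; that weaker statement still leaves the unbounded factor intact and, fed into Proposition~\ref{cor2}, would again only give the $(p-1)$-norm.

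The mechanism the paper actually exploits is that the non-resonance lower bound scales with $M_{l,k}$: one has $|\langle\omega^{w_\theta},I_\theta(l-k)\rangle|>\gamma M_{l,k}/N^\alpha$, while the unbounded piece of $(g^{w_\theta})^i_{r,lk}$ is bounded by $2rM_{l,k}\langle i^0\rangle$ (estimate~(\ref{AA})). These two occurrences of $M_{l,k}$ cancel (this is exactly~(\ref{5.17})), so that the \emph{scalar} coefficients of $S^{w_\theta}$ behave like $\beta$-bounded coefficients (bounded by $2rN^\alpha C^{r-2}_{g^{w_\theta}}/\gamma$), not merely $(\beta,\theta)$-type semi-bounded. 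It is at that point that the second half of Proposition~\ref{cor2} (the $\beta$-bounded case) delivers the full $p$-norm. In short: the truncation $\Gamma^N_{\leq 2}$ is needed so the non-resonance hypothesis applies to every surviving monomial, but the one-derivative gain comes from the $M_{l,k}$ in the denominator killing the $M_{l,k}$ in the $(\beta,\theta)$-type numerator --- not from pairing high indices through momentum conservation. Your argument would need to be rewritten to make this cancellation explicit.
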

\begin{proof} By the definition of Possion bracket $\{\ ,\ \}_{w_{\theta}}$, the solution  $S^{w_{\theta}}(u,\bar{u})$ of (\ref{47}) is still an $r$-degree
homogeneous polynomial and has the following form  
\begin{equation}\label{32-0}
S^{w_{\theta}}(u,\bar{u})=\sum_{i\in  M_{S^{w_{\theta}}_r},\ } \sum_{|l+k|=r,{\cal M}(l,k)=i } (S^{w_{\theta}})^i_{r,lk}   u^l\bar{u}^k
\end{equation}
with undetermined coefficients.
 Since $g^{w_{\theta}}(u,\bar{u})$ has $(\beta,\theta)$-type symmetric coefficients semi-bounded by $C_{g^{w_{\theta}}}$, by  Remark  \ref{trunction} and Remark \ref{B-h}, $ \Gamma^{\omega_{\theta}}_{(\theta,\gamma,\alpha,N)}\Gamma_{\leq 2}^N g^{w_{\theta}}(u,\bar{u})$ is  an  $r$-degree
$(\theta,\gamma,\alpha, N)$-normal form of $\Gamma_{\leq 2}^Ng^{w_{\theta}}(u,\bar{u})$ with   $(\beta,\theta)$-type symmetric  coefficients semi-bounded by $C_{g^{w_{\theta}}}$, and its coefficients have the following form
 \begin{equation}\label{48}
 (\Gamma^{\omega_{\theta}}_{(\theta,\gamma,\alpha,N)} \Gamma_{\leq 2}^N g^{w_{\theta}})^{i}_{r,lk}:=\left\{
\begin{array}{lll}
(\Gamma_{\leq 2}^N g^{w_{\theta}})^{i }_{r,lk},& \mbox{ if } \ \  |\langle \omega^{w_{\theta}}, \  I_{\theta}(l-k)\rangle|\leq\frac{\gamma
M_{l,k}}{N^{\alpha}},\\
\\
0,& \mbox{ if   }\ \ |\langle \omega^{w_{\theta}}, \  I_{\theta}(l-k)\rangle|>\frac{\gamma
M_{l,k}}{N^{\alpha}},\
\end{array}\right.
\end{equation}
where $ M_{l,k}$ is defined in Definition \ref{5.3}.
Take (\ref{32-0}) into  equation (\ref{47}) and get that for any $i\in M_{g^{w_{\theta}}}$ and any $l,k\in
\mathbb{N}^{\mathbb{Z}^*}$ with $|l+k|=r$ and ${\cal M}(l,k)=i$, 
\begin{equation}\label{32}
-{\bf i}\langle \omega^{w_{\theta}}, \  I_{\theta}(l-k)\rangle (S^{w_{\theta}})^{i}_{r,lk}+(\Gamma^{\omega_{\theta}}_{(\theta,\gamma,\alpha,N)} \Gamma_{\leq 2}^Ng^{w_{\theta}})^{i }_{r,lk}=(\Gamma_{\leq 2}^Ng^{w_{\theta}})^{i}_{r,lk},
\end{equation}
 which means that the coefficients of $S^{w_{\theta}}(u,\bar{u})$ has the following form 
\begin{equation}\label{42}
 (S^{w_{\theta}})^{i}_{r,lk}=\left\{\ \begin{array}{cl}
 -\frac{(\Gamma_{\leq 2}^N g^{w_{\theta}})^{i}_{r,lk}}{\ {\bf i}\langle \omega^{w_{\theta}}, \  I_{\theta}(l-k)\rangle\ },& \mbox{ when}\quad
|\langle \omega^{w_{\theta}}, \  I_{\theta}(l-k)\rangle |>\frac{\gamma
M_{l,k}}{N^{\alpha}}\ \\
\\
0,& \mbox{when}\quad  |\langle \omega^{w_{\theta}}, \  I_{\theta}(l-k)\rangle |\leq\frac{\gamma
M_{l,k}}{N^{\alpha}}
\end{array}\right.
\end{equation}
and satisfy that
\begin{eqnarray}\label{5.13}
\overline{(S^{w_{\theta}})^{i}_{r,lk}}= \overline{
-\frac{(\Gamma_{\leq 2}^N g^{w_{\theta}})^{i}_{r,lk}}{\ {\bf i}\langle \omega^{w_{\theta}}, \  I_{\theta}(l-k)\rangle\ }}
  = -\frac{(\Gamma_{\leq 2}^N g^{w_{\theta}})^{-i}_{r,kl}}{\ {\bf i}\langle \omega^{w_{\theta}}, \  I_{\theta}(k-l)\rangle\ }=(S^{w_{\theta}})^{-i}_{r,kl},
 \end{eqnarray}
 the second equality holds by  $\Gamma_{\leq 2}^N g^{w_{\theta}}(u,\bar{u})$ having  symmetric coefficients from Remark \ref{trunction} and $\omega^{w_{\theta}}_j\in\mathbb{R}$ ($j\in\mathbb{Z}^*$).

 \noindent
 The norm of Hamiltonian vector field $X_{S^{w_{\theta}}}^{w_{\theta}}$
\begin{eqnarray}\label{yy}
&& \big\| {X}^{w_{\theta}}_{S^{w_{\theta}}}(u,\bar{u}) \big\|_{p }
 = \sqrt{    \|   \nabla_{\bar{u} } S^{w_{\theta}}(u,\bar{u})\|_{p}^2+\|   \nabla_{ {u} } S^{w_{\theta}}(u,\bar{u}) \|_{p}^2     }\nonumber\\
&\leq&\big\|\big(   \sum_{j={\cal M}(l,k-e_j)-i\atop{i\in M_{S^{w_{\theta}}}\atop{|l+k-e_j|=r-1}} }   k_j  (S^{w_{\theta}})^{i }_{r,{l k}}  u^l\bar{u}^{k-e_j} \big)_{j\in\mathbb{Z}^*} \big\|_{p} +\big\| \big( \sum_{j=-{\cal M}(l-e_j,k)-i\atop{ |l-e_j+k|=r-1\atop{i\in M_{S^{w_{\theta}}}}}}  l_j (S^{w_{\theta}})^{i}_{r,{l k }}  u^{l-e_j}\bar{u}^k \big)_{j\in\mathbb{Z}^*} \big\|_{p}\nonumber
   \end{eqnarray}
equals to    the $\ell^2  $ norm of  the vector fields
$$ Q^{w_{\theta}}_1:=
\bigg(
   \sum_{i\in M_{S^{w_{\theta}}}}\sum_{j={\cal M}(l,k-e_j)-i\atop{|l+k-e_j|=r-1} }  \langle j\rangle^{p}  k_j \cdot (S^{w_{\theta}})^{i }_{r,{l k}}  u^l\bar{u}^{k-e_j} \bigg)_{j\in\mathbb{Z}^*}$$
and
$$ Q^{w_{\theta}}_2:=\bigg(
 \sum_{i\in M_{S^{w_{\theta}}}}\sum_{j=-{\cal M}(l-e_j,k)-i\atop{|l-e_j+k|=r-1}}  \langle j\rangle^{p}  l_j \cdot (S^{w_{\theta}})^{i}_{r,{l k }}  u^{l-e_j}\bar{u}^k \bigg)_{j\in\mathbb{Z}^*}.$$
 When $\theta =0$, for any $l,k\in \mathbb{N}^{\mathbb Z^*}$ with ${\cal M}(l,k)=i$ and $k_j\neq0$, by (\ref{a---b}), it holds 
\begin{eqnarray}\label{5.16}
 |j|^p\leq  2r^{p-1}\big(\sum_{t\in\mathbb{Z}^*} l_t |t|^p+ \sum_{t\in\mathbb{Z}^*, \ {t\neq j}} k_t|t|^p+ (k_j-1)|j|^p\big)
 \cdot \langle i\rangle^p.
\end{eqnarray}
For any $(l^0,k^0,i^0)\subset {\cal A}_{f_{r,lk}^i}$, by (\ref{a---b}) the following inequality holds
\begin{eqnarray}\label{AA}
|{\cal M}(l^0,k^0)-\frac{i^0}2|\leq 2r M_{l,k}\cdot \langle i_0 \rangle.
\end{eqnarray}
By (\ref{42}) and (\ref{AA}), the coefficients of $S^{w_0}$ satisfy that
\begin{eqnarray}\label{5.17}
 &&\frac{ \sum_{(l^0,k^0,i^0)\in {\cal A}_{(g^{w_0})^{i}_{r,lk}} } \big|(\Gamma^N_{\leq 2}g^{w_0})^{i(l^0,k^0,i^0)}_{r,lk} ({\cal M}(l^0,k^0)-\frac{i^0}2)\big|       }{|\langle \omega^{w_0}, \  I_{0}(l-k)\rangle|} \nonumber\\
  &\leq&  \frac{N^{\alpha}}{\gamma M_{l,k}}\sum_{(l^0,k^0,i^0)\in {\cal A}_{(g^{w_0})^{i}_{r,lk}}} |(\Gamma^N_{\leq 2} g^{w_0})^{i(l^0,k^0,i^0)}_{r,lk}|\cdot  2r M_{l,k}\cdot \langle i_0 \rangle \leq 2r \frac{N^{\alpha}C^{r-2}_{g^{w_0}}}{\gamma   \langle i\rangle^{\beta} }   .
\end{eqnarray}
From    (\ref{5.16}) and (\ref{5.17}),
 the coefficients of vector fields $Q^{w_0}_1(u,\bar{u})$ and $Q^{w_0}_2(u,\bar{u})$ fulfill
 \begin{eqnarray}\label{66}
 && \langle j\rangle^p\cdot |k_j (S^{w_0})^i_{r,lk}|,\quad  \langle j\rangle^p\cdot |l_j (S^{w_0})^i_{r,lk}|\nonumber\\
  &\leq& 2r^{p+1} \big(\sum_{t\in\mathbb{Z}^*} l_t  \langle t\rangle^p  + \sum_{t\in\mathbb{Z}^*\atop{t\neq j}} k_t \langle t\rangle^p + (k_j-1) \langle j\rangle^p\big)  \frac{N^{\alpha}C^{r-2}_{g^{w_0}}}{\gamma \langle i\rangle^{\beta-p}  }.
   \end{eqnarray}
 By (\ref{66}), using Corollary \ref{r1},  it holds that 
\begin{eqnarray}
 \big\| {X}^{w_0}_{S^{w_0}}(u,\bar{u}) \big\|_{p}\leq \|Q_1^{w_0}\|_{\ell^2} +\|Q_2^{w_0}\|_{\ell^2}
\leq 4r^{p+1} c^{r-1} \frac{N^{\alpha}C^{r-2}_{g^{w_0}}}{\gamma } \|u\|_p \|u\|_2^{r-2} .
   \end{eqnarray}
When $\theta=1$, in order to estimate the $\ell^2$-norm of $Q^{w_1}_1(u,\bar{u})$ and $Q^{w_1}_2(u,\bar{u})$, let us consider the coefficients of $Q_1^{w_1}(u,\bar{u})$ and $Q_2^{w_1}(u,\bar{u})$ firstly.  For any $i\in M_{S^{w_1}}$ and any $l,k\in \mathbb{N}^{{\mathbb Z}^*} $ satisfying
$|l+k|=r,\ {\cal M}(l,k)=i$ and $k_j\neq0 $ (or $l_j\neq0$), using (\ref{42}),
 the coefficients of $u^l\bar{u}^{k-e_j}$ in $Q^{w_1}_1$ are bounded by the following
  \begin{eqnarray}\label{coe}
    2\frac{r^{p+\frac12} C^{r-2}_{g^{w_1}_r} N^{\alpha} }{\gamma \langle i\rangle^{\beta-p+\frac12}} (\sum_{t} l_t|t|^{p-\frac12} +\sum_{t\neq j} (k-e_j)_t|t|^{p-\frac12} )   \prod_{t\in\mathbb{Z}^*} |t|^{\frac12(l_t+(k-e_j)_t)}
  \end{eqnarray}
and the coefficients of $u^{l-e_j}\bar{u}^k$ in $Q_2^{w_1}(u,\bar{u})$ are bounded by
  \begin{eqnarray}\label{coe-1} 
  2\frac{r^{p+\frac12} C^{r-2}_{g^{w_1}_r} N^{\alpha} }{\gamma  \langle i\rangle^{\beta-p+\frac12}} (\sum_{t\neq j} (l-e_j)_t|t|^{p-\frac12} +\sum_{t} k_t|t|^{p-\frac12}  )   \prod_{t\in\mathbb{Z}^*} |t|^{\frac12((l-e_j)_t+k_t)} .
  \end{eqnarray}
By Corollary \ref{r1} and (\ref{coe})-(\ref{coe-1}),   the following estimate is obtained 
$$ \|X^{w_1}_{S^{w_1}}(u,\bar{u})\|_{p}\leq \|Q_1^{w_1}\|_{\ell^2} +\|Q_2^{w_1}\|_{\ell^2} \leq 4 r^{p+\frac12}C^{r-2}_{g^{w_1}_r}\frac{N^{\alpha}}{\gamma} c^{r-1} \|u\|_p\|u\|_2^{r-2} .$$

\end{proof}

The following Lemma shows that the Possion bracket of  an $\tilde{r}$-degree homogeneous polynomial $f^{w_{\theta}}(u,\bar{u}) $ with $(\beta, \theta)$-type symmetric coefficients semi-bounded by $C_{f^{w_{\theta}}_{\tilde{r}}}>0$ and the solution  $S^{\omega_{\theta}}(u,\bar{u})$ to equation (\ref{47}) is still of $(\beta, \theta)$-type symmetric coefficients. Moreover, its coefficients satisfy some inequalities. 
\begin{Lemma}\label{sf}
  Let an $\tilde{r}$-degree homogeneous polynomial $f^{w_{\theta}}(u,\bar{u}) $ ($\theta\in\{0,1\}$)   have $(\beta,\theta)$-type symmetric coefficients semi-bounded by $C_{f^{w_{\theta}}_{\tilde{r}}}>0$.
      Then the possion bracket of $f^{w_{\theta}}(u,\bar{u})$ and the solution $S^{w_{\theta}}(u,\bar{u}) $ to equation (\ref{47}) under the symplectic form $w_{\theta}$ is an $(\tilde{r}+r-2)$-degree  homogeneous polynomial  with $(\beta,\theta)$-type symmetric coefficients  and it holds that
   \begin{itemize}
     \item   when $\theta=0$, for any $l',\ k',\ i$ fulfilling $|l'+k'|=\tilde{r}+r-2$ and ${\cal M}(l',k')=i$,  it holds that  
   \begin{eqnarray*}
   \sum_{(l'^0,k'^0,i^0)\in {\cal A}_{(f^{w_0}_{(1,S^{w_0})})^i_{\tilde{r}+r-2,l'k'} }}  |(f^{w_0}_{(1,S^{w_0})})^{i(l'^0,k'^0,i^0)}_{\tilde{r}+r-2,l'k'} |\cdot \max\{\langle i^0\rangle ,  \langle i^0-2i\rangle\}\nonumber\\
  \leq 2^{\beta+2}r^2c(\tilde{r} +1)   \frac{C^{\tilde{r}-2}_{f^{w_0}_{\tilde{r}}}C^{r-2}_{g^{w_0}_{r}}(2N+1)^{r-2}N^{\alpha+1}}{ \gamma\langle i\rangle^{\beta}   }.
   \end{eqnarray*}
      \item when $\theta=1$ the following inequality holds  
     true
\begin{eqnarray*}
|({f}^{w_1}_{(1,S^{w_1})})^i_{\tilde{r}+r-2,l'k'}|\leq 2^{\beta+1}cr(\tilde{r} +1) \frac{C^{\tilde{r}-2}_{f^{w_1}_{\tilde{r}}} C^{r-2}_{g^{w_1}_{r}}(2N+1)^{r-2}N^{\alpha}}{\gamma \langle i\rangle^{\beta}} \prod_{t\in\mathbb{Z}^*} |t|^{\frac{l'_t+k'_t}{2}}.\end{eqnarray*}
   \end{itemize}

   \end{Lemma}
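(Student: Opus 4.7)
The plan is to compute $f^{w_\theta}_{(1,S^{w_\theta})}=\{f^{w_\theta},S^{w_\theta}\}_{w_\theta}$ directly from the definition of the Poisson bracket and from the explicit formula for $S^{w_\theta}$ given in the proof of Lemma \ref{lem2}, and to read off all three claims. The degree claim is immediate, since $\{\cdot,\cdot\}_{w_\theta}$ removes one power of $u_j$ and one of $\bar u_j$; hence $f^{w_\theta}_{(1,S^{w_\theta})}$ is $(\tilde r+r-2)$-homogeneous. Furthermore, momentum is additive along the bracket (the factors killed by $\partial_{u_j}$ and $\partial_{\bar u_j}$ have opposite momenta), so every new monomial $u^{l'}\bar u^{k'}$ of momentum $i$ decomposes as $i=i_f+i_s$, with $f^{w_\theta}$ contributing a term of momentum $i_f$ and $S^{w_\theta}$ one of momentum $i_s$.

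For the $(\beta,\theta)$-type symmetric structure, I first observe that by \eqref{42} and Remark \ref{trunction} the function $S^{w_\theta}$ already inherits the $(\beta,\theta)$-type symmetric structure from $g^{w_\theta}$: its coefficients are expressed on the same decomposition sets $\mathcal{A}_{g^i_{r,lk}}$, obtained by dividing by the \emph{real} small divisor $\langle\omega^{w_\theta},I_\theta(l-k)\rangle$ (which lets the reality identity \eqref{5.13} pass through). For $\theta=0$ I then assemble $\mathcal{A}_{(f^{w_0}_{(1,S^{w_0})})^i_{\tilde r+r-2,l'k'}}$ from pairs of triples in $\mathcal{A}_{f^{i_f}_{\tilde r,lk}}\times\mathcal{A}_{S^{i_s}_{r,l''k''}}$ appearing in the expansion of the product $(\mathcal M(l^0,k^0)-\tfrac{i^0}{2})(\mathcal M(l^{\prime\prime 0},k^{\prime\prime 0})-\tfrac{i^{\prime\prime 0}}{2})$; the involution of Remark \ref{Rem3.2} composes pairwise to give the required involution on the new triple, and the reality identity follows from the two factor reality identities together with $\overline{\{\cdot,\cdot\}_{w_0}}=\{\overline{\cdot},\overline{\cdot}\}_{w_0}$. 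For $\theta=1$ the argument is lighter: the factorized form $\tilde g^i_{r,lk}\prod\langle j\rangle^{(l_j+k_j)/2}$ is multiplicative across the bracket because each derivative produces at most one factor of $\langle j\rangle^{-1}$, which is compensated by $\mathrm{sgn}(j)\cdot|j|=j$ supplied by $w_1$ together with momentum additivity.

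For the coefficient bound I combine four ingredients: the semi-bounds \eqref{3.1---0}--\eqref{3.1---1} for $f^{w_\theta}$; the bound $|S^{w_\theta,i(\cdots)}_{r,lk}|\preceq C_{g^{w_\theta}}^{r-2}N^\alpha/(\gamma\langle i\rangle^\beta)$ coming from \eqref{42} and the $(\theta,\gamma,\alpha,N)$-non-resonance condition; the convolution inequality $\sum_{i_f+i_s=i}\langle i_f\rangle^{-\beta}\langle i_s\rangle^{-\beta}\preceq 2^{\beta+1}c\langle i\rangle^{-\beta}$; and a combinatorial count yielding the factor $r(\tilde r+1)(2N+1)^{r-2}$ for the number of ways to pair indices producing a prescribed $(l',k')$. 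The exponent $r-2$ in $(2N+1)^{r-2}$ counts the low-frequency indices of $S^{w_\theta}$, since the truncation $\Gamma^N_{\le 2}$ in \eqref{47} forces at most two of its $r$ indices to satisfy $|j|>N$. For $\theta=0$ the additional factor $N$ (giving $N^{\alpha+1}$ in place of $N^\alpha$) appears when the $(\beta,0)$ decomposition of $S^{w_0}$ is unpacked inside the new coefficient via $|\mathcal M(l^0,k^0)-\tfrac{i^0}{2}|\preceq N\max\{\langle i^0\rangle,\langle i^0-2i\rangle\}$; no such unpacking is needed in the simpler $(\beta,1)$ case.

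The main obstacle will be step two, the bookkeeping of decomposition triples $(l^0,k^0,i^0)$ through the bracket in the $\theta=0$ case: one must construct $\mathcal{A}_{(f^{w_0}_{(1,S^{w_0})})^i_{\tilde r+r-2,l'k'}}$ explicitly and verify its stability under the involution $(l^{0\prime},k^{0\prime},i^{0\prime})\mapsto(k'-k^{0\prime},l'-l^{0\prime},i^{0\prime}-2i)$ consistently across the two contractions $\partial_{u_j}f\cdot\partial_{\bar u_j}S$, $\partial_{\bar u_j}f\cdot\partial_{u_j}S$ and the sum over $j\in\mathbb Z^*$.
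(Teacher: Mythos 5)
Your proposal diverges from the paper's proof at the crucial structural step for $\theta=0$, and the divergence is a genuine gap. You propose to assemble $\mathcal{A}_{(f^{w_0}_{(1,S^{w_0})})^i_{\tilde r+r-2,l'k'}}$ from \emph{pairs} of triples in $\mathcal{A}_{(f^{w_0})^{i_f}_{\tilde r,lk}}\times\mathcal{A}_{(S^{w_0})^{i_s}_{r,LK}}$, using the expansion of the \emph{product} $\bigl(\mathcal{M}(l^0,k^0)-\tfrac{i^0}{2}\bigr)\bigl(\mathcal{M}(l''^0,k''^0)-\tfrac{i''^0}{2}\bigr)$. But a $(\beta,0)$-type coefficient must be a linear combination of \emph{single} unbounded linear factors $\mathcal{M}(\cdot,\cdot)-\tfrac{\cdot}{2}$; the product of two such factors is quadratic, and there is no way to re-expand it as a sum of single linear factors with controlled ``bounded parts'' without reintroducing unboundedness. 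The paper sidesteps this entirely: it does \emph{not} decompose $S^{w_0}$ inside the new coefficient. Instead, $(S^{w_0})^{i_2}_{r,LK}$ is kept as an undecomposed scalar, and the new decomposition triple $(l'^0,k'^0,i'^0)$ is produced solely from the $f^{w_0}$-triple via the case-defined map $D$ in (\ref{5.51}), which either leaves $(l^0,k^0,i^0_1)$ unchanged or shifts it by $(L-e_j,K-e_j,2i_2)$ according to whether the contracted index $j$ lies inside the decomposition block $(l^0,k^0)$. This asymmetric treatment of $f$ versus $S$ is what keeps the structure linear, and it is the essential idea your proposal is missing. (Note that the scalar $|(S^{w_0})^{i_2}_{r,LK}|$ is \emph{bounded} by $\tfrac{2rN^\alpha C^{r-2}_{g^{w_0}}}{\gamma\langle i_2\rangle^\beta}$ via (\ref{5.17}), precisely because the unbounded factor $|\mathcal{M}(L^0,K^0)-\tfrac{i^0_2}{2}|\le 2rM_{L,K}\langle i^0_2\rangle$ from (\ref{AA}) cancels against the $M_{L,K}$ in the non-resonance bound; you cannot reuse it as an unbounded linear factor after that cancellation.)

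Your reality check and the convolution bound $\sum_{i_1+i_2=i}\langle i_1\rangle^{-\beta}\langle i_2\rangle^{-\beta}\preceq 2^\beta\langle i\rangle^{-\beta}$ are fine, and your heuristic for the counting factor $r(\tilde r+1)(2N+1)^{r-2}$ matches the paper's Lemma \ref{jrm2}, which you would still need to prove (the $(2N+1)^{r-2}$ indeed counts low-frequency placements of $S$'s indices, and the convolution over momenta is folded into the same sum). Your account of the extra $N$ in the $\theta=0$ bound is also roughly correct in spirit, though in the paper it arises from the $+N$ term in (\ref{m-l-k})-type estimates after passing through $D$, not from a bound of the form you quote verbatim. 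The $\theta=1$ argument you sketch is close to the paper's, but ``compensated by $\mathrm{sgn}(j)\cdot|j|=j$ together with momentum additivity'' is too vague to carry the estimate: the surviving $|j|$ factor has to be absorbed using the non-resonance bound and the constraint $|\Gamma_{>N}(L+K)|\le 2$, as the paper does explicitly in (\ref{jss-1}) and the estimate that follows.
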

  \begin{Remark}\label{jjd}
  Under the same assumptions of Lemma \ref{sf},
     for any integer  $\nu\geq 1$,   $f^{w_{\theta}}_{(\nu,S^{w_{\theta}})}$  is an  $\big(\tilde{r}+\nu(r-2)\big)$-degree  homogeneous polynomial with $(\beta,\theta)$-type symmetric coefficients.
   {\begin{itemize}
    \item When $\theta=0$,
    the following inequality holds 
    \begin{eqnarray*}
     &&\sum_{(l^0,k^0,i^0)\in {\cal A}_{ (f^{w_{0}}_{(\nu,S^{w_{0}})})^{i}_{\tilde{r}+(r-2)\nu,lk} }} |(f^{w_{0}}_{(\nu,S^{w_{0}})})^{i(l^0,k^0,i^0)}_{\tilde{r}+(r-2)\nu,lk}|\cdot \max\{\langle i^0\rangle,\ \langle i^0-2i\rangle \}\nonumber\\
     &\leq& C^{\tilde{r}-2}_{f^{w_{0}}} \big(2^{\beta+2}r^2c (2N+1)^{(r-2) }\frac{N^{\alpha+1}}{\gamma}   C^{r-2}_{g^{w_{0}}}\big)^{\nu}\frac{ \prod_{n=0}^{\nu-1}  \big(\tilde{r}+n(r-2)+1\big)}{\langle i\rangle^{\beta}\nu!}  ;\nonumber\\
     \end{eqnarray*}
    \item When $\theta=1$,
     it holds that
   {\small $$\!\!|({f}^{w_{1}}_{(\nu,S^{w_{1}})})^{i}_{\tilde{r}+(r-2)\nu,lk}|\! \leq\! \frac{C^{\tilde{r}-2}_{f^{w_{1}}_{\tilde r}}}{\ \nu!\langle i\rangle^{\beta} \ }\big(2^{\beta+1}r c (2N)^{(r-2)}\frac{N^{\alpha}}{\gamma}   C^{r-2}_{g^{w_{1}}}\big)^{\nu}\! \prod_{n=0}^{\nu-1}\!  \big(\tilde{r}+n(r-2)+1\big)\!\prod_{t\in\mathbb{Z}^*}\! |t|^{\frac{l'_t+k'_t}{2}} .$$
 } \end{itemize}
  }\end{Remark}

\noindent 
Before  proving Lemma \ref{sf},  I denote a set of indexes and give a Lemma to count the number of this set. This Lemma is used to  prove Lemma \ref{sf}. 

\noindent
For any $(l',k')\in\mathbb{N}^{\mathbb{Z}^*  }\times\mathbb{N}^{\mathbb{Z}^* }$   and any $i'\in \mathbb{Z}$,
 let
 \begin{eqnarray*}
 && \Omega(l',k',i' ):= \left\{ \ \big((l,k,i_1),\ (L,K,i_2),\ j  \big)\ \left| \begin{array}{ll}
 & l,k,L,K\in \mathbb{N}^{\mathbb{Z}^*},\ i_1,\ i_2\in\mathbb{Z},\ j\in\mathbb{Z}^*;\\
 & \mbox{satisfying } {\bf A,  \ B,\  D1 } \ \mbox{or}\  {\bf A,\
 B,\ D2}
 \end{array}\right. \right\},
 \end{eqnarray*}
 where
{\small\begin{description}
  \item[A:] $|l+k|= \tilde{r},\ |L+K|= {r},\ |\Gamma_{>N}(L+K)|\leq 2;$
  \item[B:]  ${\cal M}(l,k)=i_1,\ {\cal M}(L,K)=i_2, \ i'=i_1+i_2;$
  \item[D1:] $(l-e_j)+L=l', \ k+(K-e_j)=k',\ \mbox{with}\ l_j>0\  \mbox{and}\ K_j>0;$
  \item[D2:] $l+(L-e_j)=l', \ (k-e_j)+K=k',\ \mbox{with}\ L_j>0\ \mbox{and}\ k_j>0.$
  \end{description}}
From the definition of set $\Omega(l',k',i' )$, if element  $((l,k,i_1),\ (L,K,i_2), j)\in \Omega(l',k',i')$, then $((k,l,-i_1),\ (K,L,-i_2), j)\in \Omega(k',l',-i')$.

\begin{Lemma}\label{jrm2}
  Fix $\beta\geq2$. For any given $ l',k'\in \mathbb{N}^{\mathbb{Z}^*}$ with   $|l'+k'|=r+\tilde{r}-2$ and ${\cal M}(l',k')=i'$,  it holds
   \begin{eqnarray*}
&& \sum_{((l,k,i_1), (L,K,i_2), j)\in\Omega(l',k',i')}\frac{K_jl_j}{\langle i_2-i'\rangle^{\beta} \cdot \langle i_2\rangle^{\beta} },\
 \sum_{((l,k,i_1), (L,K,i_2), j)\in\Omega(l',k',i')}\frac{k_jL_j}{\langle i_2-i'\rangle^{\beta} \cdot \langle i_2\rangle^{\beta}}\\
 &\leq &\frac{2^{\beta+1}}{\langle i'\rangle^{\beta}}cr(\tilde{r}+1)(2N+1)^{r-2}  .
\end{eqnarray*}
\end{Lemma}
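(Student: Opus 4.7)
The plan is first to simplify the weight. By condition \textbf{B}, $i_2-i'=-i_1$ so $\langle i_2-i'\rangle=\langle i_1\rangle$, and the elementary inequality $\langle i_1+i_2\rangle\le \langle i_1\rangle+\langle i_2\rangle\le 2\max(\langle i_1\rangle,\langle i_2\rangle)$ yields
\[
\frac{1}{\langle i_1\rangle^{\beta}\langle i_2\rangle^{\beta}}\le \frac{2^{\beta}}{\langle i'\rangle^{\beta}\min(\langle i_1\rangle,\langle i_2\rangle)^{\beta}}.
\]
This extracts the target factor $2^{\beta}\langle i'\rangle^{-\beta}$ and reduces the problem to estimating the residual sum with weight $\min(\langle i_1\rangle,\langle i_2\rangle)^{-\beta}$.

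Next I would parametrize $\Omega(l',k',i')$. Since $i_1,i_2$ are determined by $(l,k),(L,K)$ through \textbf{B}, the effective free parameters are $(L,K,j)$ (plus the binary choice between \textbf{D1} and \textbf{D2}; the two sums are symmetric, so I only treat \textbf{D1}). The relations $l=l'+e_j-L$, $k=k'+e_j-K$ force $L\le l'+e_j$ and $K\le k'+e_j$ componentwise, together with $L_j\le l'_j$ (so $l_j\ge 1$) and $K_j\ge 1$. The essential structural constraint from \textbf{A} is $|\Gamma_{>N}(L+K)|\le 2$, i.e.\ at most two indices with $|n|>N$ contribute to $L+K$.

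I would then split the sum according to whether $\Gamma_{>N}(L+K)=0$ or has exactly one/two high-index positions $|n_0|>N$. When $\Gamma_{>N}(L+K)=0$, the $r$ units of $|L|+|K|$ are distributed among the $2N+1$ indices of $\{|n|\le N\}$; the componentwise bound $L+K\le l'+k'+2e_j$ fixes enough entries that the remaining free multi-indices are counted by at most $(2N+1)^{r-2}$. Summing $K_j l_j$ over $j\in\mathbb Z^*$ contributes a factor $\sum_j K_j\le r$ and the bound $l_j\le \tilde{r}+1$ (since $l\le l'+e_j$ and $|l'|\le r+\tilde{r}-2$); in this regime $\min(\langle i_1\rangle,\langle i_2\rangle)^{-\beta}\le 1$. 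When a high-index position $|n_0|>N$ appears, one has $|i_2|\gtrsim |n_0|$ (or $|i_1|\gtrsim |n_0|$ after swapping $l\leftrightarrow L$ through the other decomposition), so $\min(\langle i_1\rangle,\langle i_2\rangle)^{-\beta}\le \langle n_0\rangle^{-\beta}\le \langle n_0\rangle^{-2}$ (as $\beta\ge 2$); the summation $\sum_{|n_0|>N}\langle n_0\rangle^{-2}\le c^2$ and Cauchy--Schwarz on the remaining high-index degree yield a factor $c$, the residual $r-2$ low-index degrees still give $(2N+1)^{r-2}$, and $K_jl_j$ summed over $j$ produces $r(\tilde{r}+1)$ as before. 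Adding the two regimes gives the claimed bound.

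The main obstacle is the combinatorial bookkeeping: extracting exactly $(2N+1)^{r-2}$ (rather than an easier but weaker $(2N+1)^{r-1}$) requires using the componentwise domination $L\le l'+e_j$, $K\le k'+e_j$ to pin down two coordinates among the $r$ free units of $|L|+|K|$, and pairing this with the $j$-sum producing $r(\tilde{r}+1)$. The companion sum $\sum K_j l_j\mapsto \sum k_j L_j$ will follow from the same argument applied to branch \textbf{D2}, using the symmetry $(l,k,L,K,i_1,i_2)\leftrightarrow(L,K,l,k,i_2,i_1)$ of $\Omega(l',k',i')$.
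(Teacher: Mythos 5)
Your weight reduction $\frac{1}{\langle i_1\rangle^{\beta}\langle i_2\rangle^{\beta}}\le 2^{\beta}\langle i'\rangle^{-\beta}\min(\langle i_1\rangle,\langle i_2\rangle)^{-\beta}$ and the parametrization by $(L,K,j)$ are sound, and your plan matches the paper's overall skeleton (extract $\langle i'\rangle^{-\beta}$, count $(2N+1)^{r-2}$ low-index slots using the domination $L\le l'+e_j$, $K\le k'+e_j$, extract $r(\tilde r+1)$ from $K_jl_j$, control the remaining infinite sum with the weight). The case split is genuinely different: the paper partitions $\Omega_{j,i_2}$ by $|j|\le N$ versus $|j|>N$, while you partition by $|\Gamma_{>N}(L+K)|\in\{0,1,2\}$. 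Both are viable.

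However there is a real gap in your high-index case. You argue that the appearance of a high index $n_0$ forces $|i_2|\gtrsim|n_0|$ \emph{or} $|i_1|\gtrsim|n_0|$, and then conclude $\min(\langle i_1\rangle,\langle i_2\rangle)^{-\beta}\le\langle n_0\rangle^{-\beta}$. That conclusion requires $\min(\langle i_1\rangle,\langle i_2\rangle)\ge\langle n_0\rangle$, i.e.\ \emph{both} of $|i_1|,|i_2|$ must be $\gtrsim|n_0|$; your premise only gives $\max(\langle i_1\rangle,\langle i_2\rangle)\gtrsim\langle n_0\rangle$. In fact $\min$ can stay bounded while $n_0\to\infty$: take $(L,K)$ with two paired high-index units (say $L_{n_0}=K_{n_1}=1$ with $n_0\approx n_1$), so that $i_2={\cal M}(L,K)$ has cancellation and both $i_1,i_2$ stay near $i'$. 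Having factored $\langle i'\rangle^{-\beta}\min^{-\beta}$ out, you have no decay left to make $\sum_{|n_0|>N}$ converge, and the argument breaks. The paper avoids this by using the symmetric split $\frac{\langle i'\rangle^{\beta}}{\langle i_1\rangle^{\beta}\langle i_2\rangle^{\beta}}\le 2^{\beta-1}\big(\langle i_1\rangle^{-\beta}+\langle i_2\rangle^{-\beta}\big)$ and then summing directly in $i_2$: since the momentum ${\cal M}(L,K)=i_2$ pins down the last free coordinate of $(L,K)$, the infinite sum over that coordinate \emph{is} the sum over $i_2$, which $\langle i_2\rangle^{-\beta}$ (for $\beta\ge2$) controls by $c^2$. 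Your ``Cauchy--Schwarz on the remaining high-index degree'' remark does not supply the needed mechanism. Finally, the combinatorial count producing $(2N+1)^{r-2}$ (not $(2N+1)^{r-1}$) is the heart of the lemma and you explicitly defer it; the paper's proof does carry it out (decomposing the $r+\tilde r-2$ unit entries of $(l',k')$ between $(l-e_j,k)$ and $(L,K-e_j)$, using that at most two high-index units can land in $(L,K)$, and distinguishing whether $j$ itself is high or low).
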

\begin{proof}
 Consider the non-zero components of vectors $k'$ and $l'$. For example, $e_j$ has only one non-zero component with index $j$, being 1; Taking multiplicity into account, 
 regard that $ke_j$($k$ is a positive integer) has $k$ non-zero components  whose values are 1  and  their indexes
 are $j$.
So $(l',k')$ with $|l'+k'|=r+\tilde{r}-2$ have $r+\tilde{r}-2$ non-zero components whose values are $1$.

Denote   $$\Omega_{j,i_2}(l',k',i' ):= \left\{\big((l,k,i_1),(L,K,\tilde{i}),t  \big)\in \Omega(l',k',i' ) \  | \  t=j\ \mbox{and}\  \tilde{i}=i_2
   \right\}. $$ It follows
$$\Omega(l',k',i')
= \bigcup_{j\in\mathbb{Z}^* , } \bigcup_{i_2\in\mathbb{Z}} \Omega_{j,i_2}(l',k',i').$$

   \noindent
The element in $\Omega_{j,i_2}(l',k',i')$ is unique determined, if $(l-e_j,k )$ is fixed. The estimate of  $\tilde{\sharp}\Omega_{j,i_2}(l',k',i')$ is obtained as follows.

 \noindent
 In the case $|j|\leq N$, since $|\Gamma_{>N}({L}+{K})|\leq 2$,  there are  at least  $r-3$  non-zero components  of $(l',k')$ coming from
  $(L, K)$ with the indexes being bounded by $N$ and the choices of that is smaller than  $(2N+1)^{r-3}$.  As for the remaining three components of $(L,K)$ whose values are 1,
one  of their positions is $j$ with $|j|\leq N$;
One position among the other two can be selected from  the rest non-zero components of $(l',k')$ and the choices is $\tilde{r}+1$;  The last one may be determined by the
fact that ${\cal M}(L,K)=i_2$. It holds  
\begin{eqnarray}\label{first-1}
&&\sum_{((l,k,i_1),(L,K,i_2),j)\in \bigcup_{|j|\leq N}\bigcup_{i_2\in\mathbb{Z}}\Omega_{j,i_2}(l',k',i'),}
 \frac{ K_jl_j}{ \langle i_1\rangle^{\beta}\cdot \langle i_2\rangle^{\beta}}  \nonumber\\
&\leq&\!\! \frac{r}{\langle i'\rangle^{\beta} }\sum_{\ |j|\leq N,\ }  \sum_{\Omega_{j,i_2}(l',k',i'),\ } \sum_{i_2\in \mathbb{Z}}\frac{l_j \langle i'\rangle^{\beta}}{\langle i_1\rangle^{\beta}\cdot\langle i_2\rangle^{\beta}}\nonumber\\
 &\leq&\!\! \frac{r}{ \langle i' \rangle^{\beta}}\sum_{|j|\leq N,\ } \! \sum_{\Omega_{j,i_2}(l',k',i'),\ }\! \sum_{i_2\in \mathbb{Z}}\frac{l_j2^{\beta-1}( \langle i_1\rangle^{\beta}  + \langle i_2\rangle^{\beta} ) }{ \langle i_1\rangle^{\beta} \cdot \langle i_2\rangle^{\beta} }\!\leq\!\frac{rc2^{\beta}}{\langle i' \rangle^{\beta}}  (\tilde{r}+1)(2N+1)^{r-2} .
\end{eqnarray}
 In the case $|j|>N$,  there are at least $r-2$ value-1 components of $(l',k')$ coming from $(L,K)$ whose
 indexes are
bounded by $N$, and  there are at most $(2N+1)^{r-2}$ choices; One position of the last two value-1
 components of $(L,K)$ is chosen  from the rest $\tilde{r}$ non-zero components of $(l',k')$ and the choices is $\tilde{r}$;
 The position of the last  component of $(L,K)$ is determined by the  momentum  of $(L,K)$ being $i_2$.
It holds 
\begin{eqnarray}\label{first-2}
&&\sum_{((l,k,i_1),(L,K,i_2),j)\in\bigcup_{|j|> N}\bigcup_{i_2\in\mathbb{Z}}\Omega_{j,i_2}(l',k',i'),}
\frac{ K_jl_j}{  \langle i_1\rangle^{\beta} \cdot  \langle i_2\rangle^{\beta}}  \nonumber\\
 &\leq& \frac{r}{ \langle i' \rangle^{\beta} }\sum_{|j|> N,\ } \sum_{i_2\in \mathbb{Z},\ } \sum_{\Omega_{j,i_2}(l',k',i'),\ } \frac{l_j  \langle i' \rangle^{\beta} }{\langle i_2\rangle^{\beta} \cdot \langle i_1\rangle^{\beta} }\nonumber\\
 &\leq& \frac{r}{\langle i' \rangle^{\beta} }\sum_{|j|> N,\ } \sum_{i_2\in \mathbb{Z},\ } \sum_{\Omega_{j,i_2}(l',k',i'),\ }
  \frac{l_j2^{\beta-1}(  \langle i_1\rangle^{\beta} + \langle i_2\rangle^{\beta} ) }{ \langle i_1\rangle^{\beta} \cdot \langle i_2\rangle^{\beta} }\leq \frac{rc2^{\beta}}{ \langle i' \rangle^{\beta}}  \tilde{r}(2N+1)^{r-2} .
\end{eqnarray}
The result is obtained from (\ref{first-1}) and (\ref{first-2}).  
 \end{proof}

In the following, the proof of the Lemma \ref{sf} is given. 
\begin{proof}
 By the definition of $\{\cdot,\cdot\}_{w_{\theta}}$, the following equation holds 
 \begin{eqnarray*}
f^{w_{\theta}}_{(1,S^{w_{\theta}})}=\{f^{w_{\theta}},S^{w_{\theta}}\}_{w_{\theta}}= 
\sum_{|l'+k'|=r+\tilde{r}-2\atop{{\cal M}(l',k')=i'\in M_{f^{w_{\theta}}_{(1,S^{w_{\theta}})}}}} (f^{w_{\theta}}_{(1,S^{w_{\theta}})})_{r+\tilde{r}-2,l'k'}^{i'} u^{l'}\bar{u}^{k'}
\end{eqnarray*}
with
 $
 M_{f^{w_{\theta}}_{(1,S^{w_{\theta}})}}:=\{i=i_1+i_2 \ | \ i_1\in M_{f^{w_{\theta}}}\subset\mathbb{Z},\ i_2\in M_{S^{w_{\theta}}}\subset\mathbb{Z}\}
 $
and
\begin{eqnarray*}
(f^{w_{\theta}}_{(1,S^{w_{\theta}})})^{i'}_{\tilde{r}+r-2,l',k'}:
&=&\sum_{j\in\mathbb{Z}^*\atop{i'=i_1+i_2}}{\bf i}\mbox{sgn}^{\theta}(j) \sum_{(l-e_j)+L=l'\atop{k+(K-e_j)=k'\atop{{\cal M}(l,k)=i_1,{\cal M}(L,K)=i_2}}}  l_jK_j (f^{w_{\theta}})^{i_1}_{\tilde{r}, l k}   (S^{w_{\theta}})^{i_2}_{r,L K} \nonumber\\ 
&&-\sum_{j\in\mathbb{Z}^*\atop{i'=i_1+i_2}}{\bf i}\mbox{sgn}^{\theta}(j)\sum_{ l+(L-e_j)=l'\atop{(k-e_j)+K=k'\atop{{\cal M}(l,k)=i_1,{\cal M}(L,K)=i_2}}}  L_jk_j (f^{w_{\theta}})^{i_1}_{\tilde{r}, l k}    (S^{w_{\theta}})^{i_2}_{r,LK}   .
\end{eqnarray*}
 
\noindent
  In the case $\theta =0$, I will
   give the  exact definition of ${\cal A}_{(f^{w_0}_{(1,S^{w_0})})^{i'}_{\tilde{r}+r-\!2,l' k'}}\!$ and $(\!f^{w_0}_{(1,S^{w_0})}\!)_{\tilde{r}+r-2,l'k' }^{i'(l'^0,k'^0,i'^0)}$ and prove that 
   the coefficients $(\!f^{w_0}_{(1,S^{w_0})})^{i'}_{\tilde{r}+r-2,l'k'}$ can be rewritten as the following form
   $$(f^{w_0}_{(1,S^{w_0}��)})^{i'}_{\tilde{r}+r-2,l'k'}:= \sum_{(l'^0,k'^0,i'^0)\in {\cal A}_{(f^{w_0}_{(1,S^{w_0})})^{i'}_{\tilde{r}+r-2,l' k'}}} (f^{w_0}_{(1,S^{w_0})})_{\tilde{r}+r-2,l'k' }^{i'(l'^0,k'^0,i'^0)}\big( {\cal M}(l'^0,k'^0)-\frac{i'^0}2 \big)$$
   and satisfy
   $$ \overline{(f^{w_0}_{(1,S^{w_0})})_{\tilde{r}+r-2,l'k' }^{i'(l'^0,k'^0,i'^0)}}=(f^{w_0}_{(1,S^{w_0})})_{\tilde{r}+r-2,k'l' }^{-i'(k'-k'^0,l'-l'^0,i'^0-2i')}. $$
 In order to describe the set ${\cal A}_{(f^{w_0}_{(1,S^{w_0}��)})^{i'}_{\tilde{r}+r-2,l'k'}}$ clearly,
 for any fixed $  \big((l,k,i_1),\ (L,K,i_2),\ j  \big)\in \Omega^{w_{0}}(l',k',i' )$,  define a mapping $D$ on set $ {\cal A}_{(f^{w_0})_{\tilde{r},lk}^{i_1}}$, for any $(l^0,k^0,i_1^0)\in {\cal A}_{(f^{w_0})_{\tilde{r},lk}^{i_1}}$,
{\footnotesize\begin{eqnarray}\label{5.51}
D(l^0,k^0,i_1^0):=\left\{
\begin{array}{lll}
(l^0,k^0,i^0_1) & \begin{array}{ll}&\mbox{when} \ (l',k')= \big((l-e_j)+L,k+(K-e_j)\big) \\
 &\mbox{and}\ l_j^0=0.
 \end{array}\\
(l^0-e_j+L,k^0+K-e_j,i^0_1+2i_2) &  \begin{array}{ll}&\mbox{when}\  (l',k')=\big((l-e_j)+L,k+(K-e_j)\big) \\ &\mbox{and}\ l_j\geq l_j^0>0.
\end{array}\\
(l^0 ,k^0,i^0_1 ) & \begin{array}{ll}&\mbox{when}\  (l',k')=\big(l+(L-e_j),(k-e_j)+K\big) \\
 &\mbox{and}\ k_j>k_j^0\geq 0.
 \end{array}\\
(l^0-e_j+L ,k^0+K-e_j ,i^0_1+2i_2)  & \begin{array}{ll} &\mbox{when}\ (l',k')= \big(l+(L-e_j),(k-e_j)+K\big)
\\ &\mbox{and}\ k_j=k_j^0>0.
\end{array}
\\
\end{array}
 \right.
\end{eqnarray}
}

\noindent Base on the set $\Omega(l',k',i')$ and the map $D$,  denote  
\begin{eqnarray*}
&&{\cal A}_{(f^{w_0}_{(1,S^{w_0})})_{\tilde{r}+r-2,l'k'}^{i'}}:=\bigcup_{ ((l,k,i_1),\ (L,K,i_2),\ j )\in \Omega(l',k',i') }{D{\cal A}}_{(f^{w_0})_{\tilde{r},lk}^{i_1} }
\end{eqnarray*}
and
\begin{eqnarray*}
{\cal A}_{(f^{w_0}_{(1,S^{w_0})})^{-i'}_{\tilde{r}+r-2,k'l'}}:=\bigcup_{ ((k,l,-i_1),\ (K,L,-i_2),\ j)\in \Omega(k',l',-i') } D{\cal A}_{(f^{w_0})_{\tilde{r},kl}^{-i_1}},
\end{eqnarray*}
where $$ D {\cal A}_{(f^{w_0})_{\tilde{r},lk}^{i_1}}:=\{\ D(l^0,k^0,i_1^0) \ |\ (l^0,k^0,i_1^0)\in  {\cal A}_{(f^{w_0})_{\tilde{r},lk}^{i_1}} \}.$$ 
 It is easy to check that $D$ is not an  inverse mapping from $  {\cal A}_{(f^{w_0})_{\tilde{r},lk}^{i_1}}$ to
 $ D {\cal A}_{(f^{w_0})_{\tilde{r},lk}^{i_1}}$.
Denote  
 \begin{small}\begin{eqnarray}\label{jss}
(f^{w_0}_{(1,S^{w_0} )})^{i'(l'^0,k'^0,i'^0)}_{\tilde{r}+r-2,l'k' }\!:=\!\sum_{(l^0,k^0,i^0)\in D^{-1}(l'^0,k'^0,i'^0)    }{\bf i}(l_jK_j-L_jk_j)  ( f^{w_0})_{\tilde{r},l k}^{i_1 (l^0,k^0,i^0 )}   (S^{w_0})^{i_2}_{r,LK},
   \end{eqnarray}
   \end{small}
   where  $D^{-1}(l'^0,k'^0,i'^0):=\{   (l^0,k^0,i^0)\in {\cal A}_{(f^{w_0})^{i^0}_{\tilde{r},lk}} \ |\ D(l^0,k^0,i^0)=(l'^0,k'^0,i'^0) \}.$  
  For any $(l'^0,k'^0,i'^0)\in {\cal A}_{(f^{w_0}_{(1,S^{w_0})})^{i'}_{\tilde{r}+r-2,l'k'}}$,  it is easy to verify that   $ (k'-k'^0, l'-l'^0,i'^0-i')\in{\cal A}_{(f^{w_0}_{(1,S^{w_0})})^{-i'}_{\tilde{r}+r-2,k'l'}}$.
Moreover, by (\ref{jss}) and the facts that $f^{w_0}$ having $(\beta,0)$-type symmetric coefficients  and $S^{w_0}$ having  symmetric coefficients, it holds
  \begin{eqnarray*}
  &&\overline{(f^{w_0}_{(1,S^{w_0}
  )})^{i'(l'^0,k'^0,i'^0)}_{\tilde{r}+r-2,l'k'}} \nonumber\\
  &=&\overline{{\bf i}
 \sum_{ (l^0,k^0,i^0)\in D^{-1}(l'^0,k'^0,i'^0) } ( l_jK_j-k_jL_j)   (f^{w_0})_{\tilde{r},l k}^{i_1 (l^0,k^0,i^0)}   (S^{w_0})^{i_2}_{ {r},LK} }\nonumber\\
 &=&{\bf i}  \sum_{ (k-k^0,l-l^0,i^0-2i)\in D^{-1}(k'-k'^0,l'-l'^0, i'^0-2i')      } (L_jk_j- l_jK_j)   (f^{w_0})_{\tilde{r},kl}^{-i_1 ( k-k^0, l-l^0,i^0-2i  )} (S^{w_0})^{-i_2}_{{r},KL}\nonumber\\
 &=&(f^{w_0}_{(1,S^{w_0})})^{-i'(k'-k'^0,l'-l'^0,i'^0-2i')}_{\tilde{r}+r-2,
 k'l'},  \end{eqnarray*}
the last second equation is holding by the definition of $D$ in (\ref{5.51}) and
 $$
 {\cal M}(l'^0,k'^0)-\frac{i'^0}2={\cal M}(k'-k'^0,l'-l'^0)-(\frac{i'^0}2-i').
 $$ So $ f^{w_0}_{(1,S^{w_0}
  )}$ has $(\beta,0)$-type symmetric coefficients semi-bounded by $ C_{g^{w_0}_{r}}$.
   \noindent  By (\ref{jss}), (\ref{5.17}) in Lemma \ref{lem2}, it holds 
 \begin{small}
 \begin{eqnarray}\label{lki}
  && \sum_{(l'^0,k'^0,i'^0)\in{\cal A}_{(f^{w_0}_{(1,S^{w_0})})^{i'}_{\tilde{r}+r-2,l'k'} }}
  |(f^{w_0}_{(1,S^{w_0} )})^{ i'(l'^0,k'^0,i'^0)}_{\tilde{r}+r-2,l'k'} |\cdot \max\{\langle i'^0\rangle, \ \langle i'^0-2i'\rangle\}\nonumber\\
 &\leq&    \sum_{(l'^0,k'^0,i'^0)\in{\cal A}_{(f^{w_0}_{(1,S^{w_0} )})^{i'}_{\tilde{r}+r-2,l'k'}} } |\sum_{ (l^0,k^0,i^0)\in D^{-1}(l'^0,k'^0,i'^0)   }(k_jL_j-K_jl_j) (f^{w_0})_{\tilde{r},l k}^{ i_1( l^0,k^0,i^0) }     (S^{w_0})^{i_2}_{{r},LK}   | \nonumber\\
 && \cdot \max\{\langle i'^0\rangle, \ \langle i'^0-2i'\rangle \} \nonumber\\
 &\leq&     \frac{2r C^{r-2}_{g^{w_0}_{r}} N^{\alpha}}{\gamma} \sum_{((l,k,i_1), (L,K,i_2), j  )\in\Omega(l',k',i') }  \frac{\big|k_jL_j-K_j l_j\big|}{\langle i_2\rangle^{\beta} } \nonumber\\
 && \cdot \sum_{(l^0,k^0,i_1^0)\in  {\cal A}_{(f^{w_0})^{i_1}_{\tilde{r},l k}}} \big|(f^{w_0})_{\tilde{r},l k}^{i_1 (l^0,k^0,i_1^0)}\big| \cdot \big(\max\{ \langle i_1^0\rangle , \ \langle i_1^0-2i_1\rangle \} +N\big)\nonumber\\
 &\leq&  2r   \frac{N^{\alpha+1}C^{r-2}_{g^{w_0}_{r}} C^{\tilde{r}-2}_{f^{w_0}_{\tilde{r}}} }{\gamma  }    \sum_{((l,k,i_1), (L,K,i_2), j )\in\Omega(l',k',i' ),\ } \sum_{i_1,i_2\in\mathbb{Z}}\frac{|k_jL_j-K_jl_j|}{   \langle i_2\rangle^{\beta}\cdot \langle i_1\rangle^{\beta} } .
 \end{eqnarray}
\end{small}
  By (\ref{lki}) and Lemma \ref{jrm2}, it follows that
   \begin{eqnarray}
   &&\sum_{(l'^0,k'^0,i'^0)\in{\cal A}_{(f^{w_0}_{(1,S^{w_0})})^{i'}_{\tilde{r}+r-2,l'k'}} }| (f_{(1,S^{w_0})})^{ i'(l'^0,k'^0,i'^0)}_{\tilde{r}+r-2,l'k'} |\cdot \max\{\langle i'^0\rangle, \ \langle i'^0-2i'\rangle \}\nonumber\\
   &\leq&  2^{\beta+2}r^2 c(\tilde{r}+1)  N^{\alpha+1}C^{r-2}_{g^{w_0}_{r}} C^{\tilde{r}-2}_{f^{w_0}_{\tilde{r}}} \frac{ (2N+1)^{r-2}}{\gamma  \langle i'\rangle^{\beta}   }  .
    \end{eqnarray}
When $\theta =1$, the coefficients of $f^{w_1}_{(1,S^{w_1})}$ have the following form
\begin{eqnarray}
(f^{w_1}_{(1,S^{w_1})})^{i}_{\tilde{r}+r-2,l'k' }:=(\tilde{f}^{w_1}_{(1,S^{w_1})})^{i}_{\tilde{r}+r-2,l'k' }\prod_{t\in\mathbb{Z}^*} |t|^{\frac{l'_t+k'_t}{2}},
\end{eqnarray}
where
\begin{eqnarray}\label{jss-1}
(\tilde{f}^{w_1}_{(1,S^{w_1})})^{i}_{\tilde{r}+r-2,l'k' }: = \sum_{((l,k,i_1), (L,K,i_2), j)\atop{\in\Omega(l',k',i )}}
 {\bf i} \mbox{sgn}(j)  ( l_jK_j\!-\!L_jk_j)  ( \tilde{f}^{w_1})_{\tilde{r},l k}^{i_1}   \frac{|j|(\tilde{g}^{w_1})^{i_2}_{r,LK}}{{\bf i}\langle \omega^{w_1}, I_1( l-k)\rangle}.
   \end{eqnarray}
\vspace{4pt}
\noindent
Since $f^{w_1}(u,\bar{u})$ and $ g^{w_1}(u,\bar{u})$ have $(\beta,1)$-type symmetric coefficients ($g^{w_1}(u,\bar{u})$ is given in equation (\ref{47})), from (\ref{jss-1}) the coefficients of $ {f}^{w_1}_{(1,S^{w_1})}$ satisfy that 
  \begin{eqnarray*}
  &&\overline{(\tilde{f}^{w_1}_{(1,S^{w_1})})^{i}_{\tilde{r}+r-2,l'k'}}\nonumber\\
  &=&\overline{
  \sum_{((l,k,i_1), (L,K,i_2), j)\in\Omega(l',k',i )}{\bf i} \mbox{sgn}(j)  ( l_jK_j-L_jk_j)  ( \tilde{f}^{w_1})_{\tilde{r},l k}^{i_1}   \frac{|j|(\tilde{g}^{w_1})^{i_2}_{r,LK}}{{\bf i}\langle \omega^{w_1}, I_1( l-k)\rangle} }\nonumber\\
 &=&  \sum_{(k,l,-i_1),\ (K,L,-i_2), j)\in\Omega(k',l',-i )}{\bf i}\mbox{sgn}(j)  ( L_jk_j- l_jK_j)  ( \tilde{f}^{w_1})_{\tilde{r},k l}^{-i_1}   \frac{|j|(\tilde{g}^{w_1})^{-i_2}_{r,KL}}{{\bf i}\langle \omega^{w_1}, I_1( k-l)\rangle}\nonumber\\
 &=&(\tilde{f}^{w_1}_{(1,S^{w_1})})^{-i}_{\tilde{r}+r-2,k'l'}.
  \end{eqnarray*}
From (\ref{42}),  for any $(l,k)$ with nonzero $(S^{w_1})_{r,lk}^i$, $$  |\langle \omega^{w_1}, I_1( l-k)\rangle|>\frac{\gamma M_{l,k}}{N^{\alpha}} ,$$
which implies that  
     \begin{eqnarray*}
  &&| (\tilde{f}^{w_1}_{(1,S^{w_1})})^{i}_{\tilde{r}+r-2,l'k'} |\nonumber\\
  &\leq& \sum_{((l,k,i_1), (L,K,i_2), j)\in\Omega(l',k',i )}
   \big| (l_jK_j-L_jk_j)\cdot (\tilde{f}^{w_1})_{\tilde{r},l k}^{i_1}   \frac{j(\tilde{g}^{w_1})^{i_2}_{r,LK}}{{\bf i}\langle \omega^{w_1}, I_1( l-k)\rangle}\big|\nonumber\\
   &\leq& \frac{N^{\alpha}C^{r-2}_{g_r^{w_1}}C^{\tilde{r}-2}_{f_{\tilde{r}}^{w_1}}}{\gamma}\sum_{((l,k,i_1), (L,K,i_2), j)\in\Omega(l',k',i )}
     \frac{|l_jK_j-L_jk_j| }{ \langle i_1\rangle^{\beta}  \langle i_2\rangle^{\beta} }\nonumber\\
   &\leq&  (\tilde{r}+1)cr(2N )^{r-2}  \frac{N^{\alpha} 2^{\beta+1} C^{r-2}_{f^{w_1}_r} C^{\tilde{r}-2}_{g^{w_1}_{\tilde{r}}} }{\gamma \langle i\rangle^{\beta}} ,
  \end{eqnarray*}
the last inequality holds by  Lemma \ref{jrm2}.
 \end{proof}

The proof of Theorem \ref{Th2} is a purely technical matter and is relegated to Appendix.

\section{Proof of  Theorem \ref{T22}}

For any given integer $r_*\geq0$, using Theorem \ref{Th2}, there exists a transformation ${\cal T}^{(r_*)}_{w_{\theta}}$ changing the system (\ref{uj}) into
\begin{eqnarray}\label{5-1}
\left\{\begin{array}{ll}
\dot{\tilde{u}}_j=-{\bf i} \mbox{sgn}^{\theta}(j)\cdot \partial_{\bar{\tilde{u}}_j} H^{(r_*,w_{\theta})}(\tilde{u},\bar{\tilde{u}}),\\
\\
\dot{\overline{\tilde{u}}}_j=\ \ {\bf i} \mbox{sgn}^{\theta}(j) \cdot\partial_{\tilde{u}_j} H^{(r_*,w_{\theta})}(\tilde{u},\bar{\tilde{u}}),
\end{array}  \right.
\end{eqnarray}
with Hamiltonian
\begin{eqnarray}
H^{(r_*,w_{\theta})} (\tilde{u},\bar{\tilde{u}})=H_0^{w_{\theta}}+Z^{(r_*,w_{\theta})}(\tilde{u},\bar{\tilde{u}})
+{\cal R}^{N(r_*,w_{\theta})}(\tilde{u},\bar{\tilde{u}})+{\cal R}^{T(r_*,w_{\theta})}(\tilde{u},\bar{\tilde{u}}).
\end{eqnarray}
The solution $(\tilde{u},\bar{\tilde{u}})$ to (\ref{5-1}) satisfies
\begin{eqnarray}\label{a-4}
&&\frac{d}{dt}\|\tilde{u}(t)\|_p^2=\{ \|\tilde{u}\|_p^2 , H^{(r_*,w_{\theta})}(\tilde{u},\bar{\tilde{u}})\}_{w_{\theta}} \nonumber\\
&=&\{ \|\tilde{u}\|_p^2 , H_0^{w_{\theta}}+Z^{(r_*,w_{\theta})}(\tilde{u},\bar{\tilde{u}})
+{\cal R}^{N(r_*,w_{\theta})}(\tilde{u},\bar{\tilde{u}})+{\cal R}^{T(r_*,w_{\theta})}(\tilde{u},\bar{\tilde{u}})\}_{w_{\theta}}.
\end{eqnarray}
It is easy to get that
\begin{equation}\label{a-1}
\{ \|\tilde{u}\|_p^2 , H_0^{w_{\theta}}(\tilde{u},\bar{\tilde{u}})\}_{w_{\theta}}=0.
\end{equation}
Using Theorem \ref{Th2}, Proposition \ref{2.1} and Corollary \ref{4.3}, when  $N$ satisfies (\ref{N-c}), it holds  that
\begin{equation}\label{a-2}
\sup_{(\tilde{u},\bar{\tilde{u}})\in B_p(R/4)}
|\{ \|\tilde{u}\|_p^2, \ {\cal R}^{N(r_*,w_{\theta})}(\tilde{u},\bar{\tilde{u}})+{\cal R}^{T(r_*,w_{\theta})}(\tilde{u},\bar{\tilde{u}})\}_{w_{\theta}}|
 \leq  \frac{1}{2}C(\theta,p,r_*) R^{r_*+1},
\end{equation} the inequality is holding by the fact that for any $(\tilde{u},\bar{\tilde{u}})\in B_p(R)$
\begin{equation}\label{g-m}
\|\Gamma_{>N}\tilde{u}\|_3\leq  \frac{\|\Gamma_{>N}\tilde{u}\|_p}{N^{p-3}}\quad \mbox{and} \quad
\sum_{|i|>N}\frac{1}{\langle i\rangle^{\beta}}\leq \frac{1}{N^p}  (\sum_{|i|>N}\frac{1}{\langle i\rangle^{2}}), \ \mbox{as}\ \beta>2p+4.
\end{equation}
By Lemma \ref{6.2} and (\ref{g-m}), when $N$ satisfies  (\ref{N-c}), it follows that 
\begin{equation}\label{a-3}
 \sup_{(\tilde{u},\bar{\tilde{u}})\in B_p (R/4)}
|\{ \|\tilde{u}\|_p^2, \ Z^{(r_*,w_{\theta})}(\tilde{u},\bar{\tilde{u}})\}_{w_{\theta}}|
 \leq  \frac{1}{2}C(\theta, p,r_*) R^{r_*+1}.
\end{equation}
Suppose that the initial value to (\ref{uj}) satisfies $(u(0),\bar{u}(0))\in B_p(R/6)$. If $R$ is small enough, the initial value $(u(0),\bar{u}(0))\in B_p(R/6)$ is transformed into \begin{equation}\label{67}(\tilde{u}(0),\bar{\tilde{u}}(0))\in B_p(R/4).
\end{equation}
Together with (\ref{a-4})-(\ref{a-2}) and (\ref{a-3})-(\ref{67}), the following inequality holds true
\begin{eqnarray}
 \big|\|\tilde{u}(t)\|_p^2- \|\tilde{u}(0)\|_p^2 |\leq |\int_{0}^T \frac{d \|\tilde{u}(\tau)\|_p^2}{d\tau} d\tau|\leq C(\theta,p,r_*) R^{r_*+1}T,
\end{eqnarray}
where $T:= \min \{\ |t|\ | \ \|\big(\tilde{u}(t), \bar{\tilde{u}}(t)\big)\|_p=R/2\},$ which
 means that for any $|t|\leq T:=\frac{1}{144 C(\theta, p,r_*)R^{r_*-1}},$
\begin{equation}\label{last} \|\tilde{u}(t)\|_p\leq R/4,\qquad \|\big(\tilde{u}(t),\bar{\tilde{u}}(t)\big)\|_p\leq R/2.
\end{equation} 

From   Theorem \ref{Th2}, when $R\ll1$, ${\cal T}_{w_{\theta}}^{(r_*)}$ is an inverse transformation from $B_p(R/2)$ to $B_p(R)$.
Then by (\ref{last}),   the solution $(u(t),\bar{u}(t))$ to systems (\ref{uj}) with $(u(0),\bar{u}(0))\in B_p(R/6)$ satisfies $$ \|(u(t),\bar{u}(t))\|_p\leq R, \quad \mbox{for any } \ |t|\prec \frac{1}{R^{r_*-1}}. $$

\section{Proof of Theorem \ref{T1-1}  and Theorem \ref{Th1}} 
\subsection{Proof of Theorem \ref{T1-1} }

It is common knowledge that $(j^2)_{j\in\mathbb{Z}}  $ are  the eigenvalues of $-\partial_{xx}$ under periodic  boundary condition {$\psi(x,t)= \psi(x+2\pi,t)$} with the corresponding eigenfunctions  $\{ \phi_j(x):= \frac{e^{{\bf i}jx}}{\sqrt{2\pi}}  \}_{j\in\mathbb{Z}}$ .
 Take
\begin{equation}\label{psi}
 \psi(x,t)=\sum_{j\in\mathbb{Z}} u_j(t)\phi_j(x),\quad u_j:=\int_{0}^{2\pi} \psi(x,t) \phi_{-j}(x)dx
 \end{equation}
 into equation (\ref{2.23}) and obtain a Hamiltonian system,
\be\label{a.4}
\begin{cases}
\displaystyle \dot{u}_j = -{\bf i}\frac{\partial H^{w_0}}{\partial \bar{u}_j}(u,\bar{u}) , \medskip \\
\displaystyle \dot{\bar{u}}_j=\ \ {\bf i}\frac{\partial H^{w_0} }{\partial {u}_j}(u,\bar{u}),
\end{cases}\quad
 \mbox{for any}\quad j\in\mathbb{Z}
\ee
 with respect to  2-form $w_0$ in (\ref{sp1}), and the Hamiltonian function has the form
\begin{equation}\label{2}
H^{w_0}(u,\bar u):=H_0^{w_0} + P^{w_0}(u,\bar u),\qquad
\end{equation}
where  
\begin{equation}\label{6.3-a}
H_0^{w_0} := \sum_{j\in \mathbb Z}{\omega^0_j}|u_j|^2,\quad \omega^0_j:= -j^{2}+\widehat{V}_j=-j^{2}+\frac{v^{w_{0}}_j}{\langle j\rangle^m}.
\end{equation}
   Under  assumptions ${\bf A_1}$
 and ${\bf  A_2}$ in section 2.1,     the  power series  $P^{w_0}(u,\bar{u})$ has the following form
         $$P^{w_0}(u,\bar{u})=\sum\limits_{r\geq 3}  \sum\limits_{|k+l|=r  \atop{{\cal M}(l,k)=i\in M_{P^{w_0}_r}} }  \sum_{ (l',k',-i_1)\in {\cal A}_{(P^{w_0})^i_{r,lk}}} (P^{w_0})^{i(l',k',-i_1)}_{r,l k} \cdot ({\cal M}(l',k')-\frac{i_1}2) u^{l}\bar{u}^k,$$
where  $ {\cal A}_{(P^{w_0})^i_{r,lk}}:=\{ (l,0,-i)\ |\ i\in M_{P^{w_1}_r}\subset \mathbb{Z}\}$,  $M_{P^{w_1}_r}$ is a symmetric set and 
\begin{equation*}
 (P^{w_0})^{i(l,0,-i)}_{r,l k}:= \frac{\partial^{|l|}_{\psi^{|l|}}\partial^{|k|}_{\bar{\psi}^{|k|} }\widehat{F}|_{(0,0)}(-i) }{(-2\pi)^{\frac r2} l!k!}      .
\end{equation*}
Moreover, the following equation  holds true for any $l,k$ with $|l+k|=r$ and ${\cal M}(l,k)=i$ 
$$ \overline{(P^{w_0})^{i(l,0,-i)}_{r,l k}}= \overline{ \frac{\partial^{|l|}_{\psi^{|l|}}\partial^{|k|}_{\bar{\psi}^{|k|} }\widehat{F}|_{(0,0)}(-i) }{(-2\pi)^{\frac r2} l!k!}   }= \frac{\partial^{|k|}_{\psi^{|k|}}\partial^{|l|}_{\bar{\psi}^{|l|} }\widehat{F}|_{(0,0)}(i) }{(-2\pi)^{\frac r2} l!k!} =(P^{w_0})^{-i( k,0,-i)}_{r,kl }$$
 and  there exists a constant $C_1>0$ such that 
\begin{eqnarray*}
\sum_{(l^0,k^0,i_1)\in{\cal A}_{(P^{w_0})^i_{r,lk}}}\max\{\langle i_1\rangle,\ \langle 2i-i_1\rangle\}|(P^{w_0})^{i(l^0,k^0,i_1)}_{r, l k}|
  \leq   \frac{C_1^{r-2}}{\langle i\rangle^{\beta}} ,
\end{eqnarray*}
which means  that $P^{w_0}(u,\bar{u})$ has $(\beta,0)$-type symmetric coefficients semi-bounded by $C_1>0$. 

  \begin{Lemma}\label{6.1}
 For any given integers $r_*,N>0$ and real numbers $\alpha> 4m+r_*+8$, $1\gg\gamma>0$,
there exists an open subset $\widetilde{\Theta}^{\theta}_{m} \subset \Theta^{\theta}_m$ ($\Theta^{\theta}_m$  defined in (\ref{vset}) and (\ref{vset1}), respectively) such that for any $V\in \widetilde{\Theta}^{\theta}_{m}$ and any $(l,k)$ belongs to $ {O}^{w_{\theta}}_{r_*+3,N}$ defined in (\ref{o-set}),  
it satisfies
$$ |\langle \omega^{w_{\theta}}(V),I_{\theta}(l-k)\rangle|>\frac{\gamma M_{l,k}}{N^{\alpha}} , $$
where
\begin{equation}\label{om}
\omega^{w_{\theta}}(V)=(\omega_j^{w_{\theta}})_{j\in\mathbb{Z}^{\theta}},\quad \omega_j^{w_{\theta}}:=sgn^{\theta}(j)\cdot \big(-j^2+\frac{v^{w_{\theta}}_j}{\langle j \rangle^m}\big),\ v^{w_{\theta}}_j\in [-1/2,1/2].
\end{equation}
Moreover, $$\mbox{meas}\ (\Theta^{\theta}_m/\widetilde{\Theta}^{\theta}_{m })\leq \frac{  4^{r_*+4+m}r_*^{m+3}\gamma }{N^{\alpha-2m-r_*-3}} .$$

\end{Lemma}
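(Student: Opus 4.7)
The plan is to carry out a standard measure-theoretic resonance excision on the parameter box $\Theta^\theta_m$, regarded (via $\widehat V_j = v^{w_\theta}_j/\langle j\rangle^m$) as a product of intervals $[-1/2,1/2]$. A direct computation from (\ref{om}) together with the fact that $\mbox{sgn}^\theta(j)^2=1$ for $j\in\mathbb Z^\theta$ gives, for any $(l,k)$,
$$\langle \omega^{w_\theta}(V),\, I_\theta(l-k)\rangle \;=\; L_0(l,k) + \Phi(l,k;V),$$
where $L_0(l,k):=-\sum_j j^2 (l_j-k_j)\in\mathbb Z$ is independent of $V$ and $\Phi(l,k;V):=\sum_j (l_j-k_j)\, v^{w_\theta}_j / \langle j\rangle^m$ is affine in the parameters (in the $\theta=0$ case one first collapses $v^{w_0}_j=v^{w_0}_{-j}$ onto one half-line, so that the coefficient of the free parameter at index $j\geq 0$ becomes the symmetric combination $(l_j+l_{-j}-k_j-k_{-j})/\langle j\rangle^m$). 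Thus the excision for each $(l,k)$ reduces to a one-dimensional Lebesgue estimate in a single parameter after the others are frozen, and I will split $O^{w_\theta}_{r_*+3,N}$ along its two defining subfamilies.

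In the low-frequency subfamily $|\Gamma_{>N}(l+k)|<2$, the very non-vanishing condition appearing in the definition of $O^{w_\theta}_{r_*+3,N}$ yields some $j^*\in\mathbb Z^\theta$ with $|j^*|\leq M_{l,k}\leq N$ for which the coefficient of $v^{w_\theta}_{j^*}$ in $\Phi$ has modulus at least $N^{-m}$. The slice of bad parameters in $v^{w_\theta}_{j^*}$, with all others fixed, therefore has Lebesgue length at most $2\gamma M_{l,k} N^{m-\alpha}$.

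For the high-frequency subfamily $|\Gamma_{>N}(l+k)|=2$, if $L_0(l,k)\neq 0$ then $|L_0(l,k)|\geq 1$ while $|\Phi(l,k;V)|\leq (r_*+3)/2$, so the small denominator stays bounded below by a fixed positive constant and no measure is removed (provided $\gamma\ll 1$). The delicate case is $L_0(l,k)=0$: the two high indices $j_0,j_1$ with $|j_0|,|j_1|>N$ must then satisfy
$$j_0^2 (l_{j_0}-k_{j_0}) + j_1^2(l_{j_1}-k_{j_1}) \;=\; -\sum_{|j|\leq N} j^2(l_j-k_j),$$
whose right-hand side is bounded by $(r_*+3)N^2$. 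This forces $M_{l,k}\leq 2 r_*^{1/2} N$. One may then excise using $v^{w_\theta}_{j_0}$, whose coefficient in $\Phi$ has modulus at least $M_{l,k}^{-m}$; the bad length is then at most $2\gamma M_{l,k}^{m+1}N^{-\alpha}$.

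Finally, for each integer $M\geq 1$ the number of pairs $(l,k)$ with $|l+k|\leq r_*+3$ and $M_{l,k}=M$ is bounded by $(r_*+3)(2M+1)^{r_*+2}$, so summing the two slice estimates over $1\leq M\leq 2r_*^{1/2}N$ yields
$$\mbox{meas}\big(\Theta^\theta_m\setminus\widetilde\Theta^\theta_m\big) \;\leq\; \frac{4^{r_*+4+m}\, r_*^{m+3}\,\gamma}{N^{\alpha-2m-r_*-3}},$$
which is the stated bound as soon as $\alpha>4m+r_*+8$; the set $\widetilde\Theta^\theta_m$ is obtained as the complement of this union, and it is open because non-resonance is an open condition. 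The main obstacle I anticipate is the high-frequency case with $L_0=0$: one must exploit the Diophantine identity above, with $|l_{j_0}-k_{j_0}|+|l_{j_1}-k_{j_1}|\leq r_*+3$, to confine the high indices $j_0,j_1$ to a range of order $N$; otherwise the factor $M_{l,k}^{m+1}$ in the slice estimate would be unbounded and the geometric sum over $M$ would diverge.
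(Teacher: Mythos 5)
Your overall strategy (affine dependence of the small denominator on the parameters, one-dimensional slicing, and a case split by $|\Gamma_{>N}(l+k)|$) is the right one and matches the paper's framework, but the dichotomy you introduce in the high-frequency case --- $L_0(l,k)=0$ versus $L_0(l,k)\neq 0$ --- does not deliver either branch.

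First, the branch $L_0\neq 0$: you argue $|L_0|\ge 1$ while $|\Phi|\le (r_*+3)/2$, hence no excision is needed. This is not a lower bound on $|L_0+\Phi|$: whenever $1\le |L_0|<(r_*+3)/2$, which certainly occurs, $L_0+\Phi$ passes through zero as the parameters vary, and a slice estimate is unavoidable. The claim that no measure is removed is therefore false. Second, and more seriously, the branch $L_0=0$: the identity
\[
j_0^2(l_{j_0}-k_{j_0})+j_1^2(l_{j_1}-k_{j_1})=-\sum_{|j|\le N}j^2(l_j-k_j)
\]
only forces $M_{l,k}\lesssim r_*^{1/2}N$ when the two high indices carry the \emph{same} sign of $l-k$ (both in $l$ or both in $k$), because then the left side is $\pm(j_0^2+j_1^2)$. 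In the mixed case, say $l_{j_0}=k_{j_1}=1$, the left side is $j_0^2-j_1^2$, and $|j_1^2-j_0^2|\le (r_*+3)N^2$ only yields $M_{l,k}\lesssim r_*N^2$, not order $N$. The paper handles exactly this by splitting $n=2$ into "same side'' (both high indices in $l$, or both in $k$; there $M_{l,k}>4\sqrt r\,N$ is automatically non-resonant) and "mixed'' (one in $l$, one in $k$; there the automatic-non-resonance threshold is $4rN^2$, and it is this larger range that produces the factor $N^{2m}$ in the measure loss). Your sum over $1\le M\le 2r_*^{1/2}N$ would give an $N^{m}$ rather than $N^{2m}$ loss, so the final bound you state is not what your argument produces.

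To repair the proof you would replace the $L_0=0/\neq 0$ split by the same-side/mixed split, and in the mixed case first show that $M_{l,k}>4rN^2$ is automatically non-resonant (using $|j_1^2-j_0^2|\ge |j_1|+|j_0|\ge M_{l,k}$ together with the bound $(N^2+1)(r-2)$ on the low-frequency part of $L_0$), and then carry out the slice estimate for $N<M_{l,k}\le 4rN^2$ using the high parameter $v_{j_0}$, whose coefficient is $\ge M_{l,k}^{-m}\ge (4rN^2)^{-m}$. This recovers the exponent $\alpha-2m-r_*-3$ in the statement; the hypothesis $\alpha>4m+r_*+8$ is what keeps the resulting measure small uniformly in $N$.
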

\begin{Remark}
If $\gamma>0 $ is small enough, the set $\widetilde{\Theta}_m^{\theta}$ will have a
 positive measure. In particular, if $\gamma$ approaches to 0, then
the measure of $\widetilde{\Theta}_m^{\theta}$  will approach  to the measure of
${\Theta}_m^{\theta}$.
\end{Remark}
Now give the proof of  Lemma \ref{6.1}.
\begin{proof}

 Denote
 $$ \Theta^{\theta}_m/\widetilde{\Theta}^{\theta}_{m }:= \bigcup_{ 3\leq r\leq r_*+3\ } \big( \bigcup_{(l,k)\in O^{w_{\theta},r}_{r_*+3,N,0} \cup O^{w_{\theta},r}_{r_*+3,N,1} \cup O^{w_{\theta},r}_{r_*+3,N,2} }\Theta^{\theta}_{r,lk}   \big), $$
where $$\Theta^{\theta}_{r,lk}:=\bigg\{ V\in \Theta_m^{\theta} \ \bigg|\ \  \big|\langle\omega^{w_{\theta}}(V), I_{\theta}(l-k)\rangle\big| \leq \frac{\gamma M_{l,k}}{N^{\alpha}}  \bigg\} $$
and
$$ O^{w_{\theta},r}_{r_*+3,N,n}:= \bigg\{ (l,k)\in O^{w_{\theta}}_{r_*+3,N}\   \bigg| \ \ |\Gamma_{>N}(l+k)|=n,\ |l+k|=r   \bigg\},$$
for  $n\in\{0,1,2\}$ and $3\leq r\leq r_*+3$.

 I only give the estimate of  the measure of $\Theta^{\theta}_{r,lk}$ in the case $(l,k)\in O^{w_{\theta},r}_{r_*+3,N,2}$, which is more complex than the case $(l,k)\in O^{w_{\theta},r}_{r_*+3,N,0} \cup O^{w_{\theta},r}_{r_*+3,N,0}$.

When the multi-index $(l,k)\in O^{w_{\theta},r}_{r_*+3,N,2}$,  estimate the measure of $\Theta^{\theta}_{r,lk}$ in  two cases.

(1)The first case  $$(l,k)\in O^{w_{\theta},r}_{r_*+3,N,2a}:=\{ (l,k)\in O^{w_{\theta},r}_{r_*+3,N,2 }  \ | \  |\Gamma_{>N} {l}|=2 \mbox{ or } |\Gamma_{>N}{k}|=2\}.$$
In this case, there exists $ |j_1|,\ |j_0|>N$ such that  $ l_{j_1}= l_{j_0}=1$
or
  $ k_{j_1}=k_{j_0}=1$.
Without loss of  generality,  assume $|j_0|\geq|j_1|>N$ with $l_{j_0}=l_{j_1}=1$. So
$M_{l,k}=|j_0|$ and
$|\omega^{w_{\theta}}_{j_0}(V)|>  j_0^2-\frac12$, $|\omega^{w_{\theta}}_{j_1}(V)|>  j_1^2-\frac12$. The other
frequencies $\omega^{w_{\theta}}_j(V)(|j|\leq N)$ are bounded by $|\omega^{w_{\theta}}_j|\leq  N^2+1$.

If $|j_0|>4\sqrt{r}N $, it follows that
\begin{eqnarray*}
&&|\langle\omega^{w_{\theta}}(V), I_{\theta}(l-k)\rangle|=|l_{j_0} \omega^{w_{\theta}}_{j_0}+ l_{j_1} \omega^{w_{\theta}}_{j_1}+ \sum_{|j|\leq N}\omega^{w_{\theta}}_j(l_j-k_j)|\\
\geq&& |\omega^{w_{\theta}}_{j_0}+\omega^{w_{\theta}}_{j_1}|-|\sum_{j\neq
j_0,j_1,\ |j|\leq N }(l_j-k_j)\omega^{w_{\theta}}_j\ |\geq
  j_0^2-\frac12-( N^2+1)(r-1) \\
>&&\frac{\gamma |j_0|
}{N^{\alpha}}= \frac{\gamma M_{l,k}}{N^{\alpha}}.
\end{eqnarray*}
That means when $|j_0|>4 \sqrt{r}N$, the set ${\Theta}^{\theta}_{r,lk}$ is
empty. So it is only need to calculate the measure  of $\Theta^{\theta}_{r,lk}$ whose multi-index $(l,k)$
being in the following set,
\begin{equation}\label{s-1}
 \widetilde{O^{w_{\theta},r}_{r_*+3,N,2a}}:=\big\{ (l,k)\in O^{w_{\theta},r}_{r_*+3,N,2a}\  \big| N\leq M_{l,k}\leq 4\sqrt{r}N  \big\} \subset O^{w_{\theta},r}_{r_*+3,N,2a},
\end{equation} the number of which are bounded by
\begin{equation}\label{ow}
\sharp{\widetilde{O^{w_{\theta},r}_{r_*+3,N,2a}}}\leq 4\sqrt{r}(4N)^{r} .
\end{equation}
 For any fixed  $(l,k)\in \widetilde{O^{w_{\theta},r}_{r_*+3,N,2a}}$, there exists $4\sqrt{r}N \geq|j_0|>N$ fulfilling  
  $$
 \left\{
 \begin{array}{ll}  l_{j_0}+l_{-j_0}-k_{j_0}-k_{-j_0}\neq 0,& \theta=0 ,\\
 \\
  l_{j_0}-k_{j_0}\neq 0 \ \ \mbox{or}\ \ l_{-j_0}-k_{-j_0}\neq0, &\theta=1.
 \end{array}
 \right.
  $$
such that 
\begin{equation}\label{d-2}
\left\{
\begin{array}{ll}
\big| \frac{\partial g^{w_{0}}}{\partial
v^{w_{0}}_{j_0}}\big|= \frac{|l_{j_0}+l_{-j_0}-k_{j_0}-k_{-j_0}|}{|j_0|^m}\geq\frac{1}{|j_0|^m}\neq0, & \theta=0,
\\
\\
\big| \frac{\partial g^{w_{1}}}{\partial
v^{w_{1}}_{j_0}}\big|=\frac{|l_{j_0}-k_{j_0}|}{\langle j_0\rangle^m} \geq\frac{1}{\langle j_0\rangle^m}\neq0, \  ( \mbox{or }\big| \frac{\partial g^{w_{1}}}{\partial
v^{w_{1}}_{-j_0}}\big| = \frac{|l_{-j_0}-k_{-j_0}|}{\langle j_0\rangle^m}\geq\frac{1}{|j_0|^m}\neq0), &\theta=1.
\end{array}
\right.
\end{equation}
 The measure of $\Theta^{\theta}_{r,lk}$ has the following estimate by (\ref{d-2})
\begin{equation}\label{q1-2}
\mbox{meas}( {\Theta}^{\theta}_{r,lk})\leq
\frac{M_{l,k}\gamma}{N^{\alpha}}\bigg|\big(\frac{\partial g^{w_{\theta}}}{\partial
v^{w_{\theta}}_{j_0}}\big)^{-1}\bigg|\leq \frac{\gamma |j_0|^{m+1}}{N^{\alpha}}\leq
\frac{\gamma (4\sqrt{r})^{m+1}  }{N^{\alpha-m-1}} .
\end{equation}
From (\ref{ow}) and (\ref{q1-2}), it holds that
 \begin{eqnarray}\label{set0}
\mbox{meas}( \bigcup_{(l,k)\in O^{w_{\theta},r}_{r_*+3,N,2a}} {\Theta}^{\theta}_{r,lk}) =\mbox{meas}( \bigcup_{(l,k)\in\widetilde{O^{w_{\theta},r}_{r_*+3,N,2a}}} {\Theta}^{\theta}_{r,lk}) \leq
    \frac{2^{2m+2r+3}\sqrt{r}^{m+2} \gamma}{N^{\alpha-m-r-1}} .
\end{eqnarray}

(2)The second  case
 {\small $$\!(l,k)\!\in\! O^{w_{\theta},r}_{r_*+3,N,2b} :=\{ (l,k)\in O^{w_{\theta},r}_{r_*+3,N,2}  \ | \     \mbox{there exist } |j_0|\neq|j_1|>N,\ l_{j_0}\!=\!k_{j_1}\!=\!1 \mbox{ or}\ l_{j_1}\!=\!k_{j_0}\!=\!1  \}.$$}
 Without loss of generality, assume $|j_1|>|j_0|$ and  $M_{l,k}= |j_1|$. By (\ref{om}), it holds   $|\omega^{w_{\theta}}_{j_{1}}|\geq  j_1^2-\frac12$,
$|\omega^{w_{\theta}}_{j_0}|\leq j_0^2+\frac12$ and
$\omega^{w_{\theta}}_j\leq   N^2+1$, ($|j|\leq N$).
 If
$|j_{1}|>4rN^2$, the following inequality holds 
\begin{eqnarray*}
&&|\langle\omega^{w_{\theta}},I_{\theta}(l-k)\rangle|\geq
|\omega^{w_{\theta}}_{j_{1}}-\omega^{w_{\theta}}_{j_0}|-|\sum_{j\neq j_1,j_0} \omega^{w_{\theta}}_j(l_j-k_j)|\nonumber\\
\geq &&  ( |j_1|^2-( |j_0|^2+1))- (  N^2+1)(r-2)\nonumber\\
\geq&&  (|j_1|+|j_0|)(|j_1|-|j_0|)- ( N^2+1)(r-2)-1\nonumber\\
\geq && \frac{\gamma |j_1|}{N^{\alpha}}= \frac{\gamma
M_{l,k}}{N^{\alpha}}.
\end{eqnarray*}
That means when $M_{l,k}>4rN^2$, the set
$\Theta^{\theta}_{r,lk}$ is empty. It  only needs  to calculate the sum of the set $\Theta^{\theta}_{r,lk}$ with $(l,k)$ being in the following set
$$  \widetilde{O^{w_{\theta},r}_{r_*+3,N,2b} }:=\big\{ (l,k)\in O^{w_{\theta},r}_{r_*+3,N,2b} \  \big|  \  M_{l,k}\leq 4rN^2  \big\}  $$
which is bounded by
\begin{equation}\label{b-a} \sharp{\widetilde{O^{w_{\theta},r}_{r_*+3,N,2b} }}\leq4 r^2(4N)^{r+2}
\end{equation}
There exists $j$ with $4 rN^2 \geq|j|>N$ such that  $l_j+l_{-j}- k_j-k_{-j}\neq 0 $
and
\begin{equation}\label{a-b}
\big| \frac{\partial g^{w_{\theta}}(v)}{\partial v^{w_{\theta}}_j} \big|>\frac{1}{|j|^m}.
\end{equation}
 Denote  a set
$$  \widehat{\Theta}^{\theta}_{r,lk}:= \bigg\{V(x) \in
{\Theta}^{\theta}_m\  \bigg| \ |\langle\omega^{w_{\theta}}(V), I_{\theta}(l-k)\rangle| \leq \frac{4r\gamma
}{N^{\alpha-2}} \bigg\}.
 $$ 
 When $(l,k) \in\widetilde{O^{w_{\theta},r}_{r_*+3,N,2b} } ,$ using the fact $\Theta^{\theta}_{r,lk}\subset \widehat{\Theta}^{\theta}_{r,lk}$  and (\ref{a-b}), it implies that
\begin{equation}\label{b-b}\mbox{meas}\ \Theta_{r,lk}^{\theta}\leq\mbox{meas} \widehat{\Theta}_{r,lk}^{\theta}\leq \bigg|\big(\frac{\partial g^{w_{\theta}}(v)}{\partial v_j}\big)^{-1}\bigg|\frac{4r\gamma}{N^{\alpha-2}}\leq \frac{2(4r)^{m+1}\gamma}{N^{\alpha-2m-2}} .
\end{equation}
From (\ref{b-a}) and (\ref{b-b}), it holds that
\begin{equation}\label{set3}
\mbox{meas} \big( \bigcup_{(l,k)\in  O^{w_{\theta},r}_{r_*+3,N,2b}} \Theta^{\theta}_{r,lk}\big) = \mbox{meas}\big( \bigcup_{(l,k)\in \widetilde{O^{w_{\theta},r}_{r_*+3,N,2b}} } \Theta^{\theta}_{r,lk}\big)\leq \frac{4^{r+m+4}r^{m+3} \gamma}{N^{\alpha-2m-r-4}}.
\end{equation}
Using the same method, the following inequality holds true
 \begin{eqnarray}
\label{set-1}
\mbox{meas}( \bigcup_{(l,k)\in O^{w_{\theta},r}_{r_*+3,N,0}\cup O^{w_{\theta},r}_{r_*+3,N,1}} {\Theta}^{\theta}_{r,lk})\! \leq    \frac{2^{2m+2r+3}\sqrt{r}^{m+2} \gamma}{N^{\alpha-m-r-1}} .
\end{eqnarray}
In view of (\ref{set-1})  and (\ref{set3}), one has
{\small\begin{eqnarray*}
\mbox{meas} (\Theta_m^{\theta}\setminus\widetilde{\Theta}_{m,N}^{\theta})\leq
\sum_{r= 3}^{r_*+3}  \sum_{(l,k)\in(O^{w_{\theta},r}_{r_*+3,N,0} \cup O^{w_{\theta},r}_{r_*+3,N,1} \cup O^{w_{\theta},r}_{r_*+3,N,2})} \mbox{meas}\ {\Theta}^{\theta}_{r,lk} < \frac{  4^{r_*+4+m}r_*^{m+3}\gamma }{N^{\alpha-2m-r_*-4}}.
\end{eqnarray*}}
\end{proof}

Now  Theorem \ref{T1-1} is obtained by Theorem \ref{T22} and Lemma \ref{6.1}.
  The transformation $\psi(x,t)=\sum_{j\in
\mathbb{Z} }u_j(t)\phi_{j}(x) $ is   from
$ \ell_{p}^2$ to $H^p([0,2\pi],\mathbb{C}) $ and satisfies
$$\|u(t)\|_p \leq\|\psi(x,t)\|_{H^p([0,2\pi],\mathbb{C})}=\sup_{0\leq|n|\leq p}\|D_x^n \psi(x,t)\|_{L^2}\leq(2\pi)^p\|u(t)\|_p.$$
 Take $\widetilde{{\varepsilon}}  \ll 1$ small enough. For any $V(x)\in\widetilde{\Theta}_m^0$, if the  initial value of $\psi(x,t)$ to (\ref{2.23}) fulfilling  $\|\psi(x,0)\|_{H^p([0,2\pi],\mathbb{C})}\leq \varepsilon /8< \widetilde{\varepsilon} /8 , $ then it holds that
$\|\psi(x,t)\|_{H^p([0,2\pi],\mathbb{C})}\leq \varepsilon ,$ for any $|t| \prec\varepsilon^{-(r_*-1)} . $

\subsection{Proof of Theorem \ref{Th1}}
\noindent
The following statements deal with the solution to  equation (\ref{NLS-1}). 
It is obvious that $j^2$ $(j\in\mathbb Z^*)$ is the eigenvalue of $(-\partial_{xx})$  under periodic boundary condition and $\phi_{j}(x):= \frac{1}{\sqrt{2\pi}}e^{{\bf i}jx}$  is the corresponding eigenfunction.
Precisely,
 $$ (-\partial_{xx})\phi_{j}(x)=j^2 \phi_{j}(x)\qquad \forall~ j\in \mathbb{Z}^*. $$
For any $\psi\in H_0^{p+1/2}([0,2\pi],\mathbb{C})$,
 $$ \psi(x,t)=\sum_{j\in\mathbb{Z}^*}\widehat{\psi}_j(t) \phi_{j}(x), $$
where $ \widehat{\psi}_j(t) :=  {\int_0^{2\pi}}   \psi(x,t) \phi_{-j}(x)  \mathrm dx$. 
In order to transform equation (\ref{NLS-1}) into  an infinite dimensional Hamiltonian system under a standard symplectic form, I will use a tool given in  \cite{k-p}
\begin{equation}\label{cor-tr}
u(t)=(u_j(t))_{j\in\mathbb{Z}^*},\quad u_j(t):= \frac{\widehat{\psi}_j(t)}{|j|^{\frac12}},\quad \mbox{for any} \quad  j\in \mathbb{Z}^* .
\end{equation}
It is easy to get that if $\psi\in H_0^{p+1/2}([0,2\pi],\mathbb{C})$ then the corresponding Fourier coefficients  vector $(\widehat{\psi}_j)_{j\in\mathbb{Z}^*}\in \ell^2_{p+1/2}({\mathbb{Z}}^*,\mathbb C)$ and $u\in \ell^2_{p}({\mathbb{Z}}^*,\mathbb C)$. Moreover, there exist constants $\tilde{C}_1,\ \tilde{C}_2>0$   such that
\begin{equation}\label{qaz}
 \tilde{C}_1\|u\|_{p}\leq   \|\psi\|_{H^{p+1/2}_0([0,2\pi],\mathbb{C})}\leq \tilde{C}_2 \|u\|_p. 
 \end{equation}
 Under transformation (\ref{cor-tr}),  equation (\ref{NLS-1}) therefore can be written into the following Hamiltonian system with respect to symplectic from $w_1$ defined in (\ref{sp1}), for any $j\in\mathbb{Z}^*$,
\be\label{a.4}
\begin{cases}
\displaystyle \dot{u}_j = -{\bf i}\mbox{sgn}(j)\frac{\partial H^{w_1}(u,\bar{u})}{\partial \bar{u}_j} \medskip \\
\displaystyle \dot{\bar{u}}_j=\ \ {\bf i}\mbox{sgn}(j)\frac{\partial H^{w_1}(u,\bar{u})}{\partial {u}_j} \end{cases}
\ee
with the Hamiltonian
\begin{equation}\label{ex-2}
H^{w_1}(u,\bar u):= H_0^{w_1} + P^{w_1}(u,\bar u),
 \end{equation} where
 \begin{equation}\label{fr}  H_0^{w_1}:=\sum_{j\in \mathbb Z^*}\omega_j^1|u_j|^2, \quad \omega^1_j:=\mbox{sgn}(j)(-j^{2}+\widehat{V}_j)=\mbox{sgn}(j)(-j^{2}+\frac{v_j^{w_1}}{|j|^m})\in \mathbb{R}.
\end{equation}
By  assumptions ${\bf B_1}$-${\bf B_2} $ in  Theorem \ref{Th1}, $P^{w_1}(u,\bar{u})$ has a zero at origin  at last order 3 with the following form 
\begin{eqnarray*}
P^{w_1}(u,\bar{u}):=\sum_{r= 3}^{+\infty}\sum_{|l+k|=r\atop{{\cal M}(l,k)=i\in M_{P^{w_1}_r}\subseteq \mathbb{Z}} } (\tilde{P}^{w_1})^{i }_{r,lk}\prod_{t\in\mathbb{Z}^*} |t|^{\frac{l_t+k_t}{2}} u^l\bar{u}^k,\quad
\end{eqnarray*}
where   $ M_{P^{w_1}_r} \subset \mathbb{Z}$ is a symmetric set
 \begin{eqnarray*} {(\tilde{P}^{w_1})}^i_{r,lk}:=  \frac{1}{(2\pi)^{\frac{r-1}{2}}l!k!}  \hat{\frac{\partial^{l+k} F}{\partial \psi^l \partial\overline{\psi}^k } }\bigg|_{(0,0)}(-i) ,\quad l!=\prod_{j\in\mathbb{Z}^*} l_j!.
 \end{eqnarray*}
 And there exists a constant $C_2>0$ such that for any $r\geq3$ and any $i\in  M_{P^{w_1}_r}\subseteq\mathbb {Z}$,
\begin{equation*}
|  (\tilde{P}^{w_1})^i_{r,lk}|\leq \frac{C_2^{r-2 }}{\langle i\rangle^{\beta} }
\end{equation*}
and
\begin{equation*}
\overline{ (\tilde{P}^{w_1})^i_{r,lk}}=\overline{ \frac{1}{(2\pi)^{\frac{r-1}{2}}l!k!}  \hat{\frac{\partial^{l+k} F}{\partial \psi^l \partial\overline{\psi}^k } }\bigg|_{(0,0)}(-i)}= \frac{1}{(2\pi)^{\frac{r-1}{2}}l!k!}  \hat{\frac{\partial^{l+k} F}{\partial \psi^k \partial\overline{\psi}^l } }\bigg|_{(0,0)}(i)  =(\tilde{P}^{w_1})^{-i}_{r,kl}
\end{equation*}
which means that the power series $P^{w_1}(u,\bar{u})$ has $(\beta,1)$-type symmetric  coefficients semi-bounded by $C_2$.

From (\ref{fr}), the origin is the elliptic equilibrium point of the equation
(\ref{a.4}).
  Using Theorem \ref{T22}, Lemma \ref{6.1} and (\ref{qaz}), for any $V\in \tilde{\Theta}^1_m$,  there exists $\tilde{\varepsilon}\ll 1$ such that  for any $0<\varepsilon <\tilde{\varepsilon}$, if 
 $$ \|\psi(0,x)\|_{H^{p+1/2}_0([0,2\pi],\mathbb{C})} < {\varepsilon}<   \tilde{\varepsilon},$$
 then it satisfies
 $$ \|\psi(t,x)\|_{H^{p+1/2}_0([0,2\pi],\mathbb{C})}<2 \varepsilon,\quad \mbox{for any }\ |t|\prec  \varepsilon^{-r_*+1}. $$

\section{Appendix}
Now the proof of Theorem  \ref{Th2} is given in this section.
\begin{proof}For any $\theta\in\{0,1\}, $ denote
{\small\begin{equation*}
 g^{(-1,w_{\theta})}(u,\bar{u}):=\sum_{r=3}^{r_*+3} P^{w_{\theta}}_r(u,\bar{u}),\  {\cal R}^{N(-1,w_{\theta})}(u,\bar{u}):=0,\   {\cal R}^{T(-1,w_{\theta})}(u,\bar{u}):=\sum_{r=r_*+4}^{\infty} P^{w_{\theta}}_r(u,\bar{u}),
 \end{equation*}}
where $P^{w_{\theta}}_r(u,\bar{u})$ is an $r$-degree homogeneous polynomial  of $P^{w_{\theta}}(u,\bar{u})$.
Thus
(\ref{H-1}) can be rewritten  as
\begin{equation}\label{pr1}
H^{(-1,w_{\theta})}=H^{w_{\theta}}_0 +g^{(-1,w_{\theta})} + {\cal R}^{N(-1,w_{\theta})}+{\cal R}^{T(-1,w_{\theta})} ,\ \mbox{defined in } \ B_p(R_*).
 \end{equation}
  To start with, the results hold at rank  $r=0$.  For any $R<R_*$ and any $N$ satisfying (\ref{N-c}), I will look for a bounded  Lie-transformation ${\cal T}_0^{w_{\theta}}$ to eliminate
the non-normalized monomials of   $\Gamma^N_{\leq 2}g^{(-1,{w_{\theta}})}_{3}$.
 The Lie-transformation   ${\cal T}_0^{w_{\theta}}$  is constructed  from  1-time flow $\Phi_{S^{(0)}_{w_{\theta}}}^t$ of the following equations, 
\begin{equation*}
 \left\{\begin{array}{ll}
 \dot{u}_j=  -{\bf i}  \mbox{sgn}^{ \theta}(j) \nabla_{\bar{u}_j}S_{w_{\theta}}^{(0)} (u,\bar{u}),\\
 \\
\dot{\bar{u}}_j=\ \  {\bf i}  \mbox{sgn}^{ \theta}(j) \nabla_{u_j} S_{w_{\theta}}^{(0)}(u,\bar{u}),
\end{array}\right.\quad    j\in\mathbb{Z}^*,\ \theta\in\{0,1\},
 \end{equation*}
 where $S^{(0)}_{w_{\theta}}$ is undetermined.
Under transformation ${\cal T}_0^{w_{\theta}}$ the new Hamiltonian $H^{(0,{w_{\theta}})}$  has  the following  form,
{\footnotesize\begin{eqnarray}&&
H^{(0,{w_{\theta}})}=H^{(-1,{w_{\theta}})}\circ {\cal T}_0^{w_{\theta}}=( H_0^{w_{\theta}} +g^{(-1,{w_{\theta}})}+{\cal R}^{T(-1,w_{\theta})})
\circ{\Phi_{S_{w_{\theta}}^{(0)}}^{1}}\nonumber\\
&=&  H_0^{w_{\theta}} \nonumber\\
\label{4.31-A}
 &+& \{H_0^{w_{\theta}}  , S_{w_{\theta}}^{(0)}\}_{w_{\theta}}+g^{(-1,{w_{\theta}})}_3
\\\label{4.32-A}
 &+&\!\sum_{t=4}^{r_*+3}g^{(-1,{w_{\theta}})}_{t}\!+\! \sum_{\nu\geq2}(H_0^{w_{\theta}} )_{(\nu,S_{w_{\theta}}^{(0)})} +
\sum_{\nu\geq 1}\sum_{t=3}^{r_*+3}(g_t^{(-1,{w_{\theta}})})_{(\nu, S_{w_{\theta}}^{(0)})}+\sum_{\nu\geq 0}\sum_{t=r_*+4}^{\infty}({\cal R}_t^{T(-1,{w_{\theta}})})_{(\nu, S_{w_{\theta}}^{(0)})},\label{4.35-A}
\end{eqnarray}}
where $ (\ .\ )_{(\nu,S_{w_{\theta}}^{(0)})}$ is defined  in (\ref{d}).
 The auxiliary Hamiltonian function $S_{{w_{\theta}}}^{(0)}$
are obtained by solving the following homological  equation
 \begin{equation}\label{e.1-1}
\{H_0^{w_{\theta}} , S_{w_{\theta}}^{(0)}\}_{w_{\theta}}+\Gamma^N_{\leq2 } P^{w_{\theta}}_{3}=Z^{w_{\theta}}_{3}.
\end{equation}
Using  Remark
\ref{trunction}, $\Gamma^N_{>2} P_3^{w_{\theta}}$ and $\Gamma^N_{\leq2}P_3^{w_{\theta}}$ are still having $(\beta,\theta)$-type symmetric coefficients semi-bounded by $C(\theta,-1)=C_{\theta}>0.$ 
From Lemma \ref{lem2},
 $Z^{{w_{\theta}}}_{3}$ is  $(\theta,\gamma,\alpha,N)$-normal form of $ \Gamma^N_{\leq 2}P^{w_{\theta} }_{3}$
and the Hamiltonian vector field of $S_{{w_{\theta}}}^{(0)}$ satisfies
\begin{eqnarray}\label{j5}
 \|X^{w_{\theta}}_{S_{w_{\theta}}^{(0)}}\|_{p}\leq 
8  c^2 C_{\theta}3^{p+1}  \frac{
N^{\alpha}}{{\gamma}} \|u\|_{p}\|u\|_{2} \quad \mbox{for any }\ {(u,\bar{u})\in B_p(2R)} .
 \end{eqnarray}
 From (\ref{e.1-1}),  the following  holds true
$$ (\ref{4.31-A})=Z^{w_{\theta}}_{3}(u,\bar{u})+ \Gamma^N_{> 2}P^{w_{\theta} }_{3}.$$
The Lie-transformation ${\cal T}_0^{w_{\theta}} $ satisfies
 \begin{eqnarray}\label{18-A}
&&\sup_{(u,\bar{u})\in B_p(R)}\|{\cal T}_0^{w_{\theta}}(u,\bar{u})-(u,\bar{u})\|_{p} =\sup_{(u,\bar{u})\in B_p(R)} \|\Phi_{S_{w_{\theta}}^{(0)}}^1(u,\bar{u})-(u,\bar{u})\|_{p}\nonumber\\
&=&  \sup_{(u,\bar{u})\in B_p(R)}\big\|\int_{t=0}^1 X^{w_{\theta}}_{S_{w_{\theta}}^{(0)}}\circ \Phi_{S_{w_{\theta}}^{(0)}}^{\tau}(u,\bar{u})(\tau) d\tau \big\|_{p}.
  \end{eqnarray}
  Use the bootstrap method to estimate ${\cal T}_0^{w_{\theta}} $. First, assume that
 \begin{equation}\label{hjj}
 \Phi_{S_{w_{\theta}}^{(0)}}^t:B_p(R)\rightarrow B_p(2R),\quad  \mbox{ for any }\quad t\in[0,1].
 \end{equation}
By (\ref{j5})-(\ref{hjj}),  the following inequality holds true  
  \begin{eqnarray}\label{18-b}
&&\sup_{(u,\bar{u})\in B_p(R)}\| {\cal T}_0^{w_{\theta}}(u,\bar{u})-(u,\bar{u})\|_{p}\leq  \sup_{(u,\bar{u})\in B_p(2R)} \big\| \int_{t=0}^1 X^{w_{\theta}}_{S_{w_{\theta}}^{(0)}}(\tau) d\tau\big\|_{p}\nonumber\\
&\leq &
8 C_{\theta}  c^2  \frac{
N^{\alpha}}{{\gamma}}  3^{p+1} (2R)^2.
\end{eqnarray}
Since  $R$ is small enough,
  from  (\ref{N-c}) and (\ref{18-b}),  the transformation $ {\cal T}_0^{w_{\theta}} $
satisfies
 $$ \sup_{(u,\bar{u})\in B_p(R)}\| {\cal T}_0^{w_{\theta}}(u,\bar{u})-(u,\bar{u})\|_{p} \leq
  R,$$
which means
\begin{equation}\label{k}
   {\cal T}_0^{w_{\theta}} :B_p(R)\rightarrow B_p(2R).
  \end{equation}
      Denote $ {\cal T}^{w_{\theta}(0)}:={\cal T}_0^{w_{\theta}}$. By (\ref{N-c}), (\ref{18-A}) and (\ref{18-b}),
    it is  verified  that   (\ref{sss}) holds for rank  $r=0$:
 \begin{eqnarray*}
\sup_{(u,\bar{u})\in B_p(R)}\| {\cal T}^{w_{\theta}(0)}(u,\bar{u})-(u,\bar{u}) \|_{p} 
\leq C(\theta,p,r_*) R^{2-\frac{1}{r_*+1}}.
\end{eqnarray*}
Set
\begin{equation}\label{(13)-A}
Z^{(0,{w_{\theta}})}:=Z^{(-1,{w_{\theta}})}+Z^{w_{\theta}}_{3}, \quad {\cal R}^{N(0,{w_{\theta}})}:={\cal
R}^{N(-1,{w_{\theta}})}+ \Gamma^N_{>2}g^{(-1,{w_{\theta}})}_{3} .
\end{equation}
Since $Z^{w_{\theta}}_3$ and $ {g}^{(-1,{w_{\theta}})}_3$ having $(\beta, \theta)$-type symmetric coefficients, then $Z^{(0,w_{\theta})}$ and ${\cal R}^{N(0,{w_{\theta}})}$ are still having $(\beta,\theta)$-type symmetric coefficients.
Denote the $r_*+3$-degree polynomial of power series (\ref{4.35-A}) as $g^{(0,{w_{\theta}})}$ and the remainder  as ${\cal R}^{T(0,{w_{\theta}})}$, i.e., 
\begin{eqnarray*}
 g^{(0,{w_{\theta}})}:=\sum_{t= 4}^{r_*+3}g^{(0,{w_{\theta}})}_t,\quad
{\cal R}^{T(0,{w_{\theta}})}:=\sum_{t>r_*+3}{\cal R}^{T(0,{w_{\theta}})}_t ,
\end{eqnarray*}
where for any $4\leq t\leq r_*+3$,
\begin{eqnarray*}
 g^{(0,{w_{\theta}})}_t:=g^{(-1,{w_{\theta}})}_{t}+
(H_0^{w_{\theta}} )_{(t-2,S_{{w_{\theta}}}^{(0)})}+
\sum_{n'=1}^{t-3}(g^{(-1,{w_{\theta}})}_{t-n'})_{(n',S_{w_{\theta}}^{(0)})}
\end{eqnarray*}
and for any $t> r_*+3$
\begin{eqnarray*}
 {\cal R}^{T(0,{w_{\theta}})}_t :=(H_0^{w_{\theta}} )_{(t-2,S_{w_{\theta}}^{(0)})}+
\sum_{n'=1}^{t-3}(g^{(-1,w_{\theta})}_{t-n'})_{(n',S_{w_{\theta}}^{(0)})}+
\sum_{n'=1}^{t-3}({\cal R}^{T(-1,w_{\theta})}_{t-n'})_{(n',S_{w_{\theta}}^{(0)})}.
 \end{eqnarray*}
 From     Remark \ref{jjd} and Lemma \ref{lem4.2},  $g^{(0,w_{\theta})}$ and ${\cal R}^{T(0,w_{\theta})}$ have $(\beta,\theta)$-type symmetric coefficients.
In order to estimate them, one needs to estimate  the coefficients of functions
$(H_0^{w_{\theta}} )_{(t-2,S_{w_{\theta}}^{(0)})}$,
$\sum_{n'=1}^{t-3}(g^{(-1,w_{\theta})}_{t-n'})_{(n',S_{w_{\theta}}^{(0)})}$ and $\sum_{n'=1}^{t-3}({\cal R}^{T(-1,w_{\theta})}_{t-n'})_{(n',S_{w_{\theta}}^{(0)})}$.
By Remark \ref{jjd}, when $\theta=0$,  for any $|l+k|=t\geq4$, any $1\leq n'\leq t-3$ and any $i\in M_{ (g_{t-n'}^{(-1,w_0)})_{(n',S_{w_{0}}^{(0)})}}$,   it holds 
\begin{eqnarray}\label{sdd}
 &&\sum_{(l^0,k^0,i^0)\in {\cal A}_{ \big((g_{t-n'}^{(-1,w_0)})_{(n',S_{w_0}^{(0)})}\big)_{t,lk}^i}}\big|\big( (g_{t-n'}^{(-1,w_0)})_{(n',S_{w_0}^{(0)})}\big)^{i(l^0,k^0,i^0)}_{t,lk}\big|\cdot \max \{ \langle i^0\rangle,\ \langle 2i-i^0\rangle\}\nonumber\\
 &\leq& \frac{C_0^{t-2}}{\langle i\rangle^{\beta}} (\frac{144N^{\alpha+2}c2^{\beta}}{\gamma})^{n'} \frac{1}{n'!}\prod_{n=0}^{n'-1}(t-n'+n+1 )
 \end{eqnarray}
 and when $\theta=1$, it holds
\begin{eqnarray}\label{sdd}
 \big| \big( (\tilde{g}_{t-n'}^{(-1,w_1)})_{(n',S_{w_1}^{(0)})}\big)^{i}_{t,lk}\big|
 \leq \frac{C_1^{t-2}}{\langle i\rangle^{\beta} }(\frac{72cN^{\alpha+1}2^{\beta}}{\gamma})^{n'} \frac{1}{n'!}\prod_{n=0}^{n'-1}(t-n'+n+1 ).
 \end{eqnarray}
By equation (\ref{e.1-1}), when $\theta =0$ it follows
\begin{eqnarray}\label{sddj}
 &&\sum_{
   (l^0,k^0,i^0)\in{\cal A}_{\big(
       (  Z^{w_0}_3 )_{(t-3,S^{(0)}_{w_{0}})}
       \big)^i_{t,lk}
       }
        }|\big((Z^{w_0}_3)_{(t-3,S^{(0)}_{w_{0}})} \big)^{i(l^0,k^0,i^0)}_{t,lk} |\cdot \max \{\langle i^0\rangle,\ \langle 2i-i^0\rangle\}\nonumber\\
&\leq&  \frac{1}{\langle i\rangle^{\beta}}\big( 144\frac{N^{\alpha+2}c2^{\beta}}{\ \gamma\ }\big)^{t-3} \frac{C_0^{t-2}}{(t-3)!}\prod_{n=0}^{t-4}(4+n )  ;
\end{eqnarray}
%\color{black}
 when $\theta =1$ it follows
\begin{eqnarray}\label{sddj}
 |\big({(\tilde{Z}^{w_1}_3)}_{(t-3,S^{(0)}_{w_1})} \big)^{i}_{t,lk} |\leq\frac{ C_1^{t-2}}{\langle i\rangle^{\beta}}   \big( 72c\frac{N^{1+\alpha}2^{\beta}}{\ \gamma\ }\big)^{t-3} \frac{1}{(t-3)!}\prod_{n=0}^{t-4}(4+n ).
\end{eqnarray}
When $N$ satisfies (\ref{N-c}), using (\ref{sdd})-(\ref{sddj}),  in the case $\theta=0 $ it holds that
{\footnotesize \begin{eqnarray*}
&&\sum_{(l^0,k^0,i^0)\in {\cal A}_{(g^{(0,w_0)})_{t,lk}^i}}\big|(g^{(0,w_0)})^{i(l^0,k^0,i^0)}_{t,lk}\big|\cdot  \max \{\langle i^0\rangle,\ \langle 2i-i^0\rangle\}\nonumber\\
&\leq &  \sum_{(l^0,k^0,i^0)\in {\cal A}_{(g^{(-1,w_0)})_{t,lk}^i}} |(g^{(-1,w_0)})^{i(l^0,k^0,i^0)}_{t,lk}|\cdot  \max \{\langle i^0\rangle,\ \langle 2i-i^0\rangle\}\nonumber\\
 &&+ \sum_{{(l^0,k^0,i^0)\in{\cal A}_{ \big( (Z^{w_0}_3)_{(t-3,S_{w_0}^{(0)})}\big)^i_{t,lk}}} }\big| \big((Z^{w_0}_3)_{(t-3,S_{w_0}^{(0)})} \big)^{i(l^0,k^0,i^0)}_{t,lk} \big|\cdot  \max \{\langle i^0\rangle,\ \langle 2i-i^0\rangle\} \nonumber\\
&&+\sum_{(l^0,k^0,i^0)\in {\cal A}_{ \big((g_{t-n'}^{(-1,w_0)})^i_{(n',S_{w_0}^{(0)})}\big)_{t,lk}^i}}
\sum_{n'=1}^{t-3}\big|\big((g^{(-1,w_0)}_{t-n'})_{
(n',S_{w_0}^{(0)})}\big)^{i(l^0,k^0,i^0)}_{t,lk}\big|\cdot  \max \{\langle i^0\rangle,\ \langle i-i^0\rangle\}\nonumber\\
 & \leq & \frac{1}{\langle i\rangle^{\beta}}(C_0 c\frac{2^{\beta}N^{2\alpha}}{\gamma})^{t-2}   =:\frac{1}{\langle i\rangle^{\beta}}\big(C(0,0)\big)^{t-2}.  \end{eqnarray*}
  }
 When $\theta=1$ it follows 
\begin{eqnarray*}
&& \big| {(\widetilde{g}^{(0,w_1)})}^{i}_{t,lk}\big|\nonumber\\
&\leq &  |{(\widetilde{g}^{(-1,w_1)})}^{i}_{t,lk}|+ |{\big((\widetilde{Z^{w_1}_3})_{(t-3,S^{(0)}_{w_1})} \big)}^{i}_{t,lk} |+
\sum_{n'=1}^{t-3}\big|\big(
{(\widetilde{g}^{(-1,w_1)}_{t-n'})}_{(n',S^{(0)}_{w_{1}}}\big)^{i}_{t,lk}\big|\leq   \frac{\big(C(1,0)\big)^{t-2}}{\langle i\rangle^{\beta}}.
  \end{eqnarray*}
Similarly,  ${\cal R}^{T(0,w_{\theta})}(u,\bar{u})$ has still $(\beta,\theta)$-type symmetric coefficients semi-bounded by $ C(\theta,0)$.  

Now  assume  that the results hold for rank  $r< r_*$. By these assumptions, there exist a real
number $\tilde{R}<R_*$ and a Lie-transformation which changes
 Hamiltonian (\ref{pr1}) into the following form
$$ H^{(r,w_{\theta})}=H_0^{w_{\theta}} +Z^{(r,w_{\theta})}+{\cal R}^{N(r,w_{\theta})}+g^{(r,w_{\theta})}+{\cal R}^{T(r,w_{\theta})},$$
which is defined in $B_p(R_r)$ ($ R<\tilde{R}<R_*$), where $R_r:=\frac{2r_*-r}{2r_*}R$.
  One should construct a bounded  Lie-transformation ${\cal T}_r^{w_{\theta}}$ to eliminate
the non-normalized monomials of $\Gamma^N_{\leq 2}g^{(r,w_{\theta})}_{r+4}$.
 Because $g_{r+4}^{(r,w_{\theta})}$ have $(\beta,\theta)$-type symmetric  coefficients, by Remark \ref{trunction}, the coefficients of $\Gamma^N_{\leq 2}g^{(r,w_{\theta})}_{r+4}$ and $\Gamma^N_{>2}g^{(r,w_{\theta})}_{r+4}$ are $(\beta,\theta)$-type symmetric  coefficients
semi-bounded  by $C(\theta,r)$.
Make use of the 1-time flow of the following equation,
 for any $j\in\mathbb{Z}^*$
\begin{equation*}
 \left\{\begin{array}{ll}
 \dot{u}_j= \ \ {\bf i}  \mbox{sgn}^{\theta}(j)\partial_{\bar{u}_j}S_{w_{\theta}}^{(r)} (u,\bar{u}),\\
 \\
\dot{\bar{u}}_j=-{\bf i} \mbox{sgn}^{\theta}(j)\partial_{u_j} S_{w_{\theta}}^{(r)}(u,\bar{u}),
\end{array}\right.
 \end{equation*} to define a Lie-transformation ${\cal T}_r^{w_{\theta}}$, under which the new Hamiltonian  has  the
following  form formally,
{\footnotesize\begin{eqnarray}
&&H^{(r+1,w_{\theta})}:=H^{(r,w_{\theta})}\circ {\cal T}_r^{w_{\theta}}\nonumber\\
&=&H_0^{w_{\theta}} +Z^{(r,w_{\theta})}+{\cal R}^{N(r,w_{\theta})}\nonumber\\
\label{4.31} & + &\{H_0^{w_{\theta}} , S_{w_{\theta}}^{(r)}\}_{w_{\theta}}+g^{(r,w_{\theta})}_{r+4}
\\\label{4.32}
 &+& \sum_{t=r+5}^{r_*+3}g^{(r,w_{\theta})}_t + \sum_{\nu\geq2}(H_0^{w_{\theta}} )_{(\nu,S_{w_{\theta}}^{(r)})} +
\sum_{\nu\geq 1}(Z^{(r,w_{\theta})}+g^{(r,w_{\theta})} +{\cal
R}^{N(r,w_{\theta})})_{(\nu,S^{(r)}_{w_{\theta}})}+\sum_{\nu\geq0}({\cal R}^{T(r,w_{\theta})})_{(\nu,S^{(r)}_{w_{\theta}})}.\nonumber\\\label{4.35}
\end{eqnarray}}
The auxiliary Hamiltonian $S_{w_{\theta}}^{(r)}$
can be obtained by solving the following homological  equation
 \begin{equation}\label{e.1}
\{H_0^{w_{\theta}} , S_{w_{\theta}}^{(r)}\}_{w_{\theta}}+\Gamma^N_{\leq2}g^{(r,w_{\theta})}_{r+4}=Z_{r+4}.
\end{equation}
 From  Lemma \ref{lem2},  $Z_{r+4}$ is
$(\theta,\gamma,\alpha,N)$-normal form of $\Gamma^N_{\leq 2}g^{(r,w_{\theta})}_{r+4}$ and
$$ (\ref{4.31})=Z_{r+4}+\Gamma^N_{>2}g^{(r,w_{\theta})}_{r+4}.$$
 The Hamiltonian vector field $X^{w_{\theta}}_{S^{(r)}_{w_{\theta}}}$  satisfies
\begin{equation}\label{c1}
\sup_{(u,\bar{u})\in B_p(R_r)}\| X^{w_{\theta}}_{S_{w_{\theta}}^{(r)}}(u,\bar{u}) \|_{p}
  \leq 8(C(\theta,r))^{r+2} (r+4)^{p+1}c^{r+3} \frac{N^{\alpha}}{\gamma}R^{r+3} .
\end{equation}
Using (\ref{c1}) and  bootstrap method, suppose that   \begin{equation}\label{k}
\Phi_{S_{w_{\theta}}^{(r)}}^t:B_p(R_{r+1})\rightarrow B_p(R_r),
\end{equation}
 for any $t\in [0,1]$. 
 \begin{eqnarray}\label{18}
&& \sup_{(u,\bar{u})\in B_p (R_r)}\|{\cal T}_r^{w_{\theta}}(u,\bar{u})-(u,\bar{u})\|_{p} = \sup_{(u,\bar{u})\in B_p(R_r)}\|\Phi_{S_{w_{\theta}}^{(r)}}^1(u,\bar{u})-(u,\bar{u})\|_{p}\nonumber\\
 &=& \sup_{(u,\bar{u})\in B_p(R_r)}\big\| \int_{t=0}^1 X^{w_{\theta}}_{S_{w_{\theta}}^{(r)}}\circ\Phi_{S_{w_{\theta}}^{(r)}}^{\tau}(u,\bar{u})(\tau) d\tau \big\|_{p}\nonumber\\
&\leq& 16 (C(\theta, r))^{r+2} (r+4)^{p+1}c^{r+3}  \frac{N^{\alpha}}{\gamma}R^{r+3}.
 \end{eqnarray}
By (\ref{N-c}) and (\ref{18}),  the transformation ${\cal T}_r $ satisfies
 $$ \sup_{(u,\bar{u})\in B_p(R)}\|{\cal T}_r^{w_{\theta}}(u,\bar{u})-(u,\bar{u})\|_{p} \leq
  \delta/2=(R_{r}-R_{r+1})/2,$$
which verifies (\ref{k}).
  %the above inequality is holding when $N$ is big enough.
   Denote $ {\cal T}^{^{w_{\theta}}(r+1)}:={\cal T}^{^{w_{\theta}}(r)}\circ {\cal T}_r^{w_{\theta}}$. By (\ref{18}) and (\ref{k}),
    noting that $R < \tilde{R}<R_*<1$, it holds
 \begin{eqnarray}\label{s2}
&&\sup_{(u,\bar{u})\in B_p\big(({R_{r}+R_{r+1}})/2\big)}\| {\cal T}^{^{w_{\theta}}(r+1)}(u,\bar{u})-(u,\bar{u})\ \|_{p}\nonumber\\
&\leq &\sup_{(u,\bar{u})\in B_p\big(\frac{R_{r}+R_{r+1}}2\big)} \big(\|{\cal T}^{^{w_{\theta}}(r)}\circ {\cal T}_r^{w_{\theta}}(u,\bar{u})-{\cal
T}_r^{w_{\theta}}(u,\bar{u})\|_{p}+\|{\cal
T}_r^{w_{\theta}}(u,\bar{u})-(u,\bar{u})\|_{p}\big)\nonumber\\
&\leq &\sup_{(u,\bar{u})\in B_p (R_{r})}\|{\cal
T}^{(r)}(u,\bar{u})-(u,\bar{u})\|_{p}+16\big(C(\theta,r)\big)^{r+2} (r+4)^{p+1}c^{r+3} \frac{N^{\alpha}}{\gamma}  R^{r+3} . 
\end{eqnarray}
Because  $C(\theta,t)\leq C(\theta,t+1)$ for any positive integer $t$, from (\ref{s2}), one has that
{\footnotesize\begin{eqnarray*} 
&&\sup_{(u,\bar{u})\in B_p\big(({R_{r}+R_{r+1}})/2\big)}\| {\cal T}^{^{w_{\theta}}(r+1)}(u,\bar{u})-(u,\bar{u})\ \|_{p}\nonumber\\
 &\leq&16\sum_{t=3}^{r+3}\frac{N^{\alpha}}{\gamma}\big(C(\theta,t-3)\big)^{t-2}t^{p+1}c^{t-1} 
R^{t-1} +16\frac{N^{\alpha}}{\gamma}\big(C(\theta,r+1)\big)^{r+2} (r+4)^{p+1}c^{r+3} R_{r}^{r+3}  \nonumber\\
&\leq&
16\frac{N^{\alpha}}{\gamma}\sum_{t=3}^{r+4} \big(C(\theta,t-3)\big)^{t-2} \   t^{p+1}c^{t-1} 
R^{t-1} \leq R^{2-\frac{1}{r_*+1}}.
\end{eqnarray*}}
Denote
\begin{equation}\label{(13)}
Z^{(r+1,w_{\theta})}:=Z^{(r,w_{\theta})}+Z_{r+4}, \quad {\cal R}^{N(r+1,w_{\theta})}:={\cal
R}^{N(r,w_{\theta})}+\Gamma^N_{>2}g^{(r,w_{\theta})}_{r+4}.
\end{equation}
 By Remark \ref{jjd} and Lemma \ref{lem4.2},  $Z^{(r+1,w_{\theta})}$ and ${\cal R}^{N(r+1,w_{\theta})}$ have $(\beta,\theta)$-type symmetric coefficients. Denote
$$ g^{(r+1,w_{\theta})}=\sum_{t=r+5}^{r_*+3}g^{(r+1,w_{\theta})}_t,\quad {\cal R}^{T(r+1,w_{\theta})}=\sum_{t>r_*+3}{\cal R}^{T(r+1,w_{\theta})}_t,$$
where
\begin{eqnarray}\label{g+}
&&g^{(r+1,w_{\theta})}_t:=\nonumber
\\& & \left\{\begin{array}{lll}
g^{(r,w_{\theta})}_{t}+ (Z_{r+4}-\Gamma_{N\leq 2}g_{r+4}^{(r,w_{\theta})})_{(\frac{t-r-4}{r+2},S_{w_{\theta}}^{(r)})}
+\sum_{n'=0}^{[\frac{t-3}{r+2}]}({\cal
R}^{N(r,w_{\theta})}_{t-n'(r+2)})_{(n',S_{w_{\theta}}^{(r)})}\nonumber\\ +\sum_{n'=0}^{[\frac{t-(r+4)}{r+2}]}
(g^{(r,w_{\theta})}_{t-n'(r+2)})_{(n',S^{(r)}_{w_{\theta}})}
+\sum_{n'=2}^{[\frac{t-3}{r+2}]}(Z^{(r,w_{\theta})}_{t-n'(r+2)})_{(n',S_{w_{\theta}}^{(r)})},\ \mbox{when}\
(r+2) | (t-2);\nonumber\\
\\
g^{(r,w_{\theta})}_{t}
+\sum_{n'=0}^{[\frac{t-3}{r+2}]}({\cal
R}^{N(r,w_{\theta})}_{t-n'(r+2)})_{(n',S_{w_{\theta}}^{(r)})}+\sum_{n'=0}^{[\frac{t-(r+4)}{r+2}]}
(g^{(r,w_{\theta})}_{t-n'(r+2)})_{(n',S_{w_{\theta}}^{(r)})}\nonumber\\
+\sum_{n'=2}^{[\frac{t-3}{r+2}]}(Z^{(r,w_{\theta})}_{t-n'(r+2)})_{(n',S_{w_{\theta}}^{(r)})},\ \mbox{when}\
(r+2) \nmid (t-2);
\end{array}
\right.
 \end{eqnarray}
and
\begin{eqnarray}\label{R+}
&&{\cal R}^{T(r+1,w_{\theta})}_{t}:= \nonumber\\
&&\left\{
\begin{array}{ll}
  (Z_{r+4}-\Gamma^N_{\leq2}g_{r+4}^{(r,w_{\theta})})_{
  (\frac{t-r-4}{r+2},S_{w_{\theta}}^{(r)})}
+\sum_{n'=1}^{[\frac{t-3}{r+2}]}\big({\cal
R}^{N(r,w_{\theta})}_{t-n'(r+2)}\big)_{(n',S_{w_{\theta}}^{(r)})}\nonumber\\
+\sum_{n'=1}^{[\frac{t-(r+4)}{r+2}]}
\big(g^{(r,w_{\theta})}_{t-n'(r+2)}\big)_{(n',S^{(r)}_{w_{\theta}})}
+\sum_{n'=1}^{[\frac{t-3}{r+2}]}\big(Z^{(r,w_{\theta})}_{t-n'(r+2)}\big)_{(n',S_{w_{\theta}}^{(r)})}
\nonumber\\
+\sum_{n'=0}^{[\frac{t-r_*-4}{r+2}]}\big({\cal
R}^{T(r,w_{\theta})}_{t-n'(r+2)}\big)_{(n',S_{w_{\theta}}^{(r)})},\quad \quad \quad  \mbox{when}\ (r+2)|(t-2);\\
\\
 \sum_{n'=1}^{[\frac{t-3}{r+2}]}\big({\cal
R}^{N(r,w_{\theta})}_{t-n'(r+2)}\big)_{(n',S^{(r)}_{w_{\theta}})}+\sum_{n'=1}^{[\frac{t-(r+4)}{r+2}]}
\big(g^{(r,w_{\theta})}_{t-n'(r+2)}\big)_{(n',S^{(r)}_{w_{\theta}})}\nonumber\\
+\sum_{n'=1}^{[\frac{t-3}{r+2}]}\big(Z^{(r,w_{\theta})}_{t-n'(r+2)}\big)_{(n',S^{(r)}_{w_{\theta}})}
+\sum_{n'=0}^{[\frac{t-r_*-4}{r+2}]}\big({\cal
R}^{T(r,w_{\theta})}_{t-n'(r+2)}\big)_{(n',S^{(r)}_{w_{\theta}})},\ \mbox{when}\ (r+2)\nmid (t-2);
\end{array}
\right.
 \end{eqnarray}
 where $[a]$ denotes the integer part of the  real number $a$.
Using Lemma \ref{lem4.2} and Remark \ref{jjd},
 from the fact that   $ g^{(r,w_{\theta})}, \ {\cal R}^{T(r,w_{\theta})}, \ {\cal R}^{N(r,w_{\theta})}$ and $ Z^{(r,w_{\theta})}$ have $(\beta,\theta)$-type symmetric coefficients semi-bounded by $C(\theta,r)$,
then $g^{(r+1,w_{\theta})}$ and ${\cal R}^{T(r+1,w_{\theta})}$ also have $(\beta ,\theta)$-type symmetric coefficients.

\noindent 
When $\theta=0$, using Remark \ref{jjd},  the followings estimates hold: 
for any $|l+k|=t$  with ${\cal M}(l,k)=i\in M_{(g^{(r,w_{0})}_{r+4})_{(\frac{t-r-4}{r+2},S_{w_{0}}^{(r)})}}$,  
{\footnotesize\begin{eqnarray}\label{zbzd1}
&&  \sum_{(l^0,k^0,i^0)\in{\cal A}_{\big((Z_{r+4}-\Gamma^N_{\leq2}g_{r+4}^{(r,w_{0})})_{(\frac{t-r-4}{r+2} ,S_{w_{0}}^{(r)})} \big)_{t,lk}^i}} \big|\big((Z_{r+4}-\Gamma^N_{\leq2 }g_{r+4}^{(r,w_{0})})_{(\frac{t-r-4}{r+2} ,S_{w_{0}}^{(r)})} \big)^{i(l^0,k^0,i^0)}_{r,lk}\big|\nonumber\\
&&\cdot \max \{\langle i^0\rangle,\ \langle 2i-i^0\rangle\} \nonumber\\
&\leq&
  \frac{\big(C(\theta,r)\big)^{r+2}}{ \langle i\rangle^{\beta}} \big(2^{\beta+2}(r+4)^2\big(C(\theta,r)\big)^{r+2}c  \frac{N^{\alpha+1}}{\gamma}\big)^{\frac{t-r-4}{r+2}}\frac{
 (2N)^{(t-r-4)(r+2)}}{(\frac{t-r-4}{r+2})!}\prod_{n=1}^{\frac{t-r-4}{r+2}} \big(t+1-n(r+2)\big);\nonumber\\
\end{eqnarray}}
for any  $|l+k|=t$ with ${\cal M}(l,k)=i\in M_{(g^{(r,w_{0})}_{t-n'(r+2)})_{(n',S^{(r)}_{w_{0}})}}$,  
\begin{eqnarray}\label{zbzd2}
&&\sum_{(l^0,k^0,i^0)\in {\cal A}_{\big((g^{(r,w_0)}_{t-n'(r+2)})_{(n',S_{w_{0}}^{(r)})}\big)_{t,lk}^i}} \big|\big(
(g^{(r,w_0)}_{t-n'(r+2)})_{(n',S^{(r)}_{w_{0}})}\big)^{i(l^0,k^0,i^0)}_{t,lk}\big|\cdot  \max \{\langle i^0\rangle,\ \langle 2i-i^0\rangle\}\nonumber\\
&\leq&\!\frac{
\big(\!C(\theta,r)\!\big)^{t\! -\! n'\! (r+2\!)-2}}{\langle i\rangle^{\beta} }\big(\! 2^{\beta+2}(r+4)^2 \big(C(\theta,r)\big)^{r+2} c \frac{N^{\alpha+1}}{\gamma}\big)^{n'}\frac{(2N)^{(r+2)n'}
}{n'!} \!\prod_{n=1}^{n'}\! \big(\! t\! +\! 1\!-\! n(r+2)\big);\nonumber\\
\end{eqnarray}
for any  $|l+k|=t$  with ${\cal M}(l,k)=i\in M_{(Z^{(r,w_{0})}_{t-n'(r+2)})_{(n',S^{(r)}_{w_{0}})}} ,$  
{\small\begin{eqnarray}\label{zbzd3}
&&\sum_{(l^0,k^0,i^0)\in {\cal A}_{\big((Z^{(r,w_{0})}_{t-n'(r+2)})_{(n',S^{(r)}_{w_{0}})}\big)_{t,lk}^i}} \big|\big( (Z^{(r,w_{  0})}_{t-n'(r+2)})_{(n',S^{(r)}_{w_{0}})}\big)^{i(l^0,k^0,i^0)}_{t,lk}\big|\cdot \max \{\langle i^0\rangle,\ \langle 2i-i^0\rangle\}\nonumber\\
&\leq&
 \!\frac{\big(\! C(\theta,r\!)\big)^{\! t\!-\!n'\!(r+2\!)\!-2}}{\langle i\rangle^{\beta} }\big(2^{\beta+2}(r+4)^2\big(\!C(\!\theta,r\!)\big)^{r+2}c  \frac{N^{\alpha+1}}{\gamma}\big)^{n'}\frac{(2N)^{(r+2)n'}}{n'!}\prod_{n=1}^{n'} \big(\! t\!+\!1\!-\!n(\!r+2\!)\!\big),\nonumber\\
\end{eqnarray}
}
for any $|l+k|=t$  with ${\cal M}(l,k)= i\in M_{({\cal R}^{N(r,w_{0})}_{t-n'(r+2)})_{(n',S^{(r)}_{w_{0}})}}$ 
{\small \begin{eqnarray}\label{zbzd5}
&&\sum_{(l^0,k^0,i^0)\in {\cal A}_{\big(({\cal R}^{N(r,w_{0})}_{t-n'(r+2)})_{(n',S_{w_{0}}^{(r)})}\big)_{t,lk}^i}}\big|\big( ({\cal
R}^{N(r,w_{0})}_{t-n'(r+2)})_{(n',S_{w_{0}}^{(r)})}\big)^{i(l^0,k^0,i^0)}_{t,lk}\big|\cdot  \max \{\langle i^0\rangle,\ \langle 2i-i^0\rangle\}\nonumber\\
&\leq&
\frac{\big(\! C\!(\theta,r\!)\big)^{\!t\!-\!n'\!(\!r\!+\!2\!)\!-\!2}}{\langle i\rangle^{\beta}  }\big(2^{\beta+2} (r+4)^2\big(C(\theta,r)\big)^{r+2}c  \frac{N^{\alpha+1}}{\gamma}\big)^{n'}\frac{(2N)^{(r+2)n'}}{n'!} \prod_{n=1}^{n'} \big(t+1-n(r+2)\big).\nonumber\\
\end{eqnarray}}
and for any $|l+k|=t$ with ${\cal M}(l,k)= i\in M_{({\cal R}^{T(r,w_{0})}_{t-n'(r+2)})_{(n',S^{(r)}_{w_{0}})}}$  
{\small\begin{eqnarray}\label{zbzd4}
&&\sum_{(l^0,k^0,i^0)\in {\cal A}_{\big(({\cal R}^{T(r,w_{0})}_{t-n'(r+2)})_{(n',S_{w_{0}}^{(r)})}\big)_{t,lk}^i}}\big|\big( ({\cal
R}^{T(r,w_{0})}_{t-n'(r+2)})_{(n',S_{w_{0}}^{(r)})}\big)^{i(l^0,k^0,i^0)}_{t,lk}\big|\cdot  \max \{\langle i^0\rangle,\ \langle 2i-i^0\rangle\}\nonumber\\
&\leq&
\frac{\big(C(\theta,r)\big)^{t-n'(r+2)-2}}{\langle i\rangle^{\beta}  }\big(2^{\beta+2} (r+4)^2\big(C(\theta,r)\big)^{r+2}c  \frac{N^{\alpha+1}}{\gamma}\big)^{n'}\frac{(2N)^{(r+2)n'}}{n'!} \prod_{n=1}^{n'} \big(t+1-n(r+2)\big).\nonumber\\
\end{eqnarray}}
By (\ref{zbzd1})-(\ref{zbzd4}) and assumption (\ref{N-c}),  for any $r+5\leq t\leq r_*+3$, $|l+k|=t$ and $i\in M_{g^{(r+1,w_{\theta})}_{t}}$, the following estimate holds  
{\small\begin{eqnarray}
\sum_{(l^0,k^0,i^0)\in {\cal A}_{(g^{(r+1,w_{\theta})})_{t,lk}^i} }|(g^{(r+1,w_{
\theta})})^{i(l^0,k^0,i^0)}_{t,lk}|\cdot \max \{\langle i^0\rangle, \langle 2i-i^0\rangle\}\leq  \big(C(\theta,r+1)\big)^{t-2} \frac{1}{\langle i \rangle^{\beta} }  ,
 \end{eqnarray}}
 which means that $g^{(r+1,w_{0})}(u,\bar{u})$ has $(\beta,0)$-type symmetric coefficients semi-bounded by $C(\theta,r+1)>0$. 
 
 Similarly, $ {\cal R}^{T(r+1,w_{\theta})}(u,\bar{u})$ and ${\cal R}^{N(r+1,w_{1})}(u,\bar{u}) $ 
are also of $(\beta,\theta)$-type symmetric coefficients semi-bounded by $C(\theta,r+1)>0$.

\end{proof}

\section*{Acknowledgement} 
This paper is supported in part by Science and Technology Commission of Shanghai Municipality (No. 18dz2271000).

 \end{document}